\theoremstyle{plain}
\newtheorem{thm}{Theorem}[section]
\newtheorem{lem}[thm]{Lemma}
\newtheorem{dfn}[thm]{Definition}
\newtheorem{prop}[thm]{Proposition}
\newtheorem{rmk}[thm]{Remark}
\renewcommand{\labelenumi}{(\theenumi)}
\def\D{\mathrm{D}}
\def\H{\mathrm{H}}
\def\T{\mathrm{T}}
\def\d{\mathrm{d}}
\def\h{\mathrm{h}}
\def\Cset{\mathbb{C}}
\def\Nset{\mathbb{N}}
\def\Rset{\mathbb{R}}
\def\Zset{\mathbb{Z}}
\def\epsilon{\varepsilon}
\def\e{\mathrm{e}}
\def\i{\mathrm{i}}
\DeclareMathOperator{\trace}{tr}
\DeclareMathOperator{\sech}{sech}
\DeclareMathOperator{\gKer}{gKer}
\DeclareMathOperator{\Ker}{Ker}
\DeclareMathOperator{\lspan}{span}
\renewcommand{\Re}{\mathrm{Re}\,}
\renewcommand{\Im}{\mathrm{Im}\,}
\newcommand\defeq{\mathrel{\rlap{\raisebox{0.3ex}{$\cdot$}}\raisebox{-0.3ex}{$\cdot$}}=}
\newcommand*\pFqskip{8mu}
\newcommand*\pFq{\begingroup
        \catcode`\,\active
        \def ,{\mskip\pFqskip\relax}%
        \dopFq
}
\def\dopFq#1#2#3#4#5{%
        {}_{#1}F_{#2}\biggl(\genfrac..{0pt}{}{#3}{#4};#5\biggr)%
        \endgroup
}
\def\theequation{\arabic{section}.\arabic{equation}}
\begin{document}

% **********************************************************
% Title of This Paper
% **********************************************************

\title[Bifurcations and Spectral Stability of Solitary Waves]{Bifurcations and Spectral Stability of Solitary Waves in Coupled Nonlinear Schr\"odinger Equations}
\thanks{This work was partially supported by JSPS KAKENHI Grant Number JP17H02859.}

\author{Kazuyuki Yagasaki}

\author{Shotaro Yamazoe}

\address{Department of Applied Mathematics and Physics, Graduate School of Informatics,
Kyoto University, Yoshida-Honmachi, Sakyo-ku, Kyoto 606-8501, JAPAN}
\email{yagasaki@amp.i.kyoto-u.ac.jp (K.\ Yagasaki)}
\email{yamazoe@amp.i.kyoto-u.ac.jp (S.\ Yamazoe)}

\date{\today}
\subjclass[2010]{35B32, 35B35, 35Q55, 37K45, 37K50}
\keywords{bifurcation, coupled nonlinear Schr\"odinger equations, solitary wave, spectral stability}

\begin{abstract}
We study bifurcations and spectral stability of solitary waves in coupled nonlinear Schr\"odinger (CNLS) equations on the line.
We assume that the coupled equations possess a solution of which one component is identically zero, and call it a \emph{fundamental solitary wave}.
By using a result of one of the authors and his collaborator, pitchfork bifurcations of the fundamental solitary wave are detected.
We utilize the Hamiltonian-Krein index theory and Evans function technique to determine the spectral or orbital stability of the bifurcated solitary waves as well as that of the fundamental one under some nondegenerate conditions which are easy to verify, compared with those of the previous results.
We apply our theory to CNLS equations with a cubic nonlinearity and give numerical evidences for the theoretical results.
\end{abstract}
\maketitle

% **********************************************************
% Section 1
% **********************************************************

\section{Introduction}

We consider coupled nonlinear Schr\"odinger (CNLS) equations of the form
\begin{align}
	\begin{aligned}
		&\mathrm{i}\partial_t u+\partial_x^2u+\partial_1F(|u|^2,|v|^2;\mu)u=0,\\
		&\mathrm{i}\partial_t v+\partial_x^2v+\partial_2F(|u|^2,|v|^2;\mu)v=0,
	\end{aligned}
	\label{eqn:CNLS}
\end{align}
where $(u,v)=(u(t,x),v(t,x))$ are complex-valued unknown functions of $(t,x)\in\Rset\times\Rset$, $\mu$ is a real parameter, $F:\Rset\times\Rset\times\Rset\to\Rset$ is a given function, and $\partial_jF$ represents the derivative of $F$ with respect to the $j$-th argument for $j=1,2$.
Our special attention is paid to the cubic nonlinearity,
\begin{align}
	F(\zeta_1,\zeta_2;\beta_1)=\frac{1}{2}\zeta_1^2+\beta_1\zeta_1\zeta_2+\frac{\beta_2}{2}\zeta_2^2,
	\label{eqn:cubic}
\end{align}
where $\beta_1,\beta_2$ are real constants and $\beta_1$ is taken as the control parameter $\mu$ below.
In applications, \eqref{eqn:CNLS} arises when considering propagation of two interacted waves in nonlinear media.
For example, in nonlinear optics, it describes nonlinear pulse propagation in birefringent fibers.
See \cite{Ag95,NgWa16,Ro76,Ya10} and references therein.
\eqref{eqn:CNLS} has a Hamiltonian structure with the Hamiltonian
\begin{align*}
	\mathcal H(u,v;\mu)=\frac{1}{2}\int_\Rset\Bigl(|\partial_x u(x)|^2+|\partial_x v(x)|^2-F(|u|^2,|v|^2;\mu)\Bigr)\d x,
\end{align*}
and the Cauchy problem of \eqref{eqn:CNLS} is well-posed in the Sobolev space $H^1(\Rset,\Cset^2)$ (see, e.g., Section 4 of \cite{Ca96}).

Here we are interested in solitary waves of the form
\begin{align}
	\begin{aligned}
		&u(t,x)=\e^{\i(\omega t+cx-c^2t+\theta)}U(x-2ct-x_0),\\
		&v(t,x)=\e^{\i(st+cx-c^2t+\phi)}V(x-2ct-x_0),
	\end{aligned}
	\label{eqn:soliton}
\end{align}
such that the real-valued functions $(U,V)=(U(x),V(x))$ satisfy $U(x),V(x)\to 0$ as $x\to\pm\infty$,
 where $c$, $x_0$, $\theta$, $\phi$, and $\omega,s>0$ are real constants.
Henceforth, without loss of generality, we take $c,x_0,\theta,\phi=0$ by scaling the independent variables if necessary, since \eqref{eqn:CNLS} is invariant under the Galilean transformations
\begin{align*}
	(u(t,x),v(t,x))\mapsto\e^{\i(cx-c^2t)}(u(t,x-2ct),v(t,x-2ct)),\quad c\in\Rset,
\end{align*}
the spatial translations
\begin{align}
	(u(t,x),v(t,x))\mapsto(u(t,x-x_0),v(t,x-x_0)),\quad x_0\in\Rset,
	\label{eqn:transl}
\end{align}
and the gauge transformations
\begin{align}
	(u(t,x),v(t,x))\mapsto(\e^{\i\theta}u(t,x),\e^{\i\phi}v(t,x)),\quad\theta,\phi\in\Rset.
	\label{eqn:gauge}
\end{align}
So $(U,V)=(U(x),V(x))$ solves
\begin{align}
	\begin{aligned}
		&-U''+\omega U-\partial_1F(U^2,V^2;\mu)U=0,\\
		&-V''+sV-\partial_2F(U^2,V^2;\mu)V=0,
	\end{aligned}
	\label{eqn:UV}
\end{align}
where the prime represents the differentiation with respect to $x$.
Throughout this paper we assume that \eqref{eqn:UV} possess a homoclinic solution of the form $(U,V)=(U_0(x),0)$.
This assumption holds with $U_0(x)=\sqrt{2\omega}\sech(\sqrt\omega x)$ for the particular nonlinearity \eqref{eqn:cubic}.
We refer to the solitary wave of the form
\begin{align}
	(u(t,x),v(t,x))=(\e^{\i\omega t}U_0(x),0),
	\label{eqn:fun_soliton}
\end{align}
as a \textit{fundamental solitary wave}.
Obviously, $(U,V)=(-U_0(x),0)$ is also a homoclinic solution to \eqref{eqn:UV} but it gives the same solitary wave \eqref{eqn:soliton} as \eqref{eqn:fun_soliton} if $\theta=\pi$ is taken.
So we only treat \eqref{eqn:fun_soliton}.

Bl\'azquez-Sanz and Yagasaki \cite{BlYa12} studied bifurcations of homoclinic orbits in a large class of ordinary differential equations using Gruendler's version of Melnikov's method \cite{Gu92}, and applied their theory to show that infinitely many pitchfork bifurcations of $(U,V)=(U_0(x),0)$ occur in \eqref{eqn:UV} with \eqref{eqn:cubic} when $\beta_1$ is taken as a control parameter.
Moreover, in a general situation, they proved that if bifurcations of homoclinic orbits occur, then the associated variational equations, i.e., linearized equations, around the homoclinic orbits are integrable in the meaning of differential Galois theory \cite{PuSi03}.
%See Appendix A for more details on these results.
Homoclinic orbits born at the pitchfork bifurcations in \eqref{eqn:UV} correspond to solitary waves of the form \eqref{eqn:soliton} with nonzero $U$- and $V$-components, which are often called vector solitons, bifurcated from the fundamental solitary waves \eqref{eqn:fun_soliton} in \eqref{eqn:CNLS}.

Once establishing their existence, the stability problem of these bifurcated solitary waves naturally arises.
%The stability of solitary waves in nonlinear dispersive PDEs has been studied by many authors.
%See, e.g., \cite{CaLi82,GSS87,GSS90,SW90,SW92}.
Ohta \cite{Oh96} proved the orbital stability of the fundamental solitary waves in \eqref{eqn:CNLS} with \eqref{eqn:cubic} and $\beta_2=1$.
Subsequently, his result was extended to more general nonlinearity cases by several authors \cite{Bh15,CiZu00,Ng11,NgWa11,NgWa16}.
All of these results are based on variational characterizations of solitary waves.
On the other hand, the spectral stability of vector solitons in \eqref{eqn:CNLS} with \eqref{eqn:cubic} or the saturable nonlinearity was discussed by Pelinovsky and Yang \cite{PeYa05}.
They employed the \emph{Hamiltonian-Krein index} theory, which was established by Pelinovsky \cite{Pe05} and Kapitula et al.\ \cite{KKS04,KKS05} independently, and carried out some formal calculations to claim that any purely imaginary embedded eigenvalue with a positive (resp.\ negative) Krein signature disappears (resp.\ changes to a pair of eigenvalues with nonzero real parts) after bifurcations.
This suggests that the bifurcated solitary waves of \eqref{eqn:CNLS} with \eqref{eqn:cubic} are spectrally stable and unstable, respectively, when they have positive and sign-changing $V$-components.
A rigorous analysis was given by Cuccagna et al.\ \cite{CPV05} using a technique based on the analytic continuation of the resolvent.
Moreover, some nondegenerate conditions which are generically satisfied but difficult to verify for concrete examples were assumed there. % in these previous results.
Li and Promislow \cite{LiPr00} also studied the spectral stability of solitary waves in CNLS equations with a different special nonlinearity, using the \emph{Evans function} technique.
This technique was first proposed in Evans' earlier works about the stability of pulse solutions in nerve axon equations \cite{Ev72a,Ev72b,Ev72c,Ev75}, and subsequently discussed in a topological framework by Alexander et al.\ \cite{AGJ90}.

In this paper, we study pitchfork bifurcations of the fundamental solitary waves detected by the Melnikov analysis of \cite{BlYa12} in \eqref{eqn:CNLS} and determine the spectral and/or orbital stability of the fundamental and bifurcated solitary waves under some nondegenerate conditions which are easy to verify, compared with those of \cite{CPV05,PeYa05}.
Here the terminology ``pitchfork bifurcation'' is used with caution:
 a pair of homoclinic solutions to \eqref{eqn:UV},
 which correspond to the same family of solitary waves of the form \eqref{eqn:soliton}
 in \eqref{eqn:CNLS}, are born at the bifurcation point.
Our main tool to determine their spectral stability
 is the Evans function technique as in \cite{LiPr00} but more careful treatments are required.
The Hamiltonian-Krein index theory is also used as in \cite{PeYa05}.
% and the Hamiltonian-Krein index theory and Evans function technique in Appendix~B.
%Our analyses are carried out as follows.
%Based on the result of \cite{BlYa12}, we first express the bifurcated homoclinic orbits in the perturbation expansions.
%The spectral stability of the associated bifurcated solitary waves is determined by the eigenvalue problems of the linearized operators for \eqref{eqn:CNLS} around them.
%Substituting the above expressions of the homoclinic orbits, we obtain perturbation expansions for the eigenvalue problems, and apply the Evans function technique to them.
We apply our theory to the particular nonlinearity case of \eqref{eqn:cubic}:
  pitchfork bifurcations of the fundamental solitary waves are detected
 and the spectral or orbital stability of the fundamental and bifurcated solitary waves
 are determined for almost all parameter values.
In particular, the nondegenerate conditions for bifurcations and spectral stability are given explicitly.
The theoretical results are also demonstrated with numerical computations
 obtained by the computer tool \texttt{AUTO} \cite{DO12}.

The outline of this paper is as follows:
In Section~2, we state our assumptions and main results
 on pitchfork bifurcations of the fundamental solitary waves in \eqref{eqn:CNLS}.
In Section~3, we use the Melnikov analysis of \cite{BlYa12} for \eqref{eqn:UV}
 and prove the main result on pitchfork bifurcations.
For the reader's convenience, in our setting,
 the Melnikov analysis is briefly reviewed. %in Appendix~A.
In Section~4, we present the Hamiltonian-Krein index theory and Evans function technique
 in appropriate forms.
Applying these approaches,
  we prove the main results on the stability of the fundamental and bifurcated solitary waves
 in Sections~5 and 6, respectively.
Finally, we illustrate our theory for the particular nonlinearity \eqref{eqn:cubic}
 along with some numerical computations in Section~7.

% **********************************************************
% Section 2
% **********************************************************

\section{Main Results}

In this section, we give our assumptions and main results
 on bifurcations of the fundamental solitary wave \eqref{eqn:fun_soliton}
 and the stability of the fundamental and bifurcated solitary waves
 in the CNLS equations \eqref{eqn:CNLS}.
%Some terminology which are needed to state our results are also briefly introduced.
%Precise definitions of them are given in the later sections.
%
\subsection{Bifurcations of the Fundamental Solitary Wave}
We first make the following assumptions.
\begin{itemize}
\setlength{\leftskip}{-1em}
\item[\textbf{(A1)}]
	$F:\Rset\times\Rset\times\Rset\to\Rset$ is a $C^3$ function satisfying
\begin{align*}
	F(0,0;\mu)=\partial_1F(0,0;\mu)=\partial_2F(0,0;\mu)=0,\quad
	\partial_\mu F(\zeta,0;\mu)=0,
\end{align*}
for any $\zeta,\mu\in\Rset$.
Henceforth we suppress the dependence of $\partial_1^jF(\zeta,0;\mu)$ on $\mu$ and write $\partial_1^jF(\zeta,0)$ for $j=0,1,2,3$.
\end{itemize}
\begin{itemize}
\setlength{\leftskip}{-1em}
\item[\textbf{(A2)}]
	When $V=0$, the first equation of \eqref{eqn:UV},
	\begin{align*}
		-U''+\omega U-\partial_1F(U^2,0)U=0,
	\end{align*}
	has a homoclinic solution $U_0(x)\not\equiv0$ such that $\lim_{x\to\pm\infty}U_0(x)=0$ and $U_0'(0)=0$.
\item[\textbf{(A3)}]
	For some $\mu_0\in\Rset$, the \emph{normal variational equation} of \eqref{eqn:UV}
	around the homoclinic orbit $(U,V)=(U_0(x),0)$,
	\begin{align}
		-\delta V''+s\delta V-\partial_2F(U_0(x)^2,0;\mu_0)\delta V=0,
		\label{eqn:VE2}
	\end{align}
	 has a nonzero bounded solution $\delta V=V_1(x)$ such that
	$\lim_{x\to\pm\infty}V_1(x)=0$.
	Henceforth we take $\mu_0=0$ by shifting the value of $\mu$ if necessary without loss of generality.
\end{itemize}

\begin{rmk}\label{rmk:U0V1}\
	\begin{enumerate}
	\setlength{\leftskip}{-2em}
	\item[(i)]
		{\rm\textbf{(A2)}} holds if and only if
		\begin{align*}
			\zeta_0\defeq\inf\{\zeta>0\mid F(\zeta^2,0)=\omega\zeta^2\}>0
		\end{align*}
		exists and $\partial_1F(\zeta_0^2,0)>\omega$.
		In addition, $U_0(x)$ decays exponentially as $x\to\pm\infty$ and is unique up to translations.
		We can also take $U_0(x)$ such that it is a positive even function and satisfies $U_0(0)=\zeta_0$ and $U_0'(x)<0$ for $x>0$.
		See Section~6 of \cite{BeLi83}.
		Henceforth, we always assume these properties for $U_0(x)$.
	\item[(ii)]
		In {\rm\textbf{(A3)}}, $V_1(x)$ is either even or odd.
		Actually, when {\rm\textbf{(A3)}} holds, since the space of all solutions to \eqref{eqn:VE2} decaying as $x\to\pm\infty$ is of dimension one and $V_1(-x)$ is such a solution to \eqref{eqn:VE2},
		we have $V_1(x)=cV_1(-x)$ for some $c\in\Rset$.
		Hence, if $V_1(0)\ne0$, then $c=1$.
		Otherwise we have $c=-1$ since $V_1'(x)=-cV_1'(-x)$ and $V_1'(0)\ne0$.
	\end{enumerate}
\end{rmk}

Define the two integrals
\begin{align}
	\bar{a}_2=&-\int_{-\infty}^\infty\partial_2\partial_\mu F(U_0(x)^2,0;0)V_1(x)^2\,\d x,
	\label{eqn:a2ex}\\
	\bar{b}_2=&-2\int_{-\infty}^\infty\partial_1\partial_2F(U_0(x)^2,0;0)U_0(x)U_2(x)V_1(x)^2\,\d x\nonumber\\
	&-\int_{-\infty}^\infty\partial_2^2F(U_0(x)^2,0;0)V_1(x)^4\,\d x,
	\label{eqn:b2ex}
\end{align}
where
\begin{align}
	U_2(x)
	=&-\phi_{11}(x)\int_x^\infty\phi_{12}(y)\partial_1\partial_2F(U_0(y)^2,0;0)U_0(y)V_1(y)^2\,\d y\nonumber\\
	&-\phi_{12}(x)\int_0^x\phi_{11}(y)\partial_1\partial_2F(U_0(y)^2,0;0)U_0(y)V_1(y)^2\,\d y.
	\label{eqn:U2}
\end{align}
%(see Section~3 and Appendix~A for more details).
Applying the Melnikov analysis of \cite{BlYa12} reviewed in %Appendix A,
 Section~3.1, we have the following bifurcation result.
 %Detailed discussion is given in Section~3.

\begin{thm}\label{thm:cnls_bif}
	Suppose that {\rm\textbf{(A1)}-\textbf{(A3)}} and $\bar a_2,\bar b_2\ne0$ hold.
	Then a pitchfork bifurcation of the fundamental solitary wave \eqref{eqn:fun_soliton} occurs at $\mu=0$.
	In addition, it is supercritical or subcritical, depending on whether $\bar{a}_2\bar{b}_2<0$ or $>0$.
	Moreover, the bifurcated solitary waves are written as
	\begin{align}
		(u(t,x),v(t,x))=(\e^{\i\omega t}U_\epsilon(x),\e^{\i st}V_\epsilon(x))
		\label{eqn:bif_soliton}
	\end{align}
	with
	\begin{align}
		U_\epsilon(x)=U_0(x)+\epsilon^2U_2(x)+O(\epsilon^4),\quad
		V_\epsilon(x)=\pm(\epsilon V_1(x)+O(\epsilon^3)),
		\label{eqn:bif_UV}
	\end{align}
	where $\epsilon>0$ is a small parameter such that
	\begin{align}
		\mu=\bar\mu\,\epsilon^2,\qquad\bar\mu\defeq-{\bar b_2}/{\bar a_2}.
		\label{eqn:mu_eps}
	\end{align}
\end{thm}
A proof of Theorem~\ref{thm:cnls_bif} is given in Section~3.

\subsection{Stability of the Fundamental and Bifurcated Solitary Waves}

We turn to the stability of the fundamental and bifurcated solitary waves.
We begin with the definition of orbital stability.
\begin{dfn}
	The solitary wave \eqref{eqn:soliton2} is \emph{orbitally stable} if for any $\epsilon>0$ there exists a constant $\delta>0$ such that for all $(u_0,v_0)\in H^1(\Rset,\Cset^2)$ with
	\begin{align*}
		\inf_{x_0,\theta,\phi\in\Rset}\bigl(\|u_0-\e^{\i\theta}U(\cdot-x_0)\|_{H^1}+\|v_0-\e^{\i\phi}V(\cdot-x_0)\|_{H^1}\bigr)<\delta,
	\end{align*}
	the solution $(u(t),v(t))\in C(\Rset,H^1(\Rset,\Cset^2))$ to \eqref{eqn:CNLS} with $(u(0),v(0))=(u_0,v_0)$ satisfies
	\begin{align*}
		\sup_{t\in\Rset}\inf_{x_0,\theta,\phi\in\Rset}\bigl(\|u(t)-\e^{\i\theta}U(\cdot-x_0)\|_{H^1}+\|v(t)-\e^{\i\phi}V(\cdot-x_0)\|_{H^1}\bigr)<\epsilon.
	\end{align*}
	Otherwise, it is \emph{orbitally unstable}.
\end{dfn}

The \emph{linearized operator} $\mathscr L:H^2(\Rset,\Cset^4)$ $\subset L^2(\Rset,\Cset^4)\to L^2(\Rset,\Cset^4)$ around the solitary wave
\begin{align}
	(u(t,x),v(t,x))=(\e^{\i\omega t}U(x),\e^{\i st}V(x))
	\label{eqn:soliton2}
\end{align}
is given by
\begin{align}
	\mathscr L
	=-\i\Sigma_3\left[-\partial_x^2I_4+\begin{pmatrix}\omega I_2&O_2\\O_2&sI_2\end{pmatrix}-\begin{pmatrix}B_{11}(x)&B_{12}(x)\\B_{21}(x)&B_{22}(x)\end{pmatrix}\right]
	\label{eqn:lin_op}
\end{align}
where $I_n$ and $O_n$, respectively, represents the $n\times n$ identity and zero matrices,
 and
\begin{align*}
	&\sigma_1=\begin{pmatrix}0&1\\1&0\end{pmatrix},\quad
	\sigma_3=\begin{pmatrix}1&0\\0&-1\end{pmatrix},\quad
	\Sigma_3=\begin{pmatrix}\sigma_3&O_2\\O_2&\sigma_3\end{pmatrix},\\
	&B_{11}(x)=\partial_1^2F(U(x)^2,V(x)^2)U(x)^2(I_2+\sigma_1)+\partial_1F(U(x)^2,V(x)^2)I_2,\\
	&B_{12}(x)=B_{21}(x)=\partial_1\partial_2F(U(x)^2,V(x)^2)U(x)V(x)(I_2+\sigma_1),\\
	&B_{22}(x)=\partial_2^2F(U(x)^2,V(x)^2)V(x)^2(I_2+\sigma_1)+\partial_2F(U(x)^2,V(x)^2)I_2.
\end{align*}
See Section~4.1 for the derivation of $\mathscr L$.
We easily show that the essential spectrum of $\mathscr L$ is given by
\begin{align*}
	\sigma_{\mathrm{ess}}(\mathscr L)=\i(-\infty,-\min\{\omega,s\}]\cup\i[\min\{\omega,s\},\infty).
\end{align*}
Moreover, $\sigma(\mathscr L)$ is symmetric with respect to the real and imaginary axis: if $\lambda\in\sigma(\mathscr L)$, then $-\lambda,\pm\lambda^*\in\sigma(\mathscr L)$,
 where the superscript `$\ast$' represents the complex conjugate.
% (see a proof of Proposition~5.1.2 of \cite{KaPr13}).
%We make the following definitions.

\begin{dfn}
	The solitary wave \eqref{eqn:soliton2} in \eqref{eqn:CNLS} is \emph{spectrally stable} if the spectrum $\sigma(\mathscr L)$ of $\mathscr L$ is contained in the closed left half plane.
	Otherwise, it is \emph{spectrally unstable}.
\end{dfn}

For the fundamental and bifurcated solitary waves,
 we have the following theorems.

\begin{thm}\label{thm:fun_stab}
For any $\mu\in\Rset$, the fundamental solitary wave \eqref{eqn:fun_soliton} is orbitally and spectrally stable if $\partial_\omega\|U_0\|_{L^2}^2>0$, and it is orbitally and spectrally unstable if $\partial_\omega\|U_0\|_{L^2}^2<0$.
\end{thm}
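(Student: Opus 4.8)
The plan is to read off the spectral stability from the factorization of the Evans function \eqref{eqn:E_unperturbed} together with the structural information already assembled in parts (A), (B), (C). The linearized operator $\mathscr L_0$ is block diagonal, so the spectrum of $\mathscr L_0$ is the union of the spectra coming from the three subproblems \eqref{eqn:lin_eq_1}, \eqref{eqn:lin_eq_3}, and \eqref{eqn:lin_eq_2}, and eigenvalues of $\mathscr L_0$ correspond to zeros of $E_\mathrm A(\lambda)E_\mathrm B(\lambda)E_\mathrm C(\lambda)$ off the essential spectrum (the imaginary axis). First I would record that the continuous spectrum of $\mathscr L_0$ lies on $\i\Rset$, so spectral instability can only come from a zero of $E$ with $\Re\lambda\neq 0$; by the Hamiltonian symmetry of $\mathscr L_0$ (eigenvalues come in quadruples $\pm\lambda,\pm\bar\lambda$), it suffices to exclude, or produce, a zero in the open right half-plane.

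Next I would dispose of parts (B) and (C). Equation \eqref{eqn:lin_eq_3} is a scalar Schr\"odinger operator, so by Sturm--Liouville theory (already invoked) all its eigenvalues are simple and lie in $\i(-\infty,s)$, hence on the imaginary axis; the same holds for \eqref{eqn:lin_eq_2} after $\lambda\mapsto-\lambda$. Therefore $E_\mathrm B$ and $E_\mathrm C$ have no zeros with nonzero real part, and these two blocks contribute nothing to instability for any $\mu$. All the stability content is thus carried by part (A), i.e.\ by $E_\mathrm A(\lambda)$, which by the remark after \eqref{eqn:E1} is independent of $\mu$ and which, by the discussion around \eqref{eqn:lin_eq_1}, coincides with the linearization of the scalar NLS about $\e^{\i\omega t}U_0$.

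Now I would apply Proposition~\ref{prop:evp_A}: the eigenvalues of \eqref{eqn:lin_eq_1} are all real or purely imaginary, the zero eigenvalue has algebraic multiplicity exactly four when $\partial_\omega\|U_0\|_{L^2}^2\neq0$, and there is a pair of real eigenvalues $\pm\lambda_\ast$ with $\lambda_\ast>0$ precisely when $\partial_\omega\|U_0\|_{L^2}^2<0$, while there are no nonzero real eigenvalues when $\partial_\omega\|U_0\|_{L^2}^2>0$. In the case $\partial_\omega\|U_0\|_{L^2}^2>0$ this gives: no eigenvalue of $\mathscr L_0$ with $\Re\lambda\neq0$, hence spectral stability; orbital stability then follows from the Hamiltonian--Krein / Grillakis--Shatah--Strauss framework of Appendix~B, since the relevant Krein index vanishes (the kernel is generated exactly by the symmetries \eqref{eqn:transl}, \eqref{eqn:gauge} and the negative directions of the Hessian are matched by the positivity condition $\partial_\omega\|U_0\|_{L^2}^2>0$, together with the $v$-block being nonnegative up to the finitely many imaginary eigenvalues). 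In the case $\partial_\omega\|U_0\|_{L^2}^2<0$ the real eigenvalue $\lambda_\ast>0$ of \eqref{eqn:lin_eq_1} is a zero of $E_\mathrm A$, hence an unstable eigenvalue of $\mathscr L_0$, giving spectral instability, which in a Hamiltonian system implies orbital instability as well.

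The main obstacle is not the part-(A) analysis — that is quoted wholesale from the scalar NLS theory — but making the orbital-stability conclusion in the stable case airtight: one must check that the full kernel of $\mathscr L_0$ is exhausted by the group orbit (using \textbf{(A3)}, which forces the zero eigenvalue of the $v$-block \eqref{eqn:lin_eq_3} to be simple at $\mu=0$, but here $\mu$ is arbitrary, so a short separate argument via the Sturm--Liouville count is needed to control the $v$-block contribution to the Hamiltonian--Krein index) and that no purely imaginary eigenvalue of negative Krein signature is present. I would handle this by invoking the index identity of Appendix~B: the number of unstable eigenvalues plus twice the number of negative-signature imaginary eigenvalues equals $n(\mathscr L_+)+n(\mathscr L_-)-$(number of nonpositive directions constrained away by the symmetries), and under $\partial_\omega\|U_0\|_{L^2}^2>0$ this count is zero, closing the argument.
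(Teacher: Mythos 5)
Your strategy---read spectral (in)stability off the block structure of $\mathscr L_0$ together with Proposition~\ref{prop:evp_A} and the Sturm--Liouville facts for the $v$-block, then deduce orbital (in)stability from the Hamiltonian--Krein index---is exactly the paper's. The gap is in the index step. The identity $K_\mathrm{Ham}=\mathrm n(\mathcal L_+)+\mathrm n(\mathcal L_-)-\mathrm n(D)$ requires the explicit counts, and the one nontrivial ingredient you wave at, that the $v$-block operator $-\partial_x^2+s-\partial_2F(U_0^2,0;\mu)$ contributes nothing to $\mathrm n(\mathcal L_\pm)$, is not ``the $v$-block being nonnegative up to the finitely many imaginary eigenvalues.'' That sentence conflates the non-real spectrum of $\mathscr L_0$ with the spectrum of the self-adjoint operators $\mathcal L_\pm$, whose eigenvalues are real; the issue is the \emph{negative} eigenvalue count of the $v$-block Schr\"odinger operator, for which there is no Sturm--Liouville count at general $\mu$ because there is no distinguished eigenfunction there. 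The paper instead uses the coercivity bound $\langle(-\partial_x^2+s-\partial_2F(U_0^2,0;\mu))\psi,\psi\rangle_{L^2}\ge s-\sup_{0\le\zeta\le\zeta_0}|\partial_2F(\zeta^2,0;\mu)|>0$, and you need some such estimate. Your appeal to \textbf{(A3)} is also misplaced: it concerns $\mu=0$ only, whereas the claim is for all $\mu$. The correct ingredient is Remark~\ref{rmk:D0}: because $V\equiv 0$ we have $\varphi_3,\chi_3\equiv 0$, so $D$ is the $2\times2$ matrix \eqref{eqn:D2}, whose signature ties $K_\mathrm{Ham}$ to the sign of $\partial_\omega\|U_0\|_{L^2}^2$. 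Once $\mathrm n(\mathcal L_-)=0$, $\mathrm n(\mathcal L_+)=1$ (ground state $U_0$ and first excited state $U_0'$ of the $u$-blocks), and $\mathrm n(D)\in\{0,1\}$ are pinned down, the index is $0$ or $1$ and Theorems~\ref{thm:KHam} and~\ref{thm:orb} conclude; the paper reads off instability from $K_\mathrm{Ham}$ being odd rather than from your ``spectral implies orbital instability'' shortcut, which is true here but is not the mechanism used.
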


\begin{thm}\label{thm:bif_orb_stab}
	Let $\ell$ be the number of zeros of $V_1(x)$.
	For $\epsilon>0$ sufficiently small, the bifurcated solitary waves \eqref{eqn:bif_soliton} are orbitally and spectrally stable if $\partial_\omega\|U_0\|_{L^2}^2>0$ and $\ell=0$, and it is orbitally and spectrally unstable if $\partial_\omega\|U_0\|_{L^2}^2<0$.
\end{thm}

Proofs of Theorem~\ref{thm:fun_stab} and \ref{thm:bif_orb_stab}
 are given in Sections~5 and 6.1, respectively.
Note that Theorem~\ref{thm:bif_orb_stab} says nothing about the stability of the bifurcated solitary waves with $\ell>0$ when $\partial_\omega\|U_0\|_{L^2}^2>0$.
We assume that $\partial_\omega\|U_0\|_{L^2}^2>0$
 and discuss the stability of bifurcated solitary waves in the rest of this section.

Let $\mathscr L_0$ and $\mathscr L_\epsilon$, respectively,
 denote the linearized operator around the fundamental and bifurcated solitary waves.
To determine the spectral stability of the bifurcated solitary waves,
 we need to discuss how purely imaginary eigenvalues of $\mathscr L_0$ change
 when $\epsilon>0$.
Let $E(\lambda,\epsilon)$ denote the \emph{Evans function} for $\mathscr L_\epsilon$,
 which allows us to describe the changes of these eigenvalues for $\epsilon>0$.
See Section~4.3 for its definition.
Especially, a zero of $E(\lambda,\epsilon)$ is typically an eigenvalue 
 of $\mathscr L_\epsilon$ but is not necessarily so,
 and it is called a \emph{resonance pole} if not (e.g., Chapter~9 of \cite{KaPr13}).
We also see that the Evans function $E(\lambda,\epsilon)$ may not be analytic at
 %the branch point $\lambda=\lambda_\mathrm{br}\in\{\pm\i\omega,\pm\i s\}$.
 $\lambda\in\{\pm\i\omega,\pm\i s\}$,
 which is a branch point for a square root function
 in \eqref{eqn:mu_def1} or \eqref{eqn:mu_def2} below (see also Section~4.3)
 and at which the corresponding Jost solutions may lose their analyticity in $\lambda$.
Near %the branch points,
 $\lambda=\lambda_\mathrm{br}$ with $\lambda_\mathrm{br}\in\{\pm\i\omega,\pm\i s\}$,
 using the transformation $\gamma^2=\pm\i(\lambda-\lambda_\mathrm{br})$, we have
\begin{align*}
\widetilde E(\gamma,\epsilon)=E(\mp\i\gamma^2+\lambda_\mathrm{br},\epsilon)
%\label{eqn:tE}
\end{align*}
with $\gamma\approx 0$,
 where the upper or lower sign is taken simultaneously,
 and refer to $\widetilde E(\gamma,\epsilon)$ as the \emph{extended Evans function}.
See Sections~4.3 and 4.4 for more details.
%More general formulation and basic properties of (extended) Evans functions are presented in Sections~4.3 and 4.4.

Here we compute the Evans function $E(\lambda,0)$ for $\mathscr L_0$.
The eigenvalue problem $\mathscr L_0\psi=\lambda\psi$ is separated into three parts:
(A) the first and second components
\begin{align}
	\begin{aligned}
		&-p''+\omega p-\partial_1^2F(U_0(x)^2,0)U_0(x)^2(p+q)-\partial_1F(U_0(x)^2,0)p=\i\lambda p,\\
		&-q''+\omega q-\partial_1^2F(U_0(x)^2,0)U_0(x)^2(p+q)-\partial_1F(U_0(x)^2,0)q=-\i\lambda q;
	\end{aligned}
	\label{eqn:lin_eq_1}
\end{align}
(B) the fourth component
\begin{align}
	-p''+sp-\partial_2F(U_0(x)^2,0;\mu)p=-\i\lambda p;
	\label{eqn:lin_eq_3}
\end{align}
and (C) the third component
\begin{align}
	-p''+sp-\partial_2F(U_0(x)^2,0;\mu)p=\i\lambda p.
	\label{eqn:lin_eq_2}
\end{align}
Let $(p_j(x,\lambda),q_j(x,\lambda))$, $j=1,2,5,6$, denote the Jost solutions
 to \eqref{eqn:lin_eq_1} such that
\begin{equation}
	\begin{split}
		&\lim_{x\to-\infty}\left\lvert\e^{-\nu_1x}\!\begin{pmatrix}p_1(x,\lambda)\\q_1(x,\lambda)\\p_1'(x,\lambda)\\q_1'(x,\lambda)\end{pmatrix}-\begin{pmatrix}1\\0\\\nu_1\\0\end{pmatrix}\right\rvert
		=\lim_{x\to-\infty}\left\lvert\e^{-\nu_2x}\!\begin{pmatrix}p_2(x,\lambda)\\q_2(x,\lambda)\\p_2'(x,\lambda)\\q_2'(x,\lambda)\end{pmatrix}-\begin{pmatrix}0\\1\\0\\\nu_2\end{pmatrix}\right\rvert\\
		&=\lim_{x\to+\infty}\left\lvert\e^{\nu_1x}\!\begin{pmatrix}p_5(x,\lambda)\\q_5(x,\lambda)\\p_5'(x,\lambda)\\q_5'(x,\lambda)\end{pmatrix}-\begin{pmatrix}1\\0\\-\nu_1\\0\end{pmatrix}\right\rvert
		=\lim_{x\to+\infty}\left\lvert\e^{\nu_2x}\!\begin{pmatrix}p_6(x,\lambda)\\q_6(x,\lambda)\\p_6'(x,\lambda)\\q_6'(x,\lambda)\end{pmatrix}-\begin{pmatrix}0\\1\\0\\-\nu_2\end{pmatrix}\right\rvert=0,
	\end{split}
	\label{eqn:caseA_asym}
\end{equation}
where
\begin{align}
\nu_1(\lambda)=\sqrt{\omega-\i\lambda},\quad
\nu_2(\lambda)=\sqrt{\omega+\i\lambda}.
\label{eqn:mu_def1}
\end{align}
Actually, by a fundamental result on linear differential equations
 (e.g., Section~3.8 of \cite{CoLe55}),
 these solutions exist since $(p,q)=(e^{\pm\nu_1 x},0)$ and $(0,e^{\pm\nu_2 x})$
 are solutions to
\[
-p''+\omega p=\i\lambda p,\quad
-q''+\omega q=-\i\lambda q,
\]
to which \eqref{eqn:lin_eq_1} converges as $x\to\pm\infty$.
Moreover, one of $(p_1,q_1)$ and $(p_2,q_2)$ (resp.\ $(p_5,q_5)$ and $(p_6,q_6)$)
 is uniquely determined but the other is not,
 depending on which has a bigger real part, $\nu_1$ or $\nu_2$.
On the other hand, let $p_4(x,\lambda)$ and $p_8(x,\lambda)$
 (resp.\ $p_3(x,\lambda)$ and $p_7(x,\lambda)$) denote the Jost solutions
 to \eqref{eqn:lin_eq_3} (resp.\ to \eqref{eqn:lin_eq_2}) such that
\begin{align}
		\lim_{x\to-\infty}\left\lvert\e^{-\nu_4x}\!\begin{pmatrix}p_4(x,\lambda)\\p_4'(x,\lambda)\end{pmatrix}-\begin{pmatrix}1\\\nu_4\end{pmatrix}\right\rvert
		=\lim_{x\to+\infty}\left\lvert\e^{\nu_4x}\!\begin{pmatrix}p_8(x,\lambda)\\p_8'(x,\lambda)\end{pmatrix}-\begin{pmatrix}1\\-\nu_4\end{pmatrix}\right\rvert=0
		\label{eqn:caseC_asym}
\end{align}
(resp.
\begin{align}
		\lim_{x\to-\infty}\left\lvert\e^{-\nu_3x}\!\begin{pmatrix}p_3(x,\lambda)\\p_3'(x,\lambda)\end{pmatrix}-\begin{pmatrix}1\\\nu_3\end{pmatrix}\right\rvert
		=\lim_{x\to+\infty}\left\lvert\e^{\nu_3x}\!\begin{pmatrix}p_7(x,\lambda)\\p_7'(x,\lambda)\end{pmatrix}-\begin{pmatrix}1\\-\nu_3\end{pmatrix}\right\rvert=0
		\label{eqn:caseB_asym}
\end{align}
) and
\begin{align*}
	p_4(x,\lambda)=p_8(-x,\lambda),\quad
	p_3(x,\lambda)=p_4(x,-\lambda),\quad
	p_7(x,\lambda)=p_8(x,-\lambda),
\end{align*}
where
\begin{align}
\nu_3(\lambda)=\sqrt{s-\i\lambda},\quad
\nu_4(\lambda)=\sqrt{s+\i\lambda}.
	\label{eqn:mu_def2}
\end{align}
These solutions uniquely exist since $p=e^{\pm\nu_4 x}$ (resp.\ $p=e^{\pm\nu_3 x}$)
 are solutions to
\[
-p''+s p=\i\lambda p\quad
(\mbox{resp.\ }-p''+s p=\i\lambda q),
\]
to which \eqref{eqn:lin_eq_3} (resp.\ \eqref{eqn:lin_eq_2}) converges as $x\to\pm\infty$.
%We remark that $p_j(x,\lambda)$, $j=3,4,7,8$, are uniquely determined
% but $(p_j(x,\lambda),q_j(x,\lambda))$, $j=1,2,5,6$, are not unique in general.
See Section~5 for more details on the Jost solutions.
We choose the branch cut for $\nu_j(\lambda)$, $j=1,\ldots,4$, such that
\[
-\pi<\arg(\lambda\pm\i\omega),\arg(\lambda\pm\i s)<\pi,
\]
i.e.,
\begin{gather}
	-\frac{3\pi}{4}<\arg\nu_1(\lambda),\arg\nu_3(\lambda)<\frac{\pi}{4},\quad
	-\frac{\pi}{4}<\arg\nu_2(\lambda),\arg\nu_4(\lambda)<\frac{3\pi}{4}.
	\label{eqn:br_cut}
\end{gather}
We refer to $\lambda=\pm\i\omega$ and $\pm\i s$ as the \emph{branch points}.

We now obtain the Evans function for $\mathscr L_0$ as
\begin{align}
	E(\lambda,0)=E_\mathrm{A}(\lambda)E_\mathrm{B}(\lambda)E_\mathrm{C}(\lambda),
	\label{eqn:E_unperturbed}
\end{align}
where
\begin{align}
	&E_\mathrm{A}(\lambda)
	\defeq\det\!\begin{pmatrix}
		p_1&p_2&p_5&p_6\\
		q_1&q_2&q_5&q_6\\
		p_1'&p_2'&p_5'&p_6'\\
		q_1'&q_2'&q_5'&q_6'
	\end{pmatrix}\!(0,\lambda),
	\label{eqn:E1}\\
	&E_\mathrm{B}(\lambda)
	\defeq\det\!\begin{pmatrix}
		p_4&p_8\\
		p_4'&p_8'
	\end{pmatrix}\!(0,\lambda),
	\label{eqn:E3}
\end{align}
and
\begin{align}
	E_\mathrm{C}(\lambda)
	\defeq\det\!\begin{pmatrix}
		p_3&p_7\\
		p_3'&p_7'
	\end{pmatrix}\!(0,\lambda).
	\label{eqn:E2}
\end{align}
Near the branch points $\lambda=\lambda_\mathrm{br}\in\{\pm\i\omega,\pm\i s\}$,
 by $\gamma^2=\pm\i(\lambda-\lambda_\mathrm{br})$,
 we also modify \eqref{eqn:E_unperturbed} to
\begin{align*}
\widetilde E(\gamma,0)=\widetilde E_\mathrm{A}(\gamma)
 \widetilde E_\mathrm{B}(\gamma)\widetilde E_\mathrm{C}(\gamma)
%\label{eqn:tE}
\end{align*}
with $\gamma\approx 0$,
 where $\widetilde E(\gamma,0)$ is the extended Evans function
 for $\mathscr L_0$ and
\[
\widetilde E_\mathrm{A,B,C}(\gamma)
 =E_\mathrm{A,B,C}(\mp\i\gamma^2+\lambda_\mathrm{br}).
\]
Here the upper or lower sign has been taken simultaneously.

Let $\lambda_0\in\i\Rset\setminus\{0\}$ be a zero of $E(\lambda,0)$.
Since $p_3(x,\lambda)=p_4(x,-\lambda)$, $p_7(x,\lambda)=p_8(x,-\lambda)$,
 we have $E_\mathrm{C}(\lambda)=E_\mathrm{B}(-\lambda)$.
From the Sturm-Liouville theory (see, e.g., Section~2.3.2 of \cite{KaPr13})
 we see that \eqref{eqn:lin_eq_3} has only a finite number of eigenvalues,
 all of which are simple and belong to $\i(-\infty,s)$.
So we only have to consider zeros on $\i(-\infty,s)$ for $E_\mathrm{B}(\lambda)$
 since eigenvalues of $\mathscr L_\epsilon$ related to them
 remain near the segment for $\epsilon>0$ sufficiently small.
Under this observation, %restriction
 we see that the following six cases occur:
\begin{enumerate}
	\item[(I)] $\lambda_0\in\i(-\min\{\omega,s\},\min\{\omega,s\})$;
	\item[(II)] $\lambda_0=\pm\i\min\{\omega,s\}$;
	\item[(III)] $\lambda_0$ is a zero of $E_\mathrm{B}(\lambda)$
	and belongs to $\i(\omega,s)$ when $\omega<s$;
	\item[(IV)] $\lambda_0$ is a zero of $E_\mathrm{B}(\lambda)$ and belongs to $\i(-\infty,-\min\{\omega,s\})$;
	\item[(V)] $\lambda_0=\i s$ is a zero of $E_\mathrm{B}(\lambda)$ when $\omega<s$;
	\item[(VI)] $\lambda_0=\pm\i\omega$ is a zero of $E_\mathrm{A}(\lambda)$ when $s<\omega$.
\end{enumerate}
In cases (I) and (II)
 $\lambda_0$ is a zero of either $E_\mathrm{A}(\lambda)$, $E_\mathrm{B}(\lambda)$, or $E_\mathrm{C}(\lambda)$.
We can restate cases~(III)-(V) with $E_\mathrm{C}(\lambda)$ as
\begin{enumerate}
	\item[(III')] $\lambda_0$ is a zero of $E_\mathrm{C}(\lambda)$ and belongs to $\i(-s,-\omega)$ when $\omega<s$;
	\item[(IV')] $\lambda_0$ is a zero of $E_\mathrm{C}(\lambda)$ and belongs to $\i(\min\{\omega,s\},\infty)$;
	\item[(V')] $\lambda_0=-\i s$ is a zero of $E_\mathrm{C}(\lambda)$ when $\omega<s$.
\end{enumerate}
Obviously, $\lambda_0$ is an isolated eigenvalue of $\mathscr L_0$ in case (I),
 while it is an endpoint of $\sigma_\mathrm{ess}(\mathscr L_0)$ in case (II).
For the other cases $\lambda_0$ is not an endpoint of $\sigma_\mathrm{ess}(\mathscr L_0)$ and it is an embedded eigenvalue or a resonance pole of $\mathscr L_0$.
%See Definition~\ref{def:resonance} for the definition of resonance poles.
%In Cases (III) and (IV), $\lambda_0$ is an embedded eigenvalue with positive and negative Krein signatures, respectively.
Actually, $\lambda_0$ is a resonance pole in cases (V) and (VI).
%Note that $-\lambda_0$ is a zero of $E_\mathrm{C}(\lambda)$ estimated at $\mu=0$ in cases (III), (IV), and (V).
In the following, we state our results in these six cases separately.
Proofs of the results are provided in Section~6
 except for one of Proposition~\ref{prop:2a}.
For simplicity,
 we essentially assume that $\lambda_0$ is a simple zero of $E(\lambda,0)$
 in cases (I), (III), and (IV).
In cases (II), (V), and (VI),
 we treat the extended Evans function $\widetilde E(\gamma,0)$,
 and assume that its zero $\gamma=0$ is simple.
If these assumptions do not hold,
 then more complicated analyses are required
 to determine the fate of the eigenvalue $\lambda_0$ when $\epsilon>0$.
Note that the zero $\lambda=0$ of $E(\lambda,0)$ is not simple
 by the symmetry \eqref{eqn:transl} and \eqref{eqn:gauge}
 but the dimension of the generalized kernel of $\mathscr{L}$ does not change
 after the pitchfork bifurcations, as stated at the end of Section~\ref{sec:lin_op}.
Hence, the zero $\lambda=0$ does not cause the spectral instability
 of bifurcated solitary waves.

%\subsection*{Case (I)}

We begin with case (I).
\begin{prop}
\label{prop:2a}
Suppose that the hypotheses of Theorem~\ref{thm:cnls_bif} hold
 and $\lambda=\lambda_0\in\i(-\min\{\omega,s\},\min\{\omega,s\})$
 is a simple zero of $E(\lambda,0)$. %at $\mu=0$.
Then there exists an eigenvalue of $\mathscr L_\epsilon$ close to $\lambda=\lambda_0$
 on the imaginary axis for $\epsilon>0$ sufficiently small.
\end{prop}

\begin{proof}
The statement of this proposition
 immediately follows from the symmetry of $\sigma(\mathscr L_\epsilon)$.
\end{proof}

For case (II) we have the following (see Section~6.2 for its proof).
%\subsection*{Case (II)}
%
%By the symmetry of $\sigma(\mathscr L_\epsilon)$ it is enough to treat the case of $\lambda_0=\i\min\{\omega,s\}$.
%We obtain the following result.

\begin{thm}\label{thm:main_0}
	Suppose that the hypotheses of Theorem~\ref{thm:cnls_bif} hold,
	$\lambda_0=\i\min\{\omega,s\}$ (resp.\ $\lambda_0=-\i\min\{\omega,s\}$)
	is a zero of $E(\lambda,0)$,
         and $\gamma=0$ is a simple zero of $\widetilde E(\gamma,0)$
         with $\lambda_\mathrm{br}=\lambda_0$. % at $\mu=0$.	
	Then for $\epsilon>0$ sufficiently small, one of the following three cases occurs:
	\begin{enumerate}
		\setlength{\leftskip}{-1.6em}
		\item $\mathscr L_\epsilon$ has a unique simple eigenvalue
		in $\lambda_0+\i(-\infty,0)$ (resp.\ $\lambda_0+\i(0,\infty)$)
		and no resonance pole near $\lambda=\lambda_0$;
		\item $\mathscr L_\epsilon$
		has a unique resonance pole
		in $\lambda_0+\i(-\infty,0)$ (resp.\ $\lambda_0+\i(0,\infty)$)
		and no eigenvalue near $\lambda=\lambda_0$;
		\item $\lambda=\lambda_0$ remains as an eigenvalue or a resonance pole
		of $\mathscr L_\epsilon$.
	\end{enumerate}
\end{thm}

%\subsection*{Case (III)}

We turn to case (III) and assume that $\omega<s$ and $E_\mathrm{B}(\lambda_0)=0$
 for $\lambda_0\in\i(\omega,s)$.
Define
\begin{align}
&
\Delta_j(\lambda)
\defeq p_j(0,\lambda)p_j'(0,\lambda)+q_j(0,\lambda)q_j'(0,\lambda),
\label{eqn:Deltaj}\\
&
\mathcal I_j(\lambda)
\defeq\int_{-\infty}^\infty \partial_1\partial_2F(U_0(x)^2,0;0)U_0(x)V_1(x)\bigl(p_j(x,\lambda)+q_j(x,\lambda)\bigr)p_4(x,\lambda)\,\d x
\label{eqn:Ij}
\end{align}
for $j=1,2$.
%\begin{align}
%a(x)\defeq\partial_1\partial_2F(U_0(x)^2,0;0)U_0(x)V_1(x).
%\label{eqn:a_def}
%\end{align}
We note that $(p_1,q_1)$ is uniquely determined when $\lambda_0\in\i(\omega,s)$
 and that $(p_2,q_2)$ is also uniquely determined by Lemma~\ref{lem:Jost_normalize} below
 if $\Delta_1(\lambda_0)\neq 0$.
We have the following result (see Section~6.3 for its proof).

\begin{thm}\label{thm:main_1}
	Let $\omega<s$.
%	Suppose that {\rm\textbf{(A1)}}-{\rm\textbf{(A3)}} and $\bar a_2,\bar b_2\ne0$ hold.
	Suppose that the hypotheses of Theorem~\ref{thm:cnls_bif} hold,
	$\lambda_0\in\i(\omega,s)$ is a simple zero of $E_\mathrm{B}(\lambda)$
	with $E_\mathrm{A}(\lambda_0),E_\mathrm{C}(\lambda_0)\ne0$,
	and $\Delta_1(\lambda_0),\mathcal I_2(\lambda_0)\neq 0$. %at $\mu=0$.
	Then for $\epsilon>0$ sufficiently small, $\mathscr L_\epsilon$ has no eigenvalue
	in a neighborhood of $\lambda=\lambda_0$.
\end{thm}

\begin{rmk}
	In Theorem~\ref{thm:main_1}, the neighborhood of $\lambda=\lambda_0$ can be taken independently of $\epsilon$.
	This also holds in Theorems \ref{thm:main_3} and \ref{thm:main_4} below.
\end{rmk}

%\subsection*{Case (IV)}

We next consider case (IV) partially and assume that $E_\mathrm{B}(\lambda_0)=0$
 for $\lambda_0\in\i(-\infty,-\omega)$.
So the case of $s<\omega$ and $\lambda_0\in\i[-\omega,-s)$ is excluded.
%Let
%\begin{align}
%&
%\mathcal I_-(\lambda)\notag\\
%&
%\defeq\int_{-\infty}^\infty F(U_0(x)^2,0;0)U_0(x)V_1(x)
% \bigl(p_1(x,\lambda)+q_1(x,\lambda)\bigr)p_4(x,\lambda)\,\d x.
%\label{eqn:I_minus_def}
%\end{align}
%We also make the following two assumptions:
%
%\begin{itemize}
%\setlength{\leftskip}{0.3cm}
%\item[$\textbf{(A4-2)}_{\lambda_0}$]
%	$p_2(0,\lambda_0)p_2'(0,\lambda_0)+q_2(0,\lambda_0)q_2'(0,\lambda_0)\ne0$;
%\item[$\textbf{(A5-2)}_{\lambda_0}$]
%	$\mathcal I_-(\lambda_0)\ne0$.
%\end{itemize}
As in case (III), $(p_2,q_2)$ is uniquely determined
 when $\lambda_0\in\i(-\infty,-\min\{\omega,s\})$
 and that $(p_1,q_1)$ is also uniquely determined by Lemma~\ref{lem:Jost_normalize} below
 if $\Delta_2(\lambda_0)\neq 0$.
We have the following result (see Section~6.4 for its proof).

\begin{thm}\label{thm:main_2}
%	Suppose that {\rm\textbf{(A1)}}-{\rm\textbf{(A3)}} and $\bar a_2,\bar b_2\ne0$ hold.
	Suppose that the hypotheses of Theorem~\ref{thm:cnls_bif} holds
	and $\lambda_0\in\i(-\infty,-\omega)$ is a simple zero of $E_\mathrm{B}(\lambda)$
	with $E_\mathrm{A}(\lambda_0),E_\mathrm{C}(\lambda_0)\ne0$,
	and $\Delta_2(\lambda_0),\mathcal I_1(\lambda_0)\neq 0$. %at $\mu=0$.
	Then for $\epsilon>0$ sufficiently small,
	$\mathscr L_\epsilon$ has an eigenvalue with a positive real part
	in an $O(\epsilon^2)$-neighborhood of $\lambda=\lambda_0$.
\end{thm}

%The remaining case is degenerated as stated in the following remark.
\begin{rmk}\label{rmk_main2_deg1}
	Assume that $s<\omega$ and $\lambda_0\in\i[-\omega,-s)$.
%	Let $s<\omega$.
%	Theorem \ref{thm:main_2} does not apply when $\lambda_0\in\i[-\omega,-s)$.
	Then we have $\Re\lambda(\epsilon)=O(\epsilon^3)$,
	as shown in Remark~\ref{rmk:main2_deg2} below,
	where $\lambda(\epsilon)$ is the eigenvalue of $\mathscr L_\epsilon$ near $\lambda_0$.
	Hence, more tremendous treatments
	than those in the proof of Theorem~\ref{thm:main_2} are required
	to estimate an eigenvalue of $\mathscr L_\epsilon$ near $\lambda=\lambda_0$
	for $\epsilon>0$ sufficiently small.
	This situation does not occur for the cubic nonlinearity \eqref{eqn:cubic}
	even if $s<\omega$ as seen in Section~7.
\end{rmk}

%\subsection*{Case (V)}

For case~(V)
 we have the following (see Section~6.5 for its proof).

\begin{thm}\label{thm:main_3}
	Let $\omega<s$.
%	Suppose that {\rm\textbf{(A1)}}-{\rm\textbf{(A3)}} and $\bar a_2,\bar b_2\ne0$ hold.
	Suppose that the hypotheses of Theorem~\ref{thm:cnls_bif} hold,
	$E_\mathrm{B}(\i s)=0$,
	$\gamma=0$ is a simple zero of $\widetilde E_\mathrm{B}(\gamma)$
	with $\lambda_\mathrm{br}=\i s$ with $E_\mathrm{A}(\i s),E_\mathrm{C}(\i s)\ne0$
	and $\Delta_1(\i s),\mathcal I_2(\i s)\neq 0$. %at $\mu=0$.
	Then for $\epsilon>0$ sufficiently small,
	$\mathscr L_\epsilon$ has no eigenvalue in a neighborhood of $\lambda=\i s$
	and no resonance pole at $\lambda=\i s$.
\end{thm}

%\subsection*{Case (VI)}

Finally we consider case~(VI) and assume that $s<\omega$.
We make the following assumption.

\begin{itemize}
\item[\textbf{(A4)}]
	There exists a bounded solution $(\hat p(x),\hat q(x))$ to \eqref{eqn:lin_eq_1}
	with $\lambda=\i\omega$ such that
	\begin{align*}
		\lim_{x\to\pm\infty}\hat p(x)=0,\quad
		\lim_{x\to-\infty}\hat q(x)\ne0,\quad
		\lim_{x\to+\infty}\hat q(x)\ne0.
	\end{align*}
\end{itemize}

An equivalent statement to \textbf{(A4)} is given in Remark~\ref{rmk:A6_equiv} below.
%We see that $\textbf{(A4-1)}_{\i\omega}$ is well-defined as in Case (III).
We have the following result (see Section~6.6 for its proof).

\begin{thm}\label{thm:main_4}
	Let $s<\omega$.
%	Suppose that {\rm\textbf{(A1)}}-{\rm\textbf{(A3)}} and $\bar a_2,\bar b_2\ne0$ hold.
	Suppose that the hypotheses of Theorem~\ref{thm:cnls_bif} hold,
	$E_\mathrm{A}(\i\omega)=0$,
	$\gamma=0$ is a simple zero of $\widetilde E_\mathrm{A}(\gamma)$
	with $\lambda_\mathrm{br}=i\omega$
	and $E_\mathrm{B}(\i\omega),E_\mathrm{C}(\i\omega)\ne0$,
	and $\Delta_1(\i\omega)\neq 0$. % at $\mu=0$.
	Then, under assumption~{\rm\textbf{(A4)}},
	$\mathscr L_\epsilon$ has no eigenvalue in a neighborhood of $\lambda=\pm\i\omega$
	and no resonance pole at $\lambda=\pm\i\omega$ for $\epsilon>0$ sufficiently small.
\end{thm}

\begin{rmk}
	Let $\lambda_0\in\i\Rset$ be a simple eigenvalue of $\mathscr L_0$ which is a zero of $E_\mathrm{B}(\lambda)$.
	We see that $\lambda_0$ has a positive (resp.\ negative) Krein signature if it belongs to the upper (resp.\ lower) half plane (see Remark~\ref{rmk:krein_sig} below).
	See Section~7.1 of \cite{KaPr13} or Section~\ref{sec:KHam} for the definition of the Krein signature.
	Note that a similar statement is also found in Section~3.1 of \cite{PeYa05}.
	It follows from Theorems \ref{thm:main_1}, \ref{thm:main_3}, and \ref{thm:main_4} that embedded eigenvalues of $\mathscr L_0$ with positive Krein signatures as well as resonance poles at the endpoint of $\sigma_\mathrm{ess}(\mathscr L_0)$ disappear when $\epsilon>0$,
	provided that $\Delta_1(\lambda_0),\mathcal I_2(\lambda_0)\neq 0$.
	On the other hand, Theorem \ref{thm:main_2} states that
	embedded eigenvalues of $\mathscr L_0$ with negative Krein signatures
	have nonzero real parts when $\epsilon>0$,
	provided that $\Delta_2(\lambda_0),\mathcal I_1(\lambda_0)\neq 0$.
	These claims completely agree with the generic results obtained by Pelinovsky and Yang \cite{PeYa05} and Cuccagna et al. \cite{CPV05} stated in Section~1.
\end{rmk}

% **********************************************************
% Section 3
% **********************************************************

\section{Bifurcations and Approximate Homoclinic Solutions}

In this section, we review  the Melnikov analysis of \cite{BlYa12}
 in a sufficient setting for our purpose
 and prove Theorem~\ref{thm:cnls_bif} by applying it to \eqref{eqn:UV}.

\subsection{Melnikov Analysis}

We begin with the Melnikov analysis of \cite{BlYa12}
 on pitchfork bifurcations of homoclinic orbits
 in a class of two-degree-of-freedom Hamiltonian systems.
Our setting is simpler but sufficient for the analysis of \eqref{eqn:UV} in Section~3.2.

Consider two-degree-of-freedom Hamiltonian systems of the form
\begin{equation}
\xi'=J\D_\xi H(\xi,\eta;\mu)^\T,\quad
\eta'=J\D_\eta H(\xi,\eta;\mu)^\T,\quad
(\xi,\eta)\in\Rset^2\times\Rset^2,
\label{eqn:Hsys}
\end{equation}
where the Hamiltonian function $H:\Rset^2\times\Rset^2\times\Rset\to\Rset$ is $C^{r+1}$ ($r\ge 4$),
 the superscript `{\scriptsize T}' represents the transpose operator
 and $J$ is the $2\times 2$ symplectic matrix given by
\[
J=
\begin{pmatrix}
0 & 1\\
-1 & 0
\end{pmatrix}.
\]
We make the following assumptions.

\begin{itemize}
\setlength{\leftskip}{-0.8em}
\item[\textbf{(M1)}]
$\D_\xi H(0,0;\mu)=\D_\eta H(0,0;\mu)=0$ for any $\mu\in\Rset$.
\item[\textbf{(M2)}]
$\D_\eta H(\xi,0;0)=0$ for any $\xi\in\Rset^2$.
\end{itemize}

\textbf{(M1)} means that the origin $(\xi,\eta)=(0,0)\,(=O)$
 is an equilibrium of \eqref{eqn:Hsys} for any $\mu\in\Rset$,
 and \textbf{(M2)} means that the $\xi$-plane, $\{(\xi,\eta)\mid\eta=0\}$, is invariant
 under the flow of \eqref{eqn:Hsys} at $\mu=0$.
In particular, the system of \eqref{eqn:Hsys} restricted on the $\xi$-plane at $\mu=0$,
\begin{equation}
\xi'=J\D_\xi H(\xi,0;0)^\T,
\label{eqn:xi}
\end{equation}
has an equilibrium at $\xi=0$.
Moreover, $\D_\xi^j\D_\eta H(\xi,0;0)=0$, $j=1,2,\ldots$, for any $\xi\in\Rset^2$.

\begin{itemize}
\setlength{\leftskip}{-0.8em}
\item[\textbf{(M3)}]
At $\mu=0$ the equilibrium $\xi=0$ in \eqref{eqn:xi} is a hyperbolic saddle, i.e.,
 $J\D_\xi^2 H(0,0;0)$ has a pair of positive and negative real eigenvalues
 $\pm\lambda_1$ ($\lambda_1>0$), and has a homoclinic orbit $\xi^\h(x)$.
\item[\textbf{(M4)}]
$J\D_\eta^2 H(0,0;0)$ has a pair of positive and negative real eigenvalues
 $\pm\lambda_2$ ($\lambda_2>0$).
\end{itemize}

\textbf{(M3)} and \textbf{(M4)} mean that the equilibrium $O$ is a hyperbolic saddle in \eqref{eqn:Hsys}
 and has a homoclinic orbit $(\xi,\eta)=(\xi^\h(x),0)$.
The hyperbolic saddle $O$ has two-dimensional stable and unstable manifolds
 which intersect along the homoclinic orbit $(\xi^\h(x),0)$.

The \textit{variational equation} (VE) of \eqref{eqn:Hsys}
 around the homoclinic orbit $(\xi^\h(x),0)$ at $\mu=0$ is given by
\begin{equation}
\delta\xi'=J\D_\xi^2 H(\xi^\h(x),0;0)\delta\xi,\quad
\delta\eta'=J\D_\eta^2 H(\xi^\h(x),0;0)\delta\eta.
\label{eqn:ve}
\end{equation}
The first equation of \eqref{eqn:ve} has a nonzero bounded solution $\delta\xi=\xi^{\h\prime}(x)$
 satisfying $\lim_{x\to\pm\infty}\delta\xi(x)=0$, where $\xi^{\h\prime}(x)=\frac{\d}{\d x}\xi^\h(x)$.
We assume the following.
\begin{itemize}
\setlength{\leftskip}{-0.8em}
\item[\textbf{(M5)}]
The second equation of \eqref{eqn:ve} also has a nonzero bounded solution $\delta\eta=\phi(x)$
 such that $\lim_{x\to\pm\infty}\phi(x)=0$.
\end{itemize}
When \eqref{eqn:Hsys} is analytic, it was shown in \cite{BlYa12} that 
 the VE \eqref{eqn:ve} is integrable in the meaning of differential Galois theory \cite{PuSi03} 
 if \textbf{(M5)} holds.

The adjoint equation for \eqref{eqn:ve},
 which we call the \textit{adjoint variational equation} (AVE), is given by
\begin{equation}
\delta\hat{\xi}'=\D_\xi^2 H(\xi^\h(x),0;0)J\delta\hat{\xi},\quad
\delta\hat{\eta}'=\D_\eta^2 H(\xi^\h(x),0;0)J\delta\hat{\eta}
\label{eqn:ave}
\end{equation}
since $\D_\xi^2 H(\xi^\h(x),0;0),\D_\eta^2 H(\xi^\h(x),0;0)$ are symmetric matrices
 and $J^\T=-J$.
Hence, $\delta\hat{\xi}=-J\xi^{\h\prime}(x)=\D_\xi H(\xi^\h(x),0)$ and $\delta\hat{\eta}=-J\phi(x)$
 are, respectively, nonzero bounded solutions of the first and second equations of \eqref{eqn:ave}
 since $J^2=-I_2$ and $J^{-1}=-J$, where $I_n$ is the $n\times n$ identity matrix.

We also assume that the Hamiltonian system \eqref{eqn:Hsys} is $\Zset_2$-equivariant.
\begin{itemize}
\setlength{\leftskip}{-0.8em}
\item[\textbf{(M6)}]
There exist two $2\times 2$ matrices $S_1,S_2$, such that either $S_1\ne I_2$ or $S_2\ne I_2$, $S_j^2=I_2$, $j=1,2$, and
\begin{align*}
&
S_1J\D_\xi H(\xi,\eta;\mu)^\T=J\D_\xi H(S_1\xi,S_2\eta;\mu)^\T,\\
&
S_2J\D_\eta H(\xi,\eta;\mu)^\T=J\D_\eta H(S_1\xi,S_2\eta;\mu)^\T.
\end{align*}
\end{itemize}
It follows from \textbf{(M6)} that
 if $(\xi,\eta)=(\bar{\xi}(x),\bar{\eta}(x))$ is a solution to \eqref{eqn:Hsys},
 then so is $(\xi,\eta)=(S_1\bar{\xi}(x),S_2\bar{\eta}(x))$.
%We say that the pair $(\bar{\xi}(x),\bar{\eta}(x))$ and $(S_1\bar{\xi}(x),S_2\bar{\eta}(x))$
% are $S$-conjugate if $(\bar{\xi}(x),\bar{\eta}(x))\ne(S_1\bar{\xi}(x),S_2\bar{\eta}(x))$,
% where $S$ is a $4\times 4$ matrix given by
%\[
%S=\begin{pmatrix}
%S_1  & 0\\
%0  & S_2
%\end{pmatrix}.
%\]
See, e.g., Section~7.4 of \cite{Ku04} for more details on $\Zset_2$-equivariant systems.
In particular, the space $\Rset^4$ can be decomposed into a direct sum as
\[
\Rset^4=X^+\oplus X^-,
\]
where $X^\pm=\{(\xi,\eta)\in\Rset^4\mid(S_1\xi,S_2\eta)=\pm(\xi,\eta)\}$.
Under \textbf{(M6)}, if $(\xi^\h(x),0)\in X^+$ for any $x\in\Rset$,
 then the VE \eqref{eqn:ve} and AVE \eqref{eqn:ave} are also $\Zset_2$-equivariant since
\begin{align*}
&
S_1J\D_\xi^2 H(\xi,\eta;\mu)=J\D_\xi^2 H(S_1\xi,S_2\eta;\mu)S_1,\\
&
S_2J\D_\eta^2 H(\xi,\eta;\mu)=J\D_\eta^2 H(S_1\xi,S_2\eta;\mu)S_2
\end{align*}
%in general.
for any $\xi,\eta\in\Rset^2$.
Finally, we assume the following.
\begin{itemize}
\setlength{\leftskip}{-0.8em}
\item[\textbf{(M7)}]
For any $x\in\Rset$, $(\xi^\h(x),0),(J\xi^{\h\prime}(x),0)\in X^+$ and $(0,\phi(x)),(0,J\phi(x))\in X^-$.
\end{itemize}

Let
\[
	p(\xi,\eta)=J\D_\xi H(\xi,\eta;0)^\T,\quad
	q(\xi,\eta)=J\D_\eta H(\xi,\eta;0)^\T,
\]
and let $p_j(\xi,\eta)$ and $q_j(\xi,\eta)$ be, respectively, the $j$-th order terms of the Taylor expansions
 of $p(\xi,\eta)$ and $q(\xi,\eta)$ around $\eta=0$ with respect to $\eta$, i.e., $p_j(\xi,\eta),q_j(\xi,\eta)=O(|\eta|^j)$, $j=0,1,2,3$, and
\begin{align*}
p(\xi,\eta)=& p_0(\xi,\eta)+p_1(\xi,\eta)+p_2(\xi,\eta)+p_3(\xi,\eta)+O(|\eta|^4),\\
q(\xi,\eta)=& q_0(\xi,\eta)+q_1(\xi,\eta)+q_2(\xi,\eta)+q_3(\xi,\eta)+O(|\eta|^4).
\end{align*}
Note that $q_0(\xi,\eta),p_1(\xi,\eta)=0$ by \textbf{(M2)}.
Let $\xi=\tilde{\xi}^\mu(x)$ and $\tilde{\xi}^\alpha(x)$, respectively, denote nonzero bounded solutions to
\begin{equation}
	\xi'=J\D_\xi^2H(\xi^\h(x),0;0)\xi+J\D_\mu\D_\xi H(\xi^\h(x),0;0)^\T
	\label{eqn:ximu}
\end{equation}
and
\begin{equation}
	\xi'=J\D_\xi^2H(\xi^\h(x),0;0)\xi+p_2(\xi^\h(x),\phi(x))
	\label{eqn:xia}
\end{equation}
with $\langle\D_\xi H(\xi^\h(0),0;0)^\T,\xi(0)\rangle=0$,
 where $\langle\xi,\eta\rangle$ represents the inner product of $\xi,\eta\in\Rset^2$.
Define the definite integrals
\begin{equation}
\begin{split}
	&\bar{a}_2=-\int_{-\infty}^\infty\langle J\phi(x),J\D_\mu\D_\eta^2 H(\xi^\h(x),0;0)\phi(x)
	 +\tilde{q}_1(x,\tilde{\xi}^\mu(x),\phi(x))\rangle\d x,\\
	&\bar{b}_2=-\int_{-\infty}^\infty\langle J\phi(x),q_3(\xi^\h(x),\phi(x))
	 +\tilde{q}_1(x,\tilde{\xi}^\alpha(x),\phi(x))\rangle\d x,
\end{split}
\label{eqn:ab2}
\end{equation}
where
\[
	\tilde{q}_1(x,\xi,\eta)=\D_\xi q_1(\xi^\h(x),\eta(x))\xi.
%\D_\xi\D_\eta q(\xi^\h(x),0)(\xi,\eta).
\]
In the present situation, we state Theorem~2.16 of \cite{BlYa12} as follows.

\begin{thm}
\label{thm:Mel}
Under {\rm\textbf{(M1)}-\textbf{(M7)}}, suppose that $\bar{a}_2,\bar{b}_2\neq 0$.
Then a pitchfork bifurcation of homoclinic orbits occurs at $\mu=0$.
Moreover, it is supercritical or subcritical, depending on whether $\bar{a}_2\bar{b}_2<0$ or $>0$.
\end{thm}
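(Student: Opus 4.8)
The plan is to obtain Theorem~\ref{thm:Mel} by carrying out the Lyapunov--Schmidt reduction that underlies Gruendler's version of Melnikov's method \cite{Gu92,BlYa12}: homoclinic solutions of \eqref{eqn:Hsys} near $(\xi^\h(x),0)$ are to be parametrized by the reduced amplitude of their $\eta$-component, and the $\Zset_2$-equivariance \textbf{(M6)}--\textbf{(M7)} will force the resulting bifurcation equation to be odd in that amplitude, hence of pitchfork type. First I would fix the functional framework, seeking $(\xi,\eta)(x)$ in the space of $C^1$ maps $\Rset\to\Rset^2\times\Rset^2$ that decay exponentially as $x\to\pm\infty$ at rates just below $\lambda_1,\lambda_2$, and write the homoclinic problem as $\mathcal F(\xi,\eta;\mu)=0$. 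Since the fixed-point subspace $X^+$ is invariant under the flow of \eqref{eqn:Hsys} by \textbf{(M6)} and contains $(\xi^\h(x),0)$ by \textbf{(M7)}, the $\Zset_2$-fixed primary homoclinic orbit persists for $\mu$ near $0$ as the fundamental branch of the pitchfork. The linearization $D_{(\xi,\eta)}\mathcal F(\xi^\h,0;0)$ is the variational equation \eqref{eqn:ve}; its space of bounded solutions decaying at both ends is two-dimensional, spanned by $(\xi^{\h\prime}(x),0)\in X^+$ (from translation invariance) and $(0,\phi(x))\in X^-$ (by \textbf{(M5)}, \textbf{(M7)}), while the adjoint \eqref{eqn:ave} has the corresponding bounded solutions $(-J\xi^{\h\prime}(x),0)$ and $(0,-J\phi(x))$.

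Next I would perform the reduction. The translation mode is removed by imposing a phase condition; because $X^+$ is flow-invariant, the $X^+$-component of $\mathcal F$ already vanishes along the persisting branch, so the genuine bifurcation equation is the single scalar relation obtained by projecting the $\eta$-equation onto $(0,-J\phi)$. Writing $\epsilon$ for the amplitude of $\eta$ along $\phi$, the $\Zset_2$-action of \textbf{(M6)}--\textbf{(M7)} (which sends $\phi$ to $-\phi$ since $(0,\phi)\in X^-$) identifies the solution of amplitude $\epsilon$ with that of amplitude $-\epsilon$; thus the reduced bifurcation function $\mathcal M(\epsilon,\mu)$ is odd in $\epsilon$ and factors as $\mathcal M(\epsilon,\mu)=\epsilon\,\mathcal N(\epsilon^2,\mu)$ with $\mathcal N$ smooth in $(\epsilon^2,\mu)$ and $\mathcal N(0,0)=0$, the last because the $O(\epsilon)$ part of the $\eta$-equation holds identically by the defining property of $\phi$.

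It then remains to compute $\partial_\mu\mathcal N(0,0)$ and the coefficient of $\epsilon^2$ in $\mathcal N$. Expanding $\eta=\epsilon\phi+O(\epsilon^3)$ and $\xi=\xi^\h+\epsilon^2\tilde\xi^\alpha+\mu\,\tilde\xi^\mu+O(\epsilon^4)$ with $\mu=O(\epsilon^2)$, where $\tilde\xi^\alpha$ and $\tilde\xi^\mu$ are the bounded solutions of the inhomogeneous equations \eqref{eqn:xia} and \eqref{eqn:ximu} (forced at $O(\epsilon^2)$ by the quadratic-in-$\eta$ term $p_2$ and by the $\mu$-derivative of the $\xi$-vector field), I would substitute into the $\eta$-equation, collect the $O(\epsilon^3)$ terms, and impose the Fredholm solvability condition by pairing the residual with $-J\phi$. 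This produces exactly the two integrals in \eqref{eqn:ab2}: the $O(\epsilon\mu)$ contributions give $\partial_\mu\mathcal N(0,0)$ proportional to $-\bar a_2$, and the $O(\epsilon^3)$ cubic-and-correction contributions (namely $q_3(\xi^\h,\phi)$ and $\tilde q_1(x,\tilde\xi^\alpha,\phi)$) give the $\epsilon^2$-coefficient proportional to $-\bar b_2$, both up to one common nonzero factor since both arise from the same $-J\phi$-projection. Since $\bar a_2\ne0$, the implicit function theorem applied to $\mathcal N(\epsilon^2,\mu)=0$ yields $\mu=-(\bar b_2/\bar a_2)\epsilon^2+o(\epsilon^2)$, so the nontrivial homoclinic orbits — a symmetric pair $\pm\epsilon$ — exist precisely for $\mu$ of the sign of $-\bar b_2/\bar a_2$; this is the pitchfork, and $\bar b_2\ne0$ makes it nondegenerate. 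Matching the standard convention, the bifurcation is supercritical when $\bar a_2\bar b_2<0$ and subcritical when $\bar a_2\bar b_2>0$.

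The main difficulty I anticipate is bookkeeping rather than concept. One must verify that the reduction genuinely decouples, so that neither the translation/phase direction nor the $X^+$-component feeds back into $\mathcal N$ — this is where the invariance of $X^+$ together with the parity in $x$ of $\xi^\h$, $\xi^{\h\prime}$, $\tilde\xi^\alpha$, $\tilde\xi^\mu$ and $\phi$ is used — and one must carefully account for every third-order term (the cubic $q_3(\xi^\h,\phi)$, the mixed term $\tilde q_1(x,\tilde\xi^\alpha,\phi)$ from the $O(\epsilon^2)$ correction of $\xi$, and the $\mu$-induced terms $J\partial_\mu\D_\eta^2 H(\xi^\h,0;0)\phi$ and $\tilde q_1(x,\tilde\xi^\mu,\phi)$), checking in particular that the two reduced coefficients emerge from the same projection so that it is exactly the ratio $-\bar b_2/\bar a_2$, and not some other ratio, that controls the direction of bifurcation. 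The exponential-decay estimates needed to put \eqref{eqn:ve} into Fredholm form and to solve \eqref{eqn:xia}--\eqref{eqn:ximu} for bounded solutions are routine given \textbf{(M3)}--\textbf{(M4)}, but should be recorded with care.
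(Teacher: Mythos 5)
The paper does not give a proof of Theorem~\ref{thm:Mel}: it is stated as a specialization of Theorem~2.16 of \cite{BlYa12}, and the reader is referred there. Your Lyapunov--Schmidt reduction is essentially the same route as Gruendler's version of the Melnikov method used in that reference: the kernel of the linearization is spanned by the translation mode $(\xi^{\h\prime},0)\in X^+$ and $(0,\phi)\in X^-$; the $\Zset_2$-equivariance in \textbf{(M6)}--\textbf{(M7)} forces the reduced scalar bifurcation function along $\phi$ to be odd in the amplitude $\epsilon$; the $O(\epsilon\mu)$ and $O(\epsilon^3)$ solvability conditions against $(0,-J\phi)$ reproduce $\bar a_2$ and $\bar b_2$ respectively; and the implicit function theorem yields the normal form $\bar a_2\mu+\bar b_2\epsilon^2=0$, consistent with Remark~\ref{rmk:Mel} and with the stated super/subcritical dichotomy. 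So the outline is sound and matches the cited method.

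One justification in your sketch is misleading and should be repaired. You write that ``the $X^+$-component of $\mathcal F$ already vanishes along the persisting branch'' because $X^+$ is flow-invariant, and from this you infer that the only bifurcation equation comes from the $(0,-J\phi)$-projection. The persistence of $(\xi^\h,0)$ inside $X^+$ for all $\mu$ does not by itself show that the bifurcating $\eta\ne 0$ branch has trivial $X^+$-projection: at $O(\epsilon^2)$ the $\xi$-equation is genuinely forced by $p_2(\xi^\h,\phi)$ and by the $\mu$-terms, and the existence of bounded corrections $\tilde\xi^\alpha,\tilde\xi^\mu$ in \eqref{eqn:xia}--\eqref{eqn:ximu} requires its own Fredholm condition, namely $\int_{-\infty}^\infty\langle -J\xi^{\h\prime}(x),\,p_2(\xi^\h(x),\phi(x))\rangle\,\d x=0$ (and likewise for the $\mu$-forcing). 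This holds, but because $-J\xi^{\h\prime}$ and $p_2(\xi^\h,\phi)$ have opposite parity in $x$ (equivalently, by using translation invariance to kill this pairing via the phase), not merely because $X^+$ is invariant. You do gesture at the parity argument in your final paragraph, but it contradicts the earlier ``already vanishes'' claim; the parity/translation-invariance argument is the one actually needed. With that fixed, the remaining work is the bookkeeping you acknowledge, which is exactly what \cite{BlYa12} carries out.
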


See \cite{BlYa12} for the proof of Theorem \ref{thm:Mel}.
More general $n$-dimensional systems ($n\ge 4$) containing non-Hamiltonian systems
 and saddle-node bifurcations of homoclinic orbits were treated there.
Similar results were recently obtained for reversible systems in \cite{Ya20}.
%\begin{rmk}
%From computations of p.~$2930$ of \cite{BlYa12}, we also see that
%\begin{equation}
%\begin{split}
%&a_1=\int_{-\infty}^\infty\langle \D_\xi H(\xi^\h(x),0;0),J\D_\mu\D_\xi H(\xi^\h(x),0;0)^\T\rangle\d x=0,\\
%&b_1=\int_{-\infty}^\infty\langle \D_\xi H(\xi^\h(x),0;0),p_2(x,\phi(x))\rangle\d x=0,
%\end{split}
%\label{eqn:ab1}
%\end{equation}
%where $p_j(x,\eta)$ is the $j$-th order term of the Taylor expansion
% of $J\D_\xi H(\xi^\h(x),\eta;0)^\T$ around $\eta=0$.
%\end{rmk}

\begin{rmk}\label{rmk:Mel}
	Homoclinic orbits created at the pitchfork bifurcation detected by Theorem \ref{thm:Mel} are written as
	\begin{align*}
		(\xi,\eta)=(\xi^\h(x),\alpha\phi(x))+O(\alpha^2),
	\end{align*}
	where $\alpha$ is a small parameter satisfying
	\begin{align}
		\bar a_2\mu+\bar b_2\alpha^2=0
		\label{eqn:bif_rel}
	\end{align}
	up to $O(\sqrt{\alpha^8+\mu^4})$.
	See \cite{BlYa12} for the details.
\end{rmk}

\subsection{Proof of Theorem~\ref{thm:cnls_bif}}

Using Theorem~\ref{thm:Mel} and Remark~\ref{rmk:Mel} for \eqref{eqn:UV},
 we now prove Theorem~\ref{thm:cnls_bif}.

\begin{proof}[Proof of Theorem~\ref{thm:cnls_bif}]
 Letting $\xi=(U,U')^\T$ and $\eta=(V,V')^\T$,
  we rewrite \eqref{eqn:UV} in the form of \eqref{eqn:Hsys} as
\begin{align}
	\begin{aligned}
		&\xi_1'=\xi_2,\quad
		\xi_2'=\omega\xi_1-\partial_1F(\xi_1^2,\eta_1^2;\mu)\xi_1,\\
		&\eta_1'=\eta_2,\quad
		\eta_2'=s\eta_1-\partial_2F(\xi_1^2,\eta_1^2;\mu)\eta_1,
	\end{aligned}
	\label{eqn:Hsysex}
\end{align}
which is a two-degree-of-freedom Hamiltonian system with Hamiltonian
\begin{align*}
	H(\xi,\eta;\mu)=\frac{1}{2}\bigl(-\omega\xi_1^2-s\eta_1^2+F(\xi_1^2,\eta_1^2;\mu)+\xi_2^2+\eta_2^2\bigr).
\end{align*}
Under \textbf{(A1)} and \textbf{(A2)}, we easily see that \textbf{(M1)}-\textbf{(M4)} and \textbf{(M6)} in Section~3.1 hold, where $\lambda_1=\sqrt\omega$, $\lambda_2=\sqrt s$, $S_1=I_2$, $S_2=-I_2$, and
\begin{align}
	\xi^\h(x)=(U_0(x),U_0'(x))^\T.
	\label{eqn:homo}
\end{align}
%Here the superscript `{\scriptsize T}' represents the transpose operator as in Appendix~A.
Note that $X^+=\{(\xi,\eta)\in\Rset^2\times\Rset^2\mid\eta=0\}$ and $X^-=\{(\xi,\eta)\in\Rset^2\times\Rset^2\mid\xi=0\}$.
For \eqref{eqn:Hsysex} and \eqref{eqn:homo} the variational equation \eqref{eqn:ve} becomes
\begin{align}
	\begin{aligned}
		&
		\xi_1'=\xi_2,\quad
		\xi_2'=\omega\xi_1-\bigl(2\partial_1^2F(U_0(x)^2,0)U_0(x)+\partial_1F(U_0(x)^2,0)\bigr)\xi_1,\\
		&
		\eta_1'=\eta_2,\quad
		\eta_2'=s\eta_1-\partial_2F(U_0(x)^2,0;\mu)\eta_1.
	\end{aligned}
	\label{eqn:veex}
\end{align}
Letting $\eta_1=\delta V$ and $\eta_2=\delta V'$, the $\eta$-component of \eqref{eqn:veex} reduces to \eqref{eqn:VE2}.
When \textbf{(A3)} holds, so do \textbf{(M5)} and \textbf{(M7)} with $\phi(x)=(V_1(x),V_1'(x))^\T$
 and \eqref{eqn:VE2} has no bounded solution which is linearly independent of $V_1(x)$.
Using Theorem~\ref{thm:Mel} and Remark~\ref{rmk:Mel},
 we obtain the desired result with $\bar a_2$ and $\bar b_2$ given in \eqref{eqn:ab2}.

It remains to obtain the expressions \eqref{eqn:a2ex} and \eqref{eqn:b2ex}
 for $\bar a_2$ and $\bar b_2$ as well as
 the approximation \eqref{eqn:bif_UV} for the bifurcated homoclinic solutoions.
Its direct substitution into \eqref{eqn:UV} shows that the expression \eqref{eqn:bif_UV} is valid
 if $\delta U=U_2(x)$ is a bounded solution to the nonhomogeneous linear equation
\begin{align}
	-\delta U''+\omega\delta  U-\bigl(2\partial_1^2F(U_0(x)^2,0)&U_0(x)^2+\partial_1F(U_0(x)^2,0)\bigr)\delta U\notag\\
	=&\partial_1\partial_2F(U_0(x)^2,0;0)U_0(x)V_1(x)^2,
	\label{eqn:aprx1}
\end{align}
such that $\lim_{x\to\pm\infty}U_2(x)=0$.
We easily see that it exists uniquely and is an even function.
So we only have to show that $U_2(x)$ is expressed as \eqref{eqn:U2} for the latter.

On the other hand,
 noting that $\partial_\mu\D_\xi H(\xi,0;0)=0$ for \eqref{eqn:Hsysex} by \textbf{(A1)},
 we obtain $\tilde\xi^\mu(x)=0$.
Since \eqref{eqn:aprx1} is equivalent to \eqref{eqn:xia} for \eqref{eqn:Hsysex},
 we have $\tilde{\xi}^\alpha(x)=(U_2(x),U_2'(x))^\T$.
Let $\Phi(x)=(\phi_{ij}(x))_{1\le i,j\le2}$ be a fundamental matrix solution to the homogeneous part of \eqref{eqn:xia} such that $\Phi(0)=I_2$.
Since $\delta U=U_0'(x)$ is a solution to the homogeneous part of \eqref{eqn:aprx1}, we have
\begin{align}
	\phi_{12}(x)=U_0'(x)/U_0''(0)
	\label{eqn:phi12}
\end{align}
and $U_0''(0)=(\omega-\partial_1F(\zeta_0^2,0))\zeta_0<0$ by Remark~\ref{rmk:U0V1} (i).
Moreover, applying the reduction of order to \eqref{eqn:aprx1}
 and letting $\delta U=U_0'(x)\varphi$, we have
\begin{align*}
	U_0'(x)\varphi''+2U_0''(x)\varphi'=0,
\end{align*}
to which a general solution is given by
\begin{align*}
	\varphi(x)=C_1\int_{x_0}^x\frac{\d y}{U_0'(y)^2}+C_2,
\end{align*}
where $x_0>0,C_1,C_2$ are arbitrary constants and $x>0$.
This yields a general solution to the homogeneous part of \eqref{eqn:aprx1} as
\begin{align}
	\delta U=U_0'(x)\!\left(C_1\int_{x_0}^x\frac{\d y}{U_0'(y)^2}+C_2\right).
	\label{eqn:dU}
\end{align}
Since by l'H\^{o}pital's rule
\begin{align*}
	\lim_{x\to 0}U_0'(x)\int_{x_0}^x\frac{\d y}{U_0'(y)^2}
	=&\lim_{x\to 0}\frac{1}{1/U_0'(x)}\int_{x_0}^x\frac{\d y}{U_0'(y)^2}\\
	=&-\lim_{x\to 0}\frac{1}{U_0''(x)/U_0'(x)^2}\frac{1}{U_0'(x)^2}
	=-\frac{1}{U_0''(0)}
\end{align*}
as well as $\phi_{11}(0)=1$ and $\phi_{11}'(0)=\phi_{21}(0)=0$, we set
\begin{align*}
	C_1=-U_0''(0),\quad
	C_2=\frac{1}{U_0'(x_0)}+\int_0^{x_0}\frac{U_0''(y)-U_0''(0)}{U_0'(y)^2}\,\d y
\end{align*}
in \eqref{eqn:dU} to obtain
\begin{align}
	\phi_{11}(x)=U_0'(x)\!\left(-U_0''(0)\int_{x_0}^x\frac{\d y}{U_0'(y)^2}+\frac{1}{U_0'(x_0)}+\int_0^{x_0}\frac{U_0''(y)-U_0''(0)}{U_0'(y)^2}\,\d y\right)
	\label{eqn:phi11}
\end{align}
for $x>0$ and $\phi_{11}(x)=\phi_{11}(-x)$ for $x<0$.
In particular, $\lim_{x\to\pm\infty}\phi_{11}(x)=-\infty$.
Using the variation of constants formula,
 we have a bounded solution of \eqref{eqn:aprx1} satisfying $\lim_{x\to\pm\infty}\tilde{\xi}_j(x)=0$
 as \eqref{eqn:U2}.
Since
\begin{align*}
	J\partial_\mu\D_\eta^2H(\xi,0;0)=
	\begin{pmatrix}
		0 & 0\\
		-\partial_2\partial_\mu F(\xi_1^2,0;0) & 0
	\end{pmatrix},\quad
	q_3(\xi,\eta)
	=\begin{pmatrix}
		0\\
		-\partial_2^2F(\xi_1^2,0;0)\eta_1^3
	\end{pmatrix}
\end{align*}
and
\begin{align*}
	\tilde{q}_1(x,\xi,\eta)
	=\begin{pmatrix}
		0\\
		-2\partial_1\partial_2F(U_0(x)^2,0;0)U_0(x)\xi_1\eta_1
	\end{pmatrix},
\end{align*}
we express \eqref{eqn:ab2} as \eqref{eqn:a2ex} and \eqref{eqn:b2ex} for \eqref{eqn:Hsysex}.
\end{proof}

% **********************************************************
% Section 4
% **********************************************************

\section{Hamiltonian-Krein Index and Evans Function}

In this section, we present the two main tools used in the next two sections,
 the Hamiltonian-Krein index theory and Evans function technique,
 for the stability problem of solitary waves of the form
\begin{align}
	(u,v)=(\e^{\i\omega t}U(x;\omega,s),\e^{\i st}V(x;\omega,s))
	\label{eqn:soliton4}
\end{align}
in the CNLS equations \eqref{eqn:CNLS},
 to which \eqref{eqn:soliton} reduces when $c,x_0,\theta,\phi=0$ as stated in Section~1.
Here $(U,V)=(U(x;\omega,s),V(x;\omega,s))$ is a homoclinic solution to \eqref{eqn:UV} such that $U(x;\omega,s),V(x;\omega,s)\to0$ as $x\to\pm\infty$, where its dependence on $\omega,s$ is explicitly provided although it will be frequently dropped below.
We also fix the value of $\mu$ and suppress the dependence of $F$ and $(U,V)$ on $\mu$.
See \cite{KKS04,KKS05,KaPr13,Pe05} for more details on these methods.
We easily see that there exist positive constants $\rho,C$ such that
\begin{align}
	|U(x)|+|V(x)|\le C\e^{-\rho|x|},\quad x\in\Rset.
	\label{eqn:UV_expdecay}
\end{align}

% **********************************************************
% Subsection 4.1
% **********************************************************

\subsection{Linearized Operator}\label{sec:lin_op}

%Let $I_n$ and $O_n$ be the $n\times n$ identity and zero matrices, respectively, and define the Pauli matrices $\sigma_1, \sigma_3$, and a variant $\Sigma_3$ of $\sigma_3$ as
%\begin{align*}
%	\sigma_1=\begin{pmatrix}0&1\\1&0\end{pmatrix},\quad
%	\sigma_3=\begin{pmatrix}1&0\\0&-1\end{pmatrix},\quad
%	\Sigma_1=\begin{pmatrix}\sigma_1&O_2\\O_2&\sigma_1\end{pmatrix},\quad
%	\Sigma_3=\begin{pmatrix}\sigma_3&O_2\\O_2&\sigma_3\end{pmatrix}.
%\end{align*}

%%%%% Some sentences are cited from Kapitula-Promislow book, p.329
We first rewrite \eqref{eqn:CNLS} by means of $(u,v)$ and their complex conjugates $(w,z)\defeq(u^*,v^*)$ as
\begin{align}
	\begin{aligned}
		&\mathrm{i}\partial_tu+\partial_x^2u+\partial_1F(uw,vz)u=0,\quad
		\mathrm{i}\partial_tv+\partial_x^2v+\partial_2F(uw,vz)v=0,\\
		&\mathrm{i}\partial_tw-\partial_x^2w-\partial_1F(uw,vz)w=0,\quad
		\mathrm{i}\partial_tz-\partial_x^2z-\partial_2F(uw,vz)z=0,
	\end{aligned}
	\label{eqn:uwvz}
\end{align}
%By these variables 
in which the solitary wave \eqref{eqn:soliton4} in \eqref{eqn:CNLS} corresponds to
\begin{align}
	(u,w,v,z)=(\e^{\i\omega t}U(x),\e^{-\i\omega t}U(x),\e^{\i st}V(x),\e^{-\i st}V(x)).
\label{eqn:soliton3}
\end{align}
Letting
\begin{align*}
	&u=\e^{\i\omega t}(U(x)+\delta\xi_1),\quad w=\e^{-\i\omega t}(U(x)+\delta\xi_2),\\
	&v=\e^{\i st}(V(x)+\delta\xi_3),\quad z=\e^{-\i st}(V(x)+\delta\xi_4),
\end{align*}
we obtain the linearized problem for \eqref{eqn:uwvz} around the solitary wave \eqref{eqn:soliton3} as
\begin{align*}
	\partial_t\delta\xi=\mathscr L\delta\xi,
\end{align*}
where $\delta\xi=(\delta\xi_1,\ldots,\delta\xi_4)^\T$ and the linearized operator $\mathscr L$ is given by \eqref{eqn:lin_op}.
Substituting the separated variables ansatz $\delta\xi(t,x)=\e^{\lambda t}\psi(x)$ into the above linearized problem yields the eigenvalue problem in a standard form,
\begin{align}
	\mathscr L\psi=\lambda\psi,\quad\psi=(\psi_1,\ldots,\psi_4)^\T.
	\label{eqn:eigprob}
\end{align}
%Note that more direct calculation of $\sigma_\mathrm{ess}(\mathscr L)$ is also possible since \eqref{eqn:CNLS} is defined on the line.
%See Section~3 of \cite{KaPr13}.
%Since $\Sigma_1\mathscr L=\mathscr L\Sigma_1$ and $\Sigma_3\mathscr L=\mathscr L^\dagger\Sigma_3$, where $\mathscr L^\dagger$ is the adjoint operator of $\mathscr L$, we have a symmetry for $\sigma(\mathscr L)$ such that if $\lambda\in\sigma(\mathscr L)$, then $-\lambda,\pm\lambda^*\in\sigma(\mathscr L)$.
By the symmetries \eqref{eqn:transl} and \eqref{eqn:gauge}, $\Ker\mathscr L$ contains
% the following three vectors %eigenfunctions associated with the zero eigenvalue,
\begin{align}
	\begin{aligned}
		&\varphi_1(x)=(\partial_xU(x;\omega,s),\partial_xU(x;\omega,s),\partial_xV(x;\omega,s),\partial_xV(x;\omega,s))^\T,\\
		&\varphi_2(x)=(\i U(x;\omega,s),-\i U(x;\omega,s),0,0)^\T,\\
		&\varphi_3(x)=(0,0,\i V(x;\omega,s),-\i V(x;\omega,s))^\T.
	\end{aligned}
	\label{eqn:ker}
\end{align}
Moreover, %since \eqref{eqn:CNLS} is Hamiltonian, the elements
\begin{align}
	\begin{aligned}
		&\chi_1(x)=(-\i xU(x;\omega,s)/2,\i xU(x;\omega,s)/2,-\i xV(x;\omega,s)/2,\i xV(x;\omega,s)/2)^\T,\\
		&\chi_2(x)=(\partial_\omega U(x;\omega,s),\partial_\omega U(x;\omega,s),\partial_\omega V(x;\omega,s),\partial_\omega V(x;\omega,s))^\T,\\
		&\chi_3(x)=(\partial_s U(x;\omega,s),\partial_s U(x;\omega,s),\partial_s V(x;\omega,s),\partial_s V(x;\omega,s))^\T
	\end{aligned}
	\label{eqn:gker}
\end{align}
satisfy $\mathscr L\chi_j=\varphi_j$, $j=1,2,3$, whenever they exist.
For the fundamental solitary wave \eqref{eqn:fun_soliton}, 
$\varphi_3(x)$ and $\chi_3(x)$ vanish since $(U,V)=(U_0(x),0)$.
Moreover, by \textbf{(A3)},
 $\psi=(0,0,V_1(x),0)^\T$ and $(0,0,0,V_1(x))^\T$ are contained in $\Ker\mathscr L$ at $\mu=0$.
Hence, %in general 
 $\dim\gKer\mathscr L$ for the bifurcated solitary waves
 does not change after the pitchfork bifurcations
 detected by Theorem~\ref{thm:cnls_bif}.

% **********************************************************
% Subsection 4.2
% **********************************************************

\subsection{Hamiltonian-Krein Index}\label{sec:KHam}

%We next describe the Hamiltonian-Krein index theory \cite{KKS04,KKS05,Pe05}.
%%%%% Some sentences are cited from Yang book, p.166
%To this end, it is convenient to
We introduce the transformation
\begin{align}
	\psi\mapsto((\psi_1+\psi_2)/2,(\psi_3+\psi_4)/2,(\psi_1-\psi_2)/2\i,(\psi_3-\psi_4)/2\i)^\T.
	\label{eqn:to_reim}
\end{align}
Under this transformation, the eigenvalue problem \eqref{eqn:eigprob} is rewritten as
\begin{align}
	\mathcal J\mathcal L\psi=\lambda\psi,\quad\psi=(\psi_1,\ldots,\psi_4)^\T,
	\label{eqn:eigprob_ri}
\end{align}
where
\begin{align}
	\mathcal J=\begin{pmatrix}O_2&I_2\\-I_2&O_2\end{pmatrix},\quad
	\mathcal L=\begin{pmatrix}\mathcal L_+&O_2\\O_2&\mathcal L_-\end{pmatrix}
	\label{eqn:JL_def}
\end{align}
with
\begin{align}
	\begin{aligned}
		&\mathcal L_-=\begin{pmatrix}-\partial_x^2+\omega-\partial_1F(U^2,V^2)&0\\0&-\partial_x^2+s-\partial_2F(U^2,V^2)\end{pmatrix},%\label{eqn:L_minus}
		\\
		&\mathcal L_+=\mathcal L_--\begin{pmatrix}2\partial_1^2F(U^2,V^2)U^2&2\partial_1\partial_2F(U^2,V^2)UV\\2\partial_1\partial_2F(U^2,V^2)UV&2\partial_2^2F(U^2,V^2)V^2\end{pmatrix}.\\%\label{eqn:L_plus}
	\end{aligned}
	\label{eqn:L_pm}
\end{align}
We easily see that the spectrum does not change under this transformation.
Moreover, \eqref{eqn:ker} and \eqref{eqn:gker} are transformed into
\begin{align*}
	(U'(x),V'(x),0,0)^\T,\quad
	(0,0,U(x),0)^\T,\quad
	(0,0,0,V(x))^\T.
\end{align*}
and
\begin{align*}
	(0,0,-xU(x)/2,-xV(x)/2)^\T,\ \ 
	(\partial_\omega U(x),\partial_\omega V(x),0,0)^\T,\ \ 
	(\partial_s U(x),\partial_s V(x),0,0)^\T,
\end{align*}
respectively.
Let $\mathcal L_{\pm ij}$ be the $(i,j)$-components of $\mathcal L_\pm$ for $i,j=1,2$.
Note that \textbf{(A3)} holds if and only if $\mathcal L_{\pm 22}$
 for the fundamental solitary wave \eqref{eqn:fun_soliton} have a nontrivial kernel.

Define a $3\times 3$ real matrix $D$ with entries
\begin{align*}
	D_{ij}
	\defeq\langle\chi_i,\i\Sigma_3\varphi_j\rangle_{L^2}
	=\langle\chi_i,\i\Sigma_3\mathscr L\chi_j\rangle_{L^2}
\end{align*}
for $i,j=1,2,3$, i.e.,%, where $\varphi_i$ and $\chi_i$ are given by \eqref{eqn:zero_eigfun} and \eqref{eqn:zero_geigfun}, respectively.
%We easily see that $D$ is symmetric and
\begin{align}
	D=\begin{pmatrix}
		(\|U\|_{L^2}^2+\|V\|_{L^2}^2)/2&0&0\\
		0&-\partial_\omega\|U\|_{L^2}^2&-\partial_\omega\|V\|_{L^2}^2\\
		0&-\partial_s\|U\|_{L^2}^2&-\partial_s\|V\|_{L^2}^2
	\end{pmatrix}.\label{eqn:D}
\end{align}
Since $\i\Sigma_3\mathscr L$ is self-adjoint, we see that $D$ is symmetric, so that $\partial_\omega\|V\|_{L^2}^2=\partial_s\|U\|_{L^2}^2$.
Let $\mathrm n(\mathcal A)$ denote the number of negative eigenvalues
 for a self-adjoint operator $\mathcal A$.
%We also write the number of negative eigenvalues for the matrix $D$ as $\mathrm n(D)$.
%The following is a special case of the main theorem in \cite{GSS90}.

%\begin{thm}[Grillakis et al.\ \cite{GSS90}]\label{thm:GSS}
%	Assume that $D$ is nonsingular.
%	Then $\mathrm n(\mathcal L)\ge\mathrm n(D)$.
%	If $\mathrm n(\mathcal L)=\mathrm n(D)$, then the solitary wave \eqref{eqn:soliton2} is orbitally stable, i.e., for any $\epsilon>0$ there exists a $\delta>0$ such that for all $(u_0,v_0)\in H^1(\Rset,\Cset^2)$ with
%	\begin{align*}
%		\inf_{\xi,\theta,\phi\in\Rset}\bigl(\|u_0-\e^{\i\theta}U(\cdot-\xi)\|_{H^1}+\|v_0-\e^{\i\phi}V(\cdot-\xi)\|_{H^1}\bigr)<\delta,
%	\end{align*}
%	the solution $(u(t),v(t))\in C(\Rset,H^1(\Rset,\Cset^2))$ of \eqref{eqn:CNLS} with $(u(0),v(0))=(u_0,v_0)$ satisfies
%	\begin{align*}
%		\sup_{t\in\Rset}\inf_{\xi,\theta,\phi\in\Rset}\bigl(\|u(t)-\e^{\i\theta}U(\cdot-\xi)\|_{H^1}+\|v(t)-\e^{\i\phi}V(\cdot-\xi)\|_{H^1}\bigr)<\epsilon.
%	\end{align*}
%	If $\mathrm n(\mathcal L)-\mathrm n(D)$ is odd, then \eqref{eqn:soliton2} is orbitally unstable.
%\end{thm}

\begin{dfn}
	Let $k_\mathrm r$ be the number of real positive eigenvalues of $\mathcal J\mathcal L$, and let $k_\mathrm c$ be the number of its complex eigenvalues in the first open quadrant of $\Cset$.
	Let $H^\lambda$ be the linear map induced by the bilinear form $\langle v,\mathcal Lw\rangle_{L^2}$ restricted to the generalized eigenspace $\gKer(\mathcal J\mathcal L-\lambda)$, and let $k_\mathrm i^-$ be the sum of $\mathrm n(H^\lambda)$ when $\lambda$ runs through all purely imaginary eigenvalues of $\mathcal J\mathcal L$ with positive imaginary parts.
	Then the sum $K_\mathrm{Ham}=k_\mathrm r+2k_\mathrm c+2k_\mathrm i^-$ is called the \emph{Hamiltonian-Krein index}.
	Moreover, a purely imaginary, nonzero eigenvalue $\lambda$ of $\mathcal J\mathcal L$ is said to have a \emph{positive} $($resp.\ \emph{negative}$)$ \emph{Krein signature} if $H^\lambda$ is positive $($resp.\ negative$)$ definite on $\gKer(\mathcal J\mathcal L-\lambda)$.
\end{dfn}

We have the following theorem, which is frequently referred to as the index theorem.%(also known as the closure theorem)

\begin{thm}[Kapitula et al.\ \cite{KKS04,KKS05}, Pelinovsky \cite{Pe05}]\label{thm:KHam}
	If $D$ is nonsingular, then $K_\mathrm{Ham}=\mathrm n(\mathcal L)-\mathrm n(D)$.
\end{thm}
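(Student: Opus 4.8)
The plan is to follow the Pontryagin-space strategy of Pelinovsky \cite{Pe05} (an equivalent route, deforming $\mathcal J\mathcal L$ to a reference operator and tracking eigenvalue collisions by Krein signature as in Kapitula et al.\ \cite{KKS04,KKS05}, is also available). First I would pass to the form \eqref{eqn:eigprob_ri}, so that $\mathcal L$ is self-adjoint and bounded below with $\mathrm n(\mathcal L)<\infty$, its negative directions localized because the potential terms in \eqref{eqn:L_pm} decay (cf.\ \eqref{eqn:UV_expdecay}) while $\sigma_{\mathrm{ess}}(\mathcal L)\subset[\min\{\omega,s\},\infty)$, and $\mathcal J$ is skew-adjoint and invertible. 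Since $D$ is assumed nonsingular, the generalized kernel $\gKer(\mathcal J\mathcal L)$ is finite-dimensional and splits canonically as $\Ker(\mathcal J\mathcal L)=\lspan\{\varphi_1,\varphi_2,\varphi_3\}$ together with the second-level vectors $\chi_1,\chi_2,\chi_3$ with $\mathcal J\mathcal L\chi_j=\varphi_j$; the matrix $D$ is precisely the Gram matrix of the $\chi_j$ for the quadratic form $\langle\cdot,\mathcal L\cdot\rangle_{L^2}$ (equivalently, the symplectic pairing of $\chi_j$ against $\varphi_j$). I would then restrict the whole problem to the closed subspace $X$ that is the $\langle\cdot,\mathcal L\cdot\rangle_{L^2}$-orthogonal complement of $\lspan\{\chi_1,\chi_2,\chi_3\}$: on $X$ the form $\langle\cdot,\mathcal L\cdot\rangle_{L^2}$ is nondegenerate, $\mathcal J\mathcal L$ maps $X$ into itself, $0\notin\sigma(\mathcal J\mathcal L|_X)$, and the nonzero spectrum of $\mathcal J\mathcal L$ — hence all of $k_\mathrm r$, $k_\mathrm c$, $k_\mathrm i^-$ — is unchanged.

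Next I would establish the constrained Morse-index identity $\mathrm n(\mathcal L|_X)=\mathrm n(\mathcal L)-\mathrm n(D)$ together with $\Ker(\mathcal L|_X)=\{0\}$. This is a finite-rank linear-algebra fact of Schur-complement type: for a self-adjoint operator bounded below with finitely many negative directions, restricting the associated quadratic form to the codimension-$m$ subspace that annihilates an $m$-dimensional space $S$ decreases the negative index by $\mathrm n(D_S)$ and increases the nullity by $z(D_S)$, where $D_S$ is the Gram matrix of $\langle\cdot,\mathcal L\cdot\rangle_{L^2}$ on $S$; here $S=\lspan\{\chi_j\}$ and $D_S=D$, which is nonsingular by hypothesis. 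Set $\kappa:=\mathrm n(\mathcal L)-\mathrm n(D)$.

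Then comes the core step. I would equip $X$ with the indefinite inner product $[u,v]:=\langle\mathcal L u,v\rangle_{L^2}$; it is nondegenerate on $X$ with exactly $\kappa$ negative squares, so $(X,[\cdot,\cdot])$ is a Pontryagin space $\Pi_\kappa$, and the rescaled operator $\mathcal A:=\i\mathcal J\mathcal L|_X$ is $[\cdot,\cdot]$-self-adjoint, its eigenvalues being $\i$ times those of $\mathcal J\mathcal L$, so that the eigenvalues of $\mathcal A$ off the real axis correspond exactly to the eigenvalues of $\mathcal J\mathcal L$ off the imaginary axis. Invoking the spectral theory of self-adjoint operators in $\Pi_\kappa$ (Pontryagin's maximal nonpositive invariant subspace theorem), generalized eigenspaces of $\mathcal A$ at $\mu$ and $\mu'$ are $[\cdot,\cdot]$-orthogonal unless $\mu'=\mu^*$, so $\gKer(\mathcal A-\mu)$ with $\Im\mu\neq0$ is $[\cdot,\cdot]$-neutral and is nondegenerately paired with $\gKer(\mathcal A-\mu^*)$, the pair carrying negative index equal to the algebraic multiplicity of $\mu$; summing over the eigenvalues of $\mathcal A$ in the open upper half-plane produces exactly $k_\mathrm r+2k_\mathrm c$ negative squares, while each real nonzero eigenvalue $\mu=\i\lambda$ of $\mathcal A$ contributes $\mathrm n(H^\lambda)$ negative squares, which over the pairs $\{\lambda,-\lambda\}$ totals $2k_\mathrm i^-$; the remaining real spectrum of $\mathcal A$ (purely imaginary, positive-Krein-signature eigenvalues of $\mathcal J\mathcal L$) and its continuous spectrum carry only positive squares, since $\mathcal L$ is positive away from its finitely many localized negative directions. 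As $\Pi_\kappa$ has exactly $\kappa$ negative squares and all of them have now been accounted for, $K_\mathrm{Ham}=k_\mathrm r+2k_\mathrm c+2k_\mathrm i^-=\kappa=\mathrm n(\mathcal L)-\mathrm n(D)$.

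The main obstacle is the last step: the careful bookkeeping of negative squares. One must verify that no negative directions are hidden in Jordan blocks — distinguishing semisimple from non-semisimple eigenvalues — that the continuous spectrum (an endpoint issue near $\pm\i\min\{\omega,s\}$) contributes nothing, and that the symmetry $\lambda\mapsto-\lambda,\pm\lambda^*$ of $\sigma(\mathcal J\mathcal L)$ is exactly consistent with a maximal negative subspace of dimension $\kappa$. It is precisely here that the hypothesis that $D$ be nonsingular is used — it excludes the extra degeneracy at $\lambda=0$ that would alter the count — and it is this accounting that required the correction of \cite{KKS04} made in \cite{KKS05}.
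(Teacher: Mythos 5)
The paper does not prove this theorem: it is stated as a cited result from Kapitula et al.\ \cite{KKS04,KKS05} and Pelinovsky \cite{Pe05}, so there is no in-paper argument to compare against. Your sketch follows Pelinovsky's Pontryagin-space route, which is indeed one of the two standard proofs in the literature, and your identification of the genuinely delicate points at the end (Jordan blocks, endpoints of $\sigma_{\mathrm{ess}}$, the correction of \cite{KKS04} in \cite{KKS05}) is accurate.

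There is, however, a concrete gap in the setup. You take $X$ to be the $\langle\cdot,\mathcal L\cdot\rangle_{L^2}$-orthogonal complement of $\lspan\{\chi_1,\chi_2,\chi_3\}$ and assert that the form is nondegenerate on $X$, that $\Ker(\mathcal L|_X)=\{0\}$, and that $0\notin\sigma(\mathcal J\mathcal L|_X)$. None of these hold: since $\langle\varphi_i,\mathcal L\chi_j\rangle_{L^2}=\langle\mathcal L\varphi_i,\chi_j\rangle_{L^2}=0$, the kernel vectors $\varphi_1,\varphi_2,\varphi_3$ all lie in $X$, they are in the radical of $[\cdot,\cdot]=\langle\cdot,\mathcal L\cdot\rangle_{L^2}$ restricted to $X$, and $\mathcal J\mathcal L\varphi_j=0$ keeps $0$ in the spectrum. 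Your Schur-complement count $\mathrm n(\mathcal L|_X)=\mathrm n(\mathcal L)-\mathrm n(D)$ is still correct (the radical contributes nothing negative), but $(X,[\cdot,\cdot])$ as you have defined it is a degenerate inner-product space, not a Pontryagin space $\Pi_\kappa$, and Pontryagin's invariant-subspace theorem cannot be invoked on it directly. The standard fix — and an essential step in Pelinovsky's construction as well as in the treatment in \cite{KaPr13} — is to additionally factor out $\Ker(\mathcal L)=\lspan\{\varphi_1,\varphi_2,\varphi_3\}$, i.e.\ to pass to the quotient $X/\Ker(\mathcal L)$ (equivalently, to a complement of the whole generalized kernel $\gKer(\mathcal J\mathcal L)$, which has dimension $6$ precisely because $D$ is nonsingular). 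The induced form on the quotient is then nondegenerate with exactly $\kappa=\mathrm n(\mathcal L)-\mathrm n(D)$ negative squares, $\mathcal J\mathcal L$ descends, and $0$ drops out of the spectrum; from there your Pontryagin-space bookkeeping of $k_\mathrm r+2k_\mathrm c+2k_\mathrm i^-$ proceeds as you describe.
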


Using Theorem \ref{thm:KHam} and the Stability and Instability Theorems of \cite{GSS90}
 (see also \cite{KaPr13}), we obtain the following result. 

\begin{thm}
\label{thm:orb}
Suppose that $D$ is nonsingular.
If $K_\mathrm{Ham}=0$ (resp.\ $K_\mathrm{Ham}$ is odd),
 then the solitary wave \eqref{eqn:soliton4} is orbitally stable (resp.\ unstable).
\end{thm}
%Here, we say that the solitary wave is orbitally stable in the following sense.
\begin{rmk}\label{rmk:D0}
	For the fundamental solitary wave \eqref{eqn:fun_soliton} we have $\varphi_3(x),\chi_3(x)\equiv 0$.
	Replacing \eqref{eqn:D} with
	\begin{align}
		D=\begin{pmatrix}
			\|U_0\|_{L^2}^2/2&0\\
			0&-\partial_\omega\|U_0\|_{L^2}^2
		\end{pmatrix},\label{eqn:D2}
	\end{align}
	the statements of Theorems~\ref{thm:KHam} and \ref{thm:orb} also hold for \eqref{eqn:fun_soliton}.
\end{rmk}

% **********************************************************
% Subsection 4.3
% **********************************************************

\subsection{Evans Functions}\label{sec:Evans}

%We now define the Evans function for the eigenvalue problem \eqref{eqn:eigprob}, and state some basis properties of it.
%Our construction of the Evans functions is based on \cite{KaPr13}.
We rewrite the eigenvalue problem \eqref{eqn:eigprob} as a system of first-order ODEs,
\begin{align}
	Y'=A(x,\lambda)Y,
	\label{eqn:eigprob2}
\end{align}
where $Y=(\psi_1,\ldots,\psi_4,\psi_1',\ldots,\psi_4')^\T$.
Define
\begin{align}
	A_\infty(\lambda)\defeq\lim_{x\to\pm\infty}A(x,\lambda)
	=\left(
		\begin{array}{c|c}
			O_4&I_4\\\hline
			\begin{matrix}\omega-\i\lambda&0&0&0\\0&\omega+\i\lambda&0&0\\0&0&s-\i\lambda&0\\0&0&0&s+\i\lambda\end{matrix}&O_4
		\end{array}
	\right).
	\label{eqn:A_infty}
\end{align}
The matrix $A_\infty(\lambda)$ has eigenvalues
 given by \eqref{eqn:mu_def1}, \eqref{eqn:mu_def2}, and
%\begin{align}
%	\begin{aligned}
%		&\nu_1(\lambda)=\sqrt{\omega-\i\lambda},\ 
%		\nu_2(\lambda)=\sqrt{\omega+\i\lambda},\ 
%		\nu_3(\lambda)=\sqrt{s-\i\lambda},\ 
%		\nu_4(\lambda)=\sqrt{s+\i\lambda},\\
\[
\nu_{j+4}(\lambda)=-\nu_j(\lambda),\quad j=1,\ldots,4.
\]
%	\end{aligned}
%	\label{eqn:mu_def}
%\end{align}
%which we call the \emph{matrix eigenvalues}.
Let
\begin{align*}
	\Omega=\Omega_1\cap\Omega_2\cap\Omega_3\cap\Omega_4,
\end{align*}
where
%\begin{align*}
%	&\tilde\Omega_1=\Cset\setminus\{\lambda\in\Cset\mid\Re(\lambda)\le0\ \text{and}\ \Im(\lambda)=-\omega\},\\
%	&\tilde\Omega_2=\Cset\setminus\{\lambda\in\Cset\mid\Re(\lambda)\le0\ \text{and}\ \Im(\lambda)=\omega\},\\
%	&\tilde\Omega_3=\Cset\setminus\{\lambda\in\Cset\mid\Re(\lambda)\le0\ \text{and}\ \Im(\lambda)=-s\},\\
%	&\tilde\Omega_4=\Cset\setminus\{\lambda\in\Cset\mid\Re(\lambda)\le0\ \text{and}\ \Im(\lambda)=s\},\\
%	&\tilde\Omega=\tilde\Omega_1\cap\tilde\Omega_2\cap\tilde\Omega_3\cap\tilde\Omega_4,
%\end{align*}
%and denote their subsets as
\begin{equation}
\begin{split}
	&\Omega_1=\{\lambda\in\Cset\mid\Re\lambda>0\ \text{or}\ \Im\lambda>-\omega\},\\
	&\Omega_2=\{\lambda\in\Cset\mid\Re\lambda>0\ \text{or}\ \Im\lambda<\omega\},\\
	&\Omega_3=\{\lambda\in\Cset\mid\Re\lambda>0\ \text{or}\ \Im\lambda>-s\},\\
	&\Omega_4=\{\lambda\in\Cset\mid\Re\lambda>0\ \text{or}\ \Im\lambda<s\},
\end{split}
\label{eqn:Omega}
\end{equation}
and let
\begin{align*}
	\widehat\Omega=\Cset\setminus\{\lambda\in\Cset\mid\Re\lambda\le0\ \text{and}\ \Im\lambda\in\{\pm\omega,\pm s\}\}.
\end{align*}
Note that $\nu_j(\lambda)$, $j=1,\ldots,4$, are analytic in $\widehat\Omega$,
 by the definition of their branch cut \eqref{eqn:br_cut}.
Moreover, $\nu_j(\lambda)$ (resp.\ $\nu_{j+4}(\lambda)$) has a positive (resp.\ negative) real part in $\Omega_j$ for $j=1,\ldots,4$.
%A direct calculation shows that $A_\infty(\lambda)$ is diagonalizable on $\tilde\Omega$.
%Since $A(x,\lambda)$ tends to $A_\infty(\lambda)$ sufficiently fast, we anticipate that there exist solutions to \eqref{eqn:eigprob2} such that their asymptotic behavior is the same as that of $\e^{\nu_j(\lambda)x}v_j(\lambda)$ as $x\to-\infty$ or $x\to\infty$, where $v_j(\lambda)$ is an eigenvector associated to $\nu_j(\lambda)$.
Let
\begin{align}
	\Omega_\mathrm{e}=\left\{\lambda\in\widehat\Omega\;\middle|\;\min_{1\le j\le4}\Re\nu_j(\lambda)>-\frac{\rho}{2}\right\},
	\label{eqn:Omega_e}
\end{align}
which is an open neighborhood of $\Omega$ in $\widehat\Omega$, where $\rho$ is given in \eqref{eqn:UV_expdecay}.
The regions $\Omega$ and $\Omega_\mathrm{e}$ are displayed as the shaded areas in Figs.\ \ref{fig:Omega} (a) and (b), respectively.
We have the following lemma (see, e.g., Theorem 9.2.3 of \cite{KaPr13} and Proposition 1.2 of \cite{PeWe92}, for the proof).

\begin{figure}
	\begin{minipage}{0.495\textwidth}
		\includegraphics[scale=0.75]{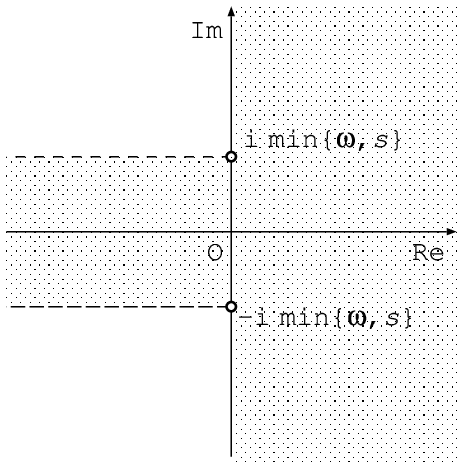}
		\begin{center}
			(a) $\Omega$
		\end{center}
	\end{minipage}
	\begin{minipage}{0.495\textwidth}
		\includegraphics[scale=0.75]{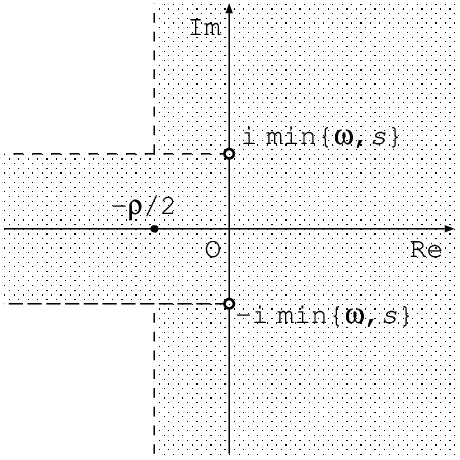}
		\begin{center}
			(b) $\Omega_\mathrm{e}$
		\end{center}
	\end{minipage}
%	\begin{center}
%		\includegraphics[scale=0.75]{Omega.eps}\qquad
%		\includegraphics[scale=0.75]{Omega_e.eps}
%	\end{center}
	\caption{
		Regions $\Omega$ and $\Omega_\mathrm{e}$.
		\label{fig:Omega}
	}
\end{figure}

\begin{lem}\label{lem:Jost_sol}
	One can take an eigenvector $v_j(\lambda)$ of $A_\infty(\lambda)$ for the eigenvalue $\nu_j(\lambda)$ such that it is analytic in $\Omega_\e$ and continuous at the branch points $\lambda=\pm\i\omega,\pm\i s$ for $j=1,\ldots,8$.
	Moreover, for $j=1,\ldots,8$, there exist two solutions $Y_j^\pm(x,\lambda)$ to \eqref{eqn:eigprob2} and a positive constant $C_j(\lambda)$ such that
	\begin{align}
		\begin{aligned}
			&|\e^{-\nu_j(\lambda)x}Y_j^-(x,\lambda)-v_j(\lambda)|\le C_j(\lambda)\e^{-\rho|x|/2}&\text{as $x\to-\infty$},\\
			&|\e^{-\nu_j(\lambda)x}Y_j^+(x,\lambda)-v_j(\lambda)|\le C_j(\lambda)\e^{-\rho|x|/2}&\text{as $x\to+\infty$},
		\end{aligned}
		\label{eqn:Y_asymp}
	\end{align}
	and $Y_j^\pm(x,\lambda)$ are analytic with respect to $\lambda\in\Omega_{\mathrm e}$ and continuous at the branch points for any fixed $x\in\Rset$, where $C_j(\lambda)$ is bounded on any compact subset of $\Omega_\e$.
\end{lem}

We refer to $Y_j^\pm(x,\lambda)$, $j=1,\ldots,8$, as the \emph{Jost solutions} for \eqref{eqn:eigprob2}.
%\begin{proof}
%	The result immediately follows from Theorem 9.2.3 in \cite{KaPr13}.
%\end{proof}
%\begin{lem}
%\label{lem:CL}
%	Let $A(x)$ be an $n\times n$ matrix so that each components are continuous functions of $x\in\Rset$.
%	Assume that there exists a diagonalizable matrix $A_\infty=\lim_{|x|\to\infty}A(x)$ such that
%		$$\int_{-\infty}^\infty\|A(x)-A_\infty\|\,\d x<\infty,$$
%	where $\|\cdot\|$ denotes some norm of a matrix.
%	Let $\nu_1,\ldots,\nu_n$ be eigenvalues of $A_\infty$ with associated eigenvectors $v_1,\ldots,v_n$.
%	Then, for $j=1,\ldots,n$, there exist solutions $Y_j^\u$ and $Y_j^\s$ of $Y'=A(x)Y$ such that
%	\begin{align*}
%		\e^{-\nu_jx}Y_j^\u(x)\to v_j\quad\text{as}\quad x\to-\infty,\\
%		\e^{-\nu_jx}Y_j^\s(x)\to v_j\quad\text{as}\quad x\to+\infty.
%	\end{align*}
%\end{lem}
%We take a linearly independent set of eigenvectors $v_1(\lambda),\ldots,v_8(\lambda)$ and define the Jost solutions $Y_j^\pm(x,\lambda)$, $j=1,\ldots,8$, as in Lemma \ref{lem:Jost_sol}.
%We take a complete set of eigenvectors $v_1(\lambda),\ldots,v_8(\lambda)$ which are analytic on $\Omega_\e$.
%We take solutions $Y_j^{\u,\s}$ ($j=1,\ldots,8$) of \eqref{eqn:eigprob2} as in Lemma \ref{lem:Jost_sol}.
Note that for \eqref{eqn:eigprob2}, both of $Y_j^-(x,\lambda)$, $j=1,\ldots,4$, and $Y_j^+(x,\lambda)$, $j=5,\ldots,8$, give bases of the spaces of solutions which decay as $x\to-\infty$ and $x\to+\infty$, respectively, when $\lambda\in\Omega$.
%We define the matrix solution
%To simplify the notations, we omit their superscripts and define the matrix solution
%\begin{align*}
%	\mathscr Y=\begin{pmatrix}Y_1^-&Y_2^-&Y_3^-&Y_4^-&Y_5^+&Y_6^+&Y_7^+&Y_8^+\end{pmatrix}.
%	\mathscr Y=(Y_1,Y_2,Y_3,Y_4,Y_5,Y_6,Y_7,Y_8)=(Y_1^\u,Y_2^\u,Y_3^\u,Y_4^\u,Y_5^\s,Y_6^\s,Y_7^\s,Y_8^\s).
%\end{align*}
We define the \emph{Evans function} $E(\lambda)$ for the eigenvalue problem \eqref{eqn:eigprob} as% the Wronskian of the Jost solutions,
\begin{align}
	E(\lambda)=\det\bigl(Y_1^-\;\;Y_2^-\;\;Y_3^-\;\;Y_4^-\;\;Y_5^+\;\;Y_6^+\;\;Y_7^+\;\;Y_8^+\bigr)(x,\lambda),
	\label{eqn:def_evans}
\end{align}
when $\lambda\in\Omega_\e\cup\{\pm\i\omega,\pm\i s\}$.
Since $\trace A(x,\lambda)=0$, the right hand side of \eqref{eqn:def_evans} is independent of $x$.
Although the Jost solutions are not uniquely determined in general, we easily see that zeros of $E(\lambda)$ do not depend on a particular choice of them.
We have the following fundamental properties of the Evans function (see, e.g., Theorem 10.2.2 of \cite{KaPr13}, for the proof).

\begin{prop}\label{prop:ev_prop}
	The Evans function $E(\lambda)$ is analytic in $\Omega_\e$ and continuous at the branch points $\lambda=\pm\i\omega,\pm\i s$.
	When $\lambda_0\in\Omega\cup\sigma_\mathrm{ess}(\mathscr L)$, it is sufficient for $E(\lambda_0)=0$ that $\lambda_0$ is an eigenvalue of $\mathscr L$.
	When $\lambda_0\in\Omega$, the condition is also necessary.
	Moreover, the algebraic multiplicity of the eigenvalue equals the multiplicity $d$ of the zero for $E(\lambda)$, for which
	\begin{align*}
		E(\lambda_0)=\partial_\lambda E(\lambda_0)=\cdots=\partial_\lambda^{d-1}E(\lambda_0)=0,\quad
		\partial_\lambda^dE(\lambda_0)\ne0.
	\end{align*}
\end{prop}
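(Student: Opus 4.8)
The plan is to follow the standard theory of Evans functions for operators whose asymptotic coefficient matrix is hyperbolic on the parameter region of interest, adapting the arguments of Theorem~10.2.2 of \cite{KaPr13} to the eight-dimensional, block-structured first-order system \eqref{eqn:eigprob2}. The basic analytic properties come for free: by Lemma~\ref{lem:Jost_sol} each Jost solution $Y_j^\pm(x,\lambda)$ is analytic in $\lambda\in\Omega_\e$ and continuous at the branch points $\lambda=\pm\i\omega,\pm\i s$ for fixed $x$, and a determinant is a polynomial in its entries, so $E(\lambda)$ inherits analyticity on $\Omega_\e$ and continuity at the branch points. Independence of $E(\lambda)$ on $x$ is Abel's identity together with $\trace A(x,\lambda)=0$ (already noted after \eqref{eqn:def_evans}), and independence of the zero set on the non-unique choice of Jost solutions follows because, at a fixed $\lambda$, the spans of $\{Y_j^-\}_{j\le4}$ and of $\{Y_j^+\}_{j\ge5}$ are intrinsic, so two admissible choices differ by multiplication on the right by invertible (locally analytic) matrices, which multiplies $E$ by a nowhere-vanishing factor.

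Next I would prove the equivalence ``$E(\lambda_0)=0\iff\lambda_0$ is an eigenvalue of $\mathscr L$'' for $\lambda_0\in\Omega$. On $\Omega$ the matrix $A_\infty(\lambda_0)$ is hyperbolic, with exactly four eigenvalues of positive and four of negative real part (this is the statement that $\nu_j(\lambda)$ has positive real part on $\Omega_j$, together with $\nu_{j+4}=-\nu_j$), so by \eqref{eqn:Y_asymp} the solutions $Y_1^-,\dots,Y_4^-$ form a basis of the four-dimensional space of solutions of \eqref{eqn:eigprob2} decaying as $x\to-\infty$, and $Y_5^+,\dots,Y_8^+$ a basis of those decaying as $x\to+\infty$. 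A nonzero eigenfunction $\psi\in H^2$ of $\mathscr L$ yields $Y=(\psi,\psi')^\T$ lying in both subspaces, so the eight columns of \eqref{eqn:def_evans} are linearly dependent and $E(\lambda_0)=0$; conversely $E(\lambda_0)=0$ produces a nonzero solution in the intersection of the two four-dimensional subspaces, which decays exponentially at both ends and, bootstrapping from $\mathscr L\psi=\lambda_0\psi$, is an $H^2$ eigenfunction. For $\lambda_0\in\sigma_\mathrm{ess}(\mathscr L)\cap\Omega_\e$ only sufficiency is claimed: there $A_\infty(\lambda_0)$ has some purely imaginary exponents, but an $L^2$ eigenfunction cannot contain the corresponding bounded non-decaying modes, hence still lies in the span of the genuinely decaying Jost solutions from each side, again forcing a nontrivial dependence among the eight columns and $E(\lambda_0)=0$; the branch points are covered in the same way, Lemma~\ref{lem:Jost_sol} guaranteeing that the Jost solutions extend continuously there.

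The main obstacle is the multiplicity statement: that the order $d$ of the zero of $E$ at an eigenvalue $\lambda_0\in\Omega$ equals its algebraic multiplicity $\dim\gKer(\mathscr L-\lambda_0)$. I would fix $x=x_0$, regard $M(\lambda)\defeq\bigl(Y_1^-\ \cdots\ Y_4^-\ Y_5^+\ \cdots\ Y_8^+\bigr)(x_0,\lambda)$ as an analytic $8\times8$ matrix function with $E(\lambda)=\det M(\lambda)$, and invoke the Gohberg--Sigal theory of characteristic values: the order of vanishing of $\det M$ at $\lambda_0$ is the sum of the partial multiplicities of $\lambda_0$ as a characteristic value of $M$. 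What then remains is to match these with the Jordan structure of $\mathscr L$ at $\lambda_0$. Given a Jordan chain $\psi_0,\dots,\psi_{d-1}$ with $(\mathscr L-\lambda_0)\psi_0=0$ and $(\mathscr L-\lambda_0)\psi_k=\psi_{k-1}$, one differentiates \eqref{eqn:eigprob2} and the asymptotics \eqref{eqn:Y_asymp} in $\lambda$ — so that the derivatives $\partial_\lambda^kY_j^\pm$ solve the inhomogeneous systems $(\partial_x-A(x,\lambda_0))\partial_\lambda^kY=k\,\partial_\lambda A(x,\lambda_0)\,\partial_\lambda^{k-1}Y$ — and assembles from the decaying $\lambda$-derivatives a root function of $M$ of order $\ge d$; conversely a root function of order $d$ unwinds into a Jordan chain of length $d$. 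This correspondence, together with the bookkeeping imposed by the block structure and the eight-dimensional exterior-algebra setup, is the technical heart; it is precisely the computation carried out in Theorem~10.2.2 of \cite{KaPr13} (see also \cite{PeWe92}), which I would reproduce with only notational changes for the present system.
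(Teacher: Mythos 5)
Your proposal is correct and takes essentially the same approach as the paper: the paper itself gives no proof, simply citing Theorem~10.2.2 of \cite{KaPr13}, and your sketch fills in the standard Evans-function bookkeeping (analyticity and $x$-independence via Lemma~\ref{lem:Jost_sol} and Abel's identity, the equivalence with eigenvalues via hyperbolicity of $A_\infty(\lambda)$ on $\Omega$) before deferring the multiplicity argument to the same reference.
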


Note that the Evans function $E(\lambda)$ may not be analytic at the branch points.
Near the branch points, we modify the Evans function as follows.
Let $\lambda_\mathrm{br}=\i\omega$ or $\i s$.
Substituting the relation $\lambda=\lambda_\mathrm{br}-\i\gamma^2$ into \eqref{eqn:eigprob2},
 we have
\begin{align*}
	Y'=A(x,\lambda_\mathrm{br}-\i\gamma^2)Y.
\end{align*}
All eigenvalues of $A_\infty(\lambda_\mathrm{br}-\i\gamma^2)$ are analytic near $\gamma=0$.
We use $\gamma$ instead of $\lambda$
 and apply the above arguments to obtain the Jost solutions and Evans function.
We write this Evans function as $\widetilde E(\gamma)$ near $\gamma=0$
 and call it the \emph{extended Evans function}. %for $E(\lambda)$.
Especially, $\widetilde E(\gamma)$ is analytic on a Riemann surface
 that is a two-sheeted cover of $\Cset$.
% that is a two-sheeted cover of $\Cset$.
For $\lambda_\mathrm{br}=-\i\omega$ or $-\i s$, using the transformation $\gamma^2=-\i(\lambda-\lambda_\mathrm{br})$, we define the extended Evans function similarly. % in the same way.
So we immediately obtain the following proposition.

\begin{prop}\label{prop:ev_gamma_prop}
	The extended Evans function $\widetilde E(\gamma)$ is analytic
	near $\gamma=0$.
	Let $\lambda_\mathrm{br}=\i\omega$ or $\i s$.
	Suppose that %$\gamma$ is included in a neighborhood of $0\in\Cset$ and
	$\lambda=\lambda_\mathrm{br}-\i\gamma^2\in\Omega_\e$.
	Then it is necessary and sufficient for $E(\lambda)=0$
	that $\widetilde E(\gamma)=0$ and $-\pi/4<\arg\gamma<3\pi/4$.
	Moreover, if $\lambda=\lambda_\mathrm{br}-\i\gamma^2\in\Omega$,
	then the condition is also necessary and sufficient for $\lambda$
	to be an eigenvalue of $\mathscr L$.
	For $\lambda_\mathrm{br}=-\i\omega$ or $-\i s$, similar statements also hold.
\end{prop}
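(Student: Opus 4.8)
The plan is to obtain everything as a transcription of Lemma~\ref{lem:Jost_sol} and Proposition~\ref{prop:ev_prop} into the $\gamma$-variable, together with a careful comparison of the two parametrizations. I would first treat $\lambda_\mathrm{br}=\i\omega$; the case $\lambda_\mathrm{br}=\i s$ is verbatim the same after replacing $\nu_2$ by $\nu_4$. Inserting $\lambda=\lambda_\mathrm{br}-\i\gamma^2$ one computes $\omega+\i\lambda=\gamma^2$, while $\omega-\i\lambda$, $s-\i\lambda$ and $s+\i\lambda$ stay bounded away from $0$ and depend analytically on $\gamma$ near $\gamma=0$. Hence, setting $\nu_2=\gamma$, all eight eigenvalues of $A_\infty(\lambda_\mathrm{br}-\i\gamma^2)$ are analytic in a full disc about $\gamma=0$; combined with the exponential decay \eqref{eqn:UV_expdecay} this is exactly the input needed to rerun the proof of Lemma~\ref{lem:Jost_sol} with $\gamma$ in place of $\lambda$, producing Jost solutions $\widetilde Y_j^\pm(x,\gamma)$ analytic near $\gamma=0$ and therefore an analytic extended Evans function $\widetilde E(\gamma)$. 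This gives the first assertion.

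Next I would compare $\widetilde E(\gamma)$ with $E(\lambda)$ on the overlap of their domains. When $-\pi/4<\arg\gamma<3\pi/4$, the number $\gamma$ lies in the sector prescribed for $\nu_2(\lambda)$ by the branch cut \eqref{eqn:br_cut}, so $\nu_2(\lambda)=\gamma$ with the branch used to define $E$; the corresponding eigenvector of $A_\infty$ differs from the one chosen in Lemma~\ref{lem:Jost_sol} only by a nonzero analytic scalar, and the remaining seven eigenvalues and eigenvectors are unchanged. Consequently the columns of $\widetilde E(\gamma)$ and of $E(\lambda)$ coincide up to nonvanishing analytic factors, and since the zeros of an Evans function do not depend on the normalization of the Jost solutions (the remark following \eqref{eqn:def_evans}), it follows that for $\lambda=\lambda_\mathrm{br}-\i\gamma^2\in\Omega_\e$ the zeros of $E(\lambda)$ are precisely the zeros of $\widetilde E(\gamma)$ lying in the sheet $\{-\pi/4<\arg\gamma<3\pi/4\}$; if instead $\arg\gamma\notin(-\pi/4,3\pi/4)$ then $\gamma=-\nu_2(\lambda)=\nu_6(\lambda)$ by \eqref{eqn:mu_def}, so $\widetilde E(\gamma)$ is the continuation onto the second sheet of the Riemann surface and carries no information about $E(\lambda)$. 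This is the stated equivalence with $E(\lambda)=0$. The final clause then follows at once from Proposition~\ref{prop:ev_prop}: when in addition $\lambda\in\Omega$, one has $E(\lambda)=0$ if and only if $\lambda$ is an eigenvalue of $\mathscr L$.

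For $\lambda_\mathrm{br}=-\i\omega$ or $-\i s$ I would use instead $\gamma^2=-\i(\lambda-\lambda_\mathrm{br})$, so that $\omega-\i\lambda=\gamma^2$ (resp.\ $s-\i\lambda=\gamma^2$) makes $\nu_1$ (resp.\ $\nu_3$) analytic in $\gamma$, and repeat the same three steps, the admissible sector for $\gamma$ now being the one prescribed by \eqref{eqn:br_cut} for $\nu_1$ (resp.\ $\nu_3$), namely $-3\pi/4<\arg\gamma<\pi/4$. The only genuinely delicate point in the whole argument is the comparison step: one must verify that the choice $\nu_2=\gamma$ (and not $\nu_2=-\gamma$) is precisely what continues the $\lambda$-Jost solutions analytically across the sector $\{-\pi/4<\arg\gamma<3\pi/4\}$, and that crossing its boundary amounts to the interchange $\nu_2\leftrightarrow\nu_6$, i.e.\ to passing onto the other sheet of the cover. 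Everything else is a direct rereading of Lemma~\ref{lem:Jost_sol} and Proposition~\ref{prop:ev_prop}.
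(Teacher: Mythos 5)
Your argument is correct and matches the paper's approach, which constructs $\widetilde E(\gamma)$ by rerunning Lemma~\ref{lem:Jost_sol} in the $\gamma$-variable (where all eigenvalues of $A_\infty$ are analytic) and then asserts the proposition follows immediately from Proposition~\ref{prop:ev_prop}. Your comparison of the two parametrizations, identifying $\gamma=\nu_2(\lambda)$ (resp.\ $\nu_4$, $\nu_1$, $\nu_3$) on the sector prescribed by \eqref{eqn:br_cut}, is exactly the step the paper leaves implicit.
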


%\begin{proof}
%	The first and second claims immediately follow from the definition of $\widetilde E(\gamma)$ and our choice of the branches of $\sqrt{\omega+\i\lambda}$ and $\sqrt{s+\i\lambda}$.
%	Applying Proposition \ref{prop:ev_prop}, we obtain the third claim.
%\end{proof}

%A resonance pole is a complex number $\lambda$ such that the eigenvalue problem \eqref{eqn:eigprob} has a solution $\psi$ which does not belong to $L^2(\Rset,\Cset^4)$.
%A resonance pole is a zero $\lambda\in\Cset$ of the Evans function such that it is not an eigenvalue of \eqref{eqn:eigprob}.
%More precisely, we make the following definition.
%
%\begin{dfn}\label{def:resonance}
%	$\lambda_0\in\Cset$ is called a \emph{resonance pole} of the linearized operator $\mathscr L$ if $\lambda_0$ is not an eigenvalue of $\mathscr L$ but at least one of the following statements holds:
%	\begin{enumerate}
%		\setlength{\leftskip}{-1.6em}
%		\item $\lambda_0$ is a zero of the Evans function $E(\lambda)$;
%		\item there exist $\lambda_\mathrm{br}\in\{\i\omega,\i s\}$ and $\gamma_0\in\Cset$ such that $\gamma^2_0=\i(\lambda_0-\lambda_\mathrm{br})$ and $\gamma_0$ is a zero of the extended Evans function $\widetilde E(\gamma)$;
%		\item there exist $\lambda_\mathrm{br}\in\{-\i\omega,-\i s\}$ and $\gamma_0\in\Cset$ such that $\gamma^2_0=-\i(\lambda_0-\lambda_\mathrm{br})$ and $\gamma_0$ is a zero of $\widetilde E(\gamma)$.
%	\end{enumerate}
%\end{dfn}

We easily see that \eqref{eqn:eigprob} has a bounded non-decaying solution only if $\lambda\in\sigma_\mathrm{ess}(\mathscr L)$ is a resonance pole.

% **********************************************************
% Subsection 4.4
% **********************************************************

\subsection{Parameter-Dependent Solitary Waves}\label{sec:Evans2}

We now consider the case in which the homoclinic solution to \eqref{eqn:UV} smoothly depends on a small parameter $\epsilon$ and write the linearized operator $\mathscr L$ as $\mathscr L_\epsilon$.
%We also write $\mathscr L_0$ for $\mathscr L_\epsilon$ when $\epsilon=0$.
%In the following, we describe how to use the Evans function
% to estimate the point spectrum $\sigma_\mathrm{p}(\mathscr L_\epsilon)$
% when the spectrum $\sigma(\mathscr L_0)$ is known.
Let $E(\lambda,\epsilon)$ and $\widetilde E(\gamma,\epsilon)$,
 respectively,  denote the original and extended Evans functions.
From Propositions~\ref{prop:ev_prop} and \ref{prop:ev_gamma_prop}
 we obtain the following.
 
\begin{prop}\label{prop:E_form}
	Suppose that the Evans function $E(\lambda,0)$ has a simple zero at $\lambda=\lambda_0\in\Omega_\e$.
	Then there exists a unique smooth function $\lambda(\epsilon)$ near $\epsilon=0$ such that $E(\lambda(\epsilon),\epsilon)=0$ and $\lambda(0)=\lambda_0$.
	Moreover,
	\begin{equation}
		\begin{split}
		&
		\partial_\epsilon\lambda(0)
		=-\frac{\partial_\epsilon E(\lambda_0,0)}{\partial_\lambda E(\lambda_0,0)},\\
		&
		\partial_\epsilon^2\lambda(0)
		=-\frac{\partial_\lambda^2E(\lambda_0,0)\partial_\epsilon\lambda(0)^2
		+2\partial_\lambda\partial_\epsilon E(\lambda_0,0)\partial_\epsilon\lambda(0)
		+\partial_\epsilon^2E(\lambda_0,0)}{\partial_\lambda E(\lambda_0,0)}.
		\end{split}
		\label{eqn:dlambda}
	\end{equation}
	If $\lambda(\epsilon)\in\Omega$, then it is an eigenvalue of $\mathscr L_\epsilon$.
\end{prop}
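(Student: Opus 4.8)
The plan is to prove Proposition~\ref{prop:E_form} as a direct application of the analytic implicit function theorem, followed by two rounds of differentiation to extract the derivative formulas, and finally a short argument using Proposition~\ref{prop:ev_prop} to upgrade a zero of the Evans function to an eigenvalue of $\mathscr L_\epsilon$.

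First I would establish the existence and uniqueness of $\lambda(\epsilon)$. By Lemma~\ref{lem:Jost_sol}, the Jost solutions $Y_j^\pm(x,\lambda,\epsilon)$ can be chosen to depend analytically on $\lambda\in\Omega_\e$ and smoothly on $\epsilon$ near $\epsilon=0$, so the Evans function $E(\lambda,\epsilon)$ defined by \eqref{eqn:def_evans} inherits these regularity properties on $\Omega_\e\times(-\epsilon_0,\epsilon_0)$. Since $\lambda_0$ is a \emph{simple} zero of $E(\cdot,0)$, Proposition~\ref{prop:ev_prop} gives $\partial_\lambda E(\lambda_0,0)\ne0$. The analytic implicit function theorem then yields a unique smooth function $\lambda(\epsilon)$ near $\epsilon=0$ with $E(\lambda(\epsilon),\epsilon)=0$ and $\lambda(0)=\lambda_0$, and by shrinking the neighborhood we may ensure $\lambda(\epsilon)$ is the only zero of $E(\cdot,\epsilon)$ in a fixed neighborhood of $\lambda_0$ for all small $\epsilon$.

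Next I would derive \eqref{eqn:dlambda}. Differentiating the identity $E(\lambda(\epsilon),\epsilon)\equiv0$ once with respect to $\epsilon$ and evaluating at $\epsilon=0$ gives $\partial_\lambda E(\lambda_0,0)\,\partial_\epsilon\lambda(0)+\partial_\epsilon E(\lambda_0,0)=0$, which rearranges to the first formula. Differentiating a second time, using the chain rule on the composite $\epsilon\mapsto E(\lambda(\epsilon),\epsilon)$, yields
\[
\partial_\lambda^2E\,(\partial_\epsilon\lambda(0))^2+2\partial_\lambda\partial_\epsilon E\,\partial_\epsilon\lambda(0)+\partial_\lambda E\,\partial_\epsilon^2\lambda(0)+\partial_\epsilon^2E=0
\]
at $(\lambda_0,0)$; solving for $\partial_\epsilon^2\lambda(0)$ gives the second formula. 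These are routine computations once the regularity is in place.

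Finally, the upgrade from a zero of $E(\cdot,\epsilon)$ to an eigenvalue of $\mathscr L_\epsilon$: if $\lambda(\epsilon)\in\Omega$, then since $\Omega$ is in the resolvent-type region where Proposition~\ref{prop:ev_prop} (its analogue for $\mathscr L_\epsilon$) applies, $E(\lambda(\epsilon),\epsilon)=0$ is both necessary and sufficient for $\lambda(\epsilon)$ to be an eigenvalue of $\mathscr L_\epsilon$, completing the proof. The main obstacle I anticipate is the regularity bookkeeping: one must verify that the particular choice of $\epsilon$-dependent Jost solutions (coming from Lemma~\ref{lem:Jost_sol} applied to the $\epsilon$-family) really is jointly smooth in $(\lambda,\epsilon)$ and that $\Omega_\e$ can be taken uniform in $\epsilon$ near $0$ — but since the decay rate $\kappa$ in \eqref{eqn:UV_expdecay} varies continuously with $\epsilon$ and the coefficient matrix $A(x,\lambda,\epsilon)$ depends smoothly on all arguments with exponentially decaying $x$-dependence toward its limit, this follows from the standard construction underlying Lemma~\ref{lem:Jost_sol}, and no genuinely new difficulty arises.
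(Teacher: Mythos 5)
Your proof is correct and follows the same route as the paper: the paper's own proof is just the one-line remark ``Applying the implicit function theorem to $E(\lambda,\epsilon)=0$, we obtain the first and second parts. The last part immediately follows from Proposition \ref{prop:ev_prop}.'' Your version merely spells out the chain-rule computations and the regularity bookkeeping that the paper leaves implicit; the substance is identical.
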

%where the prime represents differentiation with respect to $\epsilon$ and the right hand sides are estimated at $(\lambda,\epsilon)=(\lambda_0,0)$.
%The explicit computations of $\lambda'(0)$ and $\lambda''(0)$ are done in Section 5.2.
%We also have the following formulas for derivatives of $E(\lambda,\epsilon)$, as in section 4 of \cite{LP00}.

\begin{proof}
	Applying the implicit function theorem to $E(\lambda,\epsilon)=0$,
%	\begin{align*}
%		E(\lambda,\epsilon)=0,\quad
%		\partial_\lambda E(\lambda_0,0)\ne0,
%	\end{align*}
	we obtain the first and second parts.
	The last part immediately follows from Proposition \ref{prop:ev_prop}.
\end{proof}

\begin{prop}\label{prop:gamma_move}
	Let $\lambda_\mathrm{br}=\i\omega$ or $\i s$ (resp.\ $-\i\omega$ or $-\i s$).
	Suppose that $\widetilde E(\gamma,0)$ has a simple zero at $\gamma=0$.
	Then there exists a unique smooth function $\gamma(\epsilon)$ near $\epsilon=0$ such that $\widetilde E(\gamma(\epsilon),\epsilon)=0$ and $\gamma(0)=0$.
%	Let $\epsilon>0$ be sufficiently small.
%	Then the following statements hold.
%	\begin{itemize}
%		\item[(i)]
%			Assume that $\partial_\epsilon\gamma(0)\ne0$.
%			If $\Re\partial_\epsilon\gamma(0)>0$ and $\Im\partial_\epsilon\gamma(0)>0$, then there exists a zero of $E(\lambda,\epsilon)$ near $\lambda=\lambda_\mathrm{br}$ in the right half plane, and the associated solitary wave \eqref{eqn:soliton2} is linearly unstable, for $0<\epsilon\ll1$.
%			Conversely, if $\Re\partial_\epsilon\gamma(0)<0$ or $\Im\partial_\epsilon\gamma(0)<0$, then there exist no such a zero for $0<\epsilon\ll1$.
%		\item[(ii)]
%			Assume that $\partial_\epsilon\gamma(0)=0$ and $\partial_\epsilon^2\gamma(0)\ne0$.
%			If $\Re\partial_\epsilon^2\gamma(0)>0$ and $\Im\partial_\epsilon^2\gamma(0)>0$, then there exists a zero of $E(\lambda,\epsilon)$ near $\lambda=\lambda_\mathrm{br}$ in the right half plane, and the associated solitary wave \eqref{eqn:soliton2} is linearly unstable, for $0<|\epsilon|\ll1$.
%			Conversely, if $\Re\partial_\epsilon^2\gamma(0)<0$ or $\Im\partial_\epsilon^2\gamma(0)<0$, then there exist no such a zero for $0<|\epsilon|\ll1$.
%	\end{itemize}
	Moreover,
	\begin{align}
		\begin{aligned}
			&\partial_\epsilon\gamma(0)
			=-\frac{\partial_\epsilon\widetilde E(0,0)}{\partial_\gamma\widetilde E(0,0)},\\
			&\partial_\epsilon^2\gamma(0)
			=-\frac{\partial_\gamma^2\widetilde E(0,0)\partial_\epsilon\gamma(0)^2
			+2\partial_\gamma\partial_\epsilon\widetilde E(0,0)\partial_\epsilon\gamma(0)
			+\partial_\epsilon^2\widetilde E(0,0)}{\partial_\gamma\widetilde E(0,0)}.
		\end{aligned}
		\label{eqn:dgamma}
	\end{align}
	If $\Re\gamma(\epsilon)>0$ and $\Im\gamma(\epsilon)\ge0$ (resp.\ $\Re\gamma(\epsilon)>0$ and $\Im\gamma(\epsilon)\le0$), then $\lambda_\mathrm{br}-\i\gamma(\epsilon)^2$ (resp.\ $\lambda_\mathrm{br}+\i\gamma(\epsilon)^2$) is a zero of $E(\lambda,\epsilon)$.
	Especially, if $\Re\gamma(\epsilon)>0$ and $\Im\gamma(\epsilon)>0$ (resp.\ $\Re\gamma(\epsilon)>0$ and $\Im\gamma(\epsilon)<0$), then the zero is an eigenvalue of $\mathscr L_\epsilon$ with a positive real part.
	If $\Re\gamma(\epsilon)<0$ or $\Im\gamma(\epsilon)<0$ (resp.\ $\Re\gamma(\epsilon)<0$ or $\Im\gamma(\epsilon)>0$), then $\mathscr L_\epsilon$ has no eigenvalue near $\lambda=\lambda_\mathrm{br}$.
%	If $\Re\gamma(\epsilon)>0$ and $\Im\gamma(\epsilon)>0$ (resp.\ $\Re\gamma(\epsilon)>0$ and $\Im\gamma(\epsilon)<0$), then there exists a zero of $E(\lambda,\epsilon)$ near $\lambda=\lambda_\mathrm{br}$ on the open right half plane.
%	If $\Re\gamma(\epsilon)>0$ and $\Im\gamma(\epsilon)=0$, then there exists a zero of $E(\lambda,\epsilon)$ near $\lambda=\lambda_\mathrm{br}$ on the half line $\lambda_\mathrm{br}+\i(-\infty,0)$ (resp.\ $\lambda_\mathrm{br}+\i(0,+\infty)$).
%	Lastly, if $\Re\gamma(\epsilon)<0$ or $\Im\gamma(\epsilon)<0$ ($\Re\gamma(\epsilon)<0$ or $\Im\gamma(\epsilon)>0$), then $\mathscr L_\epsilon$ has no eigenvalues near $\lambda=\lambda_\mathrm{br}$.
\end{prop}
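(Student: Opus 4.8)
The plan is to mirror the proof of Proposition~\ref{prop:E_form}, but with the extended Evans function $\widetilde E(\gamma,\epsilon)$ in place of $E(\lambda,\epsilon)$, and then to convert information about the zero $\gamma(\epsilon)$ into information about zeros of $E(\lambda,\epsilon)$ and about $\sigma(\mathscr L_\epsilon)$ by invoking Propositions~\ref{prop:ev_prop} and \ref{prop:ev_gamma_prop}. First, since $\gamma=0$ is a simple zero of $\widetilde E(\gamma,0)$, we have $\partial_\gamma\widetilde E(0,0)\ne0$, so the implicit function theorem applied to $\widetilde E(\gamma,\epsilon)=0$ produces a unique smooth function $\gamma(\epsilon)$ near $\epsilon=0$ with $\widetilde E(\gamma(\epsilon),\epsilon)=0$ and $\gamma(0)=0$. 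Differentiating the identity $\widetilde E(\gamma(\epsilon),\epsilon)\equiv0$ once and twice in $\epsilon$ and evaluating at $\epsilon=0$ yields the two formulas in \eqref{eqn:dgamma}, exactly as in the proof of Proposition~\ref{prop:E_form}.

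Next I would treat the case $\lambda_\mathrm{br}\in\{\i\omega,\i s\}$ (for which $\lambda=\lambda_\mathrm{br}-\i\gamma^2$), the case $\lambda_\mathrm{br}\in\{-\i\omega,-\i s\}$ being identical after replacing $-\i\gamma^2$ by $+\i\gamma^2$ and reversing the inequalities on $\Im\gamma(\epsilon)$. Suppose $\Re\gamma(\epsilon)>0$ and $\Im\gamma(\epsilon)\ge0$; then $\arg\gamma(\epsilon)\in[0,\pi/2)\subset(-\pi/4,3\pi/4)$, and for $\epsilon$ small the point $\lambda(\epsilon)\defeq\lambda_\mathrm{br}-\i\gamma(\epsilon)^2$ lies in $\Omega_\e$, so Proposition~\ref{prop:ev_gamma_prop} gives $E(\lambda(\epsilon),\epsilon)=0$. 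If moreover $\Im\gamma(\epsilon)>0$, then $\arg\gamma(\epsilon)^2\in(0,\pi)$, hence $\Re\lambda(\epsilon)=\Re(-\i\gamma(\epsilon)^2)=\Im\gamma(\epsilon)^2>0$, so $\lambda(\epsilon)\in\Omega$, and the last assertion of Proposition~\ref{prop:ev_gamma_prop}, together with Proposition~\ref{prop:ev_prop}, shows that $\lambda(\epsilon)$ is an eigenvalue of $\mathscr L_\epsilon$, necessarily with positive real part.

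For the final claim I would argue by contradiction. Assume $\Re\gamma(\epsilon)<0$ or $\Im\gamma(\epsilon)<0$, and suppose $\mathscr L_\epsilon$ had an eigenvalue near $\lambda_\mathrm{br}$. By the symmetry of $\sigma(\mathscr L_\epsilon)$ (if $\lambda\in\sigma(\mathscr L_\epsilon)$ then $-\lambda,\pm\lambda^*\in\sigma(\mathscr L_\epsilon)$), some eigenvalue $\lambda_1$ of $\mathscr L_\epsilon$ with $\Re\lambda_1\ge0$ then lies near $\lambda_\mathrm{br}$; such a $\lambda_1$ belongs to $\Omega\cup\sigma_\mathrm{ess}(\mathscr L_\epsilon)$, so by Propositions~\ref{prop:ev_prop} and \ref{prop:ev_gamma_prop} we get $\widetilde E(\gamma_1,\epsilon)=0$, where $\gamma_1$ is the preimage of $\lambda_1$ with $\arg\gamma_1\in(-\pi/4,3\pi/4)$. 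From $\gamma_1^2=\i(\lambda_1-\lambda_\mathrm{br})$ and $\Re\lambda_1\ge0$ one has $\arg\gamma_1^2\in[0,\pi]$, hence $\arg\gamma_1\in[0,\pi/2]$, i.e.\ $\Re\gamma_1\ge0$ and $\Im\gamma_1\ge0$. Since $\gamma_1$ is near $0$, the uniqueness part of the implicit function theorem forces $\gamma_1=\gamma(\epsilon)$, contradicting $\Re\gamma(\epsilon)<0$ or $\Im\gamma(\epsilon)<0$. Hence $\mathscr L_\epsilon$ has no eigenvalue near $\lambda_\mathrm{br}$.

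I expect the main obstacle to be the careful bookkeeping of the branch cuts \eqref{eqn:br_cut} near the four branch points $\pm\i\omega,\pm\i s$: verifying, case by case, that the sign conditions on $(\Re\gamma(\epsilon),\Im\gamma(\epsilon))$ correspond precisely to $\lambda_\mathrm{br}\mp\i\gamma(\epsilon)^2$ lying in the sector $-\pi/4<\arg\gamma<3\pi/4$ and (for the eigenvalue statements) in $\Omega$ rather than in $\sigma_\mathrm{ess}(\mathscr L_\epsilon)$, where a zero of $E(\lambda,\epsilon)$ might instead correspond to an embedded eigenvalue or a resonance pole, together with the observation that an eigenvalue near $\lambda_\mathrm{br}$ forces, via the symmetry of the spectrum, an eigenvalue with $\Re\lambda\ge0$ near the same branch point. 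Everything else reduces to the same implicit-function-theorem manipulation used in Proposition~\ref{prop:E_form}.
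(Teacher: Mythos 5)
Your proposal is correct and follows essentially the same path as the paper's own proof: the implicit function theorem yields $\gamma(\epsilon)$ and the derivative formulas, the sign conditions on $(\Re\gamma,\Im\gamma)$ are translated via $\Re\lambda(\epsilon)=2\Re\gamma(\epsilon)\Im\gamma(\epsilon)$ and the branch restriction $\arg\gamma\in(-\pi/4,3\pi/4)$ into zeros or eigenvalues through Propositions~\ref{prop:ev_prop} and \ref{prop:ev_gamma_prop}, and the final nonexistence claim is proved by the same contradiction using the symmetry of $\sigma(\mathscr L_\epsilon)$ to reduce to $\Re\lambda_1\ge0$. Your version merely spells out more explicitly the argument bookkeeping for $\gamma_1$ and the appeal to uniqueness from the implicit function theorem, which the paper leaves implicit.
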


\begin{proof}
	The first and second parts immediately follow from the implicit function theorem.
	Let $\lambda_\mathrm{br}=\i\omega$ or $\i s$ and let $\lambda(\epsilon)=\lambda_\mathrm{br}-\i\gamma(\epsilon)^2$.
	We prove the remaining parts.
	Since $\Re\lambda(\epsilon)=2\,\Re\gamma(\epsilon)\,\Im\gamma(\epsilon)$ and $\Im\lambda(\epsilon)=\Im\lambda_\mathrm{br}-(\Re\gamma(\epsilon))^2+(\Im\gamma(\epsilon))^2$, we see that if $\Re\gamma(\epsilon)>0$ and $\Im\gamma(\epsilon)\ge0$, then $\lambda(\epsilon)$ is a zero of $E(\lambda,\epsilon)$ by Proposition \ref{prop:ev_gamma_prop}.
	On the other hand, assume that $\Re\gamma(\epsilon)<0$ or $\Im\gamma(\epsilon)<0$ and that $\mathscr L_\epsilon$ has an eigenvalue $\lambda_1(\epsilon)$ near $\lambda=\lambda_\mathrm{br}$.
	By the symmetry of $\sigma(\mathscr L_\epsilon)$, we can assume $\Re\lambda_1(\epsilon)\ge0$.
	Then $\lambda_1(\epsilon)$ is a zero of $E(\lambda,\epsilon)$ by Proposition \ref{prop:ev_prop}.
	However, $E(\lambda,\epsilon)$ does not have such a zero by Proposition \ref{prop:ev_gamma_prop}.
	This yields a contradiction so that $\mathscr L_\epsilon$ has no eigenvalue near $\lambda=\lambda_\mathrm{br}$.
	Thus, we complete the proof.
\end{proof}

% **********************************************************
% Section 5
% **********************************************************

\section{Spectral Stability of the Fundamental Solitary Wave}\label{sec:fun_sol_stab}

%In this section, we focus on the special case $f(u,v)=(|u|^2+\beta_1|v|^2)u$, $g(u,v)=(\beta_1|u|^2+\beta_2|v|^2)v$, that is,
%\begin{align}
%	\left.
%		\begin{array}{c}
%			\i\partial_t u+\partial_x^2u+(|u|^2+\beta_1|v|^2)u=0,\\
%			\i\partial_t v+\partial_x^2v+(\beta_1|u|^2+\beta_2|v|^2)v=0,
%		\end{array}
%	\right.
%	\label{eqn:CNLS2}
%\end{align}
%where $\beta_1,\beta_2\in\Rset$.
In this section, we compute the location of eigenvalues of the linearized operator \eqref{eqn:lin_op}
 around the fundamental solitary wave \eqref{eqn:fun_soliton} by using the techniques of Section~4,
 and give a proof of Theorem~\ref{thm:fun_stab}.

% **********************************************************
% Subsection 5.1
% **********************************************************

%\subsection{Fundamental Solitary Wave}

First we fix $\mu$ at some value which is not necessarily zero, and consider the fundamental solitary wave \eqref{eqn:fun_soliton}.
The linearized operator \eqref{eqn:lin_op} becomes
\begin{align}
	\mathscr L_0=&-\i\Sigma_3\left[-\partial_x^2I_4+\begin{pmatrix}\omega I_2&O_2\\O_2&sI_2\end{pmatrix}\right.\nonumber\\
	&\left.-\begin{pmatrix}\partial_1^2F(U_0^2,0)U_0^2(I_2+\sigma_1)+\partial_1F(U_0^2,0)I_2&O_2\\O_2&\partial_2F(U_0^2,0;\mu)I_2\end{pmatrix}\right].
	\label{eqn:L0}
\end{align}
%In this case, the eigenfunction $\varphi_3(x)$, which is given in \eqref{eqn:zero_eigfun} and associated to the zero eigenvalue, vanishes since $V\equiv0$.
%In this case, since an eigenfunction obtained from $T_3$ degenerates, we expect that the generalized kernel of $\mathscr L_0$ is of dimension four generically.
%We compute the Evans function and estimate $\dim\gKer\mathscr L_0$.
%Consider the eigenvalue problem \eqref{eqn:eigprob} for $\mathscr L_0$.
The coefficient matrix $A(x,\lambda)$ in \eqref{eqn:eigprob2} becomes
\begin{align}
	A_0(x,\lambda)=
	\left(
		\begin{array}{c|c}
			O_4&I_4\\\hline
			\begin{matrix}A_{01}(x,\lambda)&O_2\\O_2&A_{02}(x,\lambda)\end{matrix}&O_4
		\end{array}
	\right)
	\label{eqn:A0}
\end{align}
with
\begin{align*}
	&A_{01}(x,\lambda)=\omega I_2-\partial_1^2F(U_0^2,0)U_0^2(I_2+\sigma_1)-\partial_1F(U_0^2,0)I_2-\i\lambda\sigma_3,\\
	&A_{02}(x,\lambda)=sI_2-\partial_2F(U_0^2,0;\mu)I_2-\i\lambda\sigma_3.
\end{align*}
As stated in Section~2,
%Since $\mathscr L_0$ is block diagonal,
the eigenvalue problem \eqref{eqn:eigprob} is separated into three parts: (A) the first and second components; (B) the fourth component; and (C) the third component.
In the following we treat the three parts separately and obtain expressions of the Jost functions and Evans functions given by \eqref{eqn:def_evans}.

\subsubsection*{\rm\bf (A)\nopunct}
The first and second components of \eqref{eqn:eigprob} become \eqref{eqn:lin_eq_1},
which are the same as the well-understood eigenvalue problem for $u=\e^{\i\omega t}U_0(x)$ in the single nonlinear Schr\"odinger equation with the nonlinearity $\partial_1F(|u|^2,0)u$.
We summarize fundamental spectral properties of \eqref{eqn:lin_eq_1} as follows
(see, e.g., Section~5.3.1 of \cite{Ya10} for the proof).

\begin{prop}\label{prop:evp_A}
	All eigenvalues of \eqref{eqn:lin_eq_1} are either real or purely imaginary.
	The zero eigenvalue is of geometric multiplicity two and of algebraic multiplicity $\ge4$,
	where the equality holds if and only if $\partial_\omega\|U_0\|_{L^2}^2\ne0$.
	Moreover, \eqref{eqn:lin_eq_1} has a pair of positive and negative eigenvalues with the same modulus if $\partial_\omega\|U_0\|_{L^2}^2<0$ and has no nonzero real eigenvalues if $\partial_\omega\|U_0\|_{L^2}^2>0$.
\end{prop}

Let $(p_j,q_j)$, $j=1,2,5,6$, be the Jost solutions to \eqref{eqn:lin_eq_1}
 satisfying \eqref{eqn:caseA_asym}, as in Section~2.2.
So $(p_j,q_j,p_j',q_j')$, $j=1,2,5,6$, are the Jost solutions
 for the associated components of \eqref{eqn:eigprob2}  satisfying \eqref{eqn:caseA_asym}.
Note that
\begin{align*}
	(1,0,\nu_1,0)^\T,\quad
	(0,1,0,\nu_2)^\T,\quad
	(1,0,-\nu_1,0)^\T,\quad
	(0,1,0,-\nu_2)^\T
\end{align*}
are eigenvectors of
\begin{align*}
	\begin{pmatrix}O_2&I_2\\{\displaystyle\lim_{x\to\pm\infty}A_{01}(x,\lambda)}&O_2\end{pmatrix}
\end{align*}
for the eigenvalues $\nu_1,\nu_2,-\nu_1$, and $-\nu_2$, respectively.
In addition, the Jost solutions are not necessarily unique.
We take a particular choice in Section~6.
The contribution of this part to the Evans function is written as $E_\mathrm{A}(\lambda)$
 given by \eqref{eqn:E1}.
Note that $E_\mathrm{A}(\lambda)$ does not depend on $\mu$.

\subsubsection*{\rm\bf(B)\nopunct}
The fourth component of \eqref{eqn:eigprob} becomes \eqref{eqn:lin_eq_3}.
%By the Sturm-Liouville theory (see, e.g., Section~2.3.2 of \cite{KaPr13}) \eqref{eqn:lin_eq_3} has only a finite number of eigenvalues, all of which are simple and belong to $\i(-\infty,s)$.
Let $p_j$, $j=4,8$, be the Jost solutions \eqref{eqn:lin_eq_3}
 satisfying \eqref{eqn:caseC_asym}, as in Section~2.2.
So $(p_j,p_j')$, $j=4,8$, are the Jost solutions
 for the associated components of \eqref{eqn:eigprob2} satisfying \eqref{eqn:caseC_asym}.
For $\lambda\in\Omega_4$ (see \eqref{eqn:Omega}) the Jost solutions are unique since $\Re\nu_4(\lambda)>0$.
%We easily see that $p_8(x,\lambda)=p_4(-x,\lambda)$.
The contribution of this part to the Evans function is represented
 as $E_\mathrm{B}(\lambda)$ defined in \eqref{eqn:E3}.
By \textbf{(A3)}, the zero eigenvalue of \eqref{eqn:lin_eq_3} is simple at $\mu=0$,
 so that $\lambda=0$ is a simple zero of $E_\mathrm{B}(\lambda)$.

\subsubsection*{\rm\bf(C)\nopunct}
The third component of \eqref{eqn:eigprob} becomes \eqref{eqn:lin_eq_2},
 which has the same form as \eqref{eqn:lin_eq_3} by replacing $\lambda$ with $-\lambda$.
Hence, if $\lambda$ is an eigenvalue of \eqref{eqn:lin_eq_3}, then $-\lambda$ is an eigenvalue of \eqref{eqn:lin_eq_2}.
Let $p_j$, $j=3,7$, be the Jost solutions \eqref{eqn:lin_eq_2}
 satisfying \eqref{eqn:caseB_asym}, as in Section~2.2.
So $(p_j,p_j')$, $j=3,7$, are the Jost solutions
 for the associated components of \eqref{eqn:eigprob2} satisfying \eqref{eqn:caseB_asym}.
%\begin{align}
%		\lim_{x\to-\infty}\left\lvert\e^{-\nu_3x}\!\begin{pmatrix}p_3(x,\lambda)\\p_3'(x,\lambda)\end{pmatrix}-\begin{pmatrix}1\\\nu_3\end{pmatrix}\right\rvert
%		=\lim_{x\to+\infty}\left\lvert\e^{\nu_3x}\!\begin{pmatrix}p_7(x,\lambda)\\p_7'(x,\lambda)\end{pmatrix}-\begin{pmatrix}1\\-\nu_3\end{pmatrix}\right\rvert=0,
%		\label{eqn:caseB_asym}
%\end{align}
%and
%\begin{align*}
%	p_3(x,\lambda)=p_4(x,-\lambda),\quad
%	p_7(x,\lambda)=p_8(x,-\lambda),
%\end{align*}
%where $\nu_3=\nu_3(\lambda)$ is given in \eqref{eqn:mu_def}.
The contribution of this part to the Evans function is represented
 as $E_\mathrm{C}(\lambda)$ defined in \eqref{eqn:E2}.
%For $\lambda\in\Omega_3$ (see \eqref{eqn:Omega}) the Jost solutions are unique since $\Re\nu_3(\lambda)>0$.
%We easily see that $p_7(x,\lambda)=p_3(-x,\lambda)$.

\subsubsection*{\indent\nopunct}\hspace*{-0.8em}
Thus, the Evans function $E(\lambda,0)$ for $\mathscr L_0$ is represented
 as \eqref{eqn:E_unperturbed},
%We also see that $\dim\gKer\mathscr L_0=4$ or $6$ depending whether $\mu=0$ or not.
 and the Jost solutions $Y_{0j}^-(x,\lambda)$, $j=1,\ldots,4$,
 and $Y_{0j}^+(x,\lambda)$, $j=5,\ldots,8$, for $\mathscr L_0$ are given by
\begin{align}
	\begin{aligned}
		Y_{0j}^-(x,\lambda)&=(p_j,q_j,0,0,p_j',q_j',0,0)^\T(x,\lambda),\quad j=1,2,\\
		Y_{03}^-(x,\lambda)&=(0,0,p_3,0,0,0,p_3',0)^\T(x,\lambda),\\
		Y_{04}^-(x,\lambda)&=(0,0,0,p_4,0,0,0,p_4')^\T(x,\lambda),\\
		Y_{0j}^+(x,\lambda)&=(p_j,q_j,0,0,p_j',q_j',0,0)^\T(x,\lambda),\quad j=5,6,\\
		Y_{07}^+(x,\lambda)&=(0,0,p_7,0,0,0,p_7',0)^\T(x,\lambda),\\
		Y_{08}^+(x,\lambda)&=(0,0,0,p_8,0,0,0,p_8')^\T(x,\lambda).
	\end{aligned}
	\label{eqn:Jost_0}
\end{align}
The other Jost solutions $Y_{0j}^+$, $j=1,\ldots,4$, and $Y_{0j}^-$, $j=5,\ldots,8$, are assumed to have the same forms as $Y_{0j}^-$, $j=1,\ldots,4$, and $Y_{0j}^+$, $j=5,\ldots,8$, respectively, in \eqref{eqn:Jost_0}.
So the Jost solutions $Y_{0j}^\pm(x,\lambda)$, $j=1,\ldots,8$, satisfy \eqref{eqn:Y_asymp}
 with the eigenvectors,
\begin{align*}
	&v_1=(1,0,0,0,\nu_1,0,0,0)^\T,\quad
	v_2=(0,1,0,0,0,\nu_2,0,0)^\T,\\
	&v_3=(0,0,1,0,0,0,\nu_3,0)^\T,\quad
	v_4=(0,0,0,1,0,0,0,\nu_4)^\T,\\
	&v_5=(1,0,0,0,-\nu_1,0,0,0)^\T,\quad
	v_6=(0,1,0,0,0,-\nu_2,0,0)^\T,\\
	&v_7=(0,0,1,0,0,0,-\nu_3,0)^\T,\quad
	v_8=(0,0,0,1,0,0,0,-\nu_4)^\T,
%(v_1,v_2,v_3,v_4,v_5,v_6,v_7,v_8)=\begin{pmatrix}1&0&0&0&1&0&0&0\\0&1&0&0&0&1&0&0\\0&0&1&0&0&0&1&0\\0&0&0&1&0&0&0&1\\\nu_1&0&0&0&-\nu_1&0&0&0\\0&\nu_2&0&0&0&-\nu_2&0&0\\0&0&\nu_3&0&0&0&-\nu_3&0\\0&0&0&\nu_4&0&0&0&-\nu_4\end{pmatrix}
%	\label{eqn:eigvec}
\end{align*}
of $A_\infty(\lambda)$ given by \eqref{eqn:A_infty}.
We now prove Theorem~\ref{thm:fun_stab}.
%We obtain the following result on the stability of the fundamental solitary wave \eqref{eqn:fun_soliton}.

\begin{proof}[Proof of Theorem~\ref{thm:fun_stab}]
	From the above arguments in parts (A), (B), and (C)
	we see that all eigenvalues of $\mathscr L_0$ are either real or purely imaginary.
	Moreover, by Proposition~\ref{prop:evp_A},
	the number of positive eigenvalues of $\mathscr L_0$ is zero or one,
	so that  the fundamental solitary wave \eqref{eqn:fun_soliton} is spectrally stable or unstable,
	depending on whether $\partial_\omega\|U_0\|_{L^2}^2$ is positive or negative.

	It remains to prove the orbital stability and instability of \eqref{eqn:fun_soliton}.
	We define $\mathcal L_\pm$ and $D$ for \eqref{eqn:fun_soliton}
	as in \eqref{eqn:L_pm} and \eqref{eqn:D2}, respectively, i.e.,
	\begin{align*}
		&\mathcal L_-=
		\begin{pmatrix}
			-\partial_x^2+\omega-\partial_1F(U_0^2,0) & 0\\
			0 & -\partial_x^2+s-\partial_2F(U_0^2,0;\mu)
		\end{pmatrix},\\
		&\mathcal L_+=
		\begin{pmatrix}
			-\partial_x^2+\omega-\partial_1F(U_0^2,0)-2\partial_1^2F(U_0^2,0)U_0^2 & 0\\
			0 & -\partial_x^2+s-\partial_2F(U_0^2,0;\mu)
		\end{pmatrix},\\
		&D=\begin{pmatrix}
			\|U_0\|_{L^2}^2/2&0\\
			0&-\partial_\omega\|U_0\|_{L^2}^2
		\end{pmatrix}.
	\end{align*}
	Let $\mathrm n(A)$ be the number of negative eigenvalues for a self-adjoint operator $A$.
	Since
	\begin{align*}
		&\mathcal L_{-11}U_0=(-\partial_x^2+\omega-\partial_1F(U_0^2,0))U_0=0,\\
		&\mathcal L_{+11}U_0'
		=(-\partial_x^2+\omega-\partial_1F(U_0^2,0)-2\partial_1^2F(U_0^2,0)U_0^2)U_0'=0,
	\end{align*}
	the Sturm-Liouville theory (e.g., Section~2.3.2 of \cite{KaPr13}) says that
		\begin{align}
		\begin{aligned}
		\mathrm n(\mathcal L_{-11})=0,\quad
		\mathrm n(\mathcal L_{+11})=1.
%		&\mathrm n(-\partial_x^2+\omega-\partial_1F(U_0^2,0))=0,\\
%		&\mathrm n(-\partial_x^2+\omega-\partial_1F(U_0^2,0)-2\partial_1^2F(U_0^2,0)U_0^2)=1.
		\end{aligned}
		\label{eqn:KHindex_1}
	\end{align}
	Recall that $\mathcal L_{\pm ij}$ is the $(i,j)$-components of $\mathcal L_\pm$ for $i,j=1,2$.
	On the other hand, for $s>0$ sufficiently large, by Remark~\ref{rmk:U0V1} (i) we see that
	\begin{align*}
		%\inf_{\substack{\psi\in H^2\\\|\psi\|_{L^2}=1}}\langle(-\partial_x^2+s-\partial_2F(U_0^2,0;\mu))\psi,\psi\rangle_{L^2}
		\inf_{\substack{\psi\in H^2\\\|\psi\|_{L^2}=1}}\langle\mathcal L_{\pm22}\psi,\psi\rangle_{L^2}
		\ge s-\sup_{0\le\zeta\le\zeta_0}|\partial_2F(\zeta^2,0;\mu)|>0,
	\end{align*}
	which implies
	\begin{align}
		\mathrm n(\mathcal L_{\pm22})=0.
		\label{eqn:KHindex_2}
	\end{align}
	From \eqref{eqn:KHindex_1} and \eqref{eqn:KHindex_2} we obtain $K_\mathrm{Ham}=\mathrm n(\mathcal L_-)+\mathrm n(\mathcal L_+)-\mathrm n(D)=0$ if $\partial_\omega\|U_0\|_{L^2}^2>0$ and $K_\mathrm{Ham}=1$ if $\partial_\omega\|U_0\|_{L^2}^2<0$.
	Using Theorems \ref{thm:KHam} and \ref{thm:orb} along with Remark \ref{rmk:D0},
	we obtain the desired result.
\end{proof}

\begin{rmk}\label{rmk:krein_sig}
	Let $\lambda_0\in\i\Rset$ be a simple eigenvalue of $\mathscr L_0$ which is a zero of $E_\mathrm{B}(\lambda)$.
	We compute the Krein signature of $\lambda_0$.
	Let $\Psi\in L^2(\Rset)$ be the corresponding eigenfunction of \eqref{eqn:lin_eq_3} so that $(0,0,0,\Psi)^\T$ is an eigenfunction of $\mathscr L_0$ associated to $\lambda_0$.
	Using the transformation \eqref{eqn:to_reim} we see that $(0,\Psi/2,0,\i\Psi/2)^\T\in\Ker(J\mathcal L_0-\lambda_0)$, where $J\mathcal L_0$ is the linearized operator \eqref{eqn:JL_def} around the fundamental solitary wave \eqref{eqn:fun_soliton}.
	Hence, it follows from
%	\begin{align*}
%		&\langle(0,\Psi/2,0,\i\Psi/2)^\T,\mathcal L_0(0,\Psi/2,0,\i\Psi/2)^\T\rangle\\
%		&=\langle(0,\Psi/2,0,\i\Psi/2)^\T,-J^2\mathcal L_0(0,\Psi/2,0,\i\Psi/2)^\T\rangle\\
%		&=\lambda_0\langle(0,\Psi/2,0,\i\Psi/2)^\T,J(0,\Psi/2,0,\i\Psi/2)^\T\rangle\\
%		&=\frac{\Im\lambda_0}{2}\|\Psi\|_{L^2}^2,
%	\end{align*}
	\begin{align*}
		&\left\langle\begin{pmatrix}0\\\Psi/2\\0\\\i\Psi/2\end{pmatrix},\mathcal L_0\begin{pmatrix}0\\\Psi/2\\0\\\i\Psi/2\end{pmatrix}\right\rangle
		=\left\langle\begin{pmatrix}0\\\Psi/2\\0\\\i\Psi/2\end{pmatrix},-J^2\mathcal L_0\begin{pmatrix}0\\\Psi/2\\0\\\i\Psi/2\end{pmatrix}\right\rangle\\
		&=\lambda_0\left\langle\begin{pmatrix}0\\\Psi/2\\0\\\i\Psi/2\end{pmatrix},J\begin{pmatrix}0\\\Psi/2\\0\\\i\Psi/2\end{pmatrix}\right\rangle
		=\frac{\Im\lambda_0}{2}\|\Psi\|_{L^2}^2
	\end{align*}
	that $\lambda_0$ has a positive (resp.\ negative) Krein signature if it belongs to the upper (resp.\ lower) half plane.
\end{rmk}

% **********************************************************
% Section 6
% **********************************************************

\section{Spectral Stability of the Bifurcated Solitary Waves}

We next consider the bifurcated solitary waves \eqref{eqn:bif_soliton} near $\mu=0$
 and give the remaining proofs of the main results in Section~2 by using the techniques
 of Section~4. 
% We only treat the $+$ sign in \eqref{eqn:bif_UV} since the $-$ sign corresponds to the case of $\phi=\pi$ in \eqref{eqn:soliton} and the same stability result holds.
%Let $U_2(x)$ be the solution to \eqref{eqn:aprx1} such that $\lim_{x\to\pm\infty}U_2(x)=0$ and it is an even function.
%We can rewrite \eqref{eqn:bif_UV} with the $+$ sign as
%%Since $U_2(x)$ is a solution to \eqref{eqn:aprx1} with $\lim_{x\to\pm\infty}U_2(x)=0$, we can rewrite \eqref{eqn:bif_UV} with the $+$ sign as
%\begin{align}
%	(U_\epsilon(x),V_\epsilon(x))=(U_0(x)+\epsilon^2U_2(x)+O(\epsilon^4),\epsilon V_1(x)+O(\epsilon^3)),
%	\label{eqn:pfsoliton2}
%\end{align}
%more precisely.
%Recall the relation \eqref{eqn:mu_eps} and note that $\mu=0$ at $\epsilon=0$.

\subsection{Proof of Theorem~\ref{thm:bif_orb_stab}}
We begin with the following direct consequence
 of the Hamiltonian-Krein index theory described in Section~4.
%Moreover, using this proposition and Theorem~\ref{thm:orb} we immediately obtain Theorem~\ref{thm:bif_orb_stab}.

\begin{prop}
\label{prop:6a}
	Let $\ell$ be the number of zeros of $V_1(x)$.
	For $\epsilon>0$ sufficiently small, the Hamiltonian-Krein index for the bifurcated solitary waves \eqref{eqn:bif_soliton} are given by $K_\mathrm{Ham}=k_\mathrm r+2\ell$, where $k_\mathrm r=0$ if $\partial_\omega\|U_0\|_{L^2}^2>0$ and $k_\mathrm r=1$ if $\partial_\omega\|U_0\|_{L^2}^2<0$.
\end{prop}

\begin{proof}
%	The proof is based on Section 3.1 of \cite{PeYa05}.
	Let $\mathcal L_\pm$ and $D$ be given by \eqref{eqn:L_pm} and \eqref{eqn:D}, respectively, for the bifurcated solitary wave \eqref{eqn:bif_soliton}.
	By the Sturm-Liouville theory and the standard perturbation argument, we have
	\begin{align*}
		\mathrm n(\mathcal L_-)=\ell,\qquad
		\mathrm n(\mathcal L_+)=\begin{cases}
			\ell+1&\text{if $\bar b_2>0$};\\
			\ell+2&\text{if $\bar b_2<0$}
		\end{cases}
	\end{align*}
	(see Section 3.1 of \cite{PeYa05} and Section 6.1 of \cite{KaPr13} for details).
	On the other hand, substituting \eqref{eqn:bif_UV} into \eqref{eqn:D} and using the symmetry of $D$, we compute
	\begin{align*}
		&D_{22}=-\partial_\omega\|U_0\|_{L^2}^2+\frac{4\langle U_0,U_2\rangle_{L^2}^2}{\bar b_2}+O(\epsilon^2),\\
		&D_{23}=D_{32}=\frac{2\langle U_0,U_2\rangle_{L^2}\|V_1\|_{L^2}^2}{\bar b_2}+O(\epsilon^2),\quad
		D_{33}=\frac{\|V_1\|_{L^2}^4}{\bar b_2}+O(\epsilon^2).
	\end{align*}
	Thus, $\mathrm n(D)$ equals the number of negative elements in the set $\{-\partial_\omega\|U_0\|_{L^2}^2,\bar b_2\}$.
	Hence, $K_\mathrm{Ham}=\mathrm n(\mathcal L_-)+\mathrm n(\mathcal L_+)-\mathrm n(D)=k_\mathrm r+2\ell$ by Theorem~\ref{thm:KHam}.
	This completes the proof. 
\end{proof}

\begin{proof}[Proof of Theorem~\ref{thm:bif_orb_stab}]
Using Theorem~\ref{thm:orb} and Proposition~\ref{prop:6a},
 we immediately obtain the desired result. 
\end{proof}
%Note that Theorem \ref{thm:bif_orb_stab} says nothing about the stability of bifurcated solitary waves with $\ell>0$ when $\partial_\omega\|U_0\|_{L^2}^2>0$.
%The rest of this section is devoted to determination of their stability.
%Especially, we need to compute how purely imaginary eigenvalues of $\mathscr L_0$ change when $\epsilon>0$.

The rest of this section is devoted to introduce some notation to prove Theorems~\ref{thm:main_0} to \ref{thm:main_4}.
%Assume that $\partial_\omega\|U_0\|_{L^2}^2>0$.
Let $\mathscr L_\epsilon$ denote the linearized operator $\mathscr L$ defined by \eqref{eqn:lin_op} around the bifurcated solitary waves \eqref{eqn:bif_soliton}.
We have $\mathscr L_\epsilon=\mathscr L_0$ at $\epsilon=0$, where $\mathscr L_0$ is given by \eqref{eqn:L0} and has only eigenvalues on the imaginary axis by Theorem \ref{thm:fun_stab}.
The coefficient matrix $A(x,\lambda)$ in \eqref{eqn:eigprob2} is written as
\begin{align}
	A(x,\lambda,\epsilon)=A_0(x,\lambda)+\epsilon A_1(x)+\epsilon^2A_2(x)+O(\epsilon^3),
	\label{eqn:A_bif}
\end{align}
where $A_0(x,\lambda)$ is given by \eqref{eqn:A0} and
\begin{align}
	&A_1(x)=-a(x)\!\left(
		\begin{array}{c|c}
			O_4&O_4\\\hline
			\begin{matrix}O_2&I_2+\sigma_1\\I_2+\sigma_1&O_2\end{matrix}&O_4
		\end{array}
	\right),\label{eqn:A1_def}\\
	&A_2(x)=-\left(
		\begin{array}{c|c}
			O_4&O_4\\\hline
			\begin{matrix}A_{21}(x)&O_2\\O_2&A_{22}(x)\end{matrix}&O_4
		\end{array}
	\right)\nonumber
\end{align}
with
\begin{align}
	&a(x)=\partial_1\partial_2F(U_0(x)^2,0;0)U_0(x)V_1(x),\label{eqn:a_def}\\
	&A_{21}(x)=2\partial_1^2F(U_0(x)^2,0)U_0(x)U_2(x)(2I_2+\sigma_1)\nonumber\\
	&\qquad\qquad+\partial_1\partial_2F(U_0(x)^2,0;0)V_1(x)^2I_2\nonumber\\
	&\qquad\qquad+2\partial_1^3F(U_0(x)^2,0)U_0(x)^3U_2(x)(I_2+\sigma_1)\nonumber\\
	&\qquad\qquad+\partial_1^2\partial_2F(U_0(x)^2,0;0)U_0(x)^2V_1(x)^2(I_2+\sigma_1),\label{eqn:A21_def}\\
	&A_{22}(x)=\partial_2^2F(U_0(x)^2,0;0)V_1(x)^2(2I_2+\sigma_1)\nonumber\\
	&\qquad\qquad+\bigl(2\partial_1\partial_2F(U_0(x)^2,0;0)U_0(x)U_2(x)+\bar\mu\partial_2\partial_\mu F(U_0(x)^2,0;0)\bigr)I_2.\label{eqn:A22_def}
\end{align}
Let $Y_j^\pm(x,\lambda,\epsilon)$, $j=1,\ldots,8$, be the Jost solutions for $\mathscr L_\epsilon$which are given by Lemma \ref{lem:Jost_sol} %with eigenvectors \eqref{eqn:eigvec}, 
%We choose them such that $Y_j^-$, $j=1,2,3,4$, and $Y_j^+$, $j=5,6,7,8$, 
 and coincide with \eqref{eqn:Jost_0} when $\epsilon=0$, i.e., $Y_j^\pm(x,\lambda,0)=Y_{0j}^\pm(x,\lambda)$.
%Note that they smoothly depend on $\epsilon$ since $A(x,\lambda,\epsilon)$ does, and that they coincide with \eqref{eqn:Jost_0} when $\epsilon=0$.
%Since the corresponding Evans function also depends on $\epsilon$, we write it as $E(\lambda,\epsilon)$.
We write the corresponding Evans function defined by \eqref{eqn:def_evans} as $E(\lambda,\epsilon)$.

\subsection{Proof of Theorem~\ref{thm:main_0}}

%By the symmetry of $\sigma(\mathscr L_\epsilon)$, we only consider the case in which $\lambda_0=\i\min\{\omega,s\}$ is a zero of $E(\lambda,0)$.
We consider case (II) in Section~2.2.

\begin{proof}[Proof of Theorem~\ref{thm:main_0}]
Let $\lambda_\mathrm{br}=\i\min\{\omega,s\}$ and let $\widetilde E(\gamma,\epsilon)$ denote the extended Evans function near the branch point $\lambda_\mathrm{br}$, where $\gamma^2=\i(\lambda-\lambda_\mathrm{br})$, as in Section~\ref{sec:Evans2}.
%We denote the extended Evans function $\widetilde E(\gamma,\epsilon)$ near the branch point $\lambda_\mathrm{br}$.
For any $\gamma,\epsilon\in\Rset$ near $(\gamma,\epsilon)=(0,0)$,
 we can choose the $\Rset$-valued Jost solutions $Y_j^\pm(x,\lambda,\epsilon)$, $j=1,\ldots,8$,
 since $\nu_j(\lambda)=\nu_j(\lambda_\mathrm{br}-\i\gamma^2)\in\Rset$, $j=1,\ldots,8$.
For this choice we have $\widetilde E(\gamma,\epsilon)\in\Rset$.
%For any $\gamma,\epsilon\in\Rset$ near $(\gamma,\epsilon)=(0,0)$, we easily see that $\widetilde E(\gamma,\epsilon)\in\Rset$ since the Jost functions are $\Rset$-valued.
Assume that $\gamma=0$ is a simple zero of $\widetilde E(\gamma,0)$.
Using the implicit function theorem, we obtain a unique zero $\gamma(\epsilon)\in\Rset$ of $\widetilde E(\gamma,\epsilon)$ with $\gamma(0)=0$.
Using Proposition \ref{prop:gamma_move}, we see that one of the following three cases occurs.
\begin{enumerate}
\setlength{\leftskip}{-1.6em}
	\item[(i)]
		If $\gamma(\epsilon)>0$, then there exists a unique simple eigenvalue of $\mathscr L_\epsilon$ near $\lambda_\mathrm{br}$ and $\mathscr L_\epsilon$ has no resonance pole near $\lambda_\mathrm{br}$.
		The unique eigenvalue belongs to $\lambda_\mathrm{br}+\i(-\infty,0)$.
%		This phenomenon is known as an \emph{edge bifurcation} \cite{KS02,KS04}.
	\item[(ii)]
		If $\gamma(\epsilon)<0$, then there exists a unique resonance pole of $\mathscr L_\epsilon$ near $\lambda_\mathrm{br}$ and $\mathscr L_\epsilon$ has no eigenvalue near $\lambda_\mathrm{br}$.
		The unique resonance pole belongs to $\lambda_\mathrm{br}+\i(-\infty,0)$.
	\item[(iii)]
		If $\gamma(\epsilon)=0$, then $\lambda_\mathrm{br}$ remains as an eigenvalue or a resonance pole of $\mathscr L_\epsilon$.
\end{enumerate}
This completes the proof.
\end{proof}

%\begin{rmk}
%	Cuccagna and Pelinovsky \cite{CuPe05} studied the linearized problem of a single NLS in three dimensions.
%	They showed that an eigenvalue with a positive Krein signature and a `resonance' at the endpoint may bifurcate only to a neutral eigenvalue with a positive Krein signature, while an eigenvalue with a negative Krein signature at the endpoint may also bifurcate to eigenvalues with positive real parts.
%	In contrast to their results, no such eigenvalues emerge from the endpoint in our case as stated in Theorem~\ref{thm:main_0}.
%%	Kapitula et al.\ (Section 5.4 of \cite{KKS04} and Remark 1 of \cite{KKS05}) considered a Hamiltonian system with a symmetry-breaking perturbation.
%%	They assumed that the unperturbed system has relative equilibria which persist under the perturbation, and proved that eigenvalues emerging from a resonance pole at the endpoint must be purely imaginary and have positive Krein signature.
%\end{rmk}

% **********************************************************
% Subsubsection 6.2
% **********************************************************

\subsection{Proof of Theorem~\ref{thm:main_1}}\label{sec:case3}

We consider case (III) in Section~2.2 and assume that
 $\omega<s$ and $\lambda_0\in\i(\omega,s)$ is a simple zero of $E_\mathrm{B}(\lambda)$
 with $E_\mathrm{A}(\lambda_0),E_\mathrm{C}(\lambda_0)\ne0$.
Especially, $\lambda_0$ is also a simple zero of $E(\lambda,0)$.

\begin{proof}[Proof of Theorem~\ref{thm:main_1}]
We have $\nu_1(\lambda_0),\nu_4(\lambda_0)\in(0,\infty)$ and $\nu_2(\lambda_0)\in\i(0,\infty)$ (see \eqref{eqn:mu_def1} and \eqref{eqn:mu_def2}).
In particular, $p_1(x,\lambda_0)$ and $q_1(x,\lambda_0)$ are uniquely determined due to \eqref{eqn:caseA_asym}, and $p_4(x,\lambda_0)$ and $p_8(x,\lambda_0)$ satisfy
\begin{align}
	p_4(x,\lambda_0)=\tau p_8(x,\lambda_0)
	\label{eqn:p48}
\end{align}
for some $\tau\in\Rset\setminus\{0\}$ since $E_\mathrm{B}(\lambda_0)=0$.
As in Remark \ref{rmk:U0V1} (ii), we see that $\tau$ must be $1$ or $-1$.
%We also assume the following.

%\begin{itemize}
%\setlength{\leftskip}{0.3cm}
%\item[$\textbf{(A4-1)}_{\lambda_0}$]
%	$p_1(0,\lambda_0)p_1'(0,\lambda_0)+q_1(0,\lambda_0)q_1'(0,\lambda_0)\ne0$.
%\end{itemize}

%Here {\rm$\textbf{(A4-1)}_{\lambda_0}$} rules out the possibility that both $p_1(x,\lambda_0)$ and $q_1(x,\lambda_0)$ are bounded as $x\to+\infty$.
%This is shown as follows.
%Using \eqref{eqn:lin_eq_1}, we easily see that
%\begin{align*}
%	\mathcal D(x)&\defeq p_1(x,\lambda_0)p_1'(-x,\lambda_0)+q_1(x,\lambda_0)q_1'(-x,\lambda_0)\\
%		&\quad+p_1'(x,\lambda_0)p_1(-x,\lambda_0)+q_1'(x,\lambda_0)q_1(-x,\lambda_0)
%\end{align*}
%is independent of $x$.
%When both $p_1(x,\lambda_0)$ and $q_1(x,\lambda_0)$ are bounded as $x\to+\infty$, by \eqref{eqn:caseA_asym} we obtain $\mathcal D(0)=\lim_{x\to+\infty}\mathcal D(x)=0$, which violates {\rm$\textbf{(A4-1)}_{\lambda_0}$}.

Let $\bar p_4(x)$ be a solution to \eqref{eqn:lin_eq_3} with $(\lambda,\mu)=(\lambda_0,0)$ such that it is linearly independent of $p_8(x,\lambda_0)$ and
\begin{align*}
	\det\!\begin{pmatrix}
		\bar p_4(x)&p_8(x,\lambda_0)\\
		\bar p_4'(x)&p_8'(x,\lambda_0)
	\end{pmatrix}=1.
\end{align*}
Let $\bar Y_{04}(x)=(0,0,0,\bar p_4(x),0,0,0,\bar p_4'(x))^\T$ and
\begin{align}
	\mathscr Y_0(x)=\bigl(Y_{01}^-\;\;Y_{02}^-\;\;Y_{03}^-\;\;\bar Y_{04}\;\;Y_{05}^+\;\;Y_{06}^+\;\;Y_{07}^+\;\;Y_{08}^+\bigr)(x,\lambda_0).
	\label{eqn:Y0_def}
\end{align}
%Then $\mathscr Y_0$ is a fundamental matrix solution to \eqref{eqn:eigprob2} with $A_0(x,\lambda_0)$ as the coefficient matrix,
%\begin{align}
%	Y'=A_0(x,\lambda_0)Y.
%	\label{eqn:ODE0}
%\end{align}
Then $\mathscr Y_0$ is a fundamental matrix solution to
\begin{align}
	Y'=A_0(x,\lambda)Y
	\label{eqn:ODE0}
\end{align}
at $\lambda=\lambda_0$.
We consider the adjoint equation
%\begin{align}
%	Z'=-A_0(x,\lambda_0)^\H Z
%	\label{eqn:adj_eq}
%\end{align}
\begin{align}
	Z'=-A_0(x,\lambda)^\H Z
	\label{eqn:adj_eq}
\end{align}
for \eqref{eqn:ODE0}, where the superscript `{\scriptsize H}' stands for the Hermitian conjugate.
If $(\bm p,\bm p')$ with $\bm p:\Rset\to\Cset^4$ is a solution to \eqref{eqn:ODE0}, then $(-\bm p',\bm p)^\H$ is a solution to \eqref{eqn:adj_eq}.
Let $Z_{0j}^\pm(x,\lambda)$ and $\bar Z_{04}(x)$ be solutions to \eqref{eqn:adj_eq} which are obtained in this manner from $Y_{0j}^\pm(x,\lambda)$ and $\bar Y_{04}(x)$, respectively, for $j=1,\ldots,8$, where the lower or upper sign in the superscripts of $Z_{0j}^\pm$ and $Y_{0j}^\pm$ is taken depending on whether $j\le4$ or not.
%The corresponding solutions of \eqref{eqn:adj_eq} for $Y_{0j}^\pm(x,\lambda)$ and $\bar Y_{04}(x)$ in this manner are denoted by $Z_{0j}^\pm(x,\lambda)$ and $\bar Z_{04}(x)$, respectively.
Recall that $Y_{03}^-$, $Y_{04}^-$, $Y_{07}^+$, and $Y_{08}^+$ are uniquely determined.
If $\Delta_1(\lambda_0)\neq 0$,
 then the Jost solutions can be normalized as follows,
 where $\Delta_1(\lambda)$ was defined in \eqref{eqn:Deltaj} with $j=1$.

\begin{lem}\label{lem:Jost_normalize}
	Let $\lambda\in\Omega_\e$, where $\Omega_\e$ is defined in \eqref{eqn:Omega_e}.
	The Jost solutions $Y_{0j}^\pm(x,\lambda)$, $j=1,2,5,6$, for \eqref{eqn:eigprob2} with \eqref{eqn:A_bif} are uniquely determined by the relations
	\begin{align}
		\begin{aligned}
			&Y_{0,j+4}^\pm(x,\lambda)=
			\begin{pmatrix}I_4&O_4\\O_4&-I_4\end{pmatrix}Y_{0j}^\mp(-x,\lambda),\quad
			j=1,2,\\
			&Y_{02}^-\cdot Z_{05}^+=
			Y_{06}^-\cdot Z_{05}^+=
			Y_{05}^-\cdot Z_{05}^+=
			Y_{05}^-\cdot Z_{06}^+=
			Y_{05}^-\cdot Z_{02}^-=0,
		\end{aligned}
		\label{eqn:Jost_flip}
	\end{align}
	where the dot stands for the standard inner product on $\Cset^8$.
	Moreover, for these Jost solutions we have
	\begin{align}
		&\mathscr Y_0(x)^{-1}\notag\\
		&=\left(\begin{array}{c|c|c|c}
			\begin{matrix}-p_5'/C_1&-q_5'/C_1\\-p_6'/C_2&-q_6'/C_2\end{matrix}&O_2&\begin{matrix}p_5/C_1&q_5/C_1\\p_6/C_2&q_6/C_2\end{matrix}&O_2\\\hline
			O_2&\begin{matrix}p_7'/E_\mathrm{C}&0\\0&p_8'\end{matrix}&O_2&\begin{matrix}-p_7/E_\mathrm{C}&0\\0&-p_8\end{matrix}\\\hline
			\begin{matrix}p_1'/C_1&q_1'/C_1\\p_2'/C_2&q_2'/C_2\end{matrix}&O_2&\begin{matrix}-p_1/C_1&-q_1/C_1\\-p_2/C_2&-q_2/C_2\end{matrix}&O_2\\\hline
			O_2&\begin{matrix}-p_3'/E_\mathrm{C}&0\\0&-\bar p_4'\end{matrix}&O_2&\begin{matrix}p_3/E_\mathrm{C}&0\\0&\bar p_4\end{matrix}
		\end{array}\right)\!(x,\lambda_0),
		\label{eqn:Y0_inv}
	\end{align}
	where
	$C_1\defeq(Y_{01}^-\cdot Z_{05}^+)(\lambda_0)$ and $C_2\defeq(Y_{02}^-\cdot Z_{06}^+)(\lambda_0)$ are nonzero constants.
\end{lem}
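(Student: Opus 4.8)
The plan is to establish Lemma~\ref{lem:Jost_normalize} in two stages: first uniqueness of the Jost solutions under the stated normalizations, then the explicit formula \eqref{eqn:Y0_inv} for the inverse of the fundamental matrix. For the first stage, recall that for $\lambda\in\Omega_\e$ the eigenvalues $\pm\nu_3(\lambda),\pm\nu_4(\lambda)$ of $A_\infty(\lambda)$ have nonzero real parts, so $Y_{03}^-,Y_{04}^-,Y_{07}^+,Y_{08}^+$ (the components in the decoupled scalar blocks (B), (C)) are already uniquely determined by their prescribed exponential asymptotics; this is noted in the text before the lemma. The issue is the block (A) solutions $Y_{0j}^\pm$, $j=1,2,5,6$, because at a point where $\Re\nu_2(\lambda)=0$ (or more generally in the regime under consideration) the Jost solutions are not unique: one can add to $Y_{01}^-$ any multiple of $Y_{02}^-$ and similarly permute among the bounded/decaying directions. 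The flip relations in the first line of \eqref{eqn:Jost_flip} fix $Y_{0,j+4}^\pm$ in terms of $Y_{0j}^\mp$ using the reflection symmetry $x\mapsto -x$ of $A_0$ conjugated by $\mathrm{diag}(I_4,-I_4)$ — this is the same symmetry already used to write $p_8(x,\lambda)=p_4(-x,\lambda)$, so it is routine to check it is consistent with \eqref{eqn:caseA_asym}. Then the five orthogonality conditions $Y_{02}^-\cdot Z_{05}^+ = Y_{06}^-\cdot Z_{05}^+ = Y_{05}^-\cdot Z_{05}^+ = Y_{05}^-\cdot Z_{06}^+ = Y_{05}^-\cdot Z_{02}^- = 0$ pin down the remaining ambiguity. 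I would argue this by noting that $Z_{0j}^\pm$ solve the adjoint equation and that $\{Y_{0i}^\pm\cdot Z_{0j}^\pm\}$ is, up to the identification $Z=(-\bm p',\bm p)^\H$ of a solution of \eqref{eqn:ODE0} with one of \eqref{eqn:adj_eq}, the constant matrix of "symplectic-type" pairings; the conditions say certain such pairings vanish, which is exactly the statement that $Y_{05}^+$ lies in the span of the decaying directions at $+\infty$ complementary to the ones already used, and similarly for $Y_{02}^-$. The nonvanishing of $C_1,C_2$ is where $\textbf{(A4-1)}_{\lambda_0}$ enters: if $C_1=(Y_{01}^-\cdot Z_{05}^+)(\lambda_0)=0$ then $Y_{01}^-(\cdot,\lambda_0)$ would be a solution decaying at both ends, forcing $p_1,q_1$ bounded as $x\to+\infty$, which the text has already shown contradicts $\textbf{(A4-1)}_{\lambda_0}$ via the Wronskian-type quantity $\mathcal D(x)$.

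For the second stage, the formula \eqref{eqn:Y0_inv} is obtained by direct verification that the displayed matrix, call it $M(x)$, satisfies $M(x)\mathscr Y_0(x)=I_8$. Because $A_0(x,\lambda)$ has vanishing trace (it is the companion-type matrix \eqref{eqn:A0} with a zero diagonal-block structure), $\det\mathscr Y_0(x)$ is constant in $x$, so it suffices to check $M(x)\mathscr Y_0(x)=I_8$ at one convenient value of $x$, or equivalently to check that the rows of $M$ are the dual basis to the columns of $\mathscr Y_0$. The rows of $M$ are built from the adjoint Jost solutions $Z_{0j}^\pm$ and $\bar Z_{04}$ (with the $\H$-conjugation unwound, which turns $(-\bm p',\bm p)$ into the pattern "$-p'$ in the first four slots, $p$ in the last four" visible in \eqref{eqn:Y0_inv}), divided by the pairing constants $C_1,C_2,E_\mathrm{C}$, or by $1$ in the blocks where the pairing is normalized to unity (the $p_8,\bar p_4$ entries, where the determinant condition $\det\begin{pmatrix}\bar p_4 & p_8\\ \bar p_4' & p_8'\end{pmatrix}=1$ was imposed). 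The off-diagonal zero blocks $O_2$ in $M$ reflect the block-diagonal structure of $\mathscr L_0$ together with the orthogonality relations \eqref{eqn:Jost_flip}: the (A)-block solutions pair to zero against the adjoint (B)- and (C)-block solutions simply because the corresponding components are zero, and within the (A) block the normalization conditions make the cross pairings $Y_{02}^-\cdot Z_{05}^+$ etc. vanish. So verifying $M\mathscr Y_0=I_8$ reduces to: (i) the diagonal pairings $Y_{0j}^\pm\cdot Z_{0j}^\pm$ equal the claimed normalizing constants, which is the definition of $C_1,C_2$ and a standard constancy-in-$x$ computation for the scalar blocks using the asymptotics \eqref{eqn:caseB_asym}, \eqref{eqn:caseC_asym}; (ii) the cross pairings within each block vanish, which is \eqref{eqn:Jost_flip} plus, for the (C) block, the relation $p_3(x,\lambda)=p_4(x,-\lambda)$ and $E_\mathrm{C}(\lambda)=E_\mathrm{B}(-\lambda)$; (iii) the mixed-block pairings vanish for the trivial structural reason above.

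The main obstacle I expect is the bookkeeping in stage one: making precise why exactly five scalar conditions, in addition to the two flip relations, remove all the remaining freedom in the four Jost solutions $Y_{0j}^\pm$, $j=1,2,5,6$, of the (A) block — i.e., identifying the dimension of the ambiguity space and checking the five functionals are independent on it. Concretely, at $\lambda_0$ one has $\nu_1>0$ real, $\nu_2$ purely imaginary, so at $x\to +\infty$ there is a one-dimensional decaying subspace (spanned by $Y_{05}^+$), a one-dimensional growing subspace, and a two-dimensional "neutral" (bounded, oscillatory) subspace; the asymptotic normalization \eqref{eqn:caseA_asym} already selects $Y_{05}^+$ up to adding the neutral directions, and $Y_{06}^+$ similarly, while the flip relations transport these choices to $Y_{01}^-,Y_{02}^-$ at $-\infty$. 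One must show the ambiguity is genuinely two real parameters per solution and that the conditions $Y_{05}^-\cdot Z_{05}^+ = Y_{05}^-\cdot Z_{06}^+ = Y_{05}^-\cdot Z_{02}^- = 0$ and $Y_{02}^-\cdot Z_{05}^+ = Y_{06}^-\cdot Z_{05}^+=0$ — which mix $Y^-$ and $Y^+$ Jost solutions and hence couple the $\pm\infty$ choices — form a unisolvent system, with $\textbf{(A4-1)}_{\lambda_0}$ (via $C_1,C_2\neq 0$) ensuring the relevant $2\times 2$ subdeterminants are nonzero. I would handle this by writing the general Jost solution as a fixed particular one plus an unknown linear combination of the neutral-subspace solutions, substituting into the five conditions, and reading off a linear system whose coefficient matrix is triangular (or block-triangular) with diagonal entries $C_1,C_2$ and the determinant-normalized $1$'s — so invertibility is immediate once $C_1,C_2\neq 0$ is in hand. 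The rest — the constancy-in-$x$ of the pairings and the explicit inverse — is routine linear algebra of the kind already rehearsed for $E_\mathrm{B}(\lambda)=-2p_4(0,\lambda)p_4'(0,\lambda)$ in \eqref{eqn:E3}.
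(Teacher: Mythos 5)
Your overall approach is the paper's: use the constancy of the pairing $Y\cdot Z$ between solutions of \eqref{eqn:ODE0} and of the adjoint equation \eqref{eqn:adj_eq}, identify the residual ambiguity in the block-(A) Jost solutions, remove it with the five conditions in \eqref{eqn:Jost_flip}, and then read off $\mathscr Y_0(x)^{-1}$ from $(\mathscr Z_0(0)^\H\mathscr Y_0(0))^{-1}\mathscr Z_0(x)^\H$ (your ``$M\mathscr Y_0=I_8$'' check is just this in disguise). However, there is a genuine gap in the step you flag as the heart of the matter. You write that ``the nonvanishing of $C_1,C_2$ is where $\textbf{(A4-1)}_{\lambda_0}$ enters'' and that the linear system is invertible ``once $C_1,C_2\ne0$ is in hand,'' but you only ever justify $C_1\ne0$. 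The assumption $\textbf{(A4-1)}_{\lambda_0}$ concerns $p_1,q_1$ and yields $C_1=2(p_1p_1'+q_1q_1')(0,\lambda_0)\ne0$; it says nothing about $p_2,q_2$, and the analogous quantity $C_2=2(p_2p_2'+q_2q_2')(0,\lambda_0)$ (evaluated \emph{after} the normalization $Y_{02}^-\cdot Z_{05}^+=0$) is not controlled by $\textbf{(A4-1)}_{\lambda_0}$ — the superficially similar hypothesis $\textbf{(A4-2)}_{\lambda_0}$ is not assumed in Case (III). The paper obtains $C_2\ne0$ by a separate argument you do not supply: once $C_1\ne0$ lets you impose $Y_{02}^-\cdot Z_{05}^+=Y_{06}^-\cdot Z_{05}^+=0$, the constant matrix $\mathscr Z_0(0)^\H\mathscr Y_0(0)$ acquires the block form \eqref{eqn:Z0Y0}, and its nonsingularity (both $\mathscr Y_0$ and $\mathscr Z_0$ are fundamental matrix solutions) forces $\det\mathscr Z_0(0)^\H\mathscr Y_0(0)=C_1^2C_2^2\det(\ast)\ne0$, hence $C_2\ne0$. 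Without some version of this step your ``invertibility is immediate'' conclusion is unsupported, and $C_2$ appears in denominators throughout \eqref{eqn:Y0_inv}.

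There is also a smaller bookkeeping slip in your dimension count for the ambiguity in the (A)-block Jost solutions. You say the asymptotics select ``$Y_{05}^+$ up to adding the neutral directions, and $Y_{06}^+$ similarly.'' In fact $Y_{05}^+$ decays like $e^{-\nu_1x}$ at $+\infty$ with $\nu_1(\lambda_0)>0$ strictly, so adding any neutral (purely oscillatory) direction destroys the asymptotic condition \eqref{eqn:caseA_asym}: $Y_{05}^+$ is \emph{unique}. The genuine ambiguities live in the slower modes: $Y_{02}^-$ and $Y_{06}^-$ are each determined up to $\lspan_\Cset\{Y_{01}^-\}$, and $Y_{05}^-$ up to $\lspan_\Cset\{Y_{01}^-,Y_{02}^-,Y_{06}^-\}$ — a total of five complex parameters, matching the five scalar conditions in \eqref{eqn:Jost_flip}. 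Getting this count right is what makes the linear system square, so it is worth stating correctly before invoking $C_1,C_2\ne0$ to invert it.
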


A proof of Lemma \ref{lem:Jost_normalize} is given in Appendix \ref{sec:proof_Jost_nomalize}.
Thus, $Y_{0j}^\pm(x,\lambda)$ are now uniquely determined except for $Y_{03}^+$, $Y_{04}^+$, $Y_{07}^-$, and $Y_{08}^-$.
To estimate the zero $\lambda(\epsilon)$ of $E(\lambda,\epsilon)$ with $\lambda(0)=\lambda_0$ for $\epsilon>0$ by Proposition \ref{prop:E_form}, we need some information on $Y_j^\pm(x,\lambda,\epsilon)$.
Since $\partial_\epsilon Y_j^\pm(x,\lambda_0,0)$ satisfies
\begin{align*}
	(\partial_\epsilon Y_j^\pm)'=A_0(x,\lambda_0)\partial_\epsilon Y_j^\pm+A_1(x)Y_{0j}^\pm,
\end{align*}
we have the following lemma.

\begin{lem}\label{lem:deY}
	We have
	\begin{align}
		\begin{aligned}
			\partial_\epsilon Y_j^-(x,\lambda_0,0)&=\mathscr Y_0(x)\int_{-\infty}^x\mathscr Y_0(y)^{-1}A_1(y)Y_{0j}^-(y,\lambda_0)\,\d y,&j=1,2,3,4,\\
			\partial_\epsilon Y_j^+(x,\lambda_0,0)&=-\mathscr Y_0(x)\int_x^\infty\mathscr Y_0(y)^{-1}A_1(y)Y_{0j}^+(y,\lambda_0)\,\d y,&j=5,6,7,8.
		\end{aligned}
		\label{eqn:deY_int}
	\end{align} % see note 2.6 p.57
\end{lem}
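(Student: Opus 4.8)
The plan is to treat Lemma~\ref{lem:deY} as a variation-of-constants identity for the inhomogeneous linear equation satisfied by $\partial_\epsilon Y_j^\pm(x,\lambda_0,0)$, which is the one displayed just above its statement. Differentiating the Jost equation $Y'=A(x,\lambda,\epsilon)Y$ in $\epsilon$ at $\epsilon=0$, using the expansion \eqref{eqn:A_bif} and the smooth $\epsilon$-dependence of the Jost solutions from Lemma~\ref{lem:Jost_sol}, one gets $(\partial_\epsilon Y_j^\pm)'=A_0(x,\lambda_0)\partial_\epsilon Y_j^\pm+A_1(x)Y_{0j}^\pm$. Since $\mathscr Y_0$ is a fundamental matrix of $Y'=A_0(x,\lambda_0)Y$, every solution has the form $\mathscr Y_0(x)\bigl(\mathbf c+\int_{x_\ast}^x\mathscr Y_0(y)^{-1}A_1(y)Y_{0j}^\pm(y,\lambda_0)\,\d y\bigr)$, so the task reduces to pinning down the constant vector $\mathbf c$ and the base point $x_\ast$.

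The base point and constant are forced by the asymptotic normalization of the Jost solutions. For $j=1,2,3,4$ the relation $\e^{-\nu_j x}Y_j^-(x,\lambda,\epsilon)\to v_j$ as $x\to-\infty$ does not involve $\epsilon$, because $A_1(x)\to0$ as $x\to\pm\infty$, so $A_\infty(\lambda)$ and hence $\nu_j$ and $v_j$ are $\epsilon$-independent; differentiating in $\epsilon$ gives $\e^{-\nu_j x}\partial_\epsilon Y_j^-(x,\lambda_0,0)\to0$ as $x\to-\infty$, and symmetrically $\e^{\nu_j x}\partial_\epsilon Y_j^+(x,\lambda_0,0)\to0$ as $x\to+\infty$ for $j=5,\ldots,8$. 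Granting convergence of the improper integrals, this vanishing forces $x_\ast=-\infty$ and $\mathbf c=0$ in the first case, and $x_\ast=+\infty$, $\mathbf c=0$ (with the overall minus sign from reversing the limits) in the second, which is exactly \eqref{eqn:deY_int}; uniqueness of the resulting expression is inherited from the uniqueness of the normalized Jost solutions established in Lemma~\ref{lem:Jost_normalize}.

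The step I expect to be the real obstacle is the convergence of these improper integrals and the vanishing of $\mathscr Y_0(x)$ times them in the weighted sense above as $|x|\to\infty$. Here one must play off the exponential decay of $A_1(x)$ — which follows from \eqref{eqn:A1_def}--\eqref{eqn:a_def} together with the exponential decay of $U_0(x)$ (Remark~\ref{rmk:U0V1}(i)) and of $V_1(x)$ (since \eqref{eqn:VE2} is asymptotically $-\delta V''+s\,\delta V=0$ with $s>0$), giving $A_1(x)=O(\e^{-(\sqrt\omega+\sqrt s)|x|})$ — against the at-most-exponential growth of the columns of $\mathscr Y_0$ and of the rows of $\mathscr Y_0^{-1}$, which can be read off from the explicit form \eqref{eqn:Y0_inv} and the asymptotics \eqref{eqn:caseA_asym}, \eqref{eqn:caseB_asym}, \eqref{eqn:caseC_asym}. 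The delicate point is that $\lambda_0$ is an embedded eigenvalue, so $\nu_2(\lambda_0)$ is purely imaginary and the corresponding columns of $\mathscr Y_0$ are only bounded, not decaying; one must use both the block structure of $A_1$ in \eqref{eqn:A1_def} (which couples only the $(p,q)$-components of part~(A) to those of parts~(B) and~(C)) and the normalization of Lemma~\ref{lem:Jost_normalize} to check that in the product $\mathscr Y_0(y)^{-1}A_1(y)Y_{0j}^\pm(y,\lambda_0)$ the decay of $A_1$ beats every growth rate that occurs, so that the integrals converge absolutely and the prefactor $\mathscr Y_0(x)$ does not spoil the boundary behaviour. Once this estimate is in place, the identification of $\mathbf c$ and $x_\ast$ in the previous paragraph completes the proof.
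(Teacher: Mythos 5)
Your proposal is correct and takes essentially the same route as the paper: both derive the inhomogeneous equation $(\partial_\epsilon Y_j^\pm)'=A_0\partial_\epsilon Y_j^\pm+A_1Y_{0j}^\pm$ and then invoke the variation-of-constants formula with $\mathscr Y_0$, pinning down the boundary term at $\mp\infty$ via the $\epsilon$-independence of the Jost asymptotics. The paper organizes the same idea slightly differently—it writes the variation-of-constants representation for $Y_j^-(x,\lambda_0,\epsilon)$ itself, observes that $\lim_{x_0\to-\infty}\mathscr Y_0(x_0)^{-1}Y_j^-(x_0,\lambda_0,\epsilon)$ exists and is $\epsilon$-independent, and only then differentiates in $\epsilon$, which neatly sidesteps the informal step of differentiating the pointwise asymptotic condition in $\epsilon$ that you flag; the underlying mechanism and the required exponential-rate bookkeeping are the same.
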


\begin{proof}
	We only prove the first equation of \eqref{eqn:deY_int}.
	The second equation of \eqref{eqn:deY_int} can be shown by the same argument.
	By the variation of constants formula, we have
	\begin{align}
		Y_j^-(x,\lambda_0,\epsilon)
		&=\mathscr Y_0(x)\mathscr Y_0(x_0)^{-1}Y_j^-(x_0,\lambda_0,\epsilon)\nonumber\\
		&\quad+\epsilon\mathscr Y_0(x)\int_{x_0}^x\mathscr Y_0(y)^{-1}A_1(y)Y_j^-(y,\lambda_0,\epsilon)\,\d y\label{eqn:deY_tmp}
	\end{align}
	for any $x_0\in\Rset$.
	It follows from \eqref{eqn:caseA_asym}, \eqref{eqn:caseC_asym}, \eqref{eqn:caseB_asym}, 
	and \eqref{eqn:Y0_inv}
	that the limit of $\mathscr Y_0(x_0)^{-1}Y_j^-(x_0,\lambda_0,\epsilon)$ as $x_0\to-\infty$ exists and is independent of $\epsilon$.
	Letting $w_0\in\Cset^8$ be the limit and taking $x_0\to-\infty$ in \eqref{eqn:deY_tmp}, we have
	\begin{align*}
		Y_j^-(x,\lambda_0,\epsilon)
		=\mathscr Y_0(x)w_0+\epsilon\mathscr Y_0(x)\int_{-\infty}^x\mathscr Y_0(y)^{-1}A_1(y)Y_j^-(y,\lambda_0,\epsilon)\,\d y.
	\end{align*}
	Differentiating the above equation with respect to $\epsilon$ and putting $\epsilon=0$ yields the desired result.
\end{proof}

%Moreover, the following hold for any positive integer $k$:
%\begin{align}
%	\begin{aligned}
%		&\partial_\epsilon^k Y_j^\u(x,\lambda,0)\to 0&\text{as}&\quad x\to-\infty,\\
%		&\partial_\epsilon^k Y_j^\s(x,\lambda,0)\to 0&\text{as}&\quad x\to+\infty.
%	\end{aligned}
%	\label{eqn:deY_asymp}
%\end{align}
%and the proof of lemma 9.5.7 in \cite{KaPr13}.
Using \eqref{eqn:Jost_0}, \eqref{eqn:A1_def}, and \eqref{eqn:Y0_inv}, we have the forms
\begin{align}
	\begin{aligned}
		&\partial_\epsilon Y_j^-(x,\lambda_0,0)=(0,0,*,*,0,0,*,*)^\T,\quad j=1,2,\\
		&\partial_\epsilon Y_j^-(x,\lambda_0,0)=(*,*,0,0,*,*,0,0)^\T,\quad j=3,4,\\
		&\partial_\epsilon Y_j^+(x,\lambda_0,0)=(0,0,*,*,0,0,*,*)^\T,\quad j=5,6,\\
		&\partial_\epsilon Y_j^+(x,\lambda_0,0)=(*,*,0,0,*,*,0,0)^\T,\quad j=7,8,
	\end{aligned}
	\label{eqn:deY_zero}
\end{align}
for \eqref{eqn:deY_int}.
Moreover,
\begin{align}
	&\partial_\epsilon(Y_4^--\tau Y_8^+)(x,\lambda_0,0)\nonumber\\
	&=\mathscr Y_0(x)\int_{-\infty}^\infty\mathscr Y_0(y)^{-1}A_1(y)Y_{04}^-(y,\lambda_0)\,\d y\nonumber\\
	&=\rho_1Y_{01}^-(x,\lambda_0)+\rho_2Y_{02}^-(x,\lambda_0)+\rho_5Y_{05}^+(x,\lambda_0)+\rho_6Y_{06}^+(x,\lambda_0),
	\label{eqn:rho1}
\end{align}
where $\rho_j$ is the $j$-th component of
\begin{align*}
	\int_{-\infty}^\infty\mathscr Y_0(y)^{-1}A_1(y)Y_{04}^-(y,\lambda_0)\,\d y
\end{align*}
for $j=1,2,5,6$.
% $$-\int_{-\infty}^\infty h(y)p_8(y,\lambda_0,0)(z_{j5}+z_{j6})(y)\,\d y$$
% and $z_{ij}$ is the $(i,j)$-component of $\mathscr Y_0^{-1}$.
Differentiating \eqref{eqn:def_evans} with respect to $\epsilon$ at $(\lambda,\epsilon)=(\lambda_0,0)$ yields
\begin{align*}
	\partial_\epsilon E(\lambda_0,0)
	=\det\bigl(Y_1^-\;\;Y_2^-\;\;Y_3^-\;\;\partial_\epsilon(Y_4^--\tau Y_8^+)\;\;Y_5^+\;\;Y_6^+\;\;Y_7^+\;\;Y_8^+\bigr)(x,\lambda_0,0).
\end{align*}
Substituting \eqref{eqn:rho1} into the above equation, we obtain
\begin{align}
	\partial_\epsilon E(\lambda_0,0)=0.
	\label{eqn:deE3}
\end{align}
Note that $Y_j^\pm(x,\lambda,0)=Y_{0j}^\pm(x,\lambda)$.
So it follows from the first equation of \eqref{eqn:dlambda} that $\partial_\epsilon\lambda(0)=0$.
%Thus, we need $\partial_\epsilon^2 E(\lambda_0,0)$ and $\partial_\lambda E(\lambda_0,0)$ to estimate $\lambda(\epsilon)$ using the second equation of \eqref{eqn:dlambda}.
We use the second equation of \eqref{eqn:dlambda} to estimate $\lambda(\epsilon)$.
For $\partial_\epsilon^2 E(\lambda_0,0)$ and $\partial_\lambda E(\lambda_0,0)$ we have the following lemmas.

\begin{lem}\label{lem:de2E}
	We have
	\begin{align}
		\partial_\epsilon^2E(\lambda_0,0)
		=2\tau E_\mathrm{A}(\lambda_0)E_\mathrm{C}(\lambda_0)\biggl(\frac{2J_1}{C_1}+\frac{2J_2}{C_2}+f_3\biggr)
		\label{eqn:de2E_mid}
	\end{align}
	with
	\begin{align}
		J_j\defeq\int_{-\infty}^\infty\int_{-\infty}^x&a(x)p_8(x,\lambda_0)\bigl(p_{j+4}(x,\lambda_0)+q_{j+4}(x,\lambda_0)\bigr)\nonumber\\
		&a(y)p_8(y,\lambda_0)\bigl(p_j(y,\lambda_0)+q_j(y,\lambda_0)\bigr)\d y\,\d x\label{eqn:J12}
	\end{align}
	for $j=1,2$ and
	\begin{align}
		f_3\defeq\int_{-\infty}^\infty(A_{22}(x))_{22}\,p_4(x,\lambda_0)^2\,\d x,\label{eqn:f3_def}
	\end{align}
	where $a(x)$ and $A_{22}(x)$ are given by \eqref{eqn:a_def} and \eqref{eqn:A22_def}, respectively.
\end{lem}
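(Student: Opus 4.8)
The plan is to compute $\partial_\epsilon^2 E(\lambda_0,0)$ directly by differentiating the determinant formula \eqref{eqn:def_evans} twice in $\epsilon$ at $(\lambda,\epsilon)=(\lambda_0,0)$ and carefully tracking which terms survive. Differentiating a determinant twice produces two types of contributions: (a) terms where one column is replaced by its second $\epsilon$-derivative $\partial_\epsilon^2 Y_j^\pm$, and (b) terms where two distinct columns are each replaced by their first $\epsilon$-derivatives $\partial_\epsilon Y_j^\pm$. By the product rule for determinants, the second-derivative term is $\sum_j \det(\ldots\;\partial_\epsilon^2 Y_j^\pm\;\ldots)$ and the mixed term is $\sum_{j\ne k}\det(\ldots\;\partial_\epsilon Y_j^\pm\;\ldots\;\partial_\epsilon Y_k^\pm\;\ldots)$. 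Because at $\epsilon=0$ the matrix is $\mathscr Y_0$ (after replacing the degenerate pair $Y_{04}^-,\tau Y_{08}^+$ by $\bar Y_{04}$, or rather exploiting the structure used already in deriving \eqref{eqn:deE3}), and because the block structure \eqref{eqn:deY_zero} tells us that $\partial_\epsilon Y_j^\pm$ lies in the complementary block to $Y_{0j}^\pm$, most of these determinant terms vanish: a column of the form $(*,*,0,0,*,*,0,0)^\T$ or $(0,0,*,*,0,0,*,*)^\T$ inserted into the appropriate slot either duplicates a block already spanned or contributes zero by column dependence.

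First I would recall, as in Case (III)'s setup, that $\lambda_0$ is a simple zero of $E(\lambda,0)$ coming entirely from the factor $E_\mathrm{B}(\lambda)$, so that $p_4(x,\lambda_0)=\tau p_8(x,\lambda_0)$ with $\tau=\pm1$ and the fourth and eighth Jost columns become dependent at $\epsilon=0$; this is why the derivative naturally involves $\partial_\epsilon(Y_4^- - \tau Y_8^+)$, expanded in \eqref{eqn:rho1}. For the second derivative I would organize the computation around this single "defective" direction: the surviving contributions to $\partial_\epsilon^2 E(\lambda_0,0)$ come from (i) the term where the fourth column is replaced by $\partial_\epsilon^2(Y_4^- - \tau Y_8^+)$, whose relevant projection onto $\bar Y_{04}$ can be read off using $\mathscr Y_0^{-1}$ from \eqref{eqn:Y0_inv} — this produces the $f_3$ term via the $(A_{22})_{22}$ entry of $A_2(x)$ (note $A_1$ has no $(4,4)$-type block, so the $\epsilon$-linear correction contributes nothing here and the $\epsilon^2$ correction $A_2$ enters), together with (ii) the mixed terms pairing $\partial_\epsilon Y_4^-$ with $\partial_\epsilon Y_8^+$, which upon substituting the explicit integral formulas of Lemma \ref{lem:deY} and the block form \eqref{eqn:deY_zero} collapse to the double integrals $J_1,J_2$ weighted by $1/C_1,1/C_2$. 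The prefactor $2\tau E_\mathrm{A}(\lambda_0)E_\mathrm{C}(\lambda_0)$ emerges by cofactor-expanding along the first, second, fifth, sixth columns (which still form the $E_\mathrm{A}$-block) and the third, seventh columns (the $E_\mathrm{C}$-block), leaving a $2\times2$ determinant in the $\{4,8\}$ slots whose value involves the Wronskian normalization $\det\begin{pmatrix}\bar p_4 & p_8\\ \bar p_4' & p_8'\end{pmatrix}=1$ and the factor $\tau$ from $p_4=\tau p_8$.

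The main obstacle I expect is the bookkeeping in step (ii): one must substitute \eqref{eqn:deY_int} for both $\partial_\epsilon Y_4^-(x,\lambda_0,0)$ and $\partial_\epsilon Y_8^+(x,\lambda_0,0)$, plug in $A_1(y)$ from \eqref{eqn:A1_def} (which contracts the $I_2+\sigma_1$ block against the $(p_j,q_j)$ components, producing the scalar combination $p_j+q_j$ weighted by $a(x)$), apply $\mathscr Y_0(y)^{-1}$ from \eqref{eqn:Y0_inv} to extract the coefficients of the relevant Jost directions, and then assemble the resulting iterated integrals — being careful that the two columns carry integration regions $\int_{-\infty}^x$ versus $\int_x^\infty$ and that only the "crossed" pairing (column $4$ projected onto the $+$-direction it shares with column $8$, and vice versa) gives a nonzero determinant. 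Keeping track of signs (from the $-I_4$ block in the flip relation \eqref{eqn:Jost_flip}, from $\tau$, and from cofactor expansions) is where errors are likely; I would verify the final formula \eqref{eqn:de2E_mid} by checking its reality (all ingredients $a,p_8,p_j+q_j$ are real for $\lambda_0\in\i(\omega,s)$ by the structure of \eqref{eqn:caseA_asym} and \eqref{eqn:caseC_asym}, and $C_1,C_2,\tau$ are real) and, if possible, by a consistency check against the Hamiltonian-Krein index count of Theorem \ref{thm:bif_orb_stab} in a degenerate sub-case. Routine parts — the exact cofactor expansion, simplifying the $2\times2$ block determinant, and verifying that the $A_1$-contribution to the $f_3$ term vanishes — I would state but not belabor.
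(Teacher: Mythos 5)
Your high-level strategy — differentiate the determinant twice, use the block structure \eqref{eqn:deY_zero} to kill most terms, and cofactor out $E_\mathrm{A}$ and $E_\mathrm{C}$ — matches the paper. But your accounting of \emph{which} second-derivative terms survive and produce $J_1,J_2$ is wrong, and the error is not merely bookkeeping.

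You claim that the mixed terms pairing $\partial_\epsilon Y_4^-$ with $\partial_\epsilon Y_8^+$ produce $J_1,J_2$. But by \eqref{eqn:deY_zero} both $\partial_\epsilon Y_4^-$ and $\partial_\epsilon Y_8^+$ (and hence $\partial_\epsilon(Y_4^--\tau Y_8^+)$) lie in the A-block pattern $(*,*,0,0,*,*,0,0)^\T$. Together with $Y_1^-,Y_2^-,Y_5^+,Y_6^+$ this makes six columns supported only on rows $\{1,2,5,6\}$, while only $Y_3^-,Y_7^+$ carry any content in rows $\{3,4,7,8\}$; the determinant is identically zero. The surviving mixed terms are instead the pairings of column $4$ with $\partial_\epsilon Y_j^\pm$ for $j\in\{1,2,5,6\}$ (which flip the block, giving four A-pattern and four B/C-pattern columns) — this is the $g(x)$ of the paper's proof. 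You also claim that $\partial_\epsilon^2(Y_4^--\tau Y_8^+)$ produces only $f_3$ because ``$A_1$ has no $(4,4)$-block.'' This is not so: the second-order variation-of-constants formula for $\partial_\epsilon^2(Y_4^--\tau Y_8^+)$ contains two iterated $A_1$-integrals (the terms $\int \mathscr Y_0^{-1}A_1\partial_\epsilon Y_4^-$ and $\int\mathscr Y_0^{-1}A_1\partial_\epsilon Y_8^+$) in addition to the $A_2$-term, and those iterated $A_1$-integrals do project back onto the $\bar Y_{04}$ direction after the second contraction. They contribute the double-integral pieces $f_1(x),f_2(x)$. In the paper's proof the combination $2J_1/C_1+2J_2/C_2$ appears only after adding $f_1(x)+f_2(x)$ to the genuine mixed-term contribution $2g(x)$ and letting $x\to\infty$; neither piece by itself is a $J_j$. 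If you tried to carry out the computation as described you would get $f_3$ from the $A_2$-term and zero from your ``(4,8)'' mixed pairing, and would be unable to recover $J_1,J_2$ at all.

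One small additional point: your ``crossed pairing'' heuristic (``column $4$ projected onto the $+$-direction it shares with column $8$, and vice versa'') does not rescue the argument — the projection coefficients in \eqref{eqn:rho1} land on the \emph{A-block} directions $Y_{01}^-,Y_{02}^-,Y_{05}^+,Y_{06}^+$, not on $Y_{08}^+$, precisely because $A_1$ has only off-diagonal blocks. Redoing the split as $f(x)+2g(x)$ with $g(x)$ built from the $j\in\{1,2,5,6\}$ pairings, expanding $\partial_\epsilon^2(Y_4^--\tau Y_8^+)$ into its three integral pieces, and taking $x\to\infty$ is the missing structure.
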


\begin{lem}\label{lem:dlE}
	We have
	\begin{align}
		\partial_\lambda E(\lambda_0,0)=-\i\tau E_\mathrm{A}(\lambda_0)E_\mathrm{C}(\lambda_0)\|p_4(\cdot,\lambda_0)\|_{L^2}^2.
		\label{eqn:dlE}
	\end{align}
\end{lem}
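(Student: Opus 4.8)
The plan is to differentiate the determinant formula \eqref{eqn:def_evans} for $E(\lambda,\epsilon)$ with respect to $\lambda$ at $(\lambda,\epsilon)=(\lambda_0,0)$, using the column-expansion rule for the derivative of a determinant. Since $E(\lambda,0)$ is given by \eqref{eqn:E_unperturbed} with a simple zero at $\lambda_0$ coming from the factor $E_\mathrm{B}$, every term in which a column other than the fourth (or, by the symmetry $p_8(x,\lambda)=p_4(-x,\lambda)$, the eighth) is differentiated must vanish, because such a term still contains the pair $Y_{04}^-$ and $Y_{08}^+$ which are linearly dependent at $\lambda_0$ by \eqref{eqn:p48}. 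Thus only the contributions from $\partial_\lambda Y_{04}^-$ and $\partial_\lambda Y_{08}^+$ survive, and using $\tau$ to relate them these combine into a single term. The surviving determinant factorizes: the first, second, fifth, sixth rows and columns reproduce $E_\mathrm{A}(\lambda_0)$, the third and seventh reproduce $E_\mathrm{C}(\lambda_0)$, and the fourth/eighth block reduces to a $2\times2$ Wronskian-type determinant involving $p_4(x,\lambda_0)$, $p_8(x,\lambda_0)$ and their $\lambda$-derivatives.

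Next I would identify that remaining $2\times 2$ factor. Writing $w(x)=\partial_\lambda p_4(x,\lambda_0)$, differentiating \eqref{eqn:lin_eq_3} in $\lambda$ shows that $w$ solves the inhomogeneous equation $-w''+sw-\partial_2F(U_0^2,0;\mu)w+\i\lambda_0 w=\i p_4(\cdot,\lambda_0)$. Pairing this identity against $p_8(x,\lambda_0)=\tau^{-1}p_4(x,\lambda_0)$ and integrating by parts over $\Rset$ — the boundary terms vanish since all functions decay, using $\nu_4(\lambda_0)>0$ from \eqref{eqn:mu_def} and the normalization \eqref{eqn:caseC_asym} — converts the Wronskian factor evaluated at $x$ (which is $x$-independent, being a Wronskian of solutions of the same second-order ODE up to the inhomogeneity) into $-\i\tau\|p_4(\cdot,\lambda_0)\|_{L^2}^2$. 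Combining this with the $E_\mathrm{A}E_\mathrm{C}$ prefactor from the previous step gives \eqref{eqn:dlE}. One can alternatively read off the Wronskian factor directly from \eqref{eqn:E3}: since $E_\mathrm{B}(\lambda)=-2p_4(0,\lambda)p_4'(0,\lambda)$ and $p_4(0,\lambda_0)p_4'(0,\lambda_0)=0$ would contradict $\textbf{(A4-1)}_{\lambda_0}$-type nondegeneracy only in Case (B)'s companion, one instead uses the standard Evans-function identity $\partial_\lambda E_\mathrm{B}(\lambda_0)=-\i\|p_4(\cdot,\lambda_0)\|_{L^2}^2$ for Schrödinger operators at a simple embedded eigenvalue.

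The main obstacle is bookkeeping rather than conceptual: one must carefully justify that all the "wrong-column" derivatives drop out, which hinges on the linear dependence \eqref{eqn:p48} and on the block structure \eqref{eqn:Jost_0}; and one must justify the integration by parts, i.e. the vanishing of boundary contributions, which requires knowing that $p_4(x,\lambda_0)$ and $\partial_\lambda p_4(x,\lambda_0)$ both decay as $x\to-\infty$ (true since $\Re\nu_4(\lambda_0)>0$ on $\Omega_4$) and that $p_8$ decays as $x\to+\infty$. The sign and the factor $\tau$ need care: tracking $\tau$ through the identification $\partial_\lambda Y_{08}^+ \leftrightarrow \tau\,\partial_\lambda(\text{eighth-column object})$ at $\lambda_0$ is what produces the single overall $-\i\tau$ rather than a sum of two terms. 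Once those points are settled, the formula \eqref{eqn:dlE} follows.
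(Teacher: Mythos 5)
Your approach is essentially the paper's: differentiating the $8\times 8$ determinant column by column and using the linear dependence \eqref{eqn:p48} together with the block structure \eqref{eqn:Jost_0} is just a hands-on rederivation of the product rule applied to the factorization $E(\lambda,0)=E_\mathrm{A}(\lambda)E_\mathrm{B}(\lambda)E_\mathrm{C}(\lambda)$ from \eqref{eqn:E_unperturbed}, after which everything reduces to evaluating $\partial_\lambda E_\mathrm{B}(\lambda_0)$. The paper does exactly this, citing Proposition 8.1.4 of \cite{KaPr13} with $(p_0,q_0)=(p_4,p_4)$; your ``alternative'' closing remark is the same citation (though you dropped the factor $\tau$ there — the identity for $E_\mathrm{B}$ reads $\partial_\lambda E_\mathrm{B}(\lambda_0)=-\i\tau\|p_4(\cdot,\lambda_0)\|_{L^2}^2$, not $-\i\|p_4\|_{L^2}^2$).

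The genuine flaw is in your ab initio derivation via integration by parts. You assert that ``the boundary terms vanish since all functions decay,'' but $w=\partial_\lambda p_4(\cdot,\lambda_0)$ does \emph{not} decay as $x\to+\infty$: the decay of $p_4(\cdot,\lambda_0)$ at $+\infty$ is the special feature of $\lambda_0$ being an eigenvalue (the $e^{\nu_4 x}$ coefficient in the Jost expansion vanishes), and that vanishing is not inherited by the $\lambda$-derivative — indeed, its $\lambda$-derivative is, up to a normalization, exactly $\partial_\lambda E_\mathrm{B}(\lambda_0)$, which is nonzero by simplicity of the zero. If both boundary terms vanished, your pairing would give $0=\i\tau\|p_4\|_{L^2}^2$, a contradiction. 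The correct bookkeeping is that $\partial_\lambda E_\mathrm{B}=W(\partial_\lambda p_4,p_8)+W(p_4,\partial_\lambda p_8)$, with the first Wronskian vanishing at $x\to-\infty$ and the second at $x\to+\infty$; pairing the inhomogeneous equation for $w$ against $p_8$ then identifies the \emph{surviving} boundary term $-W(p_8,w)(+\infty)$ with $\partial_\lambda E_\mathrm{B}(\lambda_0)$, and the integral identity produces $-\i\tau\|p_4\|_{L^2}^2$. So the conclusion is right, but it comes from a nonvanishing boundary term, not from their absence.
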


A proof of Lemma \ref{lem:de2E} is given in Appendix \ref{sec:proof_de2E}.
To prove Lemma \ref{lem:dlE}, we note that $\lambda=\lambda_0$ is a simple eigenvalue of \eqref{eqn:lin_eq_3} and apply Proposition 8.1.4 of \cite{KaPr13} to $E_\mathrm{B}(\lambda)$ with $(p_0,q_0)=(p_4,p_4)$.
We now compute the second equation of \eqref{eqn:dlambda} to obtain the following result.

\begin{lem}\label{lem:Relambda}
	We have
	\begin{align}
		&\Re\lambda(\epsilon)
		=-\frac{\displaystyle|\mathcal I_2(\lambda_0)|^2-\Re\!\biggl(\frac{C_3}{C_2}\mathcal I_2(\lambda_0)^2\biggr)}{2\,\|p_4(\cdot,\lambda_0)\|_{L^2}^2\,\Im\nu_2(\lambda_0)}\epsilon^2+O(\epsilon^3),\label{eqn:Relambda}\\
		&\Im\lambda(\epsilon)=\Im\lambda_0+O(\epsilon^2),\label{eqn:Relambda_im}
	\end{align}
	where $C_3\defeq(Y_{02}^-\cdot Z_{02}^+)(\lambda_0)$
	and $\mathcal I_2(\lambda)$ was defined in \eqref{eqn:Ij} with $j=2$.
	Moreover,
	\begin{align}
		\Re\lambda(\epsilon)
		\le-\frac{|\mathcal I_2(\lambda_0)|^2}{2\,\|p_4(\cdot,\lambda_0)\|_{L^2}^2\,\Im\nu_2(\lambda_0)}\!\left(1-\sqrt{1-\frac{4(\Im\nu_2(\lambda_0))^2}{|C_2|^2}}\right)\!\epsilon^2+O(\epsilon^3).\label{eqn:J2_ineq}
	\end{align}
\end{lem}

\begin{proof}
	Substituting \eqref{eqn:deE3}, \eqref{eqn:de2E_mid}, and \eqref{eqn:dlE} into \eqref{eqn:dlambda}, we obtain
	\begin{align}
		&\lambda(\epsilon)
		=\lambda_0+\!\left(-\i\biggl(\frac{2J_1}{C_1}+\frac{2J_2}{C_2}+f_3\biggr)\middle/\|p_4(\cdot,\lambda_0)\|_{L^2}^2\right)\!\epsilon^2+O(\epsilon^3),\label{eqn:lambda_eps}\\
		&\Re\lambda(\epsilon)
		=\left(\Im\!\biggl(\frac{2J_2}{C_2}\biggr)\middle/\|p_4(\cdot,\lambda_0)\|_{L^2}^2\right)\!\epsilon^2+O(\epsilon^3),\nonumber
	\end{align}
	and \eqref{eqn:Relambda_im} since $p_1(x,\lambda_0),q_1(x,\lambda_0),p_4(x,\lambda_0)\in\Rset$ by $\nu_1(\lambda_0),\nu_4(\lambda_0)\in\Rset$, and consequently $J_1,C_1,f_3\in\Rset$.
	To obtain \eqref{eqn:Relambda}, we only have to prove
	\begin{align}
		\Im\!\biggl(\frac{2J_2}{C_2}\biggr)
		&=-\frac{1}{2\,\Im\nu_2(\lambda_0)}\biggl(|\mathcal I_2(\lambda_0)|^2-\Re\!\biggl(\frac{C_3}{C_2}\mathcal I_2(\lambda_0)^2\biggr)\biggr).
		\label{eqn:J2_eq}
	\end{align}
	Since $(Y_{01}^+\ Y_{02}^+\ Y_{05}^+\ Y_{06}^+)$ and $(Y_{01}^-\ Y_{02}^-\ Y_{05}^-\ Y_{06}^-)$ are $8\times4$ matrices whose nonzero elements construct fundamental matrix solutions to \eqref{eqn:lin_eq_1} (see \eqref{eqn:Jost_0} and a remark thereafter), we see that there exists $S\in\mathrm{GL}(4,\Cset)$ such that
	\begin{align}
		\begin{pmatrix}
			Y_{01}^+&Y_{02}^+&Y_{05}^+&Y_{06}^+
		\end{pmatrix}
		=
		\begin{pmatrix}
			Y_{01}^-&Y_{02}^-&Y_{05}^-&Y_{06}^-
		\end{pmatrix}
		S
		\label{eqn:scat}
	\end{align}
	at $\lambda=\lambda_0$.
	From Lemma \ref{lem:Jost_normalize} we have
	\begin{align*}
		\begin{array}{llll}
			Y_{01}^-\cdot Z_{05}^+=C_1,&
			Y_{01}^-\cdot Z_{06}^+=0,&
			Y_{01}^-\cdot Z_{01}^-=0,&
			Y_{01}^-\cdot Z_{02}^-=0,\\
			Y_{02}^-\cdot Z_{05}^+=0,&
			Y_{02}^-\cdot Z_{06}^+=C_2,&
			Y_{02}^-\cdot Z_{01}^-=0,&
			Y_{02}^-\cdot Z_{02}^-=0,\\
			Y_{05}^-\cdot Z_{05}^+=0,&
			Y_{05}^-\cdot Z_{06}^+=0,&
			Y_{05}^-\cdot Z_{01}^-=-2\nu_1,&
			Y_{05}^-\cdot Z_{02}^-=0,\\
			Y_{06}^-\cdot Z_{05}^+=0,&
			Y_{06}^-\cdot Z_{06}^+=0,&
			Y_{06}^-\cdot Z_{01}^-=0,&
			Y_{06}^-\cdot Z_{02}^-=-2\nu_2.
		\end{array}
	\end{align*}
	Hence, 
	\begin{align}
		S=
		\begin{pmatrix}
			\dfrac{2\nu_1}{C_1}&0&0&0\\
			0&\dfrac{2\nu_2}{C_2}\Bigl(1-\Bigl(\dfrac{C_3}{2\nu_2}\Bigr)^2\Bigr)&0&-\dfrac{C_3}{2\nu_2}\\
			0&0&\dfrac{C_1}{2\nu_1}&0\\
			0&\dfrac{C_3}{2\nu_2}&0&\dfrac{C_2}{2\nu_2}
		\end{pmatrix}\!(\lambda_0).
		\label{eqn:scat2}
	\end{align}
	On the other hand, since $\nu_2(\lambda_0)$ is purely imaginary, $Y_{02}^-(x,\lambda_0)^*$ satisfies the same asymptotic condition as $Y_{06}^-(x,\lambda_0)$ when $x\to-\infty$.
	%, where the superscript `{\scriptsize *}' represents the complex conjugate.
	Since by Lemma \ref{lem:Jost_normalize} $Y_{06}^-$ is uniquely determined and $(Y_{02}^-)^*\cdot Z_{05}^+=(Y_{02}^-\cdot(Z_{05}^+)^*)^*=(Y_{02}^-\cdot Z_{05}^+)^*=0$, we have $Y_{02}^-(x,\lambda_0)^*=Y_{06}^-(x,\lambda_0)$.
	Hence, we use \eqref{eqn:Jost_0}, \eqref{eqn:scat}, and \eqref{eqn:scat2} to obtain
	\begin{align}
		p_6(x)=-\frac{C_3}{2\nu_2}p_2(x)+\frac{C_2}{2\nu_2}p_2(x)^*,\quad
		q_6(x)=-\frac{C_3}{2\nu_2}q_2(x)+\frac{C_2}{2\nu_2}q_2(x)^*
		\label{eqn:p2p6_rel}
	\end{align}
	at $\lambda=\lambda_0$.
	Substituting \eqref{eqn:p2p6_rel} into \eqref{eqn:J12} with $j=2$ yields \eqref{eqn:J2_eq}.

	It remains to show \eqref{eqn:J2_ineq}.
	Putting $x=0$ at \eqref{eqn:p2p6_rel} and using $(p_6(x,\lambda),q_6(x,\lambda))=(p_2(-x,\lambda),q_2(-x,\lambda))$, we have
	\begin{align*}
		(C_3+2\nu_2(\lambda_0))(p_2(0,\lambda_0),\,q_2(0,\lambda_0))
		=C_2(p_2^*(0,\lambda_0),\,q_2^*(0,\lambda_0)),
	\end{align*}
	so that $C_3^2+4(\Im\nu_2(\lambda_0))^2=|C_2|^2$ since $C_3=2\Re\!(p_2{p_2'}^*+q_2{q_2'}^*)(0,\lambda_0)\in\Rset$, $\nu_2(\lambda_0)\in\i\Rset$, and $C_2\ne0$.
	Hence, we compute
	\begin{align*}
		\displaystyle|\mathcal I_2(\lambda_0)|^2-\Re\!\biggl(\frac{C_3}{C_2}\mathcal I_2(\lambda_0)^2\biggr)
		&\ge|\mathcal I_2(\lambda_0)|^2\!\left(1-\left|\frac{C_3}{C_2}\right|\right)\\
		&=|\mathcal I_2(\lambda_0)|^2\!\left(1-\sqrt{1-\frac{4(\Im\nu_2(\lambda_0))^2}{|C_2|^2}}\right),
	\end{align*}
	which yields \eqref{eqn:J2_ineq}.
\end{proof}

%We assume the following.

%\begin{itemize}
%\setlength{\leftskip}{0.3cm}
%\item[$\textbf{(A5-1)}_{\lambda_0}$]
%	$\mathcal I_2(\lambda_0)\ne0$.
%\end{itemize}
If $\mathcal I_2(\lambda_0)\neq 0$, then $\Re\lambda(\epsilon)<0$ by Lemma \ref{lem:Relambda}.
Assume that $\mathscr L_\epsilon$ has an eigenvalue $\lambda_1(\epsilon)$ near $\lambda_0$.
By the symmetry of $\sigma(\mathscr L_\epsilon)$, we can assume $\Re\lambda_1(\epsilon)\ge0$.
Using Proposition \ref{prop:ev_prop}, we see that $E(\lambda_1(\epsilon),\epsilon)=0$.
However, $E(\lambda,\epsilon)$ does not have such a zero since $\lambda(\epsilon)$ is a unique zero of $E(\lambda,\epsilon)$ near $\lambda_0$.
This yields a contradiction, so that $\mathscr L_\epsilon$ has no eigenvalue near $\lambda_0$.
%Assume that $\lambda(\epsilon)$ is an eigenvalue of $\mathscr L_\epsilon$.
%Then by the symmetry of $\sigma(\mathscr L_\epsilon)$, $\mathscr L_\epsilon$ has another eigenvalue with a positive real part, which is a zero of $E(\lambda,\epsilon)$ by Proposition \ref{prop:ev_prop}.
%However, $E(\lambda,\epsilon)$ does not have such a zero.
%This yields a contradiction so $\lambda(\epsilon)$ is not an eigenvalue of $\mathscr L_\epsilon$.
This completes the proof.
\end{proof}

% **********************************************************
% Subsubsection 6.3
% **********************************************************

\subsection{Proof of Theorem~\ref{thm:main_2}}

We consider case (IV) in Section~2.2 and assume that
 $\lambda_0\in\i(-\infty,-\min\{\omega,s\})$ is a simple zero of $E_\mathrm{B}(\lambda)$
 with $E_\mathrm{A}(\lambda_0),E_\mathrm{C}(\lambda_0)\ne0$.
Note that $\lambda_0\in\i(-\infty,-\omega)\setminus\{-\i s\}$ or $\lambda_0=-\i s$ (resp.\ $\lambda_0\in\i(-\infty,-\omega)$ or $\lambda_0\in\i[-\omega,-s)$) if $\omega\le s$ (resp.\ $\omega>s$).

\begin{proof}[Proof of Theorem~\ref{thm:main_2}]
We first treat the case of $\lambda_0\in\i(-\infty,-\omega)\setminus\{-\i s\}$, whether $\omega\le s$ or not.
By the choice of branch cuts \eqref{eqn:br_cut} we have $\nu_1(\lambda_0)\in\i(-\infty,0)$ and $\nu_2(\lambda_0),\nu_4(\lambda_0)\in(0,\infty)$.
In particular, the Jost solution $(p_2(x,\lambda),q_2(x,\lambda))$ to \eqref{eqn:lin_eq_1} with $\lambda=\lambda_0$ is uniquely determined by \eqref{eqn:caseA_asym}.
%We assume the following instead of $\textbf{(A4-1)}_{\lambda_0}$.

%\begin{itemize}
%\setlength{\leftskip}{0.3cm}
%\item[$\textbf{(A4-2)}_{\lambda_0}$]
%	$p_2(0,\lambda_0)p_2'(0,\lambda_0)+q_2(0,\lambda_0)q_2'(0,\lambda_0)\ne0$.
%\end{itemize}

By exchanging the roles of $\nu_1$ and $\nu_2$, the same argument as in case (III) is valid.
Note that $\nu_1(\lambda_0)$ has a negative imaginary part while $\nu_2(\lambda_0)$ has a positive one in case (III).
In particular, the Jost solutions $Y_j^\pm$, $j=1,\ldots,8$, are uniquely determined
 if $\Delta_2(\lambda_0)\neq 0$,
 where $\Delta_2(\lambda)$ was defined in \eqref{eqn:Deltaj} with $j=2$,
 as in Lemma \ref{lem:Jost_normalize}.
As in Lemma \ref{lem:Relambda} we can prove the following.

\begin{lem}\label{lem:Relambda_2}
	We have
	\begin{align}
		&\Re\lambda(\epsilon)
		=-\frac{\displaystyle|\mathcal I_1(\lambda_0)|^2-\Re\!\biggl(\frac{C_4}{C_1}\mathcal I_1(\lambda_0)^2\biggr)}{2\,\|p_4(\cdot,\lambda_0)\|_{L^2}^2\,\Im\nu_1(\lambda_0)}\epsilon^2+O(\epsilon^3),
		\label{eqn:Relambda_2}\\
		&\Im\lambda(\epsilon)=\Im\lambda_0+O(\epsilon^2),\nonumber
	\end{align}
	where $C_4\defeq(Y_{01}^-\cdot Z_{01}^+)(\lambda_0)$
	and $\mathcal I_1(\lambda)$ was defined in \eqref{eqn:Ij} with $j=1$.
	Moreover,
	\begin{align*}
		\Re\lambda(\epsilon)
		\ge-\frac{|\mathcal I_1(\lambda_0)|^2}{2\,\|p_4(\cdot,\lambda_0)\|_{L^2}^2\,\Im\nu_1(\lambda_0)}\!\left(1-\sqrt{1-\frac{4(\Im\nu_1(\lambda_0))^2}{|C_1|^2}}\right)\!\epsilon^2+O(\epsilon^3).
	\end{align*}
\end{lem}

%Here the Jost solution $(p_1(x,\lambda),q_1(x,\lambda))$ to \eqref{eqn:lin_eq_1} is uniquely determined by \eqref{eqn:caseA_asym} under the condition $(p_1p_2'+q_1q_2'+p_1'p_2+q_1'q_2)(0,\lambda)=0$ (cf.\ Lemma \ref{lem:Jost_normalize}).
%Hence, we obtain a sufficient condition to ensure $\Re\lambda(\epsilon)>0$.
%We assume the following.

%\begin{itemize}
%\setlength{\leftskip}{0.3cm}
%\item[$\textbf{(A5-2)}_{\lambda_0}$]
%	$\mathcal I_1(\lambda_0)\ne0$.
%\end{itemize}
If $\mathcal I_1(\lambda_0)\neq 0$,
 then by Lemma \ref{lem:Relambda_2} $\Re\lambda(\epsilon)>0$,
 so that $\lambda(\epsilon)$ is an eigenvalue of $\mathscr L_\epsilon$
 by Proposition \ref{prop:E_form}.

We next consider the case of $\lambda_0=-\i s$ and $\omega<s$.
%We need to treat $\widetilde E(\gamma,\epsilon)$ since $E_\mathrm{B}(\lambda)$ is not analytic at $-\i s$
%A slightly different treatment from the above is needed since $E_\mathrm{B}(\lambda)$ is not analytic at $-\i s$.
Let $\widetilde E(\gamma,\epsilon)$ denote the extended Evans function near the branch point $\lambda_\mathrm{br}=-\i s$, where $\gamma^2=-\i(\lambda-\lambda_\mathrm{br})$.
As easily shown, $\partial_\epsilon^2\widetilde E(0,0)=\partial_\epsilon^2E(-\i s,0)$ is given by \eqref{eqn:de2E_mid} with $\lambda_0=-\i s$ while $\partial_\epsilon\widetilde E(0,0)=0$.
Since $\partial_\gamma=2\i\gamma\partial_\lambda$, it follows from \eqref{eqn:dlE} that
\begin{align*}
	\partial_\gamma\widetilde E(0,0)=0,\quad
	\partial_\gamma^2\widetilde E(0,0)
	=2\tau E_\mathrm{A}(-\i s)E_\mathrm{C}(-\i s)\|p_4(\cdot,-\i s)\|_{L^2}^2.
\end{align*}
Moreover, we differentiate \eqref{eqn:def_evans} with respect to $\epsilon$ and $\gamma$ to obtain
\begin{align*}
	\partial_\epsilon\partial_\gamma\widetilde E(0,0)
	&=\det\bigl(Y_1^-\;\;Y_2^-\;\;\partial_\gamma Y_3^-\;\;\partial_\epsilon Y_4^-\;\;Y_5^+\;\;Y_6^+\;\;Y_7^+\;\;Y_8^+\bigr)(x,-\i s,0)\\
	&\quad+\det\bigl(Y_1^-\;\;Y_2^-\;\;Y_3^-\;\;\partial_\epsilon Y_4^-\;\;Y_5^+\;\;Y_6^+\;\;\partial_\gamma Y_7^+\;\;Y_8^+\bigr)(x,-\i s,0)\\
	&\quad+\det\bigl(Y_1^-\;\;Y_2^-\;\;\partial_\gamma Y_3^-\;\;Y_4^-\;\;Y_5^+\;\;Y_6^+\;\;Y_7^+\;\;\partial_\epsilon Y_8^+\bigr)(x,-\i s,0)\\
	&\quad+\det\bigl(Y_1^-\;\;Y_2^-\;\;Y_3^-\;\;Y_4^-\;\;Y_5^+\;\;Y_6^+\;\;\partial_\gamma Y_7^+\;\;\partial_\epsilon Y_8^+\bigr)(x,-\i s,0)\\
	&=0.
\end{align*}
Here we have used \eqref{eqn:p48} and $\partial_\gamma\partial_\epsilon^kY_j^\pm|_{\gamma=0}=2\i\gamma\partial_\lambda\partial_\epsilon^kY_j^\pm|_{\gamma=0}=0$ for $j\ne3,7$ and $k=0,1$ in the first equality, and \eqref{eqn:Jost_0} and \eqref{eqn:deY_zero} in the second equality.
Since $\widetilde E(\gamma,\epsilon)=\frac{1}{2}\partial_\gamma^2\widetilde E(0,0)\gamma^2+\frac{1}{2}\partial_\epsilon^2\widetilde E(0,0)\epsilon^2+\cdots$ near $(\gamma,\epsilon)=(0,0)$, all zeros of $\widetilde E(\gamma,\epsilon)$ near $(\gamma,\epsilon)=(0,0)$ are parametrized as $\gamma=\pm\gamma(\epsilon)$ with
\begin{align*}
	\gamma(\epsilon)=\frac{\sqrt{-(2J_1/C_1+2J_2/C_2+f_3)}}{\|p_4(\cdot,-\i s)\|_{L^2}}\epsilon+O(\epsilon^2).
\end{align*}
As in the proof of Lemma \ref{lem:Relambda}, we have
\begin{align*}
	\Im\!\biggl(\frac{2J_1}{C_1}+\frac{2J_2}{C_2}+f_3\biggr)
%	=\Im\!\biggl(\frac{2J_1}{C_1}\biggr)
	=-\frac{1}{2\,\Im\nu_1(-\i s)}\biggl(|\mathcal I_1(-\i s)|^2-\Re\!\biggl(\frac{C_4}{C_1}\mathcal I_1(-\i s)^2\biggr)\biggr)>0
\end{align*}
if $\Delta_2(-\i s),\mathcal I_1(-\i s)\neq 0$.
Hence, we obtain $\Re\gamma(\epsilon)>0$ and $\Im\gamma(\epsilon)<0$, so that $\lambda(\epsilon)=-\i s+\i\gamma(\epsilon)^2$ is an eigenvalue of $\mathscr L_\epsilon$ by Proposition \ref{prop:gamma_move}.
This completes the proof.
\end{proof}
%We summarize the result of Case (IV) as follows.

%\begin{thm}\label{thm:main_2}
%	Suppose that {\rm\textbf{(A1)}}-{\rm\textbf{(A3)}} and $\bar a_2,\bar b_2\ne0$ hold.
%	Assume that $\lambda_0\in\i(-\infty,-\omega)$ is a simple zero of $E_\mathrm{B}(\lambda)$ such that $E_\mathrm{A}(\lambda_0),E_\mathrm{C}(\lambda_0)\ne0$, and that ${\text{\rm\textbf{(A4-2)}}}_{\lambda_0}$ and ${\text{\rm\textbf{(A5-2)}}}_{\lambda_0}$ hold.
%	Then for $\epsilon>0$ sufficiently small, the linearized operator $\mathscr L_\epsilon$ has an eigenvalue of positive real part in an $O(\epsilon^2)$-neighborhood of $\lambda_0$.
%\end{thm}

%The remaining case is degenerated as stated in the following remark.
\begin{rmk}\label{rmk:main2_deg2}
	Let $s<\omega$.
	Suppose that $\lambda_0\in\i[-\omega,-s)$ is a simple zero of $E_\mathrm{B}(\lambda)$ such that $E_\mathrm{A}(\lambda_0),E_\mathrm{C}(\lambda_0)\ne0$.
	Then, as easily shown, a zero of $E(\lambda,\epsilon)$ near $\lambda=\lambda_0$ satisfies \eqref{eqn:lambda_eps}.
	Since $\nu_1(\lambda_0)\ge0$ and $\nu_2(\lambda_0),\nu_4(\lambda_0)>0$ by \eqref{eqn:mu_def1} and \eqref{eqn:mu_def2}, we have $p_j(x,\lambda_0),q_j(x,\lambda_0),p_4(x,\lambda_0)\in\Rset$ for $j=1,2,$ and $2J_1/C_1,2J_2/C_2,f_3\in\Rset$ in \eqref{eqn:lambda_eps}, so that $\Re\lambda(\epsilon)=O(\epsilon^3)$ as stated in Remark~\ref{rmk_main2_deg1}.
\end{rmk}

% **********************************************************
% Subsubsection 6.4
% **********************************************************

\subsection{Proof of Theorem~\ref{thm:main_3}}

We consider case~(V) in Section~2.2 and assume that
 $\omega<s$ and $\lambda_0=\i s$ is a zero of $E_\mathrm{B}(\lambda)$
 with $E_\mathrm{A}(\lambda_0),E_\mathrm{C}(\lambda_0)\ne0$.
 
\begin{proof}[Proof of Theorem~\ref{thm:main_3}]
Let $\widetilde E(\gamma,\epsilon)$ denote the extended Evans functions near the branch point $\lambda_\mathrm{br}=\i s$, where $\gamma^2=\i(\lambda-\lambda_\mathrm{br})$.
We define $\widetilde E_\mathrm{B}(\gamma)$ for $E_\mathrm{B}(\lambda)$ as $\widetilde E(\gamma,\epsilon)$ and assume that $\gamma=0$ is a simple zero of $\widetilde E_\mathrm{B}(\gamma)$.
%Let denote the extended Evans functions $\widetilde E(\gamma,\epsilon)$ and $\widetilde E_\mathrm{C}(\gamma)$ for $E(\lambda,\epsilon)$ and $E_\mathrm{C}(\lambda)$, respectively, near a branch point $\lambda_\mathrm{br}=\i s$, and consider the case that $\gamma=0$ is a simple zero of $\widetilde E_\mathrm{C}(\gamma)$ and that $E_\mathrm{A}(\i s),E_\mathrm{B}(\i s)\ne0$.
%Recall that $\gamma^2=\i(\lambda-\i s)$.
Since any eigenvalue of \eqref{eqn:p48} is included in $\i(-\infty,s)$, $\lambda_0=\i s$ is a resonance pole of $\mathscr L_0$.
%Since it is not excluded in $\i(-\infty,s)$ so that it is not an eigenvalue of \eqref{eqn:lin_eq_3}, $\lambda_0=\i s$ is a resonance pole of $\mathscr L_0$.
Actually, since $\nu_4(\i s)=0$, by \eqref{eqn:caseC_asym} $p_4(x,\i s)\to1$ ($x\to-\infty$) and $p_8(x,\i s)\to1$ ($x\to+\infty$) while \eqref{eqn:p48} still holds with $\tau=1$ or $-1$.
%there exists a Jost solution $p_4(x,\i s)=\tau p_8(x,\i s)$, $\tau\in\{\pm1\}$, of \eqref{eqn:lin_eq_3} such that $p_4(x,\i s)\to1$ ($x\to-\infty$) and $p_8(x,\i s)\to1$ ($x\to+\infty$).
Like \eqref{eqn:deE3} and \eqref{eqn:de2E_mid} in case (III), we obtain
%We carry out the same calculation as in \eqref{eqn:deE3} and Lemma \ref{lem:de2E} to obtain
\begin{align*}
	\partial_\epsilon\widetilde E(0,0)=0,\quad
	\partial_\epsilon^2\widetilde E(0,0)
	=2\tau E_\mathrm{A}(\i s)E_\mathrm{C}(\i s)\biggl(\frac{2J_1}{C_1}+\frac{2J_2}{C_2}+f_3\biggr).
\end{align*}
%Recall that $\tau=1$ or $-1$.
As in Lemma 9.5.4 of \cite{KaPr13}, we see that $\partial_\gamma\widetilde E_\mathrm{B}(0)=-(1+\tau^2)\tau=-2\tau$ and $\partial_\gamma\widetilde E(0,0)=-2\tau E_\mathrm{A}(\i s)E_\mathrm{C}(\i s)$.
Hence, as in Lemma \ref{lem:Relambda},
 there exists a unique zero $\gamma(\epsilon)$ of $\widetilde E(\gamma,\epsilon)$
 with a positive imaginary part
% satisfies $\Im\gamma(\epsilon)>0$
 if $\Delta_1(\i s),\mathcal I_2(\i s)\neq 0$.
By Proposition \ref{prop:gamma_move},
 $\mathscr L_\epsilon$ has no eigenvalue near $\lambda=\i s$.
This completes the proof.
\end{proof}
%We summarize the result of Case (V) as follows.

%\begin{thm}\label{thm:main_3}
%	Suppose that {\rm\textbf{(A1)}}-{\rm\textbf{(A3)}} and $\bar a_2,\bar b_2\ne0$ hold.
%	Let $\omega<s$.
%	Assume that $\gamma=0$ is a simple zero of $\widetilde E_\mathrm{B}(\gamma)$ such that $E_\mathrm{A}(\i s),E_\mathrm{C}(\i s)\ne0$, and that ${\text{\rm\textbf{(A4-1)}}}_{\i s}$ and ${\text{\rm\textbf{(A5-1)}}}_{\i s}$ hold. %, where $\widetilde E_\mathrm{C}(\gamma)$ denotes the extended Evans function for $E_\mathrm{C}(\lambda)$ near a branch point $\lambda_{\mathrm{br}}=\i s$.
%	Then for $\epsilon>0$ sufficiently small, the linearized operator $\mathscr L_\epsilon$ has no eigenvalues in a neighborhood of $\lambda=\i s$ and no resonance pole at $\lambda=\i s$.
%\end{thm}

% **********************************************************
% Subsubsection 6.5
% **********************************************************

\subsection{Proof of Theorem~\ref{thm:main_4}}

We consider case (VI) in Section~2.2 and assume that
 $s<\omega$ and $\lambda_0=\i\omega$ is a zero of $E_\mathrm{A}(\lambda)$
 with $E_\mathrm{B}(\lambda_0),E_\mathrm{C}(\lambda_0)\ne0$.
%By the symmetry of $\sigma(\mathscr L_\epsilon)$, we only have to consider the case of $\lambda_0=\i\omega$.

\begin{proof}[Proof of Theorem~\ref{thm:main_4}]
Let $\widetilde E(\gamma,\epsilon)$ denote the extended Evans functions near the branch point $\lambda_\mathrm{br}=\i\omega$, where $\gamma^2=\i(\lambda-\lambda_\mathrm{br})$.
We define $\widetilde E_\mathrm{A}(\gamma)$ for $E_\mathrm{A}(\lambda)$ as $\widetilde E(\gamma,\epsilon)$ and assume that $\gamma=0$ is a simple zero of $\widetilde E_\mathrm{A}(\gamma)$.
%We denote the extended Evans functions $\widetilde E(\gamma,\epsilon)$ and $\widetilde E_\mathrm{A}(\gamma)$ for $E(\lambda,\epsilon)$ and $E_\mathrm{A}(\lambda)$, respectively, near a branch point $\lambda_\mathrm{br}=\i\omega$, and consider the case that $\gamma=0$ is a simple zero of $\widetilde E_\mathrm{A}(\gamma)$ and that $E_\mathrm{B}(\i\omega),E_\mathrm{C}(\i\omega)\ne0$.
Note that $\nu_1(\i\omega),\nu_3(\i\omega)\in(0,\infty)$, $\nu_2(\i\omega)=0$, and $\nu_4(\i\omega)\in\i(0,\infty)$.
%We assume the following.
%\begin{itemize}
%\item[\textbf{(A6)}]
%	There exists a bounded solution $(\hat p(x),\hat q(x))^\T$ of \eqref{eqn:lin_eq_1} with $\lambda=\i\omega$ such that
%	\begin{align*}
%		\lim_{x\to\pm\infty}\hat p(x)=0,\quad
%		\lim_{x\to-\infty}\hat q(x)\ne0,\quad
%		\lim_{x\to+\infty}\hat q(x)\ne0.
%	\end{align*}
%\end{itemize}
Under \textbf{(A4)}, $\lambda=\i\omega$ is a resonance pole of $\mathscr L_0$.
So we take the Jost solutions $(p_j(x,\i\omega),q_j(x,\i\omega))$, $j=2,6$, to \eqref{eqn:lin_eq_1} such that
\begin{align}
	\begin{aligned}
		&p_2(x,\i\omega)=\tau p_6(x,\i\omega),\quad
		q_2(x,\i\omega)=\tau q_6(x,\i\omega),\\
		&\lim_{x\to-\infty}p_2(x,\i\omega)=\lim_{x\to+\infty}p_6(x,\i\omega)=0,\\
		&\lim_{x\to-\infty}q_2(x,\i\omega)=\lim_{x\to+\infty}q_6(x,\i\omega)=1,
	\end{aligned}
	\label{eqn:p26}
\end{align}
where $\tau=1$ or $-1$.

%Using the implicit function theorem, we see that there exists a smooth function $\gamma(\epsilon)$ such that $\widetilde E(\gamma(\epsilon),\epsilon)=0$ and $\gamma(0)=0$.
%We turn to compute $\gamma(\epsilon)$.
We can compute the zero $\gamma=\gamma(\epsilon)$ of $\widetilde E(\gamma,\epsilon)$ in a similar fashion to case (III).
So we briefly describe it below.
Since $\nu_1(\i\omega),\nu_3(\i\omega)\in(0,\infty)$, $\nu_2(\i\omega)=0$, and $\nu_4(\i\omega)\in\i(0,\infty)$, the other Jost solutions $(p_j(x,\lambda),q_j(x,\lambda))$, $j=1,5$, and $p_j(x,\lambda)$, $j=3,4,7,8$, are uniquely determined.
%Instead of Lemma \ref{lem:Jost_normalize} we choose $(p_j(x,\lambda),q_j(x,\lambda))^\T$, $j=2,6$, such that \eqref{eqn:p26} holds.
Note that the Jost solutions are determined without Lemma \ref{lem:Jost_normalize}.
Let $(\bar p_2(x),\bar q_2(x))$ be a solution to \eqref{eqn:lin_eq_1} with $\lambda=\i\omega$ such that it is linearly independent of $(p_j(x,\i\omega),q_j(x,\i\omega))$, $j=1,5,6$, and
\begin{align}
	\bar p_2p_1'+\bar q_2q_1'=0,\quad
	\bar p_2'p_1+\bar q_2'q_1=0,\quad
	\bar p_2p_2'+\bar q_2q_2'+\bar p_2'p_2+\bar q_2'q_2=1
	\label{eqn:E1scale}
\end{align}
at $(x,\lambda)=(0,\i\omega)$.
Such a solution can always be taken by adding any linear combination
 of $(p_j(x,\i\omega),q_j(x,\i\omega))$, $j=1,5,6$, to it if necessary.
Define
\begin{align*}
	&\bar Y_{02}(x)=(\bar p_2,\bar q_2,0,0,\bar p_2',\bar q_2',0,0)^\T(x),\\
	&\widehat{\mathscr Y}_0(x)=\bigl(Y_{01}^-\;\;\bar Y_{02}\;\;Y_{03}^-\;\;Y_{04}^-\;\;Y_{05}^+\;\;Y_{06}^+\;\;Y_{07}^+\;\;Y_{08}^+\bigr)(x,\i\omega).
\end{align*}
Let $Z_{0j}^\pm(x,\lambda)$, $j=1,\ldots,8$, and $\bar Z_{02}(x)$ be the solutions obtained from $Y_{0j}^\pm(x,\lambda)$, $j=1,\ldots,8$, and $\bar Y_{02}(x)$, respectively, to the adjoint equation \eqref{eqn:adj_eq} as in case (III).
As in Lemma \ref{lem:Jost_normalize}, we have the following.

\begin{lem}
	We have
\footnotesize
	\begin{align*}
		&\widehat{\mathscr Y}_0(x)^{-1}\\
		&=\left(\begin{array}{c|c|c|c}
			\begin{matrix}-p_5'/\widehat C_1&-q_5'/\widehat C_1\\-p_6'&-q_6'\end{matrix}&O_2&\begin{matrix}p_5/\widehat C_1&q_5/\widehat C_1\\p_6&q_6\end{matrix}&O_2\\\hline
			O_2&\begin{matrix}p_7'/E_\mathrm{C}&0\\0&p_8'/E_\mathrm{B}\end{matrix}&O_2&\begin{matrix}-p_7/E_\mathrm{C}&0\\0&-p_8/E_\mathrm{B}\end{matrix}\\\hline
			\begin{matrix}p_1'/\widehat C_1&q_1'/\widehat C_1\\p_2'&q_2'\end{matrix}&O_2&\begin{matrix}-p_1/\widehat C_1&-q_1/\widehat C_1\\-p_2&-q_2\end{matrix}&O_2\\\hline
			O_2&\begin{matrix}-p_3'/E_\mathrm{C}&0\\0&-p_4'/E_\mathrm{B}\end{matrix}&O_2&\begin{matrix}p_3/E_\mathrm{C}&0\\0&p_4/E_\mathrm{B}\end{matrix}
		\end{array}\right)\!(x,\i\omega),
	\end{align*}
	\normalsize
	where $\widehat C_1\defeq(Y_{01}^-\cdot Z_{05}^+)(\i\omega)$ is a nonzero constant.
\end{lem}

\begin{proof}
	The proof is similar to that of the second part in Lemma \ref{lem:Jost_normalize}.
	Let
	\begin{align*}
		\widehat{\mathscr Z}_0(x)=\bigl(Z_{01}^-\;\;\bar Z_{02}\;\;Z_{03}^-\;\;Z_{04}^-\;\;Z_{05}^+\;\;Z_{06}^+\;\;Z_{07}^+\;\;Z_{08}^+\bigr)(x,\i\omega).
	\end{align*}
	Instead of \eqref{eqn:Z0Y0} we have
	\begin{align*}
		\widehat{\mathscr Z}_0(0)^\H\widehat{\mathscr Y}_0(0)
		=\left(\begin{array}{c|c|c|c}
			O_2&O_2&\begin{matrix}-\widehat C_1&0\\0&-1\end{matrix}&O_2\\\hline
			O_2&*&O_2&*\\\hline
			\begin{matrix}\widehat C_1&0\\0&1\end{matrix}&O_2&O_2&O_2\\\hline
			O_2&*&O_2&*
		\end{array}\right)
	\end{align*}
	by \eqref{eqn:E1scale}.
	Taking the determinant in the above identity yields $\widehat C_1\ne0$.
	Using the formula $\widehat{\mathscr Y}_0(x)^{-1}=(\widehat{\mathscr Z}_0(0)^\H\widehat{\mathscr Y}_0(0))^{-1}\widehat{\mathscr Z}_0(x)^\H$, we obtain the desired result.
\end{proof}

Like \eqref{eqn:deE3} and \eqref{eqn:de2E_mid} in case (III), we obtain
\begin{align}
	\partial_\epsilon\widetilde E(0,0)=0\label{eqn:deE2}
\end{align}
and
\begin{align}
	\partial_\epsilon^2\widetilde E(0,0)
	=2E_\mathrm{B}(\i\omega)&E_\mathrm{C}(\i\omega)\biggl(\frac{J_3}{E_\mathrm{C}(\i\omega)}+\frac{J_4}{E_\mathrm{B}(\i\omega)}\nonumber\\
	&-\int_{-\infty}^\infty(p_6(x,\i\omega),\,q_6(x,\i\omega))A_{21}(x)\!\begin{pmatrix}p_2(x,\i\omega)\\q_2(x,\i\omega)\end{pmatrix}\!\d x\biggr),\label{eqn:de2E2}
\end{align}
where
\begin{align}
	J_j\defeq\int_{-\infty}^\infty\int_{-\infty}^x&a(x)\bigl(p_6(x,\lambda_0)+q_6(x,\lambda_0)\bigr)p_{j+4}(x,\lambda_0)\nonumber\\
	&a(y)\bigl(p_6(y,\lambda_0)+q_6(y,\lambda_0)\bigr)p_j(y,\lambda_0)\,\d y\,\d x\label{eqn:J34}
\end{align}
for $j=3,4$.
Recall that $a(x)$ and $A_{21}(x)$ are given by \eqref{eqn:a_def} and \eqref{eqn:A21_def}, respectively.
As in Lemma 9.5.4 of \cite{KaPr13}, we see that
\begin{align}
	\partial_\gamma\widetilde E(0,0)=2E_\mathrm{B}(\i\omega)E_\mathrm{C}(\i\omega).\label{eqn:dlE2}
\end{align}
As in Lemma \ref{lem:Relambda} we have the following lemma.
%We can show the following lemma which corresponds to Lemma \ref{lem:Relambda}.

\begin{lem}\label{lem:Relambda_iomega}
	We have
	\begin{align}
		&\Re\gamma(\epsilon)=O(\epsilon^2),\label{eqn:Relambda_iomega_re}\\
		&\Im\gamma(\epsilon)
		=-\frac{1}{2\,\Im\nu_4(\i\omega)}\biggl(|\mathcal I_2(\i\omega)|^2-\Re\!\biggl(\frac{C_5}{E_\mathrm{B}(\i\omega)}\mathcal I_2(\i\omega)^2\biggr)\biggr)\epsilon^2+O(\epsilon^3)\label{eqn:Relambda_iomega},
	\end{align}
	where $C_5\defeq(Y_{04}^-\cdot Z_{04}^+)(\i\omega)$.
	Moreover,
	\begin{align}
		\Im\gamma(\epsilon)
		\le-\frac{|\mathcal I_2(\i\omega)|^2}{2\,\Im\nu_4(\i\omega)}\!\left(1-\sqrt{1-\frac{4(\Im\nu_4(\i\omega))^2}{|E_\mathrm{B}(\i\omega)|^2}}\right)\!\epsilon^2+O(\epsilon^3).\label{eqn:J4_ineq}
	\end{align}
\end{lem}

\begin{proof}
	Substituting \eqref{eqn:deE2}, \eqref{eqn:de2E2}, and \eqref{eqn:dlE2} into \eqref{eqn:dgamma}, we obtain \eqref{eqn:Relambda_iomega_re} and
	\begin{align*}
		\Im\gamma(\epsilon)=-\Im\!\biggl(\frac{J_4}{E_\mathrm{B}(\i\omega)}\biggr)\epsilon^2+O(\epsilon^3)
	\end{align*}
	since $p_2(x,\i\omega),q_2(x,\i\omega),p_3(x,\i\omega)\in\Rset$ by $\nu_2(\i\omega),\nu_3(\i\omega)\in\Rset$, so that $J_3,E_\mathrm{C}(\i\omega)\in\Rset$.
	To obtain \eqref{eqn:Relambda_iomega}, we only have to prove
	\begin{align}
		\Im\!\biggl(\frac{J_4}{E_\mathrm{B}(\i\omega)}\biggr)
		=\frac{1}{2\,\Im\nu_4(\i\omega)}\biggl(|\mathcal I_2(\i\omega)|^2-\Re\!\biggl(\frac{C_5}{E_\mathrm{B}(\i\omega)}\mathcal I_2(\i\omega)^2\biggr)\biggr).\label{eqn:J4_eq}
	\end{align}
	Since $(Y_{04}^+\ Y_{08}^+)$ and $(Y_{04}^-\ Y_{08}^-)$ are $8\times2$ matrices whose nonzero elements construct fundamental matrix solution to \eqref{eqn:lin_eq_3}, we see that there exists $\widehat S\in GL(2,\Cset)$ such that
	\begin{align}
		\begin{pmatrix}
			Y_{04}^+&Y_{08}^+
		\end{pmatrix}
		=
		\begin{pmatrix}
			Y_{04}^-&Y_{08}^-
		\end{pmatrix}
		\widehat S
		\label{eqn:scat3}
	\end{align}
	at $\lambda=\i\omega$.
	Using \eqref{eqn:caseC_asym}, we have
	\begin{align}
		\widehat S=
		\begin{pmatrix}
			-\dfrac{2\nu_4}{E_\mathrm{B}}\Bigl(1-\Bigl(\dfrac{C_5}{2\nu_4}\Bigr)^2\Bigr)&-\dfrac{C_5}{2\nu_4}\\
			\dfrac{C_5}{2\nu_4}&-\dfrac{E_\mathrm{B}}{2\nu_4}
		\end{pmatrix}\!(\i\omega)
		\label{eqn:scat4}
	\end{align}
	as in \eqref{eqn:scat2}.
	On the other hand, since $\nu_4(\i\omega)$ is purely imaginary, $Y_{04}^-(x,\i\omega)^*$ satisfies the same asymptotic condition as $Y_{08}^-(x,\i\omega)$ when $x\to-\infty$.
	Since $Y_{08}^-$ is uniquely determined, we have $Y_{04}^-(x,\i\omega)^*=Y_{08}^-(x,\i\omega)$.
	Hence, we use \eqref{eqn:Jost_0}, \eqref{eqn:scat3}, and \eqref{eqn:scat4} to obtain
	\begin{align}
		p_8(x,\i\omega)=-\frac{1}{2\nu_4(\i\omega)}\bigl(C_5p_4(x,\i\omega)+E_\mathrm{B}(\i\omega)p_4(x,\i\omega)^*\bigr).
		\label{eqn:p4p8_rel}
	\end{align}
	Substituting \eqref{eqn:p4p8_rel} into \eqref{eqn:J34} with $j=4$ yields \eqref{eqn:J4_eq}.

	It remains to show \eqref{eqn:J4_ineq}.
	Since $C_5=2\Re\!(p_4p_4'^*)(0,\i\omega)\in\Rset$ and $\nu_4(\i\omega)\in\i\Rset$, we obtain $|E_\mathrm{B}(\i\omega)|^2=C_5^2+4(\Im\nu_4(\i\omega))^2$, from which \eqref{eqn:J4_ineq} immediately follows.
\end{proof}

From Proposition \ref{prop:gamma_move} and Lemma \ref{lem:Relambda_iomega} we see that $\mathscr L_\epsilon$ has no eigenvalue near $\lambda=\i\omega$ when $\epsilon>0$.
This completes the proof.
\end{proof}
%We summarize the result of Case (VI) as follows.

%\begin{thm}\label{thm:main_4}
%	Suppose that {\rm\textbf{(A1)}}-{\rm\textbf{(A3)}} and $\bar a_2,\bar b_2\ne0$ hold.
%	Let $s<\omega$.
%	Assume that $\gamma=0$ is a simple zero of $\widetilde E_\mathrm{A}(\gamma)$ such that $E_\mathrm{B}(\i\omega),E_\mathrm{C}(\i\omega)\ne0$, and that ${\text{\rm\textbf{(A5-1)}}}_{\i\omega}$ and {\rm\textbf{(A6)}} hold.
%	Then for $\epsilon>0$ sufficiently small, the linearized operator $\mathscr L_\epsilon$ has no eigenvalues in a neighborhood of $\lambda=\i\omega$ and no resonance pole at $\lambda=\i\omega$.
%\end{thm}

\begin{rmk}\label{rmk:A6_equiv}
%	Since the zero $\gamma=0$ of $\widetilde E_\mathrm{A}(\gamma)$ is simple, {\rm\textbf{(A4)}} holds if and only if there exists a nondecaying solution to \eqref{eqn:lin_eq_1} with $\lambda=\i\omega$.
%	In fact, by Lemma~\ref{lem:Jost_sol}, $\widetilde E_\mathrm{A}(0)=0$ means that there exists a bounded solution $(\hat p(x),\hat q(x))$ to \eqref{eqn:lin_eq_1} with $\lambda=\i\omega$ which satisfies one of the following asymptotics:
	{\rm\textbf{(A4)}} holds if and only if there exists a bounded solution $(\hat p(x),\hat q(x))\not\in L^2(\Rset)\times L^2(\Rset)$ to \eqref{eqn:lin_eq_1} with $\lambda=\i\omega$.
	The necessity is obvious.
	Conversely, by Lemma~\ref{lem:Jost_sol} any bounded solution $(\hat p(x),\hat q(x))$ to \eqref{eqn:lin_eq_1} with $\lambda=\i\omega$ satisfies one of the following asymptotics:
	\begin{enumerate}
		\renewcommand{\labelenumi}{(\roman{enumi})}
		\item
			$\displaystyle
			\lim_{x\to\pm\infty}\hat p(x)=0,\quad
			\lim_{x\to-\infty}\hat q(x)=0,\quad
			\lim_{x\to+\infty}\hat q(x)=0,$
		\item
			$\displaystyle
			\lim_{x\to\pm\infty}\hat p(x)=0,\quad
			\lim_{x\to-\infty}\hat q(x)=0,\quad
			\lim_{x\to+\infty}\hat q(x)\ne0,$
		\item
			$\displaystyle
			\lim_{x\to\pm\infty}\hat p(x)=0,\quad
			\lim_{x\to-\infty}\hat q(x)\ne0,\quad
			\lim_{x\to+\infty}\hat q(x)=0,$
		\item
			$\displaystyle
			\lim_{x\to\pm\infty}\hat p(x)=0,\quad
			\lim_{x\to-\infty}\hat q(x)\ne0,\quad
			\lim_{x\to+\infty}\hat q(x)\ne0.$
	\end{enumerate}
	If case (ii) or (iii) holds, then $(\hat p(-x),\hat q(-x))$ is also a solution to \eqref{eqn:lin_eq_1} with $\lambda=\i\omega$ which is linearly independent of $(\hat p(x),\hat q(x))$.
	Since the zero $\gamma=0$ of $\widetilde E_\mathrm{A}(\gamma)$ is simple, we see that only case (i) or (iv) occurs.
%	Hence, only case (i) or (iv) occurs.
	Hence, if $(\hat p(x),\hat q(x))\not\in L^2(\Rset)\times L^2(\Rset)$ then case (iv) must occur.
\end{rmk}

% **********************************************************
% Section 7
% **********************************************************

\section{Example: Coupled Cubic Nonlinear Schr\"odinger Equations}

In this section we apply the theories of Section~2 to the CNLS equations \eqref{eqn:CNLS} with the particular nonlinearity \eqref{eqn:cubic}, i.e.,
\begin{align}
	\mathrm{i}\partial_t u+\partial_{x}^2u+(|u|^2+\beta_1|v|^2)u=0,\quad
	\mathrm{i}\partial_t v+\partial_{x}^2v+(\beta_1|u|^2+\beta_2|v|^2)v=0,
	\label{eqn:CNLS_cubic}
\end{align}
and %perform more detailed analysis along with
 illustrate the theoretical results with some numerical computations.
Here we take $\beta_1\in\Rset$ as the control parameter. %and $\beta_2\in\Rset$ is a constant.
Throughout this section, we set $\omega=1$
 by replacing $\omega t$ and $s/\omega$ with $t$ and $s$, respectively, if necessary
 without loss of generality.

% **********************************************************
% Subsection 7.1
% **********************************************************

\subsection{Bifurcation Analysis}

%In this subsection we assume $s>0$.
For \eqref{eqn:CNLS_cubic} we easily see that \textbf{(A1)} and \textbf{(A2)} hold
 with $U_0(x)=\sqrt 2\,\sech x$ for any $\beta_1,\beta_2\in\Rset$.
The NVE \eqref{eqn:VE2} becomes
\begin{equation}
	\delta V''-(s-2\beta_1\sech^2x)\delta V=0.
	\label{eqn:exVE2}
\end{equation}
We first discuss bifurcations of the fundamental solitary wave.
As shown in Section 5 of \cite{BlYa12}, when
\begin{align}
	\beta_1=\beta_1^{(\ell)}\defeq\frac{(\sqrt s+\ell)(\sqrt s+\ell+1)}{2},\quad
	\ell\in\Zset_{\ge0}\defeq\{n\in\Zset\mid n\ge 0\},
	\label{eqn:beta1c}
\end{align}
%with $\mu=\beta_1-\beta_1^{(\ell)}$.
\textbf{(A3)} holds and \eqref{eqn:exVE2} has a bounded solution %$V_1(x)$ at  given in \textbf{(A3)}.
%Moreover, when \eqref{eqn:beta1c} holds, a bounded solution is 
given by
%It is then written as
\begin{align}
	V_1^{(\ell)}(x)
	&=2^{\sqrt s}\Gamma(\sqrt s+1)P_{\sqrt s+\ell}^{-\sqrt s}(\tanh x)\notag\\
	&=\begin{cases}
		\sech^{\sqrt s}x\,\pFq{2}{1}{-\ell',\sqrt s+\ell'+\tfrac{1}{2}}{\sqrt s+1}{\sech^2x}&\text{if $\ell=2\ell'$};\\
		\sech^{\sqrt s}x\tanh x\,\pFq{2}{1}{-\ell',\sqrt s+\ell'+\tfrac{3}{2}}{\sqrt s+1}{\sech^2x}&\text{if $\ell=2\ell'+1$},
	\end{cases}
	\label{eqn:V1_cubic}
\end{align}
where $\ell'\in\Zset_{\ge0}$, $P^\mu_\nu(z)$ is the associated Legendre function of the first kind
\begin{align*}
	P^\mu_\nu(z)=\frac{1}{\Gamma(1-\mu)}\biggl(\frac{1+z}{1-z}\biggr)^{\mu/2}\pFq{2}{1}{-\nu,\nu+1}{1-\mu}{\frac{1-z}{2}}
\end{align*}
and ${}_p F_q$ is the (generalized) hypergeometric function
\begin{align}
	\pFq{p}{q}{a_1,\ldots,a_p}{b_1,\ldots,b_q}{z}=\sum_{j=0}^\infty\frac{(a_1)_j\cdots(a_p)_j}{j!\,(b_1)_j\cdots(b_q)_j}z^j
	\label{eqn:def_hypgeo}
\end{align}
with $(x)_j\defeq\Gamma(x+j)/\Gamma(x)$.
See \cite{AbSt64,Sl66} for necessary information on the associated Legendre functions and hypergeometric functions.
See also Appendix A of \cite{YaSt20}.
Small errors were contained in (52) and (53) of \cite{BlYa12}, which correspond to \eqref{eqn:V1_cubic}.
%When \eqref{eqn:beta1c} holds, the VE \eqref{eqn:veex} is integrable in the meaning of differential Galois theory \cite{PS03}, as described in \cite{BlYa12}.
From \eqref{eqn:phi12} and  \eqref{eqn:phi11} we have
\begin{align}
	\phi_{11}(x)=\frac{1}{2}\sech x\,(3-\cosh^2x-3x\tanh x),\quad
	\phi_{12}(x)=\sech x\tanh x.
	\label{eqn:phi11_12}
\end{align}
Recall that $\phi_{jl}(x)$ represents the $(j,l)$-element of the fundamental matrix solution $\Phi(x)$ to the homogeneous part of \eqref{eqn:aprx1} with $\Phi(0)=I_2$ for $j,l=1,2$.
Using \eqref{eqn:U2}, we express \eqref{eqn:a2ex} and \eqref{eqn:b2ex} as
\begin{align}
	&\bar a_2=-2\int_{-\infty}^\infty V_1^{(\ell)}(x)^2\sech^2x\,\d x,\label{eqn:a2_tmp}\\
	&\bar b_2=8\bigl(\beta_1^{(\ell)}\bigr)^2\int_{-\infty}^\infty\phi_{11}(x)V_1^{(\ell)}(x)^2\sech x\biggl(\int_x^\infty\phi_{12}(y)V_1^{(\ell)}(y)^2\sech y\,\d y\biggr)\d x\notag\\
	&\qquad-\beta_2\int_{-\infty}^\infty V_1^{(\ell)}(x)^4\,\d x.\label{eqn:b2_tmp}
\end{align}
Here we have used the fact that $\phi_{11}(x)$ and $V_1^{(\ell)}(x)^2$ are even functions
 and $\phi_{12}(x)$ and
\begin{align*}
	\int_0^x\phi_{11}(y)V_1^{(\ell)}(y)^2\sech y\,\d y
\end{align*}
are odd functions.
Especially, we easily see by \eqref{eqn:a2_tmp} and \eqref{eqn:b2_tmp}
 that $\bar{a}_2$ is always negative
 and $\bar{b}_2$ is positive (resp.\ negative) if $\beta_2$ is negative (resp.\ positive)
 with sufficiently large magnitude.
Applying Theorem \ref{thm:cnls_bif}, we obtain the following for \eqref{eqn:CNLS_cubic}.

\begin{thm}
\label{thm:bifex}
For $\ell\in\Zset_{\ge 0}$,
 a pitchfork bifurcation of the fundamental solitary wave \eqref{eqn:fun_soliton}
 with $U_0(x)=\sqrt{2}\sech x$ occurs at $\beta_1=\beta_1^{(\ell)}$ given by \eqref{eqn:beta1c}
 if $\bar{b}_2\neq 0$.
In addition, it is supercritical or subcritical, depending on whether $\bar{b}_2>0$ or $<0$.
Moreover, the bifurcated solitary waves are expressed
 as \eqref{eqn:bif_soliton} with \eqref{eqn:bif_UV},
 where $\epsilon>0$ is a small parameter such that $\beta_1=\beta_1^{(\ell)}+O(\epsilon^2)$.
\end{thm}

%Here we have used that $\bar a_2<0$ which will be shown later in Propotision \ref{prop:a2b2_cubic}.

In what follows, we obtain tractable expressions for $\bar a_2$ and $\bar b_2$.
We begin with two auxiliary results.

\begin{lem}\label{lem:V1_3F2}
	We have
	\begin{align}
		V_1^{(\ell)}(x)^2=\sech^{2\sqrt s}x\,\pFq{3}{2}{-\ell,2\sqrt s+\ell+1,\sqrt s+\frac{1}{2}}{2\sqrt s+1,\sqrt s+1}{\sech^2x}
	\label{eqn:lemV1_3F2a}
	\end{align}
	%In particular, if we write
	%We can also write
	and
	\begin{align}
		V_1^{(\ell)}(x)^2=\sech^{2\sqrt s}x\sum_{j=0}^\ell C_j^{(\ell)}\sech^{2j}x,
	\label{eqn:lemV1_3F2b}
	\end{align}
	%then $C_j^\ell$, $j=0,1,\ldots,\ell$, satisfy the following expression
	where
	\begin{align*}
		C_j^{(\ell)}
		=(-1)^j\begin{pmatrix}\ell\\j\end{pmatrix}\frac{(2\sqrt s+\ell+1)_j(\sqrt s+\frac{1}{2})_j}{(2\sqrt s+1)_j(\sqrt s+1)_j},
		%\label{eqn:Cjl}
	\end{align*}
	and $\begin{pmatrix}\ell\\j\end{pmatrix}$ represents the binomial coefficient for $\ell,j\in\Zset_{\ge0}$.
\end{lem}

\begin{proof}
	When $\ell$ is even, \eqref{eqn:lemV1_3F2a} directly follows from Clausen's theorem (see, e.g., (2.5.7) of \cite{Sl66})
	\begin{align*}
		\pFq{2}{1}{a,b}{a+b+\tfrac{1}{2}}{z}^2
		=\pFq{3}{2}{2a,2b,a+b}{2a+2b,a+b+\tfrac{1}{2}}{z}
	\end{align*}
	for $(a,b)=(-\ell/2,\sqrt s+\ell/2+1/2)$.
	When $\ell=2k+1$ is odd, we first apply the formula
	\begin{align*}
		(1-z)^{a+b-c}\pFq{2}{1}{a,b}{c}{z}=\pFq{2}{1}{c-a,c-b}{c}{z}
	\end{align*}
	(see, e.g., 15.3.3 of \cite{AbSt64})
	and then use Clausen's theorem to obtain \eqref{eqn:lemV1_3F2a}.
	By noting
	\begin{align*}
		\frac{(-\ell)_j}{j!}=(-1)^j\begin{pmatrix}\ell\\j\end{pmatrix},
	\end{align*}
	\eqref{eqn:lemV1_3F2b} easily follows from \eqref{eqn:def_hypgeo} and \eqref{eqn:lemV1_3F2a}.
\end{proof}

\begin{lem}\label{lem:IJ}
	Let
	\begin{align*}
		\mathcal K_r\defeq\int_{-\infty}^\infty\sech^rx\,\d x,\quad r>0.
	\end{align*}
	We have
	\begin{align}
		\mathcal K_r=\frac{\sqrt\pi\,\Gamma(r/2)}{\Gamma((r+1)/2)},\quad
		\mathcal K_{r+2}=\frac{r}{r+1}\mathcal K_r.
	\label{eqn:lemIJ1}
	\end{align}
	Moreover,
	\begin{align}
		\int_{-\infty}^\infty\phi_{11}(x)\sech^{r+1}x\,\d x
		=\frac{r-1}{r+2}\mathcal K_r
		=\frac{\sqrt\pi(r-1)\Gamma(r/2)}{(r+2)\Gamma((r+1)/2)}.
	\label{eqn:lemIJ2}
	\end{align}
\end{lem}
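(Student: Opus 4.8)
The plan is to prove \eqref{eqn:lemIJ1} by reducing $\mathcal K_r$ to a Beta integral, and to prove \eqref{eqn:lemIJ2} by inserting the explicit expression \eqref{eqn:phi11_12} for $\phi_{11}$ and then reducing every resulting integral to $\mathcal K_r$ and $\mathcal K_{r+2}$ by a single integration by parts.

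For the first part I would substitute $t=\tanh x$, using $\d t=\sech^2x\,\d x$ and $\sech^2x=1-t^2$, which turns the integral into $\mathcal K_r=\int_{-1}^1(1-t^2)^{r/2-1}\,\d t=B(\tfrac12,\tfrac r2)=\sqrt\pi\,\Gamma(r/2)/\Gamma((r+1)/2)$; the integral converges for $r>0$ because $\sech^rx$ decays exponentially. The recurrence $\mathcal K_{r+2}=\frac{r}{r+1}\mathcal K_r$ then follows immediately from $\Gamma(z+1)=z\Gamma(z)$ (equivalently, by writing $\sech^{r+2}x=\sech^rx\,(1-\tanh^2x)$ and integrating $\sech^rx\tanh^2x$ by parts against $\tanh x$).

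For \eqref{eqn:lemIJ2}, substituting $\phi_{11}(x)=\tfrac12\sech x\,(3-\cosh^2x-3x\tanh x)$ and using $\cosh^2x\,\sech^{r+2}x=\sech^rx$ gives
\[
\phi_{11}(x)\sech^{r+1}x=\tfrac32\sech^{r+2}x-\tfrac12\sech^rx-\tfrac32\,x\tanh x\,\sech^{r+2}x .
\]
The first two terms integrate to $\tfrac32\mathcal K_{r+2}-\tfrac12\mathcal K_r$. For the third, I would use $\tanh x\,\sech^{r+2}x=-\frac{1}{r+2}\frac{\d}{\d x}\sech^{r+2}x$ and integrate by parts; the boundary term $[x\sech^{r+2}x]_{-\infty}^{\infty}$ vanishes, so $\int_{-\infty}^\infty x\tanh x\,\sech^{r+2}x\,\d x=\frac{1}{r+2}\mathcal K_{r+2}$. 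Collecting terms gives $\frac32\,\frac{r+1}{r+2}\mathcal K_{r+2}-\frac12\mathcal K_r$, and applying the recurrence $\mathcal K_{r+2}=\frac{r}{r+1}\mathcal K_r$ collapses this to $\frac{r-1}{r+2}\mathcal K_r$; the closed form in terms of Gamma functions is then read off from \eqref{eqn:lemIJ1}.

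The whole argument is elementary; the only points needing (minimal) care are checking that the boundary terms in the integrations by parts vanish — which is guaranteed by the exponential decay of $\sech^rx$ for $r>0$ — and keeping the arithmetic straight in the final simplification. I do not foresee any real obstacle.
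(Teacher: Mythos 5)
Your proof is correct and takes essentially the same route as the paper: the same decomposition of $\phi_{11}(x)\sech^{r+1}x$ into $\tfrac32\sech^{r+2}x-\tfrac12\sech^rx-\tfrac32\,x\tanh x\,\sech^{r+2}x$, the same integration by parts on the last term, and the same final reduction via the recurrence. The only difference is that you derive the closed form for $\mathcal K_r$ from the Beta integral rather than citing it as known, which is a harmless and self-contained addition.
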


\begin{proof}
	The first equality in \eqref{eqn:lemIJ1} is well known (see, e.g., p.\ 71 of \cite{PeWe92}).
	The second equality immediately follows from the first one.
	Finally we compute
	\begin{align*}
		\int_{-\infty}^\infty\phi_{11}(x)\sech^{r+1}x\,\d x
		&=\frac{3}{2}\mathcal K_{r+2}-\frac{1}{2}\mathcal K_r-\frac{3}{2}\int_{-\infty}^\infty x\sech^{r+2}x\tanh x\,\d x\\
		&=\frac{3}{2}\mathcal K_{r+2}-\frac{1}{2}\mathcal K_r-\frac{3}{2(r+2)}\mathcal K_{r+2},
%		=\frac{r-1}{r+2}\mathcal K_r.
	\end{align*}
	which yields \eqref{eqn:lemIJ2} along with \eqref{eqn:lemIJ1}.
\end{proof}

Using Lemmas~\ref{lem:V1_3F2} and \ref{lem:IJ}, we obtain the following formulas of $\bar a_2$ and $\bar b_2$.

\begin{prop}\label{prop:a2b2_cubic}
	For $s>0$ and $\ell\in\Zset_{\ge0}$, we have
	\begin{align}
		&\bar a_2
		=-\frac{\sqrt\pi\,\Gamma(\sqrt s)}{\Gamma\bigl(\sqrt s+\frac{1}{2}\bigr)}\frac{2\,\ell!\sqrt s}{\bigl(\sqrt s+\ell+\frac{1}{2}\bigr)(2\sqrt s+1)_\ell},\label{eqn:a2_cubic}\\
		&\bar b_2
		=\frac{\sqrt\pi\,\Gamma(2\sqrt s)}{\Gamma\bigl(2\sqrt s+\frac{1}{2}\bigr)}\sum_{j_1=0}^\ell\sum_{j_2=0}^\ell
		C_{j_1}^{(\ell)} C_{j_2}^{(\ell)}\frac{(2\sqrt s)_{j_1+j_2}}{\bigl(2\sqrt s+\frac{1}{2}\bigr)_{j_1+j_2}}\nonumber\\
		&\qquad\qquad\qquad\times\biggl(\frac{(\sqrt s+\ell)^2(\sqrt s+\ell+1)^2(2\sqrt s+j_1+j_2)}{(\sqrt s+j_1+1)(2\sqrt s+j_1+j_2+2)}-\beta_2\biggr).\label{eqn:b2_cubic}
	\end{align}
\end{prop}

\begin{proof}
	Using \eqref{eqn:def_hypgeo} and the formula
	\begin{align}
		\Gamma(x)\Gamma(y)=\Gamma(x+y)B(x,y)\label{eqn:Gamma_Beta}
	\end{align}
	(see, e.g., 6.2.2 of \cite{AbSt64}), where $B(x,y)$ is the beta function, we have
%	the integral representation of ${}_pF_q$,
	\begin{align}
		\begin{aligned}
			&\pFq{p+1}{q+1}{a_1,\ldots,a_p,c}{b_1,\ldots,b_q,d}{z}\\
			&=\frac{\Gamma(d)}{\Gamma(c)\Gamma(d-c)}\int_0^1t^{c-1}(1-t)^{d-c-1}\pFq{p}{q}{a_1,\ldots,a_p}{b_1,\ldots,b_q}{tz}\d t.
		\end{aligned}
		\label{eqn:int_pFq}
	\end{align}
Using \eqref{eqn:def_hypgeo}, \eqref{eqn:lemV1_3F2a} and \eqref{eqn:int_pFq}, we compute
	\begin{align*}
		\int_{-\infty}^\infty V_1^{(\ell)}(x)^2\sech^2x\,\d x
		&=\frac{\Gamma(\sqrt s+1)\Gamma\bigl(\frac{1}{2}\bigr)}{\Gamma\bigl(\sqrt s+\frac{3}{2}\bigr)}\pFq{4}{3}{-\ell,2\sqrt s+\ell+1,\sqrt s+\frac{1}{2},\sqrt s+1}{2\sqrt s+1,\sqrt s+1,\sqrt s+\frac{3}{2}}{1}\\
		&=\frac{\Gamma(\sqrt s+1)\Gamma\bigl(\frac{1}{2}\bigr)}{\Gamma\bigl(\sqrt s+\frac{3}{2}\bigr)}\pFq{3}{2}{-\ell,2\sqrt s+\ell+1,\sqrt s+\frac{1}{2}}{2\sqrt s+1,\sqrt s+\frac{3}{2}}{1}\\
		&=\frac{\Gamma(\sqrt s+1)\Gamma\bigl(\frac{1}{2}\bigr)}{\Gamma\bigl(\sqrt s+\frac{3}{2}\bigr)}\frac{(-\ell)_\ell\bigl(\sqrt s+\frac{1}{2}\bigr)_\ell}{(2\sqrt s+1)_\ell\bigl(-\ell-\sqrt s-\frac{1}{2}\bigr)_\ell}\\
%		&=\frac{\Gamma(\sqrt s+1)\Gamma(\frac{1}{2})}{\Gamma(\sqrt s+\frac{3}{2})}\frac{\ell!(\sqrt s+\frac{1}{2})_\ell}{(2\sqrt s+1)_\ell(\sqrt s+\frac{3}{2})_\ell}\\
		&=\frac{\Gamma(\sqrt s)\Gamma\bigl(\frac{1}{2}\bigr)}{\Gamma\bigl(\sqrt s+\frac{1}{2}\bigr)}\frac{\ell!\sqrt s}{(2\sqrt s+1)_\ell\bigl(\sqrt s+\ell+\frac{1}{2}\bigr)},
	\end{align*}
	which yields \eqref{eqn:a2_cubic} by \eqref{eqn:a2_tmp}.
	In the third equality we have used Saalsch\"utz's theorem stating that
	\begin{align}
		\pFq{3}{2}{-n,b,c}{d,e}{1}=\frac{(d-b)_n(d-c)_n}{(d)_n(d-b-c)_n}
		\label{eqn:Saal}
	\end{align}
	if $n\in\Zset_{\ge0}$ and $-n+b+c+1=d+e$ (see, e.g., (2.3.1.4) of \cite{Sl66}).
%	as well as \eqref{eqn:int_pFq} for $p=3,q=2,c=\sqrt s+1$
%	and $d=\sqrt s+\frac{3}{2}$ in the first equality.

	We turn to \eqref{eqn:b2_tmp}.
	Using \eqref{eqn:phi11_12} and \eqref{eqn:lemV1_3F2b}, we obtain
	\begin{align*}
		\int_x^\infty\phi_{12}(y)V_1^{(\ell)}(y)^2\sech y\,\d y
		=\sum_{j_1=0}^\ell\frac{C_{j_1}^{(\ell)}}{2(\sqrt s+j_1+1)}\sech^{2(\sqrt s+j_1+1)}x.
	\end{align*}
	Substituting this equality and using \eqref{eqn:lemIJ1}, we compute
	\begin{align*}
		&\int_{-\infty}^\infty\phi_{11}(x)V_1^{(\ell)}(x)^2\sech x\biggl(\int_x^\infty\phi_{12}(y)V_1^{(\ell)}(y)^2\sech y\,\d y\biggr)\d x\\
		&=\sum_{j_1=0}^\ell\sum_{j_2=0}^\ell\frac{C_{j_1}^{(\ell)} C_{j_2}^{(\ell)}}{2(\sqrt s+j_1+1)}\frac{4\sqrt s+2j_1+2j_2+1}{4\sqrt s+2j_1+2j_2+4}\mathcal K_{4\sqrt s+2j_1+2j_2+2}\\
		&=\sum_{j_1=0}^\ell\sum_{j_2=0}^\ell\frac{C_{j_1}^{(\ell)} C_{j_2}^{(\ell)}}{2(\sqrt s+j_1+1)}\frac{2\sqrt s+j_1+j_2}{2\sqrt s+j_1+j_2+2}\mathcal K_{4\sqrt s+2j_1+2j_2}\\
		&=\sum_{j_1=0}^\ell\sum_{j_2=0}^\ell\frac{C_{j_1}^{(\ell)} C_{j_2}^{(\ell)}}{2(\sqrt s+j_1+1)}\frac{2\sqrt s+j_1+j_2}{2\sqrt s+j_1+j_2+2}\frac{\sqrt\pi\,(2\sqrt s)_{j_1+j_2}\Gamma(2\sqrt s)}{\bigl(2\sqrt s+\frac{1}{2}\bigr)_{j_1+j_2}\Gamma\bigl(2\sqrt s+\frac{1}{2}\bigr)}.
	\end{align*}
	On the other hand, using \eqref{eqn:lemV1_3F2b} and \eqref{eqn:lemIJ1}, we compute
\begin{align*}
\int_{-\infty}^\infty V_1^{(\ell)}(x)^4\d x
=&\sum_{j_1=0}^\ell\sum_{j_2=0}^\ell
 C_{j_1}^{(\ell)} C_{j_2}^{(\ell)}\mathcal K_{4\sqrt{s}+2j_1+2j_2}\\
 =&
 \sum_{j_1=0}^\ell\sum_{j_2=0}^\ell C_{j_1}^{(\ell)} C_{j_2}^{(\ell)}
 \frac{\sqrt\pi\,(2\sqrt s)_{j_1+j_2}\Gamma(2\sqrt s)}
 {\bigl(2\sqrt s+\frac{1}{2}\bigr)_{j_1+j_2}\Gamma\bigl(2\sqrt s+\frac{1}{2}\bigr)}.
\end{align*}
Thus, we obtain \eqref{eqn:b2_cubic}.
\end{proof}

Further computation of $\bar b_2$ are carried out and its closed-form expressions for $\ell\le 4$
 are given in Appendix~B.

\begin{figure}[t]
	\begin{minipage}{0.495\textwidth}
	\includegraphics[scale=0.55]{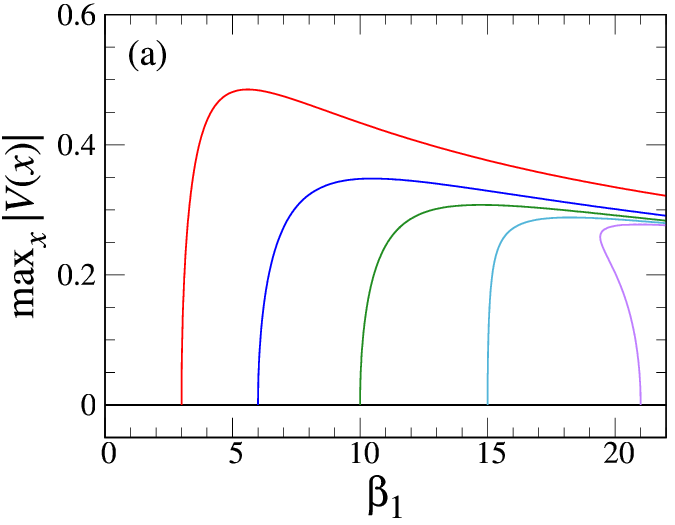}
	\end{minipage}
	\begin{minipage}{0.495\textwidth}
	\begin{center}
	\includegraphics[scale=0.45]{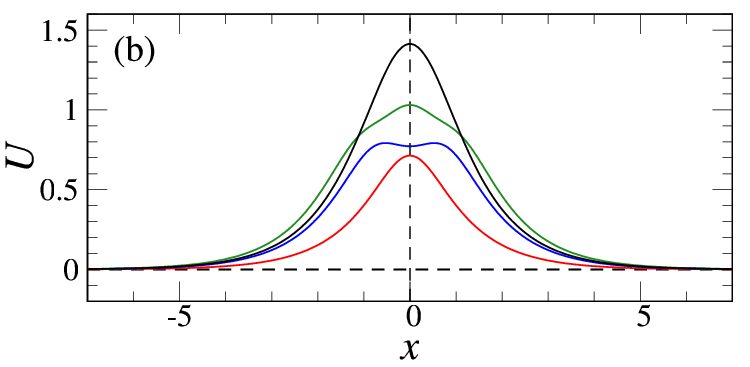}\\[2ex]
	\includegraphics[scale=0.45]{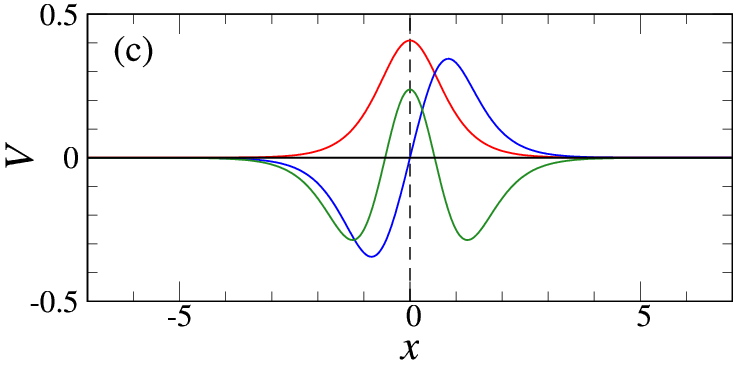}
	\end{center}
	\end{minipage}
	\caption{
		Bifurcations of solitary waves in \eqref{eqn:CNLS}
		for $(\omega,s,\beta_2)=(1,4,2)$:
%		Pitchfork bifurcations of homoclinic orbits \eqref{eqn:pfsoliton2} for $(\omega,s,\beta_2)=(1,4,2)$:
		(a) Bifurcation diagram;
		(b) and (c): profiles of the corresponding homoclinic solutions
		to \eqref{eqn:UV} on the first three branches at $\beta_1=12$.
		In plate~(a), the red, blue, green, light blue, and purple lines
		represent the branches born at the first, second, third, fourth and fifth
		bifurcations (at $\beta_1=3,6,10,15$, and $21$), respectively,
		while the black line represents the branch of the fundamental solitary wave
		\eqref{eqn:fun_soliton}.
		In plates~(b) and (c), the homoclinic solutions along with $(U,V)=(U_0(x),0)$
		are plotted as the same color lines as the corresponding branches in plate~(a).
%		(b) and (c): Profiles of homoclinic orbits on each branches at $\beta_1=12$.
%		In plates~(b) and (c), the red, blue, and green lines represent homoclinic orbits on the branches born at the first, second, and third bifurcations ($\beta_1=3,6$, and $10$), respectively, while the black line represents $U_0(x)=\sqrt 2\sech x$.
		\label{fig:3a}
	}
\end{figure}

Figure~\ref{fig:3a}(a) displays a numerically computed bifurcation diagram for homoclinic orbits in \eqref{eqn:Hsysex} for $(\omega,s,\beta_2)=(1,4,2)$.
Profiles of the homoclinic orbits on the first three bifurcation points
 at $\beta_1=12$ are plotted in Figs.~\ref{fig:3a}(b) and (b).
To obtain these results, we used a similar approach as in Section 5 of \cite{BlYa12}
 and the computation tool \texttt{AUTO} \cite{DO12}.
We observe that pitchfork bifurcations occur at $\beta_1=3,6,10,15,21$
 and they are supercritical except that the fifth one ($\ell=4$) is subcritical,
 as predicted by the theory (see also Fig.~\ref{fig:bifbdry}(a)).
Note that a pair of symmetric branches about $V=0$ are born at each bifurcation point.
Moreover, we see that a saddle-node bifurcation of the solitary waves
 on the fifth branches occurs at $\beta_1\approx 19.4$
 (more precisely, $19.41626\ldots$).
The $V$-component of the homoclinic orbit on the $(\ell+1)$th branch
 have exactly $\ell$ zeros for $\ell=0,1,2$.
The details on the approach and further numerical results are given in \cite{YaYa20}.

% **********************************************************
% Subsection 7.2
% **********************************************************

\subsection{Stability Analysis}\label{sec:cubic_stab}

We turn to the spectral stability of the fundamental and bifurcated solitary waves.
For the fundamental solitary wave, noting that $\|U_0\|_{L^2}^2=4\sqrt\omega$ for $\omega>0$ and applying Theorem \ref{thm:fun_stab}, we immediately obtain the following result, which was also obtained in Remark 2.6 of \cite{Oh96} when $\beta_2=1$.

\begin{thm}
	The fundamental solitary wave \eqref{eqn:fun_soliton} with $U_0(x)=\sqrt 2\sech x$ is orbitally and spectrally stable.
\end{thm}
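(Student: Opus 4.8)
The plan is to obtain the statement as an immediate corollary of Theorem~\ref{thm:fun_stab}, which has already reduced the orbital and spectral stability of the fundamental solitary wave to the single sign condition $\partial_\omega\|U_0\|_{L^2}^2>0$. So the first step is to check that Theorem~\ref{thm:fun_stab} applies to \eqref{eqn:CNLS_cubic}: by the remarks at the start of Section~4, \textbf{(A1)} and \textbf{(A2)} hold for the cubic nonlinearity \eqref{eqn:cubic} with the fundamental profile $U_0(x)=\sqrt{2\omega}\,\sech\sqrt\omega\,x$, which for $\omega=1$ becomes $U_0(x)=\sqrt2\,\sech x$. The only remaining task is then to evaluate $\|U_0\|_{L^2}^2$ as a function of $\omega$ and read off the sign of its $\omega$-derivative.

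That computation is a one-liner. Substituting $y=\sqrt\omega\,x$ and using Lemma~\ref{lem:IJ} (namely $\mathcal K_2=2$, from \eqref{eqn:lemIJ1}),
\begin{align*}
	\|U_0\|_{L^2}^2
	=\int_{-\infty}^\infty 2\omega\,\sech^2(\sqrt\omega\,x)\,\d x
	=2\sqrt\omega\int_{-\infty}^\infty\sech^2 y\,\d y
	=2\sqrt\omega\,\mathcal K_2
	=4\sqrt\omega,
\end{align*}
so that $\partial_\omega\|U_0\|_{L^2}^2=2\omega^{-1/2}>0$ for every $\omega>0$; in particular it equals $2$ at $\omega=1$.

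With this sign in hand, Theorem~\ref{thm:fun_stab} yields at once that the fundamental solitary wave \eqref{eqn:fun_soliton} with $U_0(x)=\sqrt2\,\sech x$ is orbitally and spectrally stable, which is the assertion. I do not expect any real obstacle here: the whole analytic content --- the block decomposition of the linearized eigenvalue problem into the scalar parts (A), (B), (C), the Evans-function and Sturm-Liouville arguments locating the eigenvalues, and the Hamiltonian-Krein index count via Theorems~\ref{thm:KHam} and \ref{thm:orb} --- is already carried out once and for all in Theorem~\ref{thm:fun_stab}, so the present statement reduces to the verification above that the nondegeneracy quantity $\partial_\omega\|U_0\|_{L^2}^2$ is strictly positive. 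The one point to record carefully is that \textbf{(A1)} and \textbf{(A2)} do hold for \eqref{eqn:cubic} --- immediate from $\partial_\mu F(\zeta,0;\mu)=0$ and $\partial_1F(\zeta,0)=\zeta$, whose stationary equation $-U''+\omega U-U^3=0$ possesses the homoclinic orbit $U_0$ --- so that Theorem~\ref{thm:fun_stab} is indeed applicable.
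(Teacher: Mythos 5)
Your proposal is correct and follows exactly the paper's route: the paper likewise notes $\|U_0\|_{L^2}^2=4\sqrt\omega$, reads off $\partial_\omega\|U_0\|_{L^2}^2>0$, and cites Theorem~\ref{thm:fun_stab} to conclude. Your extra verification that \textbf{(A1)} and \textbf{(A2)} hold for \eqref{eqn:cubic} is sound and is also stated earlier in Section~4 of the paper.
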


We next consider the bifurcated solitary waves \eqref{eqn:bif_soliton} near $\beta_1=\beta_1^{(\ell)}$.
We first give some explicit expressions of the Jost solutions and Evans function for $\mathscr L_0$ given in \eqref{eqn:L0}. %, which are used below to analyze the bifurcated solitary waves.
The eigenvalue problem for $\mathscr L_0$ is separated into three parts as stated in Section 3.1.

\subsubsection*{\rm\bf (A)\nopunct}

\eqref{eqn:lin_eq_1} becomes
\begin{align}
	-p''+p-2\sech^2x\,(2p+q)=\i\lambda p,\quad
	-q''+q-2\sech^2x\,(p+2q)=-\i\lambda q.
	\label{eqn:lin_eq_1_cubic}
\end{align}
The Jost solutions $(p_j,q_j)$, $j=1,2,5,6$, which satisfy our normalization conditions stated in Lemma \ref{lem:Jost_normalize}, for \eqref{eqn:lin_eq_1_cubic} are explicitly given by
\begin{align}
	\begin{aligned}
		&\begin{pmatrix}p_1(x,\lambda)\\q_1(x,\lambda)\end{pmatrix}
		=\frac{\e^{\nu_1x}}{(\nu_1+1)^2}\begin{pmatrix}(\nu_1-\tanh x)^2\\-\sech^2x\end{pmatrix},\quad
		\begin{pmatrix}p_2(x,\lambda)\\q_2(x,\lambda)\end{pmatrix}
		=\begin{pmatrix}q_1(x,-\lambda)\\p_1(x,-\lambda)\end{pmatrix},\\
		&\begin{pmatrix}p_{j+4}(x,\lambda)\\q_{j+4}(x,\lambda)\end{pmatrix}
		=\begin{pmatrix}p_j(-x,\lambda)\\q_j(-x,\lambda)\end{pmatrix},\quad
		j=1,2,
%		\begin{pmatrix}p_6(x,\lambda)\\q_6(x,\lambda)\end{pmatrix}
%		=\begin{pmatrix}p_2(-x,\lambda)\\q_2(-x,\lambda)\end{pmatrix}.
	\end{aligned}
	\label{eqn:p1_cubic}
\end{align}
where $\nu_1=\nu_1(\lambda)$ is given in \eqref{eqn:mu_def1} (see Section III of \cite{Ka90} and Section~10.4.1 of \cite{KaPr13}).
%where $\mu_1,\mu_2$ are given in \eqref{eqn:mu_def}.
We compute the Evans function \eqref{eqn:E1} for \eqref{eqn:lin_eq_1_cubic} as
\begin{align*}
	E_\mathrm{A}(\lambda)
	=\frac{4\nu_1(\lambda)\nu_2(\lambda)(\nu_1(\lambda)-1)^2(\nu_2(\lambda)-1)^2}{(\nu_1(\lambda)+1)^2(\nu_2(\lambda)+1)^2}
\end{align*}
%This implies that $\lambda=0$ is a zero of $E_\mathrm{A}(\lambda)$ of algebraic multiplicity $4$ and there are no zeros of it on $\tilde\Omega_1\cap\tilde\Omega_2$ defined in \eqref{eqn:Omegatilde}.
%In addition, the extended Evans function $D_\mathrm{A}(\gamma)$ with a branch point $\lambda_{\mathrm{br}}\in\{\pm\i\}$ has a simple zero $\gamma=0$.
This implies that \eqref{eqn:lin_eq_1_cubic} has only one eigenvalue $\lambda=0$ and it is of algebraic multiplicity four, and has resonance poles $\lambda=\pm\i$.

\subsubsection*{\rm\bf (B)\nopunct}

\eqref{eqn:lin_eq_3} becomes
\begin{align}
	-p''+sp-2\beta_1\sech^2(x)p=-\i\lambda p.
	\label{eqn:lin_eq_3_cubic}
\end{align}
The Jost solutions $p_j$, $j=4,8$, for \eqref{eqn:lin_eq_3_cubic} are given by
\begin{align}
	p_4(x,\lambda)
	&=\Gamma(\nu_4+1)P_\kappa^{-\nu_4}(-\tanh x)\nonumber\\
	&=\biggl(\frac{\sech x}{2}\biggr)^{\nu_4}\pFq{2}{1}{\nu_4-\kappa,\nu_4+\kappa+1}{\nu_4+1}{\frac{1+\tanh x}{2}},\label{eqn:p4ex}\\
	p_8(x,\lambda)&=p_4(-x,\lambda),\nonumber
\end{align}
where $\nu_4=\nu_4(\lambda)$ is given in \eqref{eqn:mu_def2} and $\kappa=(-1+\sqrt{1+8\beta_1})/2$ (see, e.g., Problem 5 in Section 23 of \cite{LaLi77}).
Note that $\kappa=\sqrt s+\ell$ if $\beta_1=\beta_1^{(\ell)}$.
We compute the Evans function \eqref{eqn:E3} for \eqref{eqn:lin_eq_3_cubic} as
\begin{align}
	E_\mathrm{B}(\lambda)
	=-\frac{2\,\Gamma(\nu_4(\lambda)+1)^2}{\Gamma(\nu_4(\lambda)-\kappa)\Gamma(\nu_4(\lambda)+\kappa+1)},
	\label{eqn:E3_cubic}
\end{align}
where we have used the formulas
\begin{align*}
	\pFq{2}{1}{a,b}{\frac{a+b+1}{2}}{\frac{1}{2}}
	=\frac{\sqrt\pi\,\Gamma\bigl(\frac{a+b+1}{2}\bigr)}{\Gamma\bigl(\frac{a+1}{2}\bigr)\Gamma\bigl(\frac{b+1}{2}\bigr)},\quad
	\Gamma\biggl(\frac{z}{2}\biggr)\Gamma\biggl(\frac{z+1}{2}\biggr)=2^{1-z}\sqrt\pi\,\Gamma(z)
\end{align*}
(see, e.g., Chapter 15 of \cite{AbSt64}).
From \eqref{eqn:E3_cubic} we observe the following.
Let $\beta_1\ge0$.
All eigenvalues of \eqref{eqn:lin_eq_3_cubic} are given by
\begin{equation}
\lambda=\i(s-(\kappa-k)^2),\quad
k\in\{0,1,\ldots,\lfloor\kappa\rfloor\}\setminus\{\kappa\}
\label{eqn:evB}
\end{equation}
and they are simple,
 since $\lambda\in\i(-\infty,s)$ if $\lambda$ is an eigenvalue of \eqref{eqn:lin_eq_3_cubic},
 as stated in Section~2.
Hence, if $\kappa$ is an integer, $\lambda=\i s$ is a resonance pole.
On the other hand, for $\beta_1<0$, \eqref{eqn:lin_eq_3_cubic} has no eigenvalue since $\Re\kappa\in(-1/2,0)$ and $\nu_4(\lambda)>0$ if $\lambda\in\i(-\infty,s)$.

\subsubsection*{\rm\bf (C)\nopunct}

\eqref{eqn:lin_eq_2} becomes
\begin{align}
	-p''+sp-2\beta_1\sech^2(x)p=\i\lambda p.
	\label{eqn:lin_eq_2_cubic}
\end{align}
As stated in Section \ref{sec:fun_sol_stab}, if $\lambda$ is an eigenvalue of \eqref{eqn:lin_eq_3_cubic}, then $-\lambda$ is an eigenvalue of \eqref{eqn:lin_eq_2_cubic}, and $E_\mathrm{C}(\lambda)=E_\mathrm{B}(-\lambda)$.

\begin{rmk}
%Since the eigenvalue problem for $\mathscr L_0$ is separated into 
Using Proposition~\ref{prop:ev_prop} and \eqref{eqn:E_unperturbed}
 along with parts (A), (B), and (C) above,
 we see that $\mathscr L_0$ has  the eigenvalues $\lambda=0$ and
\begin{equation}
\lambda=\pm\i(s-(\kappa-k)^2),\quad
k\in\{0,1,\ldots,\lfloor\kappa\rfloor\}\setminus\{\kappa\}.
\label{eqn:ev}
\end{equation}
%Substituting $\lambda=0$ into \eqref{eqn:p1_cubic}, the corresponding eigenfunctions of $\lambda=0$ are given by
%\begin{align*}
%	\begin{pmatrix}
%		\e^x(1-\tanh x)^2\\
%		-\e^x\sech^2x\\
%		0\\
%		0
%	\end{pmatrix}
%	=
%	\begin{pmatrix}
%		\sech x-\sech x\tanh x\\
%		-\sech x-\sech x\tanh x\\
%		0\\
%		0
%	\end{pmatrix},
%\end{align*}
%and
%\begin{align*}
%	\begin{pmatrix}
%		-\e^x\sech^2x\\
%		\e^x(1-\tanh x)^2\\
%		0\\
%		0
%	\end{pmatrix}
%	=
%	\begin{pmatrix}
%		-\sech x-\sech x\tanh x\\
%		\sech x-\sech x\tanh x\\
%		0\\
%		0
%	\end{pmatrix}.
%\end{align*}
The zero eigenvalue is of geometric multiplicity two and of algebraic multiplicity four except at the pitchfork bifurcation points $\beta_1=\beta_1^{(\ell)}$,
 and the associated eigenfunctions and generalized eigenfunctions are given
 by $\varphi_j(x)$ and $\chi_j(x)$, $j=1,2$, with $(U,V)=(U_0(x),0)$
 in \eqref{eqn:ker} and \eqref{eqn:gker}.
Especially, the Jost solutions \eqref{eqn:p1_cubic} with $\lambda=0$ are expressed
 as linear combinations of $\varphi_1(x)$ and $\varphi_2(x)$.
At $\beta_1=\beta_1^{(\ell)}$, the zero eigenvalue is of geometric multiplicity four and of algebraic multiplicity six since $(0,0,V_1^{(\ell)}(x),0)^\T$ and $(0,0,0,V_1^{(\ell)}(x))^\T$ are contained
 by $\Ker\mathscr L_0$ in addition to $\varphi_1(x)$ and $\varphi_2(x)$.
On the other hand, for the eigenvalue $\lambda=\i(s-(\kappa-k)^2)$,
 it follows from \eqref{eqn:p4ex} that
 the associated eigenfunction is given by $(0,0,0,\Psi(x))$, where
\begin{align*}
	\Psi(x)=\biggl(\frac{\sech x}{2}\biggr)^{\kappa-k}\pFq{2}{1}{-\frac{k}{2},\kappa+\frac{-k+1}{2}}{\kappa-k+1}{\sech^2x}
\end{align*}
if $k$ is even, and
\begin{align*}
	\Psi(x)=-\biggl(\frac{\sech x}{2}\biggr)^{\kappa-k}\pFq{2}{1}{-\frac{k-1}{2},\kappa+\frac{-k+2}{2}}{\kappa-k+1}{\sech^2x}\tanh x
\end{align*}
if $k$ is odd.
Similarly, the eigenfunction associated with the eigenvalue $\lambda=-\i(s-(\kappa-k)^2)$
 is given by $(0,0,\Psi(x),0)$.
Eigenfunctions and generalized eigenfunctions of $J\mathcal L$ are obtained
 from these ones by the transformation \eqref{eqn:to_reim}.
\end{rmk}

We set $\beta_1=\beta_1^{(\ell)}-({\bar b_2}/{\bar a_2})\epsilon^2$ as in \eqref{eqn:mu_eps} and consider the linearized operator $\mathscr L_\epsilon$ with \eqref{eqn:cubic} around the bifurcated solitary waves \eqref{eqn:bif_soliton} near $\beta_1=\beta_1^{(\ell)}$.
%We consider whether $\mathscr L_\epsilon$ has eigenvalues near those of $\mathscr L_0$.
%Recalling that \eqref{eqn:mu_eps}, we write $\beta_1=\beta_1^{(\ell)}+\overline{\beta_1^{(\ell)}}\epsilon^2$ with $\overline{\beta_1^{(\ell)}}\defeq-{\bar b_2}/{\bar a_2}$.
From \eqref{eqn:p1_cubic} we easily see that
\begin{align*}
	p_j(0,\lambda)p_j'(0,\lambda)+q_j(0,\lambda)q_j'(0,\lambda)
	=\frac{\nu_j(\lambda)(\nu_j(\lambda)-1)^2}{(\nu_j(\lambda)+1)^2},\quad j=1,2,
\end{align*}
so that $\Delta_1(\lambda)\neq 0$ for any $\lambda\in\i\Rset\setminus\{-\i\}$
 and $\Delta_2(\lambda)\neq 0$ for any $\lambda\in\i\Rset\setminus\{\i\}$.
Moreover, \textbf{(A4)} holds with $\bar p(x)=p_2(x,\i)$ and $\bar q(x)=q_2(x,\i)$.
Letting
\begin{align}
	\begin{aligned}
		\mathcal I^{(\ell)}(\mu,\nu)\defeq
		\int_{-\infty}^\infty&\e^{\nu x}V_1^{(\ell)}(x)\bigl((\nu-\tanh x)^2-\sech^2x\bigr)\sech^{\mu+1}x\\
		&\pFq{2}{1}{\mu-\sqrt s-\ell,\mu+\sqrt s+\ell+1}{\mu+1}{\frac{1+\tanh x}{2}}\d x,
	\end{aligned}
	\label{eqn:Iell_def}
\end{align}
we express \eqref{eqn:Ij} as
\begin{align*}
&
	\mathcal I_1(\lambda)=\frac{2^{-\nu_4(\lambda)+\frac{1}{2}}\beta_1^{(\ell)}}{(\nu_1(\lambda)+1)^2}\mathcal I^{(\ell)}(\nu_4(\lambda),\nu_1(\lambda)),\\
&
	\mathcal I_2(\lambda)=\frac{2^{-\nu_4(\lambda)+\frac{1}{2}}\beta_1^{(\ell)}}{(\nu_2(\lambda)+1)^2}\mathcal I^{(\ell)}(\nu_4(\lambda),\nu_2(\lambda)).
\end{align*}
Thus, $\mathcal I_j(\lambda)\neq 0$ is equivalent
 to $\mathcal I^{(\ell)}(\nu_4(\lambda),\nu_j(\lambda))\ne0$ for $j=1,2$.

Let $\lambda_0$ be a zero of $E_\mathrm{B}(\lambda)$ at $\beta_1=\beta_1^{(\ell)}$.
Noting that $\kappa=\sqrt s+\ell$ at $\beta_1=\beta_1^{(\ell)}$ %in \eqref{eqn:p4ex},
 and using \eqref{eqn:ev}, we can write $\lambda_0=-\i k(2\sqrt s+k)$ and $\nu_4(\lambda_0)=\sqrt s+k$ with $k\in\{-\lfloor\sqrt s\rfloor,-\lfloor\sqrt s\rfloor+1,\ldots,\ell\}$.
%Note that $\Im\lambda_0<-1$ if $k\in\{1,\ldots,\ell\}$ and $1<\Im\lambda_0<s$ if $k\in\{-\lfloor\sqrt s\rfloor,\ldots,-1\}$, and that the former case is empty if $\ell=0$ and the latter case is empty if $0<s<1$.
Here $k=0,1,\ldots,\lfloor\sqrt s\rfloor+\ell$
 have been replaced with  $k=\ell,\ell-1,\ldots,-\lfloor\sqrt s\rfloor$.
Moreover,
\begin{align}
	p_4(x,\lambda_0)=\biggl(\frac{\sech x}{2}\biggr)^{\sqrt s+k}\pFq{2}{1}{-\frac{\ell-k}{2},\sqrt s+\frac{\ell+k+1}{2}}{\sqrt s+k+1}{\sech^2x}
	\label{eqn:p4_cubic_even}
\end{align}
if $\ell-k$ is even, and
\begin{align*}
	p_4(x,\lambda_0)=-\biggl(\frac{\sech x}{2}\biggr)^{\sqrt s+k}\pFq{2}{1}{-\frac{\ell-k-1}{2},\sqrt s+\frac{\ell+k+2}{2}}{\sqrt s+k+1}{\sech^2x}\tanh x
%	\label{eqn:p4_cubic_odd}
\end{align*}
if $\ell-k$ is odd.
$\lambda_0$ may not be a simple zero of $E(\lambda,0)$ although it is simple for $E_\mathrm{B}(\lambda)$ as stated above.
The following lemma gives necessary and sufficient conditions for $\lambda_0\ne0$ to be simple for $E(\lambda,0)$.
Note that if $\lambda=\lambda_0$ is a zero of $E_\mathrm{B}(\lambda)$, then $\lambda=-\lambda_0$ is a zero of $E_\mathrm{C}(\lambda)$ and that $\lambda=\pm\i$ are zeros of $E_\mathrm{A}(\lambda)$.

\begin{lem}\label{lem:E_simple}
	Let $\ell\in\Zset_{\ge0}$, $\beta_1=\beta_1^{(\ell)}$, and $\lambda_0(k)=-\i k(2\sqrt s+k)$ for $k\in\{-\lfloor\sqrt s\rfloor,-\lfloor\sqrt s\rfloor+1,\ldots,\ell\}$.
	The following statements hold.
	\begin{itemize}
	\setlength{\leftskip}{-1.8em}
	\item[(i)]
		When $k\in\{-\lfloor\sqrt s\rfloor,\ldots,-1\}$, $\lambda_0(k)=-\lambda_0(k')$ for some $k'\in\{-\lfloor\sqrt s\rfloor,-\lfloor\sqrt s\rfloor+1,\ldots,\ell\}$ if and only if
		\begin{align*}
			\sqrt s=-\frac{k^2+(k')^2}{2(k+k')}\quad\text{and}\quad
			k'\in\bigl\{-\lfloor(\sqrt 2-1)k\rfloor,\ldots,\min\{-k-1,\ell\}\bigr\}.
		\end{align*}
	\item[(ii)]
		When $k\in\{1,\ldots,\ell\}$,
		$\lambda_0(k)=-\lambda_0(k')$ for some $k'\in\{-\lfloor\sqrt s\rfloor,-\lfloor\sqrt s\rfloor+1,\ldots,\ell\}$ if and only if
		\begin{align*}
			\sqrt s=-\frac{k^2+(k')^2}{2(k+k')}\quad\text{and}\quad
			k'\in\{-\lfloor(\sqrt 2+1)k\rfloor,\ldots,-k-1\}.
		\end{align*}
	\item[(iii)]
		$\lambda_0(k)=\i$ if and only if $s=1$ and $k=-1$.
	\item[(iv)]
		$\lambda_0(k)\ne-\i$ for any values of $\ell,s$, and $k$.
%	\item[(v)]
%		$\lambda_0(k)=-\i s$ if and only if $\sqrt s=(\sqrt 2+1)k$ and $k\in\{1,\ldots,\ell\}$.
	\end{itemize}
\end{lem}

%\subsubsection*{Case (i): $\lambda_0=0$}

%From the analysis for Cases A, B, and C above, $\lambda_0=0$ is a zero of $E(\lambda,0)$ of multiplicity $6$.
%On the other hand, $E(\lambda,\epsilon)$ should have a zero $\lambda=0$ of multiplicity $6$ even when $0<|\epsilon|\ll1$ due to the symmetries \eqref{eqn:sym} and that the bifurcated solitary waves have nontrivial $V$-component.
%Consequently it does not move, i.e., $\lambda(\epsilon)=0$.

The proof of Lemma \ref{lem:E_simple} is given in Appendix \ref{sec:proof_E_simple}.
Applying Theorems \ref{thm:main_1}, \ref{thm:main_2}, and \ref{thm:main_3} and using Lemma \ref{lem:E_simple}, we obtain the following proposition.

\begin{prop}\label{prop:cubic_main}
	Let $\omega=1$, $s>0$, and $\ell\in\Zset_{\ge0}$.
	Let $\lambda_0$ be a zero of $E_\mathrm{B}(\lambda)$ at $\beta_1=\beta_1^{(\ell)}$ and suppose that $\bar b_2\ne0$.
	For $\epsilon>0$ sufficiently small, the following statements hold.
	\begin{itemize}
	\setlength{\leftskip}{-2em}
	\item[(i)]
		Let $k\in\{-\lfloor\sqrt s\rfloor,\ldots,-1\}$.
		Suppose that
		\begin{align*}
			\sqrt s\notin\left\{-\frac{k^2+(k')^2}{2(k+k')}\;\middle|\;k'\in\bigl\{-\lfloor(\sqrt 2-1)k\rfloor,\ldots,\min\{-k-1,\ell\}\bigr\}\right\}
%			\label{eqn:cubic_nondeg_1}
		\end{align*}
		if $k\ne-\sqrt s$, and $s\ne1$ otherwise.
		If $\mathcal I^{(\ell)}(\nu_4(\lambda_0),\nu_2(\lambda_0))\ne0$, then $\mathscr L_\epsilon$ has neither eigenvalues nor resonance poles near $\lambda_0=-\i k(2\sqrt s+k)$.
	\item[(ii)]
		Let $k\in\{1,\ldots,\ell\}$.
		Suppose that
		\begin{align*}
			\sqrt s\notin\left\{-\frac{k^2+(k')^2}{2(k+k')}\;\middle|\;k'\in\{-\lfloor(\sqrt 2+1)k\rfloor,\ldots,-k-1\}\right\}.
		\end{align*}
		If $\mathcal I^{(\ell)}(\nu_4(\lambda_0),\nu_1(\lambda_0))\ne0$, then there exists a simple eigenvalue $\lambda(\epsilon)$ of $\mathscr L_\epsilon$ with $\lambda(0)=\lambda_0=-\i k(2\sqrt s+k)$ such that
		\begin{align}
			&\Re\lambda(\epsilon)=-\frac{|\mathcal I_-(\lambda_0)|^2}{2\,\Im\nu_1(\lambda_0)\|p_4(\cdot,\lambda_0)\|_{L^2(\Rset)}^2}\epsilon^2+O(\epsilon^3),\label{eqn:cubic_relambda}\\
			&\Im\lambda(\epsilon)=-k(2\sqrt s+k)+O(\epsilon^2).\nonumber
		\end{align}
	\end{itemize}
\end{prop}

\begin{proof}
	For part (i), using Lemma \ref{lem:E_simple} (i), (iii), and (iv), we see that $\lambda=\lambda_0$ is a simple zero of $E(\lambda,0)$.
	Hence, from Theorems \ref{thm:main_1} and \ref{thm:main_2} we immediately obtain the desired result.
	For part (ii), by Lemma \ref{lem:E_simple} (ii), (iii), and (iv), we see that $\lambda_0$ is simple.
	Hence, by Theorem \ref{thm:main_2}, there exists a zero $\lambda(\epsilon)$ of $\mathscr L_\epsilon$ with a positive real part.
	\eqref{eqn:cubic_relambda} follows from \eqref{eqn:Relambda_2} since $C_4=0$ as easily checked by \eqref{eqn:p1_cubic}.
\end{proof}

\begin{rmk}\label{rmk:cubic_main}
At $\beta_1=\beta_1^{(\ell)}$, the eigenvalues \eqref{eqn:ev} becomes
%\begin{align}
%	\lambda=&\pm\i(s-\omega(\sqrt{s/\omega}+\ell-k)^2),\notag\\
%	& k\in\{0,1,\ldots,\lfloor\sqrt{s/\omega}\rfloor+\ell\}\setminus\{\sqrt{s/\omega}+\ell\}.
%	\label{eqn:lambda}
%\end{align}
\begin{align}
	\lambda=&\pm\i(s-(\sqrt{s}+\ell-k)^2),\quad
	k\in\{0,1,\ldots,\lfloor\sqrt{s}\rfloor+\ell\}\setminus\{\sqrt{s}+\ell\}.
	\label{eqn:lambda}
\end{align}
From Proposition~\ref{prop:cubic_main}
 and the relation $E_\mathrm{C}(\lambda)=E_\mathrm{B}(-\lambda)$
 we see that these eigenvalues immediately disappear
 (resp.\ become a pair of eigenvalues with nonzero real parts)
  for $k>\ell$ (resp.\ $k<\ell$) when $\epsilon>0$.
%For $k=\ell$ the eigenvalue $\lambda=0$ remains the origin even when $\epsilon>0$.
For $k=\ell$ the eigenvalue $\lambda=0$ remains the origin even when $\epsilon>0$ since $\dim\Ker\mathscr L_\epsilon=3$ for $\epsilon>0$.
A basis of $\Ker\mathscr L_\epsilon$ is given by \eqref{eqn:ker}.
\end{rmk}

%As a corollary of Proposition \ref{prop:cubic_main}, by computing $\mathcal I^{(\ell)}(\nu_4(\lambda_0),\nu_j(\lambda_0))$, $j=1,2$, explicitly for some special values of $\ell$ and $\lambda_0$, we prove a detailed result on $\sigma(\mathscr L_\epsilon)$ when $\ell=0$ and the spectral instability of the bifurcated solitary waves when $\ell\ge1$.

%\begin{lem}\label{lem:Kr_Fourier}
%	Let $r>0$ and $\alpha\in\Rset$.
%	The Fourier transform of $\sech^rx$ is given by
%\end{lem}
Let $\mathcal K_r(\alpha)$ be the Fourier tranform of $\sech^rx$, i.e.,
\begin{align*}
	\mathcal K_r(\alpha)\defeq\int_{\Rset}\e^{-\i\alpha x}\sech^rx\,\d x,
\end{align*}
where $r>0$ and $\alpha\in\Rset$.
The change of variables $x=\frac{1}{2}\log t$ yields
\begin{align*}
	\mathcal K_r(\alpha)
	=2^{r-1}\int_0^\infty\frac{t^{\frac{1}{2}(r-\i\alpha)-1}}{(t+1)^r}\,\d t
	=2^{r-1}B\biggl(\frac{r-\i\alpha}{2},\frac{r+\i\alpha}{2}\biggr)
	=2^{r-1}\frac{\Gamma\bigl(\frac{r-\i\alpha}{2}\bigr)\Gamma\bigl(\frac{r+\i\alpha}{2}\bigr)}{\Gamma(r)}
\end{align*}
along with \eqref{eqn:Gamma_Beta}.
Since $\Gamma(z^*)=\Gamma(z)^*$, we obtain
\begin{align}
	\mathcal K_r(\alpha)
	=\frac{2^{r-1}}{\Gamma(r)}\Bigl\lvert\Gamma\Bigl(\frac{r\pm\i\alpha}{2}\Bigr)\Bigr\rvert^2.\label{eqn:Kr_Fourier}
\end{align}
We have useful expressions of $\mathcal I^{(\ell)}(\nu_4(\lambda_0),\nu_j(\lambda_0))$, $\ell\in\Zset_{\ge0}$, $j=1,2$, with $\mathcal K_r(\alpha)$ for some special values of $\lambda_0$ as follows.

\begin{lem}\label{lem:concrete_comp}\ 
	\begin{itemize}
	\setlength{\leftskip}{-1.8em}
	\item[(i)]
		Let $\lambda_0=-\i k(2\sqrt s+k)$ with $k\in\{-\lfloor\sqrt s\rfloor,\ldots,-1\}$.
		We have
		\begin{align*}
			&\mathcal I^{(0)}(\nu_4(\lambda_0),\nu_2(\lambda_0))\\
			&=\begin{cases}
				\displaystyle\frac{(-1)^{k/2}k(2\sqrt s+k)}{(\sqrt s+k+1)_{-k}}\biggl(\prod_{j=0}^{-k/2-1}\biggl(\biggl(j+\frac{1}{2}\biggr)^2+\frac{\nu^2}{4}\biggr)\biggr)\mathcal K_{2\sqrt s+k+1}(\nu)&\text{if $k$ is even};\\
				\displaystyle\frac{\i\nu}{2}\frac{(-1)^{(k-1)/2}k(2\sqrt s+k)}{(\sqrt s+k+1)_{-k}}\biggl(\prod_{j=1}^{-(k+1)/2}\biggl(j^2+\frac{\nu^2}{4}\biggr)\biggr)\mathcal K_{2\sqrt s+k+1}(\nu)&\text{if $k$ is odd},
			\end{cases}
		\end{align*}
		where $\nu=\sqrt{-k(2\sqrt s+k)-1}$.
	\item[(ii)]
		Let $\ell\in\Nset$ and $\lambda_0=-\i\ell(2\sqrt s+\ell)$.
		We have
		\begin{align*}
			&\mathcal I^{(\ell)}(\nu_4(\lambda_0),\nu_1(\lambda_0))\\
			&=\begin{cases}
				\displaystyle\frac{(-1)^{\ell/2+1}\ell(2\sqrt s+\ell)}{(\sqrt s+1)_\ell}\biggl(\prod_{j=0}^{\ell/2-1}\biggl(\biggl(j+\frac{1}{2}\biggr)^2+\frac{\nu^2}{4}\biggr)\biggr)\mathcal K_{2\sqrt s+\ell+1}(\nu)&\text{if $\ell$ is even};\\
				\displaystyle\frac{\i\nu}{2}\frac{(-1)^{(\ell+1)/2}\ell(2\sqrt s+\ell)}{(\sqrt s+1)_\ell}\biggl(\prod_{j=1}^{(\ell-1)/2}\biggl(j^2+\frac{\nu^2}{4}\biggr)\biggr)\mathcal K_{2\sqrt s+\ell+1}(\nu)&\text{if $\ell$ is odd},
			\end{cases}
		\end{align*}
		where $\nu=\sqrt{\ell(2\sqrt s+\ell)-1}$.
	\item[(iii)]
		Let $s\in(0,1)$.
		We have
		\begin{align*}
			\mathcal I^{(0)}(\nu_4(\i),\nu_2(\i))
			=-\frac{\sqrt\pi\,\Gamma(1+\i\nu)\Gamma\bigl(\frac{\sqrt s+1+\i\nu}{2}\bigr)\Gamma\bigl(\frac{\sqrt s+1-\i\nu}{2}\bigr)}{\Gamma(\sqrt s+1)\Gamma\bigl(\frac{\sqrt s+2+\i\nu}{2}\bigr)\Gamma\bigl(\frac{-\sqrt s+1+\i\nu}{2}\bigr)},
		\end{align*}
		where $\nu=\sqrt{1-s}$.
	\end{itemize}
\end{lem}

The proof of Lemma \ref{lem:concrete_comp} is given in Appendix \ref{sec:proof_comp}.
Using Proposition \ref{prop:cubic_main} and Lemma \ref{lem:concrete_comp}, we obtain the following theorem.

\begin{thm}\label{thm:cubic_main}
	For each $\ell\in\Zset_{\ge 0}$, suppose that $\bar b_2\ne0$
	and consider the solitary wave \eqref{eqn:bif_soliton} bifurcated at $\beta_1=\beta_1^{(\ell)}$.
	Let $\mathscr L_\epsilon$ be the linearized operator \eqref{eqn:lin_op} with \eqref{eqn:cubic} around the bifurcated solitary waves.
	\begin{itemize}
	\setlength{\leftskip}{-2em}
	\item[(i)]
		Let $\ell=0$.
		The bifurcated solitary waves are orbitally and spectrally stable.
		Moreover, for $\epsilon>0$ sufficiently small, there exists $\delta>0$ such that $\mathscr L_\epsilon$ has neither eigenvalues nor resonance poles except for $\{0\}\cup\i(-\min\{1,s\}-\delta,-\min\{1,s\}+\delta)\cup\i(\min\{1,s\}-\delta,\min\{1,s\}+\delta)$.
	\item[(ii)]
		Let $\ell>0$.
		If
		\begin{align*}
			\sqrt s\notin\left\{-\frac{\ell^2+k^2}{2(\ell+k)}\;\middle|\;k\in\{-\lfloor(\sqrt 2+1)\ell\rfloor,\ldots,-\ell-1\}\right\},
		\end{align*}
		then for $\epsilon>0$ sufficiently small, $\mathscr L_\epsilon$ has a pair of eigenvalues with positive and negative real parts near $\lambda=\pm\i\ell(2\sqrt s+\ell)$, i.e., the bifurcated solitary waves are spectrally unstable.
	\end{itemize}
\end{thm}

\begin{proof}
	The orbital and spectral stability of the bifurcated solitary waves with $\ell=0$ immediately follows from Theorem~\ref{thm:bif_orb_stab}.
	Using Proposition \ref{prop:cubic_main} (i) and Lemma \ref{lem:concrete_comp} (i), we see that $\mathscr L_\epsilon$ has neither eigenvalues nor resonance poles near zeros of $E_\mathrm{B}(\lambda)$ except for $\lambda=0,\pm\i\min\{1,s\}$, since $\mathcal K_{2\sqrt s+k+1}(\nu)\ne0$ with $\nu=\sqrt{-k(2\sqrt s+k)-1}$, $k\in\{-\lfloor\sqrt s\rfloor,\ldots,-1\}$, by \eqref{eqn:Kr_Fourier}.
	Using Theorem \ref{thm:main_4} and Lemma \ref{lem:concrete_comp} (iii), we see that $\mathscr L_\epsilon$ also has neither eigenvalues nor resonance poles near zeros of $E_\mathrm{A}(\lambda)$ except for $\lambda=0,\pm\i\min\{1,s\}$.
	Moreover, the zeros $\lambda=\pm\i\min\{1,s\}$ of $E(\lambda,0)$ remains purely imaginary by Theorem~\ref{thm:bif_orb_stab}.
	Thus we obtain part (i).
	For part (ii), using Proposition \ref{prop:cubic_main} (ii) and Lemma \ref{lem:concrete_comp} (ii), we see that $\mathscr L_\epsilon$ has an eigenvalue with a positive real part near a zero of $E_\mathrm{B}(\lambda)$ with $k=\ell$, since $\mathcal K_{2\sqrt s+\ell+1}(\nu)\ne0$ with $\nu=\sqrt{\ell(2\sqrt s+\ell)-1}$ by \eqref{eqn:Kr_Fourier}.
\end{proof}

\begin{rmk}
	The nondegenerate condition $\mathcal I^{(\ell)}\ne0$ in Proposition \ref{prop:cubic_main} is not always true.
	For example, a direct calculation shows
	\begin{align*}
		\mathcal I^{(2)}(\nu_4(\lambda_0),\nu_2(\lambda_0))=-\frac{4(\sqrt s-1)(\sqrt s-3)}{s(\sqrt s+2)(\sqrt s+3)(2\sqrt s-1)}\mathcal K_{2\sqrt s-1}\Bigl(\sqrt{4\sqrt s-5}\Bigr),
	\end{align*}
	where $\lambda_0=-\i k(2\sqrt s+k)$ and $k=-2$.
	It vanishes at $\sqrt s=3$.
	Hence, we could not generally determine whether eigenvalues of $\mathscr L_\epsilon$ near $\lambda_0=-\i k(2\sqrt s+k)$ with $k\ne\ell$ have positive real parts, in Theorem \ref{thm:cubic_main} (ii).
\end{rmk}

Finally, we give some numerical computation results for $(\omega,s,\beta_2)=(1,4,2)$.
To obtain these results, we also used the computation tool \texttt{AUTO} \cite{DO12}.
See \cite{YaYa20} for the details of our numerical approach and further numerical results.
Figure \ref{fig:lam} shows how the eigenvalues of $\mathscr L_\epsilon$
 for the bifurcated solitary wave on each branch change.
We only display the eigenvalues with $\Re\lambda,\Im\lambda\ge0$
 since the spectra of $\mathscr L_\epsilon$ are symmetric, as stated in Section~2.
For the bifurcated solitary wave born at $\beta_1=\beta_1^{(\ell)}$,
 all eigenvalues of $\mathscr L_\epsilon$ at $\beta_1=\beta_1^{(\ell)}$
 are given by \eqref{eqn:lambda} and they immediately disappear for $k\ge\ell$
 when $\beta_1$ changes from $\beta_1^{(\ell)}$ (see Remark~\ref{rmk:cubic_main}).
The loci of the eigenvalues of $\mathscr L_\epsilon$ %around the bifurcated solitary waves
 leaving the imaginary axis from \eqref{eqn:lambda}
 when $\beta_1$ changes from $\beta_1^{(\ell)}$
 are plotted for $k=0$ and $\ell=1$ in Fig.~\ref{fig:lam}(a);
 for $k=0,1$ and $\ell=2$ in Fig.~\ref{fig:lam}(b);
 for $k=0,1,2$ and $\ell=3$ in Fig.~\ref{fig:lam}(c);
 and $k=0,1,2,3$ and $\ell=4$ in Figs.~\ref{fig:lam}(d) and (e).
Each curve was computed from $\beta_1=\beta_1^{(\ell)}$ to $100$.
These results indicate that
 the real parts of the eigenvalues become positive
 and the bifurcated solitary waves are unstable
 when $\beta_1\ne\beta_1^{(\ell)}$, as stated in Theorem~\ref{thm:cubic_main}.

In Figs.~\ref{fig:lam}(d) and (e)
 four eigenvalues for the bifurcated solitary wave on the fifth branch ($\ell=4$)
 are displayed and their values at $\beta_1=\beta_1^\mathrm{SN}\approx 19.41626$,
 at which a saddle-node bifurcation occurs (see Fig.~\ref{fig:3a}(a)),
 are plotted as the circle `$\circ$'.
Especially, the solitary wave seems not to change its stability type
 at the saddle-node bifurcation point
 since all the eigenvalues are far from the imaginary axis.
On the other hand, 
 the computed eigenvalues at $\beta_1=50$ and $\beta_1=100$ were almost the same.
So the eigenvalues are thought to converge to certain values as $\beta_1\to\infty$.
See, e.g., the blue line ($k=1$) in Fig.~\ref{fig:lam}(b),
 the blue and green lines ($k=1$ and $2$) in Fig.~\ref{fig:lam}(c),
 and the green and purple lines ($k=2$ and $3$)  in Fig.~\ref{fig:lam}(d).

%Figure~\ref{fig:lam} shows eigenvalues and eigenfunctions of $\mathscr L_\epsilon$ for the solitary wave \eqref{eqn:bif_soliton} with \eqref{eqn:pfsoliton2} bifurcated at $\beta_1=\beta_1^{(2)}=10$.
%See Figs.\ \ref{fig:3a} (b) and (c) for the profiles of the solitary wave at $\beta_1=12$.
%In Fig.\ \ref{fig:lam} (a), the loci of the eigenvalues which are $-5\i$ and $-12\i$ at $\beta_1=10$ are plotted when $\beta_1$ is changed from $10$ to $12$.
%They are depicted by the bullets at $\beta_1=12$.
%The red and blue lines represents the curves emerging from $\lambda=-5\i$ and $-12\i$, respectively.
%We see that the purely imaginary eigenvalues bifurcate to a pair of eigenvalues with positive and negative real parts, as predicted in Theorem \ref{thm:cubic_main} (ii) near $\lambda=-12\i$.
%So the solitary wave is spectrally unstable for $\beta_1>10$.
%In Figs.\ \ref{fig:lam} (b)-(e) the absolute value of each components of the corresponding eigenfunctions at $\beta_1=12$ are displayed.
%The red and blue lines, respectively, represent the eigenfunctions associated with the curves emerging from $-5\i$ and $-12\i$ in Fig.\ \ref{fig:lam} (a).

\begin{figure}
	\begin{minipage}{0.495\textwidth}
		\includegraphics[scale=0.55]{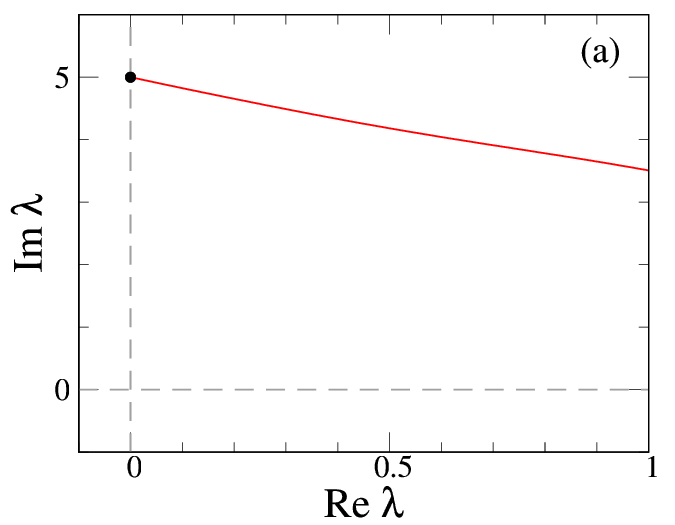}\\[1ex]
		\includegraphics[scale=0.55]{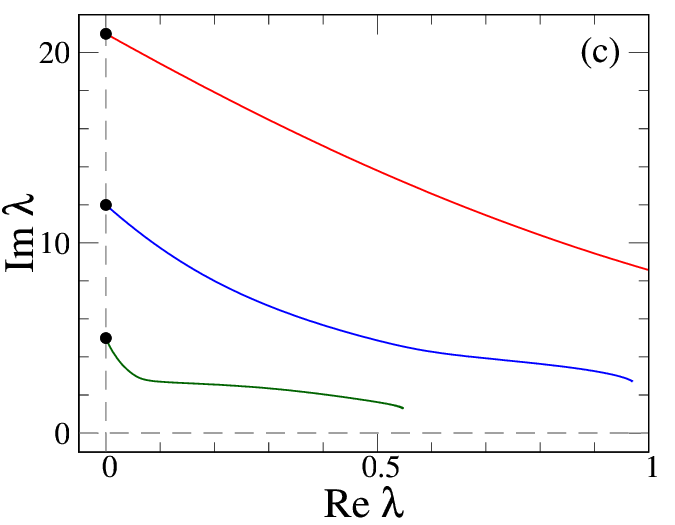}
	\end{minipage}
	\begin{minipage}{0.495\textwidth}
	\begin{center}
		\includegraphics[scale=0.55]{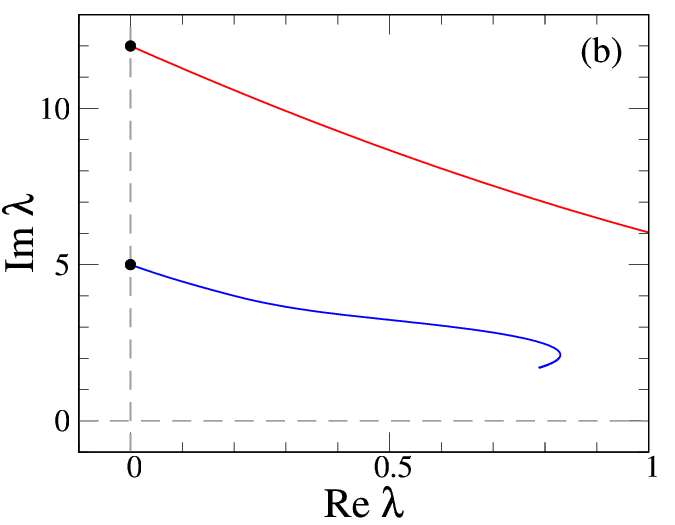}\\[1ex]
		\includegraphics[scale=0.55]{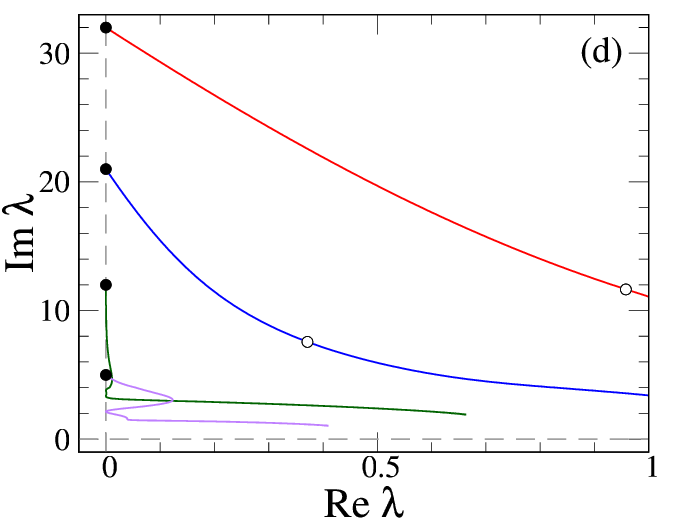}
	\end{center}
	\end{minipage}
	\includegraphics[scale=0.55]{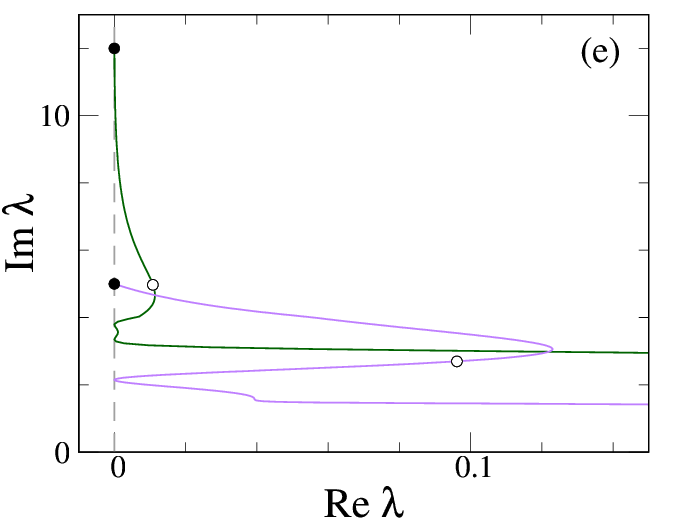}
	\caption{
		Eigenvalues of the linearized operator $\mathscr L_\epsilon$ around the bifurcated solitary waves born at $\beta_1=\beta_1^{(\ell)}$ for $(\omega,s,\beta_2)=(1,4,2)$:
		(a) $\ell=1$; (b) $\ell=2$; (c) $\ell=3$; (d) and (e) $\ell=4$.
Plate~(e) is an enlargement of plate~(d).
The red, blue, green, and purple lines represent the eigenvalues of $\mathscr L_\epsilon$ with $k=0,1,2$, and $3$, respectively,
The bullet `$\bullet$' represents the loci of the eigenvalues
 at $\beta_1=\beta_1^{(\ell)}$.
In plates (d) and (e), the circle `$\circ$' represents the loci of the eigenvalues
 at the saddle-node bifurcation point $\beta_1\approx19.41626$.
%		Eigenvalues and eigenfunctions of $\mathscr L_\epsilon$ for the bifurcated solitary wave \eqref{eqn:bif_soliton} with \eqref{eqn:pfsoliton2} for $(\omega,s,\beta_2,\ell)=(1,4,2,2)$:
%		(a) Eigenvalues for $10\le\beta_1\le12$;
%		(b)-(e) Profiles of the eigenfunction $\psi=(\psi_1,\ldots,\psi_4)$ of \eqref{eqn:eigprob} for the two eigenvalues at $\beta_1=12$.
		\label{fig:lam}
	}
\end{figure}

\appendix

\renewcommand{\theequation}{\Alph{section}.\arabic{equation}}
\renewcommand{\thefigure}{\Alph{section}.\arabic{figure}}
\setcounter{equation}{0}
\setcounter{figure}{0}

% **********************************************************
% Appendix A
% **********************************************************

\section{Proofs of Lemmas \ref{lem:Jost_normalize} and \ref{lem:de2E}}
%\renewcommand{\theequation}{A.\arabic{equation}}
%\setcounter{equation}{0}

% **********************************************************
% Subsection A.1
% **********************************************************

\subsection{Proof of Lemma \ref{lem:Jost_normalize}}\label{sec:proof_Jost_nomalize}

Because of the analiticity of the Jost solutions, without loss of generality we assume that $\lambda$ is sufficiently close to $\lambda_0$.
We now determine $Y_{0j}^\pm$ using \eqref{eqn:Jost_flip} for $j=1,2,5,6$.

For any solution $Y(x)$ of \eqref{eqn:ODE0} and $Z(x)$ of \eqref{eqn:adj_eq}, we have
\begin{align*}
	(Y\cdot Z)'
	=Y'\cdot Z+Y\cdot Z'
	=AY\cdot Z-Y\cdot A^\H Z
	=AY\cdot Z-AY\cdot Z
	=0.
\end{align*}
Hence $\mathscr Z_0(x)^\H\mathscr Y_0(x)$ is independent of $x$ so that
\begin{align}
	\mathscr Y_0(x)^{-1}=(\mathscr Z_0(0)^\H\mathscr Y_0(0))^{-1}\mathscr Z_0(x)^\H,
	\label{eqn:Y0inv_tmp}
\end{align}
where
\begin{align}
	\mathscr Z_0(x)
	&=\bigl(Z_{01}^-\;\;Z_{02}^-\;\;Z_{03}^-\;\;\bar Z_{04}\;\;Z_{05}^+\;\;Z_{06}^+\;\;Z_{07}^+\;\;Z_{08}^+\bigr)(x,\lambda_0)\\
	&=\left(\begin{array}{c|c|c|c}
		\begin{matrix}-p_1'&-q_2'\\-q_1'&-q_2'\end{matrix}&O_2&\begin{matrix}-p_5'&-q_6'\\-q_5'&-q_6'\end{matrix}&O_2\\\hline
		O_2&\begin{matrix}-p_3'&0\\0&-\bar p_4'\end{matrix}&O_2&\begin{matrix}-p_7'&0\\0&-p_8'\end{matrix}\\\hline
		\begin{matrix}p_1&q_2\\q_1&q_2\end{matrix}&O_2&\begin{matrix}p_5&q_6\\q_5&q_6\end{matrix}&O_2\\\hline
		O_2&\begin{matrix}p_3&0\\0&\bar p_4\end{matrix}&O_2&\begin{matrix}p_7&0\\0&p_8\end{matrix}
	\end{array}\right)^*\!\!\!(x,\lambda_0).
	\label{eqn:Z0_def}
\end{align}
Here we have used \eqref{eqn:Jost_0} and \eqref{eqn:Y0_def}.
Since $\Re\nu_1>\Re\nu_2=0$, we easily see that
\begin{itemize}
	\item $Y_{01}^-$ is unique;
	\item $Y_{02}^-$ and $Y_{06}^-$ are unique up to adding elements of $\lspan_\Cset\{Y_{01}^-\}$;
	\item $Y_{05}^-$ is unique up to adding elements of $\lspan_\Cset\{Y_{01}^-,Y_{02}^-,Y_{06}^-\}$.
\end{itemize}
By \eqref{eqn:Jost_0}, \eqref{eqn:Jost_flip}, and $\textbf{(A4-1)}_{\lambda_0}$, $C_1=(Y_{01}^-\cdot Z_{05}^+)(\lambda_0)=2(p_1p_1'+q_1q_1')(0,\lambda_0)\ne0$.
Thus, we uniquely determine $Y_{02}^-$ and $Y_{06}^-$ under the conditions $Y_{02}^-\cdot Z_{05}^+=0$ and $Y_{06}^-\cdot Z_{05}^+=0$.
Using \eqref{eqn:caseA_asym} and \eqref{eqn:Jost_flip}, we have
\begin{align}
	\mathscr Z_0(0)^\H\mathscr Y_0(0)
	=\left(\begin{array}{c|c|c|c}
		O_2&O_2&\begin{matrix}-C_1&0\\0&-C_2\end{matrix}&O_2\\\hline
		O_2&\ast_{11}&O_2&\ast_{12}\\\hline
		\begin{matrix}C_1&0\\0&C_2\end{matrix}&O_2&O_2&O_2\\\hline
		O_2&\ast_{21}&O_2&\ast_{22}
	\end{array}\right),
	\label{eqn:Z0Y0}
\end{align}
where $\ast_{ij}$, $i,j=1,2$, are some $2\times2$ matrices.
Here we have used \eqref{eqn:Jost_0} and \eqref{eqn:Y0_def} again.
Since any fundamental matrix solutions are nonsingular, we obtain
\begin{align*}
	0\ne\det\mathscr Z_0(0)^\H\mathscr Y_0(0)
	=C_1^2C_2^2\det\!\begin{pmatrix}\ast_{11}&\ast_{12}\\\ast_{21}&\ast_{22}\end{pmatrix}
\end{align*}
so that $C_2\ne0$.
Noting that $C_1,C_2\ne0$ and $Y_{06}^-\cdot Z_{02}^-=-2\nu_2\ne0$, we uniquely determine $Y_{05}^-$ under the conditions $Y_{05}^-\cdot Z_{05}^+=0$, $Y_{05}^-\cdot Z_{06}^+=0$, and $Y_{05}^-\cdot Z_{02}^-=0$.

Moreover, we compute the inverse of \eqref{eqn:Z0Y0} as
\begin{align}
	(\mathscr Z_0(0)^\H\mathscr Y_0(0))^{-1}
	=\left(\begin{array}{c|c|c|c}
		O_2&O_2&\begin{matrix}1/C_1&0\\0&1/C_2\end{matrix}&O_2\\\hline
		O_2&\ast_{11}&O_2&\ast_{12}\\\hline
		\begin{matrix}-1/C_1&0\\0&-1/C_2\end{matrix}&O_2&O_2&O_2\\\hline
		O_2&\ast_{21}&O_2&\ast_{22}
	\end{array}\right),
	\label{eqn:Z0Y0_inv}
\end{align}
where $\ast_{ij}$ is different from that of \eqref{eqn:Z0Y0} for $i,j=1,2$.
Substituting \eqref{eqn:Z0_def} and \eqref{eqn:Z0Y0_inv} into \eqref{eqn:Y0inv_tmp}, we obtain the $(i,j)$-components of \eqref{eqn:Y0_inv}, $i,j=1,2,5,6$.
%	Hence, the $(i,j)$-components of \eqref{eqn:Y0_inv}, $i,j=1,2,5,6$, follow from \eqref{eqn:Y0inv_tmp} and \eqref{eqn:Z0Y0_inv}.
The other components of \eqref{eqn:Y0_inv} are obtained directly from \eqref{eqn:Y0_def} with \eqref{eqn:Jost_0}.
\qed

% **********************************************************
% Subsection A.2
% **********************************************************

\subsection{Proof of Lemma \ref{lem:de2E}}\label{sec:proof_de2E}

Differentiating \eqref{eqn:def_evans} twice with respect to $\epsilon$ and using \eqref{eqn:p48} and \eqref{eqn:deY_zero}, we have $\partial_\epsilon^2E(\lambda_0,0)=f(x)+2g(x)$ where
\begin{align}
	f(x)&=\det\bigl(Y_1^-\;\;Y_2^-\;\;Y_3^-\;\;\partial_\epsilon^2(Y_4^--\tau Y_8^+)\;\;Y_5^+\;\;Y_6^+\;\;Y_7^+\;\;Y_8^+\bigr)(x,\lambda_0,0),\label{eqn:f_def}\\
	g(x)&=\det\bigl(\partial_\epsilon Y_1^-\;\;Y_2^-\;\;Y_3^-\;\;\partial_\epsilon(Y_4^--\tau Y_8^+)\;\;Y_5^+\;\;Y_6^+\;\;Y_7^+\;\;Y_8^+\bigr)(x,\lambda_0,0)\nonumber\\
	&\quad+\det\bigl(Y_1^-\;\;\partial_\epsilon Y_2^-\;\;Y_3^-\;\;\partial_\epsilon(Y_4^--\tau Y_8^+)\;\;Y_5^+\;\;Y_6^+\;\;Y_7^+\;\;Y_8^+\bigr)(x,\lambda_0,0)\nonumber\\
	&\quad+\det\bigl(Y_1^-\;\;Y_2^-\;\;Y_3^-\;\;\partial_\epsilon(Y_4^--\tau Y_8^+)\;\;\partial_\epsilon Y_5^+\;\;Y_6^+\;\;Y_7^+\;\;Y_8^+\bigr)(x,\lambda_0,0)\nonumber\\
	&\quad+\det\bigl(Y_1^-\;\;Y_2^-\;\;Y_3^-\;\;\partial_\epsilon(Y_4^--\tau Y_8^+)\;\;Y_5^+\;\;\partial_\epsilon Y_6^+\;\;Y_7^+\;\;Y_8^+\bigr)(x,\lambda_0,0).\label{eqn:g_def}
\end{align}
Note that $f(x)+2g(x)$ does not depend on $x$.

We first compute $g(x)$.
Substituting \eqref{eqn:rho1} into \eqref{eqn:g_def} and using \eqref{eqn:Jost_0}, we have
\begin{align}
	g(x)
	&=-\det\bigl(Y_1^-\;\;Y_2^-\;\;Y_3^-\;\;\sum_{j=1}^2(\rho_j\partial_\epsilon Y_j^-+\rho_{j+4}\partial_\epsilon Y_{j+4}^+)\;\;Y_5^+\;\;Y_6^+\;\;Y_7^+\;\;Y_8^+\bigr)(x,\lambda_0,0)\nonumber\\
	&=-E_\mathrm{A}(\lambda_0)E_\mathrm{B}(\lambda_0)\det\!\begin{pmatrix}
		\sum_{j=1}^2(\rho_j\partial_\epsilon Y_j^-+\rho_{j+4}\partial_\epsilon Y_{j+4}^+)_4&p_8\\
		\sum_{j=1}^2(\rho_j\partial_\epsilon Y_j^-+\rho_{j+4}\partial_\epsilon Y_{j+4}^+)_8&p_8'\\
	\end{pmatrix}\!(x,\lambda_0,0)
	\label{eqn:g_tmp}
\end{align}
with
\begin{align*}
	\rho_j=\tau\left(\int_{-\infty}^\infty\mathscr Y_0(y)^{-1}A_1(y)Y_{08}^+(y,\lambda_0)\,\d y
\right)_j,\quad j=1,2,5,6,
\end{align*}
where $(\cdot)_j$ represents the $j$-th component of the related vector in $\Cset^8$.
On the other hand, substitution of \eqref{eqn:A1_def} and \eqref{eqn:Y0_inv} into \eqref{eqn:deY_int} yields
\begin{align*}
	(\partial_\epsilon Y_j^-)_4(x,\lambda_0,0)
	=&\left(\int_{-\infty}^xa(y)p_8(y,\lambda_0)(p_j+q_j)(y,\lambda_0)\,\d y\right)\bar p_4(x)\\
	&-\left(\int_{-\infty}^xa(y)\bar p_4(y)(p_j+q_j)(y,\lambda_0)\,\d y\right)p_8(x,\lambda_0),\quad j=1,2,\\
	(\partial_\epsilon Y_j^-)_8(x,\lambda_0,0)
	=&\left(\int_{-\infty}^xa(y)p_8(y,\lambda_0)(p_j+q_j)(y,\lambda_0)\,\d y\right)\bar p_4'(x)\\
	&-\left(\int_{-\infty}^xa(y)\bar p_4(y)(p_j+q_j)(y,\lambda_0)\,\d y\right)p_8'(x,\lambda_0),\quad j=1,2,\\
	(\partial_\epsilon Y_j^+)_4(x,\lambda_0,0)
	=&-\left(\int_x^\infty a(y)p_8(y,\lambda_0)(p_j+q_j)(y,\lambda_0)\,\d y\right)\bar p_4(x)\\
	&+\left(\int_x^\infty a(y)\bar p_4(y)(p_j+q_j)(y,\lambda_0)\,\d y\right)p_8(x,\lambda_0),\quad j=5,6,\\
	(\partial_\epsilon Y_j^+)_8(x,\lambda_0,0)
	=&-\left(\int_x^\infty a(y)p_8(y,\lambda_0)(p_j+q_j)(y,\lambda_0)\,\d y\right)\bar p_4'(x)\\
	&+\left(\int_x^\infty a(y)\bar p_4(y)(p_j+q_j)(y,\lambda_0)\,\d y\right)p_8'(x,\lambda_0),\quad j=5,6.
\end{align*}
Substituting the above equations into \eqref{eqn:g_tmp}, we obtain
\begin{align*}
	g(x)
	=E_\mathrm{A}(\lambda_0)E_\mathrm{B}(\lambda_0)\sum_{j=1}^2\biggl(
	&-\rho_j\int_{-\infty}^x a(y)p_8(y)\bigl(p_j(y)+q_j(y)\bigr)\d y\\
	&+\rho_{j+4}\int_x^\infty a(y)p_8(y)\bigl(p_{j+4}(y)+q_{j+4}(y)\bigr)\d y\biggr)
\end{align*}
at $\lambda=\lambda_0$.

We next compute $f(x)$.
Since $\partial_\epsilon^2 Y_j^\pm(x,\lambda_0,0)$ satisfies
\begin{align*}
	(\partial_\epsilon^2 Y_j^\pm)'=A_0(x,\lambda_0)\partial_\epsilon^2Y_j^\pm+2A_1(x)\partial_\epsilon Y_j^\pm+2A_2(x)Y_{0j}^\pm,
\end{align*}
we have
\begin{align*}
	\partial_\epsilon^2(Y_4^--\tau Y_8^+)(x,\lambda_0,0)
	&=2\mathscr Y_0(x)\biggl(
		\int_{-\infty}^x\mathscr Y_0(y)^{-1}A_1(y)\partial_\epsilon Y_4^-(y,\lambda_0,0)\,\d y\\
		&\qquad\qquad+\int_x^\infty\mathscr Y_0(y)^{-1}A_1(y)\partial_\epsilon Y_8^+(y,\lambda_0,0)\,\d y\\
		&\qquad\qquad+\int_{-\infty}^\infty\mathscr Y_0(y)^{-1}A_2(y)Y_{04}^-(y,\lambda_0)\,\d y
	\biggr).
\end{align*}
Substituting the above expression into \eqref{eqn:f_def}, we obtain
\begin{align*}
	f(x)=2\tau E_\mathrm{A}(\lambda_0)E_\mathrm{B}(\lambda_0)(f_1(x)+f_2(x)+f_3)
\end{align*}
after some lengthy manipulation, where
\begin{align*}
	&f_1(x)=\int_{-\infty}^x\int_{-\infty}^ya(y)p_8(y)a(y')p_8(y')\nonumber\\
	&\qquad\qquad\qquad\sum_{j=1}^2\frac{1}{C_j}\Bigl(\bigl(p_{j+4}(y)+q_{j+4}(y)\bigr)\bigl(p_j(y')+q_j(y')\bigr)\nonumber\\
	&\qquad\qquad\qquad\qquad\quad-\bigl(p_j(y)+q_j(y)\bigr)\bigl(p_{j+4}(y')+q_{j+4}(y')\bigr)\Bigr)\d y'\d y,\nonumber\\
	&f_2(x)=\int_x^\infty\int_y^\infty a(y)p_8(y)a(y')p_8(y')\nonumber\\
	&\qquad\qquad\qquad\sum_{j=1}^2\frac{1}{C_j}\Bigl(\bigl(p_j(y)+q_j(y)\bigr)\bigl(p_{j+4}(y')+q_{j+4}(y')\bigr)\nonumber\\
	&\qquad\qquad\qquad\qquad\quad-\bigl(p_{j+4}(y)+q_{j+4}(y)\bigr)\bigl(p_j(y')+q_j(y')\bigr)\Bigr)\d y'\d y,\nonumber
\end{align*}
at $\lambda=\lambda_0$.
Here $C_1$ and $C_2$ are given in Lemma \ref{lem:Jost_normalize} and $f_3$ is given by \eqref{eqn:f3_def}.
Finally, we take the limit $x\to\infty$ in $\partial_\epsilon^2E(\lambda_0,0)=f(x)+2g(x)$ and evaluate
\begin{align*}
	&f_1(x)-\sum_{j=1}^2\rho_j\int_{-\infty}^x a(y)p_8(y)\bigl(p_j(y)+q_j(y)\bigr)\d y\to\frac{2J_1}{C_1}+\frac{2J_2}{C_2},\\
	&f_2(x)+\sum_{j=1}^2\rho_{j+4}\int_x^\infty a(y)p_8(y)\bigl(p_{j+4}(y)+q_{j+4}(y)\bigr)\d y\to0
\end{align*}
as $x\to\infty$, which yields \eqref{eqn:de2E_mid}.\qed

%Here we have used the relations
%\begin{align*}
%	\rho_j=-\frac{\tau}{C_j}\int_{-\infty}^\infty a(x)p_8(x)\bigl(p_{j+4}(x)+q_{j+4}(x)\bigr)\d x,\quad j=1,2.
%\end{align*}

% **********************************************************
% Subsection A.3
% **********************************************************

%\subsection{Proof of Lemma \ref{lem:dlE}}\label{sec:proof_dlE}

% **********************************************************
% Appendix B
% **********************************************************

\section{Further Computations of $\bar b_2$}

\subsection*{Case of $\ell=0$}

We take $\ell=0$ in \eqref{eqn:V1_cubic} and \eqref{eqn:b2_cubic} to obtain
\begin{equation}
	V_1^{(0)}(x)=\sech^{\sqrt{s}}x
	\label{eqn:l0}
\end{equation}
and
\begin{align*}
	\bar b_2=\frac{\sqrt\pi\,\Gamma(2\sqrt s)}{\Gamma\bigl(2\sqrt s+\frac{1}{2}\bigr)}(s^\frac{3}{2}-\beta_2),
\end{align*}
respectively.
It follows from Theorem \ref{thm:cnls_bif} that a supercritical (resp.\ subcritical) bifurcation of homoclinic orbits occurs at $\beta_1=\beta_1^{(0)}$
 if $\beta_2<$  (resp.\ $>$) $s^\frac{3}{2}$.
Since $V_1^{(0)}(x)$ has %only one peak at $x=0$,
 no zero, so does the $V$-component of the bifurcating homoclinic solution \eqref{eqn:bif_UV}
 near $\beta_1=\beta_1^{(0)}$.
%We compute $\beta_1^{(0)}=1$ and
%\begin{align*}
%\tilde{\xi}_1(x)
% =&-\frac{\sqrt{2}}{4}\sech x,\quad
% \bar{a}_2=-\frac{8}{3},\quad
% \bar{b}_2=-\frac{4}{3}(\beta_2-1)
%\end{align*}
%for $s=1$;
%$\beta_1^{(0)}=3$ and
%\[
%\tilde{\xi}_1(x)=-\frac{\sqrt{2}}{2}\cosh 2x\sech^3x,\quad
% \bar{a}_2=-\frac{32}{15},\quad
% \bar{b}_2=-\frac{32}{35}\left(\beta_2-8\right)
%\]
%for $s=4$; and $\beta_1^{(0)}=6$ and
%\begin{align*}
%&
%\tilde{\xi}_1(x)
% =-\frac{\sqrt{2}}{4}(\cosh 4x+2\cosh 2x)\sech^5x,\quad
%\bar{a}_2=-\frac{64}{35},\quad
%\bar{b}_2=-\frac{512}{693}\left(\beta_2-33\right)
%\end{align*}
%for $s=9$.
%Hence, applying Theorem~\ref{thm:Mel}, we show that
% a supercritical (resp.\ subcritical) pitchfork bifurcation of homoclinic orbits occurs
% at $\beta_1=1,3$ and $6$ for $s=1,4$ and $9$ if $\beta_2<$ (resp.\ $\beta_2>$) $1,8$ and $33$, respectively.

\subsection*{Case of $\ell=1$}

We take $\ell=1$ in \eqref{eqn:V1_cubic} and \eqref{eqn:b2_cubic} to obtain
\begin{equation*}
	V_1^{(1)}(x)=\sech^{\sqrt{s}}x\,\tanh x
%	\label{eqn:l1}
\end{equation*}
and
\begin{align*}
	\bar b_2=\frac{\sqrt\pi\,\Gamma(2\sqrt s)}{\Gamma\bigl(2\sqrt s+\frac{1}{2}\bigr)}\frac{\sqrt s(7s-4)-3\beta_2}{(4\sqrt s+1)(4\sqrt s+3)},
\end{align*}
respectively.
By Theorem \ref{thm:cnls_bif},
 a supercritical (resp.\ subcritical) bifurcation of homoclinic orbits occurs at $\beta_1=\beta_1^{(1)}$ if $\beta_2<$ (resp.\ $>$) $\sqrt s(7s-4)/3$.
Since $V_1^{(1)}(x)$ has %a maximum and minimum,
 only one zero,
 so does the $V$-component of the bifurcating homoclinic solution \eqref{eqn:bif_UV}
 near $\beta_1=\beta_1^{(1)}$.

%We compute $\beta_1^{(1)}=3$ and
%\[
%\tilde{\xi}_1(x)
% =-\frac{\sqrt{2}}{8}(\cosh 2x-3)\sech^3x,\quad
%\bar{a}_2=-\frac{8}{15},\quad
%\bar{b}_2=-\frac{4}{35}(\beta_2-1)
%\]
%for $s=1$; $\beta_1^{(1)}=6$ and
%\[
%\tilde{\xi}_1(x)
% =-\frac{\sqrt{2}}{4}\sech^5x,\quad
%\bar{a}_2=-\frac{32}{105},\quad
%\bar{b}_2=-\frac{32}{1155}(\beta_2-16)
%\]
%for $s=4$;
%and $\beta_1^{(1)}=10$ and
%\begin{align*}
%&
%\tilde{\xi}_1(x)
% =-\frac{\sqrt{2}}{240}(\cosh 6x+4\cosh 4x+5\cosh 2x+50)\sech^7x,\\
%&
%\bar{a}_2=-\frac{64}{315},\quad
%\bar{b}_2=-\frac{512}{45045}(\beta_2-59)
%\end{align*}
%for $s=9$.
%Hence, applying Theorem~\ref{thm:Mel}, we show that
% a supercritical (resp.\ subcritical) pitchfork bifurcation of homoclinic orbits occurs
% at $\beta_1=3,6$ and $10$ for $s=1,4$ and $9$ if $\beta_2<$ (resp.\ $\beta_2>$)
% $1,16$ and $59$, respectively.

\subsection*{General case of $\ell$}

From Proposition~\ref{prop:a2b2_cubic} we see that $\bar b_2$ has the form
\begin{align*}
	\bar b_2=\frac{\sqrt\pi\,\Gamma(2\sqrt s)}{\Gamma\bigl(2\sqrt s+\frac{1}{2}\bigr)}\frac{Q_1^{(\ell)}(\sqrt s)-Q_2^{(\ell)}(\sqrt s)\beta_2}{Q_3^{(\ell)}(\sqrt s)},
\end{align*}
where $Q_j^{(\ell)}(x)$, $j=1,2$, are some polynomials of $x$ with integer coefficients and
\begin{align*}
	Q_3^{(\ell)}(x)=\prod_{j=1}^{\lfloor\ell/2\rfloor}(x+j)^3\prod_{k=1}^{2\ell}(4x+2k-1).
\end{align*}
By Theorem \ref{thm:cnls_bif}, a supercritical (resp.\ subcritical) bifurcation of homoclinic orbits occurs at $\beta_1=\beta_1^{(\ell)}$ if $\beta_2<$ (resp.\ $>$) $Q_1^{(\ell)}(\sqrt s)/Q_2^{(\ell)}(\sqrt s)$.
Expressions of $Q_1^{(\ell)}(x)$ and $Q_2^{(\ell)}(x)$ for $\ell=2,3,4$ are given as follows:
\begin{align*}
	&Q_1^{(2)}(x)=x(145x^5+571x^4+487x^3-663x^2-1260x-540),\\
	&Q_2^{(2)}(x)=41x^3+167x^2+227x+105,\\
	&Q_1^{(3)}(x)=27x(229x^5+1343x^4+1655x^3-3271x^2-8336x-4560),\\
	&Q_2^{(3)}(x)=27(49x^3+303x^2+603x+385),\\
	&Q_1^{(4)}(x)=27x(16627x^8+245867x^7+1380611x^6+3183693x^5-660138x^4\\
		&\qquad\qquad-19430100x^3-41883400x^2-38216000x-13040000),\\
	&Q_2^{(4)}(x)=81(961x^6+14721x^5+92393x^4+303879x^3+551546x^2\\
		&\qquad\qquad+522180x+200200).
%	&Q_1^{(5)}(x)=675x(73549x^8+1375941x^7+9716977x^6+27961287x^5-7139586x^4\\
%		&\qquad\qquad-256712656x^3-662554288x^2-706318320x-271202400),\\
%	&Q_2^{(5)}(x)=675(10769x^6+209681x^5+1660217x^4+6816247x^3+15218074x^2\\
%		&\qquad\qquad+17354884x+7759752).
\end{align*}
Graphs of $\beta_2=Q_1^{(\ell)}(\sqrt s)/Q_2^{(\ell)}(\sqrt s)$ for $\ell\le 4$ are plotted in Fig.~\ref{fig:bifbdry}.
Especially, we observe that the bifurcations detected by Theorem \ref{thm:bifex} are supercritical for $\ell\le4$
% are supercritical (resp.\ subcritical)
% if $|\beta_2|$ is sufficiently large and $\beta_2<0$ (resp.\ $\beta_2>0$).
%Moreover, they are supercritical
 if $s>0$ is sufficiently large for $\beta_2$ fixed.

\begin{figure}
	\begin{center}
		\includegraphics[scale=0.48]{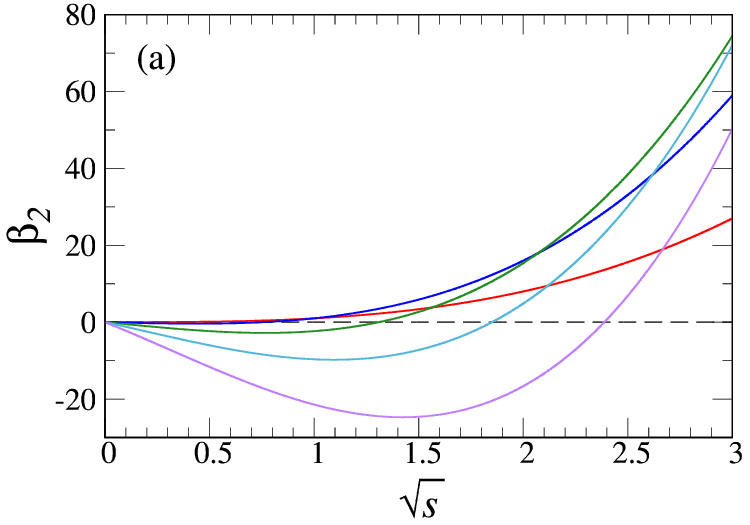}
		\includegraphics[scale=0.48]{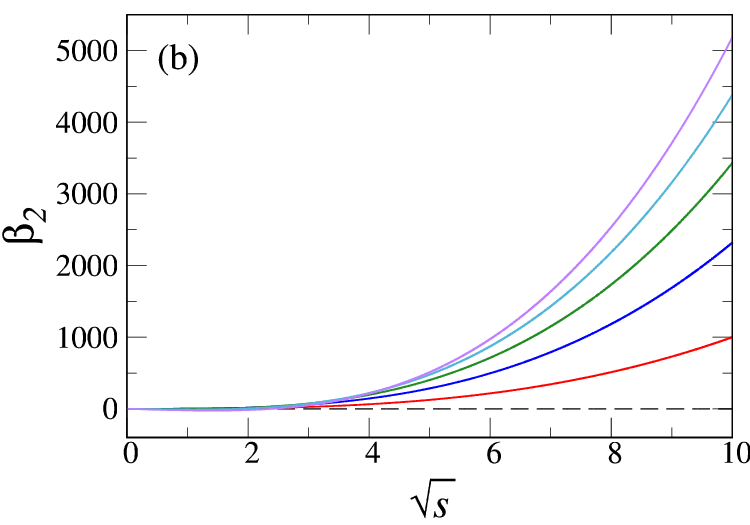}
	\end{center}
	\caption{
		Graphs of $\beta_2=Q_1^{(\ell)}(\sqrt s)/Q_2^{(\ell)}(\sqrt s)$ for $\ell=0$ (red), $1$ (blue), $2$ (green), $3$ (light blue), and $4$ (purple):
		(a) $0<\sqrt s<3$; (b) $0<\sqrt s<10$.
		For each $\ell\le 4$, a supercritical (resp.\ subcritical) bifurcation occurs
		at $\beta_1=\beta_1^{(\ell)}$ if $(\sqrt s,\beta_2)$ is located below (resp.\ above) the curve.
		\label{fig:bifbdry}
	 }
\end{figure}

% **********************************************************
% Appendix C
% **********************************************************

\section{Proofs of Lemmas \ref{lem:E_simple} and \ref{lem:concrete_comp}}
\renewcommand{\theequation}{C.\arabic{equation}}
\setcounter{equation}{0}

% **********************************************************
% Subsection C.1
% **********************************************************

\subsection{Proof of Lemma \ref{lem:E_simple}}\label{sec:proof_E_simple}

We first prove part (ii).
Let $k\in\{1,2,\ldots,\ell\}$.
The condition $\lambda_0(k)=-\lambda_0(k')$ is written as
\begin{align}
	-\i k(2\sqrt s+k)=\i k'(2\sqrt s+k')
	\label{eqn:simple_E2_E3}
\end{align}
for $k'\in\{-\lfloor\sqrt s\rfloor,-\lfloor\sqrt s\rfloor+1,\ldots,\ell\}$.
Let
\begin{align*}
	G(k,k')\defeq-\frac{k^2+(k')^2}{2(k+k')}.
\end{align*}
As easily shown, there exists $k'\in\{-\lfloor\sqrt s\rfloor,-\lfloor\sqrt s\rfloor+1,\ldots,\ell\}$ satisfying \eqref{eqn:simple_E2_E3} if and only if $-\lfloor G(k,k')\rfloor\le k'\le\ell$, $k+k'<0$, and $\sqrt s=G(k,k')$.
%Since \eqref{eqn:simple_E2_E3} is equivalent to
%\begin{align*}
%	\sqrt s=G(k,k')\defeq-\frac{k^2+(k')^2}{2(k+k')},
%\end{align*}
%the set of solutions $(k',\sqrt s)$ to \eqref{eqn:simple_E2_E3} is given by
%\begin{align}
%	\{(k',G(k,k'))\mid k'\in\Zset,\,\,-\lfloor G(k,k')\rfloor\le k'\le\ell,\,\,G(k,k')>0\}.
%	\label{eqn:R_ell}
%\end{align}
Using the fact that $-\lfloor G(k,k')\rfloor\le k'\iff k'\ge-\lfloor(\sqrt 2+1)k\rfloor$,
%Then using $G(k,k')>0\iff k'<-k$ and
%\begin{align*}
%	&-\lfloor G(k,k')\rfloor\le k'
%	\iff-G(k,k')\le k'\\
%	&\iff k'\ge-(\sqrt 2+1)k
%	\iff k'\ge-\lfloor(\sqrt 2+1)k\rfloor,
%\end{align*}
we see that $\lfloor(\sqrt 2+1)k\rfloor\le k'<-k$.
This implies part (ii).
Part (i) is also similarly proved.

We turn to parts (iii) and (iv).
For part (iii), \eqref{eqn:simple_E2_E3} is replaced with $-\i k(2\sqrt s+k)=\i$ for $k\in\{-\lfloor\sqrt s\rfloor,\ldots,\ell\}$ and $\sqrt s>0$.
This condition holds if and only if $(k^2+1)/(2k)\le k<0$ and $\sqrt s=-(k^2+1)/(2k)$, which is equivalent to $s=1$ and $k=-1$.
%\begin{align*}
%	\left\{\biggl(k,-\frac{k^2+1}{2k}\biggr)\;\middle|\;k\in\Zset,\,\,\frac{k^2+1}{2k}\le k<0\right\}=\{(-1,1)\}.
%\end{align*}
For part (iv), we notice that $\lambda_0(k)=-\i k(2\sqrt s+k)=-\i$ for $k\in\{-\lfloor\sqrt s\rfloor,\ldots,\ell\}$ if and only if $(k^2-1)/(2k)\le k\le\ell$, $-(k^2-1)/(2k)>0$, and $\sqrt s=-(k^2-1)/(2k)$.
One of the conditions does not occur at least.\qed
%\begin{align*}
%	\left\{\biggl(k,-\frac{k^2-1}{2k}\biggr)\;\middle|\;k\in\Zset,\,\,\frac{k^2-1}{2k}\le k\le\ell,\,\,-\frac{k^2-1}{2k}>0\right\}=\emptyset.
%\end{align*}
%Thus, this case does not occur,
%Finally, for the case (v) we assume that $-\i k(2\sqrt s+k)=-\i s$ for some $k$.
%Then $\sqrt s$ must belong to $\{(\sqrt 2+1)k\mid k\in\Zset,\,\,0<k\le\ell\}$.
%and the proof is complete.\qed

% **********************************************************
% Subsection C.2
% **********************************************************

\subsection{Proof of Lemma \ref{lem:concrete_comp}}\label{sec:proof_comp}

We begin with part (i) and write $\lambda_0=-\i kh$ and $\nu_2(\lambda_0)=\i\nu$, where $h=2\sqrt s+k$ and $\nu=\sqrt{-kh-1}\ge0$.

We first assume that $k<0$ is even.
Using \eqref{eqn:p4_cubic_even} %{\color{red}with $k$ replaced by  $\ell-k$}
 and \eqref{eqn:l0}, we compute \eqref{eqn:Iell_def} as
\begin{align}
	&\mathcal I^{(0)}(\nu_4(\lambda_0),\nu_2(\lambda_0))\nonumber\\
	&=\int_{-\infty}^\infty\e^{\i\nu x}\bigl((\i\nu-\tanh x)^2-\sech^2x\bigr)\sech^{h+1}x\,\pFq{2}{1}{\frac{k}{2},\frac{h+1}{2}}{\frac{h+k}{2}+1}{\sech^2x}\d x\nonumber\\
	&=\sum_{j=0}^{-k/2}\frac{\bigl(\frac{k}{2}\bigr)_j\bigl(\frac{h+1}{2}\bigr)_j}{j!\,\bigl(\frac{h+k}{2}+1\bigr)_j}\int_{-\infty}^\infty\e^{-\i\nu x}\bigl((\i\nu+\tanh x)^2-\sech^2x\bigr)\sech^{h+2j+1}x\,\d x.\label{eqn:J_cubic_mid}
\end{align}
Using integration by parts along with \eqref{eqn:Kr_Fourier}, we have
\begin{align}
	&\mathcal K_{r+2}(\alpha)
	=2^{r+1}\frac{\Gamma(\frac{r+2+\i\alpha}{2})\Gamma(\frac{r+2-\i\alpha}{2})}{\Gamma(r+2)}\notag\\
	&=2^{r-1}\frac{(r^2+\alpha^2)\Gamma(\frac{r+\i\alpha}{2})\Gamma(\frac{r-\i\alpha}{2})}{r(r+1)\Gamma(r)}
	=\frac{r^2+\alpha^2}{r(r+1)}\mathcal K_r(\alpha)\label{eqn:Kr_rec}
\end{align}
and
\begin{align*}
	\int_{-\infty}^\infty\e^{-\i\alpha x}\sech^rx\tanh x\,\d x
	=-\frac{\i\alpha}{r}\mathcal K_r(\alpha).
\end{align*}
So the integral in \eqref{eqn:J_cubic_mid} is evaluated as
\begin{align*}
	&(1-\nu^2)K_{h+2j+1}(\nu)-2K_{h+2j+3}(\nu)+\frac{2\nu^2}{h+2j+1}\mathcal K_{h+2j+1}(\nu)\\
	&=\biggl((1-\nu^2)-2\frac{(h+2j+1)^2+\nu^2}{(h+2j+1)(h+2j+2)}+\frac{2\nu^2}{h+2j+1}\biggr)\mathcal K_{h+2j+1}(\nu)\\
	&=kh\biggl(1-\frac{2}{h+2j+2}\biggr)\mathcal K_{h+2j+1}(\nu).
\end{align*}
Using \eqref{eqn:Kr_rec} in the above relation repeatedly, we obtain
\begin{align}
	\mathcal I^{(0)}(\nu_4(\lambda_0),\nu_2(\lambda_0))
	&=kh\sum_{j=0}^{-k/2}\frac{\bigl(\frac{k}{2}\bigr)_j\bigl(\frac{h+1}{2}\bigr)_j}{j!\,\bigl(\frac{h+k}{2}+1\bigr)_j}\biggl(1-\frac{2}{h+2j+2}\biggr)\nonumber\\
	&\qquad\quad\times\biggl(\prod_{m=0}^{j-1}\frac{(h+2m+1)^2+\nu^2}{(h+2m+1)(h+2m+2)}\biggr)\mathcal K_{h+1}(\nu)\nonumber\\
	&=kh\biggl(\sum_{j=0}^{-k/2}a_j-2\sum_{j=0}^{-k/2}b_j\biggr)\mathcal K_{h+1}(\nu),\label{eqn:I0_tmp}
\end{align}
where
\begin{align*}
	a_j=\frac{\bigl(\frac{k}{2}\bigr)_j\bigl(\frac{h+1}{2}\bigr)_j}{j!\,\bigl(\frac{h+k}{2}+1\bigr)_j}\prod_{m=0}^{j-1}\frac{(h+2m+1)^2+\nu^2}{(h+2m+1)(h+2m+2)},\quad
	b_j=\frac{a_j}{h+2j+2}.
\end{align*}
Since
\begin{align*}
	&\frac{a_{j+1}}{a_j}
	=\frac{\bigl(\frac{k}{2}+j\bigr)\bigl(\frac{h+1+\i\nu}{2}+j\bigr)\bigl(\frac{h+1-\i\nu}{2}+j\bigr)}{(j+1)\bigl(\frac{h+k}{2}+1+j\bigr)\bigl(\frac{h}{2}+1+j\bigr)},\\
	&\frac{b_{j+1}}{b_j}
	=\frac{\bigl(\frac{k}{2}+j\bigr)\bigl(\frac{h+1+\i\nu}{2}+j\bigr)\bigl(\frac{h+1-\i\nu}{2}+j\bigr)}{(j+1)\bigl(\frac{h+k}{2}+1+j\bigr)\bigl(\frac{h}{2}+2+j\bigr)},
\end{align*}
$a_0=1$, and $b_0=1/(h+2)$, we have
\begin{align*}
	&\sum_{j=0}^{-k/2}a_j
	=\pFq{3}{2}{\frac{k}{2},\frac{h+1+\i\nu}{2},\frac{h+1-\i\nu}{2}}{\frac{h+k}{2}+1,\frac{h}{2}+1}{1},\ 
	\sum_{j=0}^{-k/2}b_j
	=\frac{1}{h+2}\pFq{3}{2}{\frac{k}{2},\frac{h+1+\i\nu}{2},\frac{h+1-\i\nu}{2}}{\frac{h+k}{2}+1,\frac{h}{2}+2}{1}
\end{align*}
by \eqref{eqn:def_hypgeo}.
%$\sum a_j$ can be computed directly by \eqref{eqn:Saal}.
From a contiguous relation of ${}_3F_2$,
\begin{align*}
	d\,\pFq{3}{2}{a,b,c}{d,e}{z}-a\,\pFq{3}{2}{a+1,b,c}{d+1,e}{z}+(a-d)\pFq{3}{2}{a,b,c}{d+1,e}{z}=0
\end{align*}
(see (15) of \cite{Ra45}), we have
\begin{align*}
	\pFq{3}{2}{-n,b,c}{d+1,e}{z}
	=\frac{1}{d+n}\biggl(d\,\pFq{3}{2}{-n,b,c}{d,e}{z}+n\,\pFq{3}{2}{-n+1,b,c}{d+1,e}{z}\biggr).
\end{align*}
Applying \eqref{eqn:Saal} to the right hand side, we obtain
\begin{align}
	\pFq{3}{2}{-n,b,c}{d+1,e}{1}=\frac{(d-b)_n(d-c)_n}{(d+1)_n(d-b-c)_n}\biggl(1+\frac{n(d-b-c)}{(d-b)(d-c)}\biggr)\label{eqn:3F2_excess2}
\end{align}
for $n\in\Zset_{\ge0}$ and $-n+b+c+1=d+e$, so that $\sum_{j=0}^{-k/2}b_j=0$.
Using \eqref{eqn:Saal} and \eqref{eqn:I0_tmp}, we obtain
\begin{align*}
	\mathcal I^{(0)}(\nu_4(\lambda_0),\nu_2(\lambda_0))
	&=kh\frac{\bigl(\frac{1+\i\nu}{2}\bigr)_{-k/2}\bigl(\frac{1-\i\nu}{2}\bigr)_{-k/2}}{\bigl(\frac{h}{2}+1\bigr)_{-k/2}\bigl(-\frac{h}{2}\bigr)_{-k/2}}\mathcal K_{h+1}(\nu)\\
	&=\frac{(-1)^{k/2}kh}{\bigl(\frac{h+k}{2}+1\bigr)_{-k}}\biggl(\prod_{j=0}^{-k/2-1}\biggl(j+\frac{1+\i\nu}{2}\biggr)\biggl(j+\frac{1-\i\nu}{2}\biggr)\biggr)\mathcal K_{h+1}(\nu),
\end{align*}
which yields part (i) when $k$ is even.

We next assume that $k<0$ is odd.
Using arguments similar to the above, we obtain
%Using \eqref{eqn:p4_cubic_odd} and \eqref{eqn:l0}, we compute
\begin{align}
	\mathcal I^{(0)}(\nu_4(\lambda_0),\nu_2(\lambda_0))
	&=\i\nu\sum_{j=0}^{-(k+1)/2}\frac{\bigl(\frac{k+1}{2}\bigr)_j\bigl(\frac{h}{2}+1\bigl)_j}{j!\,\bigl(\frac{h+k}{2}+1\bigr)_j}\nonumber\\
	&\qquad\qquad\biggl(\frac{-kh+2}{h+2j+1}-\frac{-2kh+6}{(h+2j+1)(h+2j+3)}\biggr)\mathcal K_{h+2j+1}(\nu)\nonumber\\
	&=\i\nu\biggl(\frac{-kh+2}{h+1}\pFq{3}{2}{\frac{k+1}{2},\frac{h+1+\i\nu}{2},\frac{h+1-\i\nu}{2}}{\frac{h+k}{2}+1,\frac{h+3}{2}}{1}\biggr.\nonumber\\
	&\qquad\quad\biggl.-\frac{-2kh+6}{(h+1)(h+3)}\pFq{3}{2}{\frac{k+1}{2},\frac{h+1+\i\nu}{2},\frac{h+1-\i\nu}{2}}{\frac{h+k}{2}+1,\frac{h+5}{2}}{1}\biggr)\mathcal K_{h+1}(\nu).\label{eqn:I0_tmp2}
\end{align}
Using \eqref{eqn:Saal} and \eqref{eqn:3F2_excess2} again, we compute
\begin{align*}
	&\pFq{3}{2}{\frac{k+1}{2},\frac{h+1+\i\nu}{2},\frac{h+1-\i\nu}{2}}{\frac{h+k}{2}+1,\frac{h+3}{2}}{1}
	=\frac{\bigl(1+\frac{\i\nu}{2}\bigr)_{-(k+1)/2}\bigl(1-\frac{\i\nu}{2}\bigr)_{-(k+1)/2}}{\bigl(\frac{h+3}{2}\bigr)_{-(k+1)/2}\bigl(\frac{-h+1}{2}\bigr)_{-(k+1)/2}},\\
	&\pFq{3}{2}{\frac{k+1}{2},\frac{h+1+\i\nu}{2},\frac{h+1-\i\nu}{2}}{\frac{h+k}{2}+1,\frac{h+5}{2}}{1}
	=\frac{\bigl(1+\frac{\i\nu}{2}\bigr)_{-(k+1)/2}\bigl(1-\frac{\i\nu}{2}\bigr)_{-(k+1)/2}}{\bigl(\frac{h+5}{2}\bigr)_{-(k+1)/2}\bigl(\frac{-h+1}{2}\bigr)_{-(k+1)/2}}\biggl(1-\frac{\bigl(\frac{k+1}{2}\bigr)\bigl(\frac{-h+1}{2}\bigr)}{\bigl(1+\frac{\i\nu}{2}\bigr)\bigl(1-\frac{\i\nu}{2}\bigr)}\biggr).
\end{align*}
Substituting these expressions into \eqref{eqn:I0_tmp2}, we obtain part (i) when $k$ is odd.
Part (ii) is proved in the same way as in part (i).

We finally prove part (iii).
Let $\lambda_0=\i$ and $\nu=\sqrt{1-s}>0$.
So $\nu_2(\i)=0$ and $\nu_4(\i)=\i\nu$.
\eqref{eqn:Iell_def} is written as
\begin{align*}
	\mathcal I^{(0)}(\nu_4(\i),\nu_2(\i))
	=\int_{-\infty}^\infty&\sech^{\sqrt s+1+\i\nu}x\,(1-2\sech^2x)\\
	&\pFq{2}{1}{-\sqrt s+\i\nu,\sqrt s+1+\i\nu}{1+\i\nu}{\frac{1+\tanh x}{2}}\d x.
\end{align*}
Changing the variable as $t=(1+\tanh x)/2$ and using \eqref{eqn:int_pFq}, we have
\begin{align}
	\mathcal I^{(0)}(\nu_4(\i),\nu_2(\i))
	=&\frac{2^{\sqrt s+\i\nu}\Gamma(\frac{\sqrt s+1+\i\nu}{2})^2}{\Gamma(\sqrt s+1+\i\nu)}\biggl(\pFq{2}{1}{-\sqrt s+\i\nu,\frac{\sqrt s+1+\i\nu}{2}}{1+\i\nu}{1}\biggr.\nonumber\\
	&\biggl.-2\frac{\sqrt s+1+\i\nu}{\sqrt s+2+\i\nu}\pFq{3}{2}{-\sqrt s+\i\nu,\sqrt s+1+\i\nu,\frac{\sqrt s+3+\i\nu}{2}}{1+\i\nu,\sqrt s+3+\i\nu}{1}\biggr).
	\label{eqn:comp_tmp1}
\end{align}
By Gauss' summation theorem and Watson's summation theorem (see, e.g., (1.7.6) and (2.3.3.13) of \cite{Sl66}), the first and second terms are computed as
\begin{align}
	\pFq{2}{1}{-\sqrt s+\i\nu,\frac{\sqrt s+1+\i\nu}{2}}{1+\i\nu}{1}
	=\frac{\Gamma(1+\i\nu)\Gamma\bigl(\frac{\sqrt s+1-\i\nu}{2}\bigr)}{\Gamma(\sqrt s+1)\Gamma\bigl(\frac{-\sqrt s+1+\i\nu}{2}\bigr)}
	\label{eqn:comp_tmp2}
\end{align}
and
\begin{align}
	\pFq{3}{2}{-\sqrt s+\i\nu,\sqrt s+1+\i\nu,\frac{\sqrt s+3+\i\nu}{2}}{1+\i\nu,\sqrt s+3+\i\nu}{1}
	=\frac{(\sqrt s+2+\i\nu)\Gamma(1+\i\nu)\Gamma\bigl(\frac{\sqrt s+3-\i\nu}{2}\bigr)}{\Gamma(\sqrt s+2)\Gamma\bigl(\frac{-\sqrt s+1+\i\nu}{2}\bigr)},
	\label{eqn:comp_tmp3}
\end{align}
respectively.
Substituting \eqref{eqn:comp_tmp2} and \eqref{eqn:comp_tmp3} into \eqref{eqn:comp_tmp1}, we obtain the desired result.\qed

%\bibliographystyle{abbrv}
%\bibliography{references}

\begin{thebibliography}{99}
\bibitem{AbSt64}
	M.\ Abramowitz and I.\ Stegun,
	\textit{Handbook of Mathematical Functions with Formulas, Graphs and Mathematical Tables},
	Dover Publications, Inc., New York, 1965.
\bibitem{Ag95}
	G.\ Agrawal,
	\textit{Nonlinear Fiber Optics},
	Academic Press, New York, 6th edition, 2019.
\bibitem{AGJ90}
	J.\ Alexander, R.\ Gardner, and C.\ Jones,
	A topological invariant arising in the stability analysis of travelling waves,
	\textit{J.\ reine angew.\ Math.}, \textbf{410} (1990) 167--212.
\bibitem{BeLi83}
	H.\ Berestycki and P.-L.\ Lions,
	Nonlinear scalar field equations, I: Existence of a ground state,
	\textit{Arch.\ Rational Mech.\ Anal.}, \textbf{82} (1983) 313--345.
\bibitem{Bh15}
	S.\ Bhattarai,
	Stability of solitary-wave solutions of coupled NLS equations with power-type nonlinearites,
	\textit{Adv.\ Nonlinear Anal.}, \textbf{4} (2015) 73--90.
\bibitem{BlYa12}
	D.\ Bl\'azquez-Sanz and K.\ Yagasaki,
	Analytic and algebraic conditions for bifurcations of homoclinic orbits I: Saddle equilibria,
	\textit{J.\ Differential Equations}, \textbf{253} (2012) 2916--2950.
\bibitem{Ca96}
	T.\ Cazenave,
	\textit{An introduction to nonlinear Schr{\"o}dinger equations},
	Universidade Federal do Rio de Janeiro, Centro de Ci\^encias Matem\'aticas e da Natureza, Instituto de Matem\'atica, Rio de Janeiro, 3rd edition, 1996.
%\bibitem{CaLi82}
%	T.\ Cazenave and P.-L.\ Lions,
%	Orbital stability of standing waves for some nonlinear {S}chr{\"o}dinger equations,
%	\textit{Comm.\ Math.\ Phys.}, \textbf{85} (1982) 549--561.
\bibitem{CiZu00}
	R.\ Cipolatti and W.\ Zumpichiatti,
	Orbitally stable standing waves for a system of coupled nonlinear Schr{\"o}dinger equations,
	\textit{Nonlinear Anal.}, \textbf{42} (2000) 445--461.
\bibitem{CoLe55}
	E.\ Coddington and N.\ Levinson,
	\textit{Theory of Ordinary Differential Equations},
	McGraw-Hill, New York, 1955.
%\bibitem{CuPe05}
%	S.\ Cuccagna and D.\ Pelinovsky,
%	Bifurcations from the endpoints of the essential spectrum in the linearized nonlinear Schr{\"o}dinger problem,
%	\textit{J.\ Math.\ Phys.}, \textbf{46} (2005) 053520.
\bibitem{CPV05}
	S.\ Cuccagna, D.\ Pelinovsky, and V.\ Vougalter,
	Spectra of positive and negative energies in the linearized NLS problem,
	\textit{Comm.\ Pure Appl.\ Math.}, \textbf{58} (2009) 1--29.
\bibitem{DO12}
	E.\ Doedel and B.\ Oldeman,
	AUTO-07P: Continuation and Bifurcation Software for Ordinary Differential Equations,
	2012, available online from \texttt{http://indy.cs.concordia.ca/auto}.
\bibitem{Ev72a}
	J.\ Evans,
	Nerve axon equations, I: Linear approximations,
	\textit{Indiana U.\ Math.\ J.}, \textbf{21} (1972) 877--955.
\bibitem{Ev72b}
	J.\ Evans,
	Nerve axon equations, II: Stability at rest,
	\textit{Indiana U.\ Math.\ J.}, \textbf{22} (1972) 75--90.
\bibitem{Ev72c}
	J.\ Evans,
	Nerve axon equations, III: Stability of the nerve impulse,
	\textit{Indiana U.\ Math.\ J.}, \textbf{22} (1972) 577--594.
\bibitem{Ev75}
	J.\ Evans,
	Nerve axon equations, IV: The stable and unstable impulse,
	\textit{Indiana U.\ Math.\ J.}, \textbf{24} (1975) 1169--1190.
\bibitem{GSS90}
	M.\ Grillakis and J.\ Shatah and W.\ Strauss,
	Stability theory of solitary waves in the presence of symmetry. II,
	\textit{J.\ Funct.\ Anal.}, \textbf{94} (1990) 308--348.
\bibitem{Gu92}
	J.\ Gruendler,
	Homoclinic solutions for autonomous dynamical systems in arbitrary dimension,
	\textit{SIAM J.\ Math.\ Anal.}, \textbf{23} (1992) 702--721.
\bibitem{KKS04}
	T.\ Kapitula and P.\ Kevrekidis and B.\ Sandstede,
	Counting eigenvalues via the Krein signature in infinite-dimensional Hamiltonian systems,
	\textit{Phys.\ D}, \textbf{195} (2004) 263--282.
\bibitem{KKS05}
	T.\ Kapitula and P.\ Kevrekidis and B.\ Sandstede,
	Addendum: Counting eigenvalues via the Krein signature in infinite-dimensional Hamiltonian systems,
	\textit{Phys.\ D}, \textbf{201} (2005) 199--201.
\bibitem{KaPr13}
	T.\ Kapitula and K.\ Promislow,
	\textit{Spectral and Dynamical Stability of Nonlinear Waves},
	Springer, New York, 2013.
\bibitem{Ka90}
	D.\ Kaup,
	Perturbation theory for solitons in optical fibers,
	\textit{Phys.\ Rev.\ A}, \textbf{42} (1990) 5689--5694.
\bibitem{Ku04}
	Y.\ Kuznetsov,
	\textit{Elements of Applied Bifurcation Theory},
	Springer, New York, 3rd edition, 2004.
\bibitem{LaLi77}
	L.\ Landau and E.\ Lifshitz,
	\textit{Quantum Mechhanics: Non-relativistic Theory},
	Pergamon Press, Oxford, 1977.
\bibitem{LiPr00}
	Y.\ Li and K.\ Promislow,
	The mechanism of the polarizational mode instability in birefringent fiber optics,
	\textit{SIAM J.\ Math.\ Anal.}, \textbf{31} (2000) 1351--1373.
\bibitem{Ng11}
	N.\ Nguyen,
	On the orbital stability of solitary waves for the 2-coupled nonlinear Schr{\"o}dinger system,
	\textit{Commun.\ Math.\ Sci.}, \textbf{9} (2011) 997--1012.
\bibitem{NgWa11}
	N.\ Nguyen and Z.-Q.\ Wang,
	Orbital stability of solitary waves for a nonlinear Schr{\"o}dinger system,
	\textit{Adv.\ Differential Equations}, \textbf{16} (2011) 977--1000.
\bibitem{NgWa16}
	N.\ Nguyen and Z.-Q.\ Wang,
	Existence and stability of a two-parameter family of solitary waves for a 2-coupled nonlinear Schr{\"o}dinger system,
	\textit{Discrete Contin.\ Dyn.\ Syst.}, \textbf{36} (2016) 1005--1021.
\bibitem{Oh96}
	M.\ Ohta,
	Stability of solitary waves for coupled nonlinear Schr{\"o}dinger equations,
	\textit{Nonlinear Anal.}, \textbf{26} (1996) 933--939.
\bibitem{PeWe92}
	R.\ Pego and M.\ Weinstein,
	Eigenvalues, and instabilities of solitary waves,
	\textit{Phil.\ Trans.\ R.\ Soc.\ Lond.\ A}, \textbf{340} (1992) 47--94.
\bibitem{Pe05}
	D.\ Pelinovsky,
	Inertia law for spectral stability of solitary waves in coupled nonlinear Schr{\"o}dinger equations,
	\textit{Proc.\ R.\ Soc.\ A}, \textbf{461} (2005) 783--812.
\bibitem{PeYa05}
	D.\ Pelinovsky and J.\ Yang,
	Instabilities of multihump vector solitons in coupled nonlinear Schr{\"o}dinger equations,
	\textit{Stud.\ Appl.\ Math.}, \textbf{115} (2005) 109--137.
\bibitem{PuSi03}
	M.\ van der Put and M.\ Singer,
	\textit{Galois Theory of Linear Differential Equations},
	Springer, New York, 2003.
\bibitem{Ra45}
	E.\ Rainville,
	The contiguous function relations for ${}_pF_q$ with applications to Bateman's $J_n^{u,v}$ and Rice's $H_n(\zeta,p,v)$,
	\textit{Bull.\ Amer.\ Math.\ Soc.}, \textbf{51} (1945) 714--723.
\bibitem{Ro76}
	G.\ Roskes,
	Some nonlinear multiphase interactions,
	\textit{Stud.\ Appl.\ Math.}, \textbf{55} (1976) 231--238.
\bibitem{Sl66}
	L.\ Slater,
	\textit{Generalized Hypergeomtetric Functions},
	Cambridge University Press, Cambridge, 1966.
\bibitem{Ya20}
	K.\ Yagasaki,
	Analytic and algebraic conditions for bifurcations of homoclinic orbits II: Reversible systems,
	submitted for publication.
\bibitem{YaSt20}
	K.\ Yagasaki and T.\ Stachowiak,
	Bifurcations of radially symmetric solutions in a coupled elliptic system with critical growth in $\mathbb{R}^d$ for $d=3,4$,
	\textit{J.\ Math.\ Anal.\ Appl.}, \textbf{484} (2020) 123726.
\bibitem{YaYa20}
	K.\ Yagasaki and S.\ Yamazoe,
	Numerical computations for bifurcations and spectral stability of solitary waves in coupled nonlinear Schr{\"o}dinger equations,
	submitted for publication.
\bibitem{Ya10}
	J.\ Yang,
	\textit{Nonlinear Waves in Integrable and Nonintegrable Systems},
	SIAM, Philadelphia, PA, 2010.
\end{thebibliography}

% **********************************************************
% Bibliography
% **********************************************************

\end{document}